\documentclass[10pt]{amsart}
\usepackage[mathcal]{eucal}
\usepackage{amssymb, graphics, labelfig}

\usepackage{graphicx}

\newtheorem{thm}{Theorem}
\newtheorem{lem}[thm]{Lemma}
\newtheorem{pro}[thm]{Proposition}
\newtheorem{cor}[thm]{Corollary}
\theoremstyle{remark}
\newtheorem{rem}[thm]{Remark}

\newcommand{\PSL}{\mathrm{PSL}}
\newcommand{\SL}{\mathrm{SL}}
\newcommand{\Rep}{\mathcal{R}}
\newcommand{\Hom}{\mathrm{Hom}}
\newcommand{\CO}{\mathcal{C}^{\mathrm{Twist}}(\widehat{\lambda})}
\newcommand{\CH}{\mathcal C^{\mathrm{H\ddot ol}}}
\newcommand{\HO}{\mathcal{C}^{\mathrm{H\ddot ol}}(\widehat{\lambda})}
\newcommand{\La}{\widehat{\lambda}}
\newcommand{\St}{\widetilde{S}}
\newcommand{\ut}{\widetilde{u}}
\newcommand{\Sinf}{\partial_{\infty}\widetilde{S}}
\newcommand{\db}{/\kern -3.5pt/}
\newcommand{\leftn}{\left \vert \kern -1.5pt \left  \vert \kern -1.5pt \left \vert}
\newcommand{\rightn}{\right \vert \kern -1.5pt \right \vert \kern -1.5pt \right \vert}
\begin{document}

\title{Thurston's cataclysms for Anosov representations}

\author{Guillaume Dreyer}
\address {Department
of Mathematics,  University of Notre Dame, 255 Hurley Hall, Notre Dame, IN~46556, U.S.A.}
\email{dreyfactor@gmail.com, gdreyer@alumni.usc.edu, gdreyer@nd.edu}
\date{\today}
\thanks{This research was partially supported by the grant  DMS-0604866 from the National Science Foundation.}


\begin{abstract}
Given an Anosov representation $\rho \colon \pi_1(S) \to \PSL_{n}(\mathbb{R})$ and a maximal geodesic lamination $\lambda$ in a surface $S$, we construct shear deformations along the leaves of the geodesic lamination $\lambda$ endowed with a certain flag decoration, that is provided by the associated flag curve $\mathcal{F}_\rho\colon \Sinf \to \mathrm{Flag}(\mathbb{R}^n)$ of the Anosov representation $\rho$; these deformations generalize to Labourie's Anosov representations Thurston's cataclysms for hyperbolic structures on surfaces. A cataclysm is parametrized by a transverse $n$--twisted cocycle for the orientation cover $\La$ of $\lambda$. In addition, we establish various geometric properties for these deformations. Among others, we prove a variation formula for the associated length functions $\ell^i_\rho$ of the Anosov representation $\rho$.


\end{abstract}
\maketitle

Let $S$ be a closed, connected, oriented surface of genus $g\geq 2$. In \cite{La1}, F. Labourie introduced the notion of \emph{Anosov representation} to study elements of the $\PSL_{n}(\mathbb{R})$--character variety 
$$
\Rep_{\PSL_{n}(\mathbb{R})}(S)=\Hom\bigl(\pi_{1}(S),\PSL_{n}(\mathbb{R})\bigr)\db\PSL_{n}(\mathbb{R}),
$$ 
namely conjugacy classes of homomorphisms $\rho \colon \pi_1(S) \to \PSL_{n}(\mathbb{R})$ from the fundamental group $\pi_1(S)$ to the Lie group $\PSL_{n}(\mathbb{R})$ (equal to the special linear group $\SL_{n}(\mathbb{R})$ if $n$ is odd, and to  $\SL_{n}(\mathbb{R})/ \{\pm \mathrm{Id}\}$ if $n$ is even). A fundamental property of  these Anosov representations is the following.
\begin{thm}[Labourie \cite{La1}]
\label{thm:Labourie1}
Let $\rho \colon \pi_{1}(S) \rightarrow \PSL_{n}(\mathbb{R})$ be an Anosov representation. Then  $\rho$ is discrete and injective. In addition, the image $\rho(\gamma) \in \PSL_n(\mathbb{R})$ of any nontrivial $\gamma \in \pi_{1}(S)$ is diagonalizable, its eigenvalues are all real with distinct absolute values. 
\end{thm}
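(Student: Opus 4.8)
The plan is to follow Labourie's proof, whose two engines are the $\rho$--equivariant boundary map supplied by the Anosov hypothesis and the uniform hyperbolicity of the geodesic flow. First I would unpack the definition. To $\rho$ one associates the flat $\mathbb{R}^n$--bundle $E_\rho=\widetilde{T^1S}\times_\rho\mathbb{R}^n$ over the unit tangent bundle $T^1S$, carrying the lift $\phi_t$ of the geodesic flow, whose holonomy around the periodic orbit of any $\gamma\in\pi_1(S)$ is conjugate to $\rho(\gamma)$. Being Anosov means $E_\rho$ admits a continuous, $\phi_t$--invariant splitting $E_\rho=\ell_1\oplus\cdots\oplus\ell_n$ into line subbundles for which $\phi_t$ contracts each $\mathrm{Hom}(\ell_i,\ell_{i+1})$ at a uniform exponential rate. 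Reading this splitting off along the endpoints of geodesics produces a continuous, $\rho$--equivariant flag curve $\xi\colon\Sinf\to\mathrm{Flag}(\mathbb{R}^n)$, and the fact that the $\ell_i$ span $E_\rho$ at every point becomes the \emph{transversality} of $\xi$: the flags $\xi(x)$ and $\xi(y)$ are in general position whenever $x\neq y$ in $\Sinf$.

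Granting this, the diagonalizability and real-spectrum statements are formal. Fix a nontrivial $\gamma\in\pi_1(S)$; as a deck transformation it acts on $\Sinf\cong S^1$ with north--south dynamics, hence has an attracting fixed point $\gamma^+$ and a repelling one $\gamma^-$, with $\gamma^+\neq\gamma^-$. By equivariance $\rho(\gamma)$ fixes the two flags $F^+=\xi(\gamma^+)$ and $F^-=\xi(\gamma^-)$, which are transverse by the previous step. A transverse pair of complete flags decomposes $\mathbb{R}^n=L_1\oplus\cdots\oplus L_n$ into the lines $L_i=F^+_i\cap F^-_{n+1-i}$; since $\rho(\gamma)$ preserves both filtrations it preserves each $L_i$ and therefore acts on it by a scalar $\lambda_i\in\mathbb{R}^\ast$. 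Thus $\rho(\gamma)$ is diagonalizable over $\mathbb{R}$ with real eigenvalues $\lambda_1,\dots,\lambda_n$ (well defined up to a common sign in $\PSL_n(\mathbb{R})$).

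For the distinctness of the moduli I would pass to the closed orbit $c_\gamma\subset T^1S$ of the conjugacy class of $\gamma$, whose period is the translation length $\ell(\gamma)$ of $\gamma$. Along $c_\gamma$ the subbundle $\ell_i$ is the line $L_i$ above, so the holonomy of $\phi_t$ over time $\ell(\gamma)$ on $\mathrm{Hom}(\ell_i,\ell_{i+1})$ is multiplication by $\lambda_{i+1}/\lambda_i$. Iterating the uniform contraction of $\phi_t$ on $\mathrm{Hom}(\ell_i,\ell_{i+1})$ over $k$ periods gives $|\lambda_{i+1}/\lambda_i|^{k}\leq Ce^{-ck\ell(\gamma)}$ for fixed $C,c>0$, forcing $|\lambda_i|>|\lambda_{i+1}|$; hence $|\lambda_1|>|\lambda_2|>\cdots>|\lambda_n|>0$. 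Injectivity follows at once: for $n\geq2$ a matrix with $n$ real eigenvalues of pairwise distinct modulus is never $\pm\mathrm{Id}$, so $\rho(\gamma)\neq\mathrm{Id}$ in $\PSL_n(\mathbb{R})$ for every nontrivial $\gamma$. For discreteness I would make the preceding contraction uniform in $\gamma$ using compactness of $T^1S$: the constants $C,c$ can be taken independent of the orbit, and comparing $\ell(\gamma)$ with the word length $|\gamma|$ (which agree up to bounded multiplicative and additive error) yields $|\lambda_1(\rho(\gamma))/\lambda_n(\rho(\gamma))|\geq e^{c'|\gamma|}$ for some $c'>0$; equivalently the orbit map $\gamma\mapsto\rho(\gamma)$ is a quasi-isometric embedding of $\pi_1(S)$ into $\PSL_n(\mathbb{R})$, in particular proper, so $\rho(\pi_1(S))$ is discrete.

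The step I expect to be genuinely technical is this last one: promoting the fibrewise contraction of the Anosov splitting — which is only uniform in time after invoking compactness of $T^1S$ — to a length-type lower bound on the eigenvalue gap valid simultaneously over all of $\pi_1(S)$, which additionally requires the comparison between translation length in the base and word length in $\pi_1(S)$. By contrast the diagonalizability and reality of the spectrum are soft consequences of equivariance together with the transversality of the flag curve, so the only delicate point there is the extraction of that curve, and of its transversality, from the definition of an Anosov representation.
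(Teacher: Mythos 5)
First, a caveat on the comparison: the paper does not prove this theorem at all — it is quoted from Labourie's paper \cite{La1}, and the only piece of argument the author supplies is the sketch in \S1 following Theorem~\ref{thm:Labourie3}, where the flatness of the Anosov section along the axis of $\gamma$ identifies each $\widetilde V_i(g_\gamma)$ with a $\rho(\gamma)$--eigenline and the contraction on $V_i^*\otimes V_j$ gives $|\lambda_1^\rho(\gamma)|>\cdots>|\lambda_n^\rho(\gamma)|$. Your treatment of diagonalizability, reality of the spectrum, the strict ordering of the moduli via the holonomy of the flow on $\mathrm{Hom}(\ell_i,\ell_{i+1})$ over the closed orbit, and the deduction of injectivity all match that sketch and the standard argument; they are correct as outlined (modulo the usual caveat that extracting the transverse flag curve from the definition is itself the main content of \cite{La1}, which you acknowledge).

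The discreteness step, however, contains a genuine error as written. The quantity $|\lambda_1(\rho(\gamma))/\lambda_n(\rho(\gamma))|$ is a class function — it is unchanged under replacing $\gamma$ by $h\gamma h^{-1}$ — so it cannot be bounded below by $e^{c'|\gamma|}$ for the word length of $\gamma$ itself: conjugating a fixed element by longer and longer $h$ makes $|\gamma|$ arbitrarily large while the eigenvalues stay put. Relatedly, ``the orbit map is a quasi-isometric embedding'' is a statement about the Cartan projection (singular values) of $\rho(\gamma)$, i.e.\ about $d(\mathrm{Id},\rho(\gamma))$ in $\PSL_n(\mathbb{R})$, and eigenvalue estimates alone do not control singular values. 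Two ways to repair this. The cheap fix, which only uses what you have already established: if $\rho(\gamma_k)\to\mathrm{Id}$ with $\gamma_k\neq 1$, then by continuity of the spectrum $|\lambda_1/\lambda_n|(\rho(\gamma_k))\to 1$, while your period estimate gives $|\lambda_1/\lambda_n|(\rho(\gamma_k))\geq e^{a(n-1)\ell(\gamma_k)}\geq e^{a(n-1)\,\mathrm{sys}(m_0)}>1$, since translation lengths for the auxiliary hyperbolic metric are bounded below by the systole; this contradiction shows $\mathrm{Id}$ is isolated in the image, and with injectivity already proved, discreteness follows. The faithful-to-Labourie fix is to run the Anosov contraction along geodesic \emph{rays} rather than closed orbits, which does yield growth of singular values in terms of $|\gamma|$ and hence properness of the orbit map; but that is a different estimate from the one you wrote down.
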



Important examples of Anosov representations are provided by \emph{Hitchin representations}, namely homomorphisms lying in \emph{Hitchin components} $\mathrm{Hit}_n(S)$. A Hitchin component $\mathrm{Hit}_n(S)$ is defined as a component of the character variety $\Rep_{\PSL_{n}(\mathbb{R})}(S)$ that contains some (conjugacy class of) $n$--\emph{Fuchsian representation}, namely some homomorphism $\rho \colon \pi_1(S) \to \PSL_n(\mathbb{R})$ of the form
$$
\rho=\iota\circ r
$$
where: $r\colon \pi_1(S) \to \PSL_2(\mathbb{R})$ is a discrete, injective homomorphism; and $\iota: \PSL_2(\mathbb{R})$ $ \to \PSL_n(\mathbb{R})$ is the preferred homomorphism defined by the $n$--dimensional, irreducible representation of $\SL_2(\mathbb{R})$ into $\SL_n(\mathbb{R})$. These preferred components $\mathrm{Hit}_n(S)$ were identified by N. Hitchin \cite{Hit} who first suggested the interest in studying their elements. 

Motivations for studying Hitchin representations find their origin in the case where $n=2$. Hitchin components $\mathrm{Hit}_2(S)$ then coincide with \emph{Teichm\"uller components} $\mathcal{T}(S)$ of $\Rep_{\PSL_{2}(\mathbb{R})}(S)$, whose elements, known as \emph{Fuchsian representations}, are of particular interest as they correspond to conjugacy classes of holonomies of hyperbolic structures on $S$. Moreover, every (representative of) element in $\mathcal{T}(S)$ is a discrete, injective homomorphism, and reversely, any such homomorphism lies in some component $\mathcal{T}(S)$ \cite{We,Mar}. It is a result due to W.~Goldman \cite{Gol} that $\Rep_{\PSL_{2}(\mathbb{R})}(S)$ possesses exactly two Teichm\"uller components $\mathcal{T}(S)$, and each of these components $\mathcal{T}(S)$ is known to be homeomorphic to $\mathbb{R}^{6g-6}$ \cite{Th1, FLP}. 

In the case where $n \geq 3$, there are one or two Hitchin components $\mathrm{Hit}_n(S)$  in $\Rep_{\PSL_{n}(\mathbb{R})} (S)$ depending on whether $n$ is odd or even, and a beautiful result of Hitchin is that each of these components $\mathrm{Hit}_n(S)$ is homeomorphic to $\mathbb{R}^{(2g-2)(n^2-1)}$. Hitchin's proof is based on the theory of Higgs bundles, and as observed by Hitchin, this complex analysis framework offers no information about the geometry of elements of $\mathrm{Hit}_n(S)$. The first geometric result for Hitchin representations is to due to S. Choi and W. Goldman \cite{ChGol} who showed that, in the case when $n=3$, the Hitchin component $\mathrm{Hit}_3(S)$ parametrizes the deformation space of \emph{real convex projective structures} on $S$. As a consequence of their work, they showed the faithfulness and the discreetness for the elements in $\mathrm{Hit}_3(S)$. 


The powerful Anosov property for Hitchin representations discovered by Labourie \cite{La1} has the great advantage to provide a unified, dynamical-geometric approach to study all Hitchin representations, and also many more other surface group representations. Briefly, given a homomorphism $\rho\colon \pi_1(S)\to \PSL_n(\mathbb{R})$, consider the twisted, flat $M$--bundle $T^1S\times_\rho M=T^1S\times M / \pi_1(S) \to T^1S$, where: $T^1S$ is the unit tangent bundle of $S$; and where the fibre $M$ is the space of line decomposition of $\mathbb{R}^n$; let $(G_t)_{t\in \mathbb{R}}$ on $T^1S\times_\rho M$ be the flow that lifts the geodesic flow $(g_t)_{t\in \mathbb{R}}$ on $T^1S$. The representation $\rho$ is said to be \emph{Anosov} if there exists a flat section $\sigma_\rho\colon T^1S\to T^1S\times_\rho M$ with some Anosov properties for the flow $(G_t)_{t\in \mathbb{R}}$. The rigidity introduced by the Anosov dynamics guarantees the uniqueness of such a section: it is the \emph{Anosov section} $\sigma_\rho$ of the Anosov representation $\rho$, and is the central geometric feature of the Anosov representation $\rho$. In addition, the faithfulness and the discreetness, as well as the fundamental loxodromic property of Theorem~\ref{thm:Labourie1}, all come as consequences of the Anosov dynamics. Because of their properties, Anosov representations constitute a suitable higher-rank version of Fuchsian representations. As a result, we may expect that some concepts and invariants from classic Teichm\"uller theory extend to the framework of Anosov representations.

\begin{figure}[htbp]
\centerline{\AffixLabels{\includegraphics{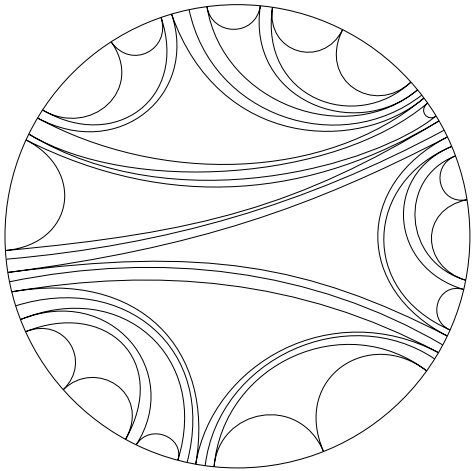}}}
\caption{The lift of a maximal geodesic lamination $\lambda$ in $S$ to the universal cover $\St$.}
\vskip 10pt
\label{Fig1maxgeodlam}
\end{figure}

\subsection*{Results}
\label{results}
We extend to Anosov representations \emph{cataclysm deformations} introduced by W. Thurston \cite{Th2, Bon1}, which themselves generalize (left) \emph{earthquakes} \cite{Th1, Ker}. Let $r\colon\pi_1(S)\to \PSL_2(\mathbb{R})$ be a Fuchsian representation; and let $\mu$ be a \emph{measured lamination}  supported in the geodesic lamination $\lambda\subset S$, namely $\lambda$ is a closed subset foliated by disjoint, complete, simple geodesics endowed with a transverse measure supported in $\lambda$ \cite{Th1,PeH,Bon4}. An earthquake is a deformation of the hyperbolic structure on $S$ of holonomy $r\colon\pi_1(S)\to \PSL_2(\mathbb{R})$ via a shear operation of the components in the complement $S-\lambda$ along the leaves of the geodesic lamination $\lambda$. Such a deformation yields another hyperbolic structure on $S$ of holonomy $\Lambda^\mu r \colon\pi_1(S)\to \PSL_2(\mathbb{R})$. The shear for each component of $S-\lambda$ is determined by the transverse measure $\mu$ which parametrizes the earthquake. A feature of  earthquakes is that every component of $S-\lambda$ moves in the left direction. Cataclysms are similar to earthquakes, with the difference that the shear is allowed to simultaneously occur to the left and to the right. In particular, a cataclysm is parametrized by a \emph{transverse cocycle} $\varepsilon$ for the geodesic lamination $\lambda$ \cite{Bon3, Bon1}, which can be thought as a transverse signed measure that is only finitely additive.

Let $\lambda \subset S$ be a \emph{maximal geodesic lamination}, i.e. the complement $S - \lambda$ is made of ideal triangles. Let $\La$ be its orientation cover (in the sense of foliation theory). Cataclysms for Anosov representations are parametrized by the (vector) space of \emph{transverse $n$--twisted cocycles $\CO$ for the oriented geodesic lamination $\La$}. Let $\Rep^{\mathrm{Anosov}}_{\PSL_{n}(\mathbb{R})}(S)$ be the set of Anosov representations; it is an open subset of $\Rep_{\PSL_{n}(\mathbb{R})}(S)$. 


\begin{thm} \textsc{(Cataclysm Theorem)}
Let $\rho$ be an Anosov representation. There exist a neighborhood $\mathcal{U}^\rho$ of $0\in \CO$, and a continuous, injective map
\begin{eqnarray*}
\Lambda\colon \mathcal{U}^\rho& \to &\Rep^{\mathrm{Anosov}}_{\PSL_{n}(\mathbb{R})}(S)\\
\varepsilon=(\varepsilon_1, \ldots , \varepsilon_n)&\mapsto&\Lambda^{\varepsilon}\rho
\end{eqnarray*}
that coincides, in the special case where $n=2$, with Thurston's cataclysm deformations for Fuchsian representations along the maximal geodesic lamination $\lambda$.
\end{thm}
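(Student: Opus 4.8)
\textit{Proof strategy.}
The plan is to realize $\Lambda^{\varepsilon}\rho$ by \emph{shearing the flag curve} $\mathcal{F}_\rho$, in the spirit of Bonahon's product--integral description of Thurston's earthquake/cataclysm deformations. Lift everything to the universal cover $\St$, where the preimage $\widetilde\lambda$ of $\lambda$ cuts $\St$ into ideal triangles (the plaques); by Labourie's hyperconvexity of $\mathcal{F}_\rho$, the three vertices of each plaque carry a triple of flags in general position, and the two flags $F, F'$ of $\mathbb{R}^n$ bordering any leaf $h$ of $\widetilde\lambda$ are transverse, so that $\mathbb{R}^n$ splits as the direct sum of the $n$ lines $L_i(h) = F^{(i)} \cap F'^{(n-i+1)}$. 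To a transverse $n$--twisted cocycle $\varepsilon = (\varepsilon_1, \dots, \varepsilon_n)$ and an oriented crossing of $h$ I attach the elementary shear $T_h(\varepsilon)$, namely the transformation of $\mathbb{R}^n$ multiplying $L_i(h)$ by $e^{\varepsilon_i(h)}$, viewed in $\PSL_n(\mathbb{R})$. Reversing the orientation of the crossing swaps $F \leftrightarrow F'$, hence $L_i(h) \leftrightarrow L_{n+1-i}(h)$ and $\varepsilon_i(h) \mapsto \varepsilon_{n+1-i}(h)$; this is precisely the symmetry that makes $T_h(\varepsilon)$ well defined over the orientation cover $\La$ and that forces the cocycle to be $n$--twisted in the first place.

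Next, for two leaves $g, g'$ of $\widetilde\lambda$ I define the deformed slithering map $\Sigma^{\varepsilon}_{gg'} \in \PSL_n(\mathbb{R})$ as the product --- generically an infinite product, i.e.\ a product integral --- of the factors $T_h(\varepsilon)$ over the leaves $h$ separating $g$ from $g'$. Convergence of this product is the first technical point: it follows from the H\"older regularity of $\mathcal{F}_\rho$ coming from the Anosov property, combined with the regularity built into the definition of a transverse $n$--twisted cocycle, and it is here that the neighborhood $\mathcal{U}^\rho$ of $0$ is constrained, the estimates deteriorating as $\varepsilon$ grows. Granting convergence, one checks the slithering cocycle relation $\Sigma^{\varepsilon}_{gg'}\Sigma^{\varepsilon}_{g'g''} = \Sigma^{\varepsilon}_{gg''}$ for all triples of leaves (a crossing followed by the reverse crossing cancels, by construction of $T_h$) and, since $\varepsilon$ is $\pi_1(S)$--invariant and $\mathcal{F}_\rho$ is $\rho$--equivariant, the transformation rule $\rho(\gamma)\,\Sigma^{\varepsilon}_{gg'}\,\rho(\gamma)^{-1} = \Sigma^{\varepsilon}_{\gamma g,\,\gamma g'}$.

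Now fix a base leaf $g_0$ and set $\Lambda^{\varepsilon}\rho(\gamma) := \Sigma^{\varepsilon}_{g_0,\gamma g_0}\,\rho(\gamma)$. Combining the transformation rule with the cocycle relation gives $\Sigma^{\varepsilon}_{g_0,\gamma g_0}\rho(\gamma)\,\Sigma^{\varepsilon}_{g_0,\delta g_0}\rho(\delta) = \Sigma^{\varepsilon}_{g_0,\gamma g_0}\Sigma^{\varepsilon}_{\gamma g_0,\gamma\delta g_0}\rho(\gamma\delta) = \Sigma^{\varepsilon}_{g_0,\gamma\delta g_0}\rho(\gamma\delta)$, so that $\gamma \mapsto \Lambda^{\varepsilon}\rho(\gamma)$ is a homomorphism; changing $g_0$ conjugates it, so it is well defined as a point of the character variety, and $\Lambda^{0}\rho = \rho$ since each $T_h(0)$ is the identity. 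The maps $\Sigma^{\varepsilon}_{gg'}$ transport $\mathcal{F}_\rho$ to a $\Lambda^{\varepsilon}\rho$--equivariant flag curve; since the slithering products depend continuously on $\varepsilon$, uniformly, the map $\varepsilon \mapsto \Lambda^{\varepsilon}\rho$ is continuous with $\Lambda^{0}\rho = \rho$, and as $\Rep^{\mathrm{Anosov}}_{\PSL_{n}(\mathbb{R})}(S)$ is open, shrinking $\mathcal{U}^\rho$ makes $\Lambda^{\varepsilon}\rho$ Anosov throughout --- this lets us avoid verifying hyperconvexity of the deformed flag curve by hand. For injectivity I would exhibit a left inverse: from $\Lambda^{\varepsilon}\rho$ one recovers its unique flag curve, hence the two flags bordering each leaf of $\widetilde\lambda$, hence $\varepsilon$ itself via the Bonahon--Dreyer-type formulas expressing the shear of a transverse cocycle across a leaf in terms of triple and double ratios of the neighbouring flags (this is where transversality of the flag curve is used); the extraction is continuous and inverts $\Lambda$. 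Finally, for $n=2$ the flags are points of $\partial_\infty\mathbb{H}^2$, the line data $L_i(h)$ collapses, $T_h(\varepsilon)$ becomes the hyperbolic translation along the geodesic $h$ through $\varepsilon(h)$, the product integral $\Sigma^{\varepsilon}_{g_0,\gamma g_0}\rho(\gamma)$ is exactly the holonomy of Thurston's cataclysm of the hyperbolic structure with holonomy $\rho$ along $\lambda$, and the identification is a direct matrix computation. The main obstacle is the pair ``convergence of the deformed slithering products plus the cocycle and equivariance identities''; once that machinery is in place, openness of the Anosov condition, continuity, the inverse map, and the $n=2$ reduction are comparatively routine.
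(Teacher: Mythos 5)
Your construction of the deformation itself is essentially the paper's: your elementary shears along leaves of the flag-decorated lamination, organized into a product integral whose convergence is driven by the H\"older regularity of $\mathcal{F}_\rho$, are the paper's maps $T^{\varepsilon}_{g}$, and your leaf-indexed slithering product is exactly the alternative description $\varphi^{r}_{PQ}=T_{h_0}^{\varepsilon(P,R_1)}\circ\cdots\circ T_{h_m}^{\varepsilon(R_m,Q)}$ of Proposition~\ref{ShearMap2} (the paper's primary definition is indexed by the plaques separating two plaques and is based at an ideal triangle $P_0$ rather than a leaf $g_0$, but the two are reconciled precisely by that proposition and the composition property of Theorem~\ref{ShearComp}); the homomorphism verification, conjugation under change of base object, $\Lambda^{0}\rho=\rho$, and the appeal to openness of $\Rep^{\mathrm{Anosov}}_{\PSL_{n}(\mathbb{R})}(S)$ for small $\varepsilon$ all match \S\ref{sect:Cataclysms} and \S\ref{CataDeformations}. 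Two cautions on this part: a transverse cocycle is evaluated on transverse arcs, not on single leaves, so your factor ``$T_h(\varepsilon)$ multiplying $L_i(h)$ by $e^{\varepsilon_i(h)}$'' only makes sense in the finite approximations where the exponent is $\varepsilon$ of the arc between consecutive sampled leaves; and convergence is not ``regularity built into the definition of the cocycle'' --- it requires the divergence-radius machinery (Lemmas~\ref{lem01}--\ref{lem05}), in particular the linear bound $\varepsilon(k_d)\leq C\Vert\varepsilon\Vert(r(d)+1)$ competing against exponential decay, which is where the smallness condition $\Vert\varepsilon\Vert<K/2C$ comes from.

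The genuine gap is in the injectivity step. You propose to recover $\varepsilon$ from $\Lambda^{\varepsilon}\rho$ by reading off shear data from ``its unique flag curve''. But what such formulas recover is the shear between $\mathcal{F}_\rho$ and $\mathcal{F}_{\rho'}$ along $\widetilde\lambda$, and to know this equals $\varepsilon$ you must first prove that the flag map obtained by transporting $\mathcal{F}_\rho$ with your slithering maps coincides, on $\partial_\infty\widetilde\lambda$, with the Anosov flag curve $\mathcal{F}_{\rho'}$ of the deformed representation. Equivariance alone does not give this: Labourie's uniqueness characterizes a specific H\"older hyperconvex curve on all of $\Sinf$ via the Anosov dynamics, whereas your transported map is only defined and equivariant on $\partial_\infty\widetilde\lambda$. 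This identification is exactly Theorem~\ref{CataFlag}, whose proof occupies Section~\ref{Sect:Catac}: one compares the two flat sections over $\La$ using the contraction of the flow on the bundles $V_i^*\otimes V_j$ (Lemma~\ref{FlowingVector}), compactness and connectedness of $\La$ (maximality of $\lambda$ enters here), and a normalization at the base plaque to pin down the residual permutation. Only then does the paper extract $\varepsilon$ from the pair $(\rho,\rho')$ via the convergent summation $\delta^{\rho\rho'}_i$ (Lemma~\ref{lem28}, Proposition~\ref{pro34}) and conclude injectivity (Corollary~\ref{Cor36}). Without an argument of this kind your left inverse is not defined, so the injectivity claim is unsupported as written; the rest of your outline, including the $n=2$ reduction, is consistent with the paper.
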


The construction of our cataclysm deformations makes use of the geometry of Anosov representations. Indeed, let $\Sinf$ be the ideal boundary of $S$; this object is defined independently of the choice of a hyperbolic metric on $S$; see \cite{Ghys,Gro}. The following geometric property will play a central r\^ole in our construction.

\begin{thm}[Labourie \cite{La1}]
\label{thm:Labourie2}
Let $\rho\colon\pi_1(S)\to \PSL_n(\mathbb{R})$ be an Anosov representation. There exists a unique, H\"older continuous, $\rho$--equivariant flag curve $\mathcal{F}_\rho\colon\Sinf \to \mathrm{Flag}(\mathbb{R}^n)$.
\end{thm}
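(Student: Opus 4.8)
\emph{Proof idea.} The strategy is to extract $\mathcal{F}_\rho$ from the Anosov section $\sigma_\rho$ of the flat $M$--bundle and to use the Anosov contraction to show that the relevant flags depend on a single endpoint. I would first identify $T^1\St$ with the set of pairs $(v^-,v^+)$ of distinct points of $\Sinf$ together with a real parameter, so that the geodesic flow $(g_t)$ translates the parameter. Lifting $\sigma_\rho$ to a map $\ut\colon T^1\St\to M$, which is $\rho$--equivariant because $\sigma_\rho$ lives over $T^1S$, we assign to each $v$ a decomposition $\mathbb{R}^n=\ell_1(v)\oplus\dots\oplus\ell_n(v)$ into lines. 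Since the flat $M$--bundle is trivial over the simply connected $\St$, the lifted flow acts trivially on the $M$--factor, so the flatness of $\sigma_\rho$ along the flow gives $\ut(g_tv)=\ut(v)$; hence $\ell_i(v)$ depends only on the pair $(v^-,v^+)$. The Anosov hypothesis gives, at the section, an exponential contraction of the factors $\Hom(\ell_i,\ell_j)$ with $i<j$ under $G_t$ as $t\to+\infty$ and the reverse as $t\to-\infty$; this also fixes the labelling of the $\ell_i$ in a globally coherent way, $T^1S$ being connected.

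The heart of the matter is to prove that the flag $F^+(v)=\bigl(\ell_1(v)\subset\ell_1(v)\oplus\ell_2(v)\subset\cdots\bigr)$ depends only on $v^+$, and symmetrically that $F^-(v)=\bigl(\ell_n(v)\subset\ell_{n-1}(v)\oplus\ell_n(v)\subset\cdots\bigr)$ depends only on $v^-$. I would establish this first on the dense set of pairs $(\gamma^-,\gamma^+)$ arising from nontrivial $\gamma\in\pi_1(S)$: the flow line between $\gamma^-$ and $\gamma^+$ is $\gamma$--periodic, so equivariance together with flow--invariance forces the decomposition there to be $\rho(\gamma)$--invariant, hence by Theorem~\ref{thm:Labourie1} to be the eigenline decomposition of $\rho(\gamma)$, with $\ell_i$ matched to the $i$--th eigenline ordered by decreasing modulus of eigenvalue according to the contraction convention. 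Thus on this dense set $F^+$ is the attracting flag of $\rho(\gamma)$, visibly independent of $\gamma^-$, and $F^-$ the repelling flag. For an arbitrary endpoint I would then compare two geodesics sharing the forward endpoint $\xi$: being forward asymptotic, the contraction estimate makes $F^+$ along the two flow lines converge to one another, while flow--invariance keeps each $F^+$ constant along its line, so passing to the limit forces equality. This lets us set $\mathcal{F}_\rho(\xi):=F^+(v)$ for any $v$ with $v^+=\xi$ — every boundary point being a forward endpoint — and the assignment is $\rho$--equivariant by equivariance of $\ut$ and lands in genuine full flags since the decomposition has $n$ distinct summands; moreover $\mathcal{F}_\rho(\xi)$ and $\mathcal{F}_\rho(\eta)$ are transverse for $\xi\ne\eta$ because $F^+(v)$ and $F^-(v)$ are complementary.

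For Hölder regularity I would invoke the $C^{0,\alpha}$ section theorems for contracting bundle automorphisms applied to $(G_t)$ on the $M$--bundle, which yield Hölder continuity of $\sigma_\rho$, and transport it to $\mathcal{F}_\rho$ through the continuous identification of $\Sinf$ with the space of forward endpoints: two nearby boundary points are the forward endpoints of geodesics that fellow--travel for a time of order $\log\bigl(1/d(\xi,\eta)\bigr)$, and the exponential contraction then bounds the distance between $\mathcal{F}_\rho(\xi)$ and $\mathcal{F}_\rho(\eta)$ by a fixed power of $d(\xi,\eta)$. Uniqueness I would deduce with no further hypothesis: for any continuous $\rho$--equivariant flag curve $\mathcal{F}$, equivariance forces $\mathcal{F}(\gamma^+)$ to be a $\rho(\gamma)$--invariant flag, and a north--south dynamics argument — pushing a point $\xi$ near $\gamma^+$ forward by powers of $\gamma$, using $\mathcal{F}(\gamma^k\xi)=\rho(\gamma)^k\mathcal{F}(\xi)$ and continuity — identifies it with the attracting flag of $\rho(\gamma)$, hence with $\mathcal{F}_\rho(\gamma^+)$; since such fixed points are dense in $\Sinf$ and both curves are continuous, $\mathcal{F}=\mathcal{F}_\rho$.

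The main obstacle is the localization step of the second paragraph: showing that $F^+(v)$ genuinely depends on $v^+$ alone is where the Anosov property must be used in an essential, quantitative way. The periodic--orbit argument makes it cheap on a dense set, but upgrading it to all of $\Sinf$, and doing so with enough uniformity to simultaneously produce the modulus of continuity behind the Hölder estimate, is the technical core; once the regularity of the section and this localization are in hand, equivariance, transversality and uniqueness are comparatively formal.
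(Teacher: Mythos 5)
First, a point of comparison: the paper does not prove this statement. It is quoted from Labourie \cite{La1} (and restated, with the $2$--hyperconvexity property, in \S\ref{FlagDescription}); the only thing the paper itself establishes about the flag curve is the dictionary (\ref{FlagVersusSection}) between it and the Anosov section. So your proposal cannot be measured against a proof in the paper; the closest internal benchmark is the dynamical argument carried out for Theorem~\ref{CataFlag}, which uses exactly the ingredients your outline would need.

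Judged on its own terms, your outline follows the standard route (extract the flags from the Anosov section, localize to one endpoint, then regularity and uniqueness), but the central localization step is not yet a proof. On the dense set of periodic pairs the claimed independence of $v^-$ is vacuous: two nontrivial elements of $\pi_1(S)$ with the same attracting fixed point share an axis, so $\gamma^+$ determines $\gamma^-$ and the periodic computation only identifies the value at fixed points. All the weight therefore falls on the asymptotic comparison, and there the inference ``the contraction estimate makes $F^+$ along the two flow lines converge to one another, while flow--invariance keeps each constant, so equality'' is not sound as stated: the convergence supplied by continuity and the Anosov estimates is measured in the moving, $\rho$--equivariant fibre metric, while in the universal cover the lifted flow is trivial on the fibre, so the subspaces along each orbit are literally constant as subspaces of $\mathbb{R}^n$. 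If closeness of the two orbits in $T^1S$ together with continuity of the section sufficed to force equality in the limit, the same reasoning would show that the entire line decomposition depends only on $v^+$, which is false: by (\ref{FlagVersusSection}) the line $\widetilde{V}_n$ is determined by the backward endpoint and genuinely varies with it. What is actually needed is the relative (dominated) contraction used quantitatively: take a vector in the candidate subspace at $w$, decompose it with respect to the splitting at $v$, flow forward, normalize by the dominant component, and use compactness of $T^1S$ together with the transverse continuity of the Anosov section (Theorem~\ref{AnosovSection}) to kill the components in $V_{i+1},\dots,V_n$ --- precisely the mechanism of Lemma~\ref{FlowingVector} and the surrounding argument in the proof of Theorem~\ref{CataFlag}; the same estimates are what produce the H\"older exponent. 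You correctly flag this as the technical core, but as written it is a gap, not an argument. A smaller gap: in your uniqueness step, $\lim_k\rho(\gamma)^k\mathcal{F}(\xi)$ is the attracting flag of $\rho(\gamma)$ only if $\mathcal{F}(\xi)$ is transverse to the repelling flag, which is not automatic for an arbitrary continuous equivariant curve; that transversality (or an appeal to the uniqueness of the Anosov section, Theorem~\ref{AnosovSection}) must be supplied.
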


Note that the same invariant flag curve was similarly provided in the case of Hitchin representations by independent work of V.~Fock and A.~Goncharov \cite{FoGo}, who in addition established a certain positivity condition for this flag curve. Their approach also implies the faithfulness and the discreteness of Hitchin representations. The point of view of  Fock and Goncharov is algebaic geometric and relies on  G.~Lusztig's notion of positivity \cite{Lusz1, Lusz2}; in particular, it is very different from Labourie's.

The geometric intuition for our cataclysms is to deform an Anosov representation $\rho$ via a deformation of the associated flag curve $\mathcal{F}_\rho$. Let $\lambda \subset S$ be a geodesic lamination $\lambda \subset S$. By adding finitely many leaves, we can arrange that $\lambda$ is maximal. Let $\widetilde{\lambda}\subset \St$ that lifts the maximal geodesic lamination $\lambda\subset S$, where $\St$ is the universal cover of $S$; see Figure~\ref{Fig1maxgeodlam}. The flag curve $\mathcal{F}_\rho\colon\Sinf \to \mathrm{Flag}(\mathbb{R}^n)$ induces an equivariant flag decoration on the set of endpoints $\partial_\infty \widetilde{\lambda}\subset \Sinf$ of the geodesic lamination $\widetilde{\lambda}$. In particular, each ideal triangle in the complement $\St-\widetilde{\lambda}$ inherits a flag decoration $\big (\mathcal{F}_\rho(x),\mathcal{F}_\rho(y),\mathcal{F}_\rho(z)\big)\in \mathrm{Flag}(\mathbb{R}^n)$ on its three vertices $(x,y,z)\in \partial_\infty \widetilde{\lambda}$. Similarly as for Fuchsian representations, we define an equivariant shear operation for the \emph{flag decorated ideal triangles} in $\St-\widetilde{\lambda}$ along the leaves of the \emph{flag decorated geodesic lamination} $\widetilde{\lambda}$. The shear for each flag decorated triangle is determined by a transverse $n$--twisted cocycle $\varepsilon=(\varepsilon_1, \ldots , \varepsilon_n) \in\CO$ for the orientation cover $\La$. Such a shear deformation modifies the geometry of the flag curve $\mathcal{F}_\rho$, and so the Anosov representation $\rho$.



In \cite{Dr1}, the author generalizes to Anosov representations \emph{Thurston's length function} of Fuchsian representations \cite{Th1, Bon2, Bon4}, which is a fundamental tool in the study of $2$ and $3$--dimensional hyperbolic manifolds. Among others, one motivation for introducing cataclysms is to analyze the behavior of the lengths $\ell^\rho_i$ under such deformations. More precisely, fix a maximal geodesic lamination $\lambda \subset S$. Let $\HO$ be the (vector) space of \emph{transverse cocycles for the orientation cover $\La$}. Given an Anosov representation $\rho\colon \pi_1(S)\to \PSL_n(\mathbb{R})$, the construction in \cite{Dr1} provides, for every $i=1$, $\ldots$ , $n$, a continuous, linear function $\ell^\rho_i\colon \HO \to \mathbb{R}$. We prove the following variational formula.

\begin{thm} \textsc{(Variation of the lengths)}
Let $\rho'=\Lambda^{\varepsilon} \rho$ be a cataclysm deformation of an Anosov representation $\rho$ for some $\varepsilon=(\varepsilon_1, \ldots , \varepsilon_n)\in \CO$ along the maximal geodesic lamination $\lambda\subset S$. Let $\ell^\rho_{i}$ and $\ell^{\rho'}_{i}$ be the associated lengths of $\rho$ and $\rho'$, respectively. For every transverse cocycle $\alpha \in\CH(\widehat{\lambda})$, 
$$
\ell^{\rho'}_{i}(\alpha)=\ell^{\rho}_{i}(\alpha) + \tau(\alpha, \varepsilon_i)
$$
where the pairing $\tau\colon \CH (\widehat{\lambda} )\times \CH(\widehat{\lambda})\to \mathbb{R}$ is Thurston's intersection number.
\end{thm}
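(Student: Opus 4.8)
The plan is to reduce the variation formula to two essentially independent facts: an explicit description of the length function $\ell^\rho_i$ in terms of the local invariants that the flag curve $\mathcal{F}_\rho$ induces along the maximal lamination $\lambda$, and a computation of how those invariants are affected by the cataclysm $\Lambda^{\varepsilon}$.

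First I would recall from \cite{Dr1} how $\ell^\rho_i$ is computed relative to $\lambda$. Since $\lambda$ is maximal, the flag curve $\mathcal{F}_\rho$ equips each ideal triangle of $\St-\widetilde{\lambda}$ with a flag triple $\bigl(\mathcal{F}_\rho(x),\mathcal{F}_\rho(y),\mathcal{F}_\rho(z)\bigr)$, whose $\PSL_n(\mathbb{R})$--invariants are the \emph{triangle invariants} of $\rho$, and it attaches to the leaves of $\widetilde{\lambda}$ a family of \emph{shear invariants} which assemble, for each index $i$, into a transverse cocycle $\sigma^\rho_i\in\HO$ for the orientation cover $\widehat{\lambda}$. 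The computation of the $i$--th length in \cite{Dr1} then organizes into a shear contribution and a triangle contribution: it takes the form
\[
\ell^\rho_i(\alpha)=\tau(\alpha,\sigma^\rho_i)+\Theta^\rho_i(\alpha),
\]
where $\Theta^\rho_i\colon\HO\to\mathbb{R}$ is continuous and linear and depends on $\mathcal{F}_\rho$ only through the triangle invariants of $\rho$ (with $\Theta^\rho_i\equiv 0$ when $n=2$, so that one recovers Bonahon's description of Thurston's length function for Fuchsian representations as a pairing against the shear cocycle of the hyperbolic structure). One anchor for this is the closed--leaf case: if $\gamma$ is a closed leaf of $\lambda$, then $\mathcal{F}_\rho$ sends the two endpoints of $\gamma$ to the attracting and repelling flags of $\rho(\gamma)$, so that $\ell^\rho_i(\gamma)$ is, up to normalization, the logarithm of the $i$--th eigenvalue of $\rho(\gamma)$, and one sees concretely how both the shear cocycle and the triangle invariants enter.

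Next I would analyze the cataclysm. By the construction underlying the Cataclysm Theorem, $\rho'=\Lambda^{\varepsilon}\rho$ is the Anosov representation whose flag curve $\mathcal{F}_{\rho'}$ is obtained from $\mathcal{F}_\rho$ by the equivariant shear of the flag--decorated ideal triangles of $\St-\widetilde{\lambda}$ along the leaves of $\widetilde{\lambda}$, the shear along each leaf being prescribed by $\varepsilon=(\varepsilon_1,\dots,\varepsilon_n)\in\CO$. Choosing a base triangle and expressing $\mathcal{F}_{\rho'}$ as an accumulated product, over the leaves separating a given triangle from the base, of the elementary shear maps determined by $\varepsilon$, one observes that each elementary shear map carries the flag configuration of every ideal triangle on which it acts to a $\PSL_n(\mathbb{R})$--equivalent configuration; hence the triangle invariants of $\rho'$ coincide with those of $\rho$, so $\Theta^{\rho'}_i=\Theta^\rho_i$, while the shear invariant across each leaf picks up exactly the corresponding value of $\varepsilon_i$. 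In other words, keeping the triangle invariants fixed, the cataclysm acts on shear cocycles by translation: $\sigma^{\rho'}_i=\sigma^\rho_i+\varepsilon_i$ in $\HO$.

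Combining the two steps then gives, for every $\alpha\in\HO$,
\[
\ell^{\rho'}_i(\alpha)=\tau(\alpha,\sigma^{\rho'}_i)+\Theta^{\rho'}_i(\alpha)=\tau(\alpha,\sigma^\rho_i)+\tau(\alpha,\varepsilon_i)+\Theta^\rho_i(\alpha)=\ell^\rho_i(\alpha)+\tau(\alpha,\varepsilon_i),
\]
using $\sigma^{\rho'}_i=\sigma^\rho_i+\varepsilon_i$, $\Theta^{\rho'}_i=\Theta^\rho_i$, and the bilinearity of Thurston's intersection number $\tau$. The main obstacle is the first step, namely establishing the decomposition $\ell^\rho_i(\alpha)=\tau(\alpha,\sigma^\rho_i)+\Theta^\rho_i(\alpha)$ with a triangle term that is genuinely insensitive to the shear data: inside the holonomy of $\rho$ along a transverse arc, written as a product of shear matrices and triangle matrices, one must isolate the part that is linear in $\alpha$ and attributable to $\sigma^\rho_i$ from the part governed by the triangle invariants, which forces one to use that $\ell^\rho_i$ is the \emph{linearized} length, so that the nonlinear cross terms in the product do not contribute. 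A secondary technical point is the bookkeeping around the orientation cover: since the components $\varepsilon_i$ of the $n$--twisted cocycle $\varepsilon$ live a priori in $\CO$ rather than directly in $\HO$, one has to check that passing to the induced transverse cocycle for $\widehat{\lambda}$ is compatible with the pairing $\tau$ and with the covering involution. The cataclysm step, by contrast, should be essentially a matter of unwinding the definition of $\Lambda^{\varepsilon}$.
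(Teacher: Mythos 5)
Your overall strategy --- split $\ell^\rho_i$ into a shear part paired against $\alpha$ via $\tau$ plus a triangle part, then show the cataclysm translates the shear cocycle by $\varepsilon_i$ and fixes the triangle data --- is not the paper's argument, and as written it has a genuine gap at exactly the point you flag as ``the main obstacle.'' The decomposition $\ell^\rho_i(\alpha)=\tau(\alpha,\sigma^\rho_i)+\Theta^\rho_i(\alpha)$, with $\sigma^\rho_i\in\HO$ a well-defined shear cocycle and $\Theta^\rho_i$ depending only on triangle invariants, is not available in \cite{Dr1}: there the length is \emph{defined} as $\ell_i^\rho(\alpha)=\int_{\La}\omega_i^\rho\,d\alpha$ for a leafwise $1$--form $\omega_i^\rho$ built from the Anosov metric, and no such shear/triangle splitting is established. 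Producing it would amount to proving a length formula of Bonahon--Dreyer type for arbitrary transverse cocycles $\alpha$ and for \emph{all} Anosov representations, whereas shear and triangle invariants are only set up (via positivity) for Hitchin representations; the theorem here is stated for general Anosov $\rho$. So the first step of your reduction is an unproved claim at least as deep as the theorem itself, and the closed-leaf anchor you mention does not bootstrap to general $\alpha$ without the full machinery. Your second step also understates the work: the statement that the cataclysm shifts the shear data by exactly $\varepsilon_i$ is, in the paper, Proposition~\ref{pro34} ($\delta^{\rho\rho'}_i(k)=\varepsilon_i(k)$), whose proof requires the convergence estimates of \S\ref{sect:Inequalities} and Corollary~\ref{cor08}, not just ``unwinding the definition of $\Lambda^{\varepsilon}$.''

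The paper's route avoids the missing decomposition entirely by working with the \emph{difference} of the defining $1$--forms, $\Delta_i^{\rho\rho'}=\omega_i^{\rho'}-\omega_i^{\rho}$, so that $\ell_i^{\rho'}(\alpha)-\ell_i^{\rho}(\alpha)=\int_{\La}\Delta_i^{\rho\rho'}\,d\alpha$ and no absolute formula for either length is needed. It then shows (Lemmas~\ref{Lem38} and~\ref{FormExtended}) that along the boundary of each one-holed hexagon of $\widehat{U}-\La$ the form $\Delta_i^{\rho\rho'}$ is the differential of the explicit function $u\mapsto-\log\bigl\Vert\varphi_{P_0\widetilde{P}}\widetilde{X}_i(\ut)\bigr\Vert'_{\ut}$ built from the shearing maps, and that $\Delta_i^{\rho\rho'}$ extends to a closed H\"older $1$--form on $\widehat{U}$, exact on each hexagon. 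Pairing with the cycle $\mathcal{G}_\alpha$ representing $[\alpha]\in H_1(\widehat{U})$, the hexagon primitives cancel and the leaf-crossing contributions, identified with $\varepsilon_i$ by Propositions~\ref{pro34} and~\ref{cor46}, assemble into $\tau(\alpha,\varepsilon_i)$. If you want to salvage your approach, you would need to either prove the shear/triangle decomposition of $\ell^\rho_i$ (restricting to Hitchin components and invoking \cite{BonDr1}), or replace it by the paper's cohomological comparison of the two $1$--forms.
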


The nature of the above result is essentially algebraic topologic, and a large part of the proof consists of describing certain objects (co)homologically. A key idea is the homological interpretation of transverse cocycles of $\HO$ as elements of the first homology group $H_1(\widehat{U})$ where $\widehat{U}$ is a preferred open neighborhood for the oriented geodesic lamination $\La$. In particular, Thurston's intersection number \cite{PeH, Bon3, Bon4} on $\HO$, which  is a certain type of geometric intersection, turns out to be the same as the classic homology intersection pairing for $H_1(\widehat{U})$ (up to a nonzero scalar multiplication).


\subsection*{Remarks}
\label{Remarks}
In the case where $n=3$, our cataclysms include \emph{bending deformations} for Fuchsian representations along a simple, closed curve $\gamma\subset S$, that were introduced by D. Johnson and J. Milson in \cite{JM}. Bendings are defined algebraically and provide examples of deformations of Fuchsian representations to Hitchin representations in $\mathrm{Hit}_3(S)$. Goldman \cite{Gol2} gives a geometric interpretation of bendings as deformations of hyperbolic structures to real convex projective structures on $S$. His description emphasizes the r\^ole played by the ideal boundary $\Sinf$ that, in the latter context, identifies with a convex projective curve embedded in $\mathbb{RP}^2$: bendings appear as explicit deformations of the convex boundary, and coincide with cataclysms along a simple, closed geodesic $\gamma\subset S$.  


A question that this article does not address is the completeness of cataclysms. In \cite{Dr2}, we define the notion of Anosov representation along a geodesic lamination $\lambda\subset S$, where these considerations find a more natural answer. Cataclysms extend to this class of Anosov representations, and we show the existence of cataclysm paths in this (open) subset of $\Rep_{\PSL(\mathbb{R})}(S)$. In addition, our analysis gives precise conditions for the existence of such paths in terms of the length functions $\ell_i^\rho$ introduced in \cite{Dr1}.

Another motivation for studying cataclysms is part of the development of a new system of coordinates for Hitchin components $\mathrm{Hit}_n(S)$. Let us recall Hitchin's result, namely that $\mathrm{Hit}_n(S)$ is diffeomorphic to $\mathbb{R}^{(2g-2)(n^2-1)}$. Hitchin's parametrization is based on Higgs bundle techniques, and in particular requires the initial choice of a complex structure on $S$. In a joint work with F. Bonahon \cite{BonDr1,BonDr2}, we construct a geometric, real analytic parametrization of Hitchin components $\mathrm{Hit}(\mathbb{R}^n)$. One feature of this parametrization is that it is based on topological data only. In essence, our coordinates are an extension of Thurston's shearing coordinates \cite{Th2,Bon1} on the Teichm\"uller space $\mathcal{T}(S)$, combined with Fock-Goncharov's coordinates on the moduli space of positive framed local systems of a punctured surface \cite{FoGo}.

\section{Anosov representations}

We begin with reviewing some material about Anosov representations. The main objects are the \emph{Anosov section} and the associated \emph{flag curve} of an Anosov representation, that will play a fundamental r\^ole throughout. Main references for this section are \cite{La1, Gui, GuiW1,GuiW2}.

For convenience, we fix once and for all a hyperbolic metric $m_0$ on $S$. It induces a $m_0$--geodesic flow $(g_t)_{t\in \mathbb{R}}$ on the unit tangent bundle $T^1S$: we refer to the associated orbit space as the $m_0$--geodesic foliation $\mathcal{F}$ of $T^1S$. 


\subsection{The Anosov bundle(s)}
\label{bundledescription}

We present two equivalent descriptions of an Anosov representation. 

\subsubsection{$M$--bundle description}
\label{Mbundledescription}

Let $M$ be the space of line decompositions of $\mathbb{R}^n$, namely $M$ is the set of $n$--tuplets of $1$--dimensional subspaces $(L_1, \dots ,L_n)$ such that $\mathbb{R}^n=L_1\oplus\cdots\oplus L_n$. Given a homomorphism $\rho\colon \pi_1(S) \to \PSL_n(\mathbb{R})$, consider the flat twisted $M$--bundle
$$
T^1 S \times_\rho M = T^1\widetilde S \times M / \pi_1(S)\to T^1S
$$
where: $T^1\widetilde S$ is the unit tangent bundle of the universal cover $\widetilde{S}$ of $S$; and where the action of $\pi_1(S)$ is defined by the property that
\begin{eqnarray*}
\gamma \cdot \big ( \ut,(L_1, \dots ,L_n) \big )=\big (\gamma \ut,  (\rho(\gamma)L_1, \dots ,\rho(\gamma)L_n)\big )
\end{eqnarray*}
for every $ \gamma \in \pi_{1}(S)$ and $ \big (\ut,(L_1, \dots ,L_n) \big) \in T^{1}\widetilde{S}\times M$. Via the flat connection, the geodesic flow $(g_t)_{t\in \mathbb{R}}$ on  $T^{1}S$ lifts to a flow $(G_{t})_{t\in \mathbb{R}}$ on the total space $T^{1}{S}\times_{\rho}M$; here, the ``flatness'' condition means that, if one looks at the situation in the universal cover $T^{1}\widetilde{S}\times M$, the lift $(\widetilde{G}_{t})_{t\in \mathbb{R}}$ acts on $T^1\widetilde{S}\times \mathbb{R}^n$ as the geodesic flow $(\widetilde{g})_{t\in \mathbb{R}}$ on the first factor, and trivially on the second factor. We shall refer to  $T^1 S \times_\rho M\to T^1S$ as the \emph{associated} $M$--\emph{bundle} of the homomorphism $\rho\colon \pi_1(S) \to \PSL_n(\mathbb{R})$.

A homomorphism $\rho \colon\pi_1(S)\to\PSL_n(\mathbb{R})$ is said to be \emph{Anosov} if the associated $M$--bundle admits a continuous section $\sigma: T^1S \to T^{1}{S}\times_{\rho}M$, $u \mapsto \big(V_1(u), \ldots ,V_n(u) \big)$ satisfying the two following properties:
\begin{enumerate}
\item The section $\sigma$ is \emph{flat}, namely if $\widetilde{\sigma}: T^{1}\St \to T^{1}\St \times M$, $\ut \mapsto \big(\widetilde{V}_1(\ut), \dots ,\widetilde{V}_n(\ut) \big)$ is a lift of $\sigma$, then for every $i=1$, $\ldots$ , $n$, for every $t\in \mathbb{R}$, the fibres $\widetilde{V}_i(\ut)$ and $\widetilde{V}_i(g_t(\ut))$  coincide as lines of $\mathbb{R}^n$; 

\medskip  

\item \label{AnosovProperty} Let  $T^1S \times_{\rho}\mathrm{End}(\mathbb{R}^n)\to T^1S$ be the flat twisted $\mathrm{End}(\mathbb{R}^n)$--bundle, where $\rho(\pi_1(S))$ acts by conjugation on the space of linear endomorphisms $\mathrm{End}(\mathbb{R}^n)$. Let $(\bar{G}_{t})_{t\in\mathbb{R}}$ be the lift on $T^1S \times_{\rho}\mathrm{End}(\mathbb{R}^n)$ of the geodesic flow $(g_t)_{t\in \mathbb{R}}$. The flat section $\sigma= (V_1, \dots ,V_n)$ induces a line splitting $\bigoplus_{1\leq i,j\leq n}V_i^*\otimes V_j$ of the flat bundle $T^1S \times_{\rho}\mathrm{End}(\mathbb{R}^n)\to T^1S$ with the property that each line sub-bundle $V_i^*\otimes V_j\to T^1S$ is invariant under the action of the flow $(\bar{G}_{t})_{t\in\mathbb{R}}$. We require the restriction of flow $(\bar{{G}_{t}}_{|V_i^*\otimes V_j})_{t\in \mathbb{R}}$ to each line sub-bundle $V_i^*\otimes V_j$ to be ``Anosov'' in the following sense: for every $i \neq j$, there exists a metric $\leftn \ \rightn$ on $V_i^*\otimes V_j$, and some constants $A\geq0$ and $a>0$ such that, $\forall u\in T^1S\text{, }\forall \psi_u\in V_i^*\otimes V_j(u)\text{, }\forall t > 0$,
\begin{align*}
\text{if $i>j$, }&\leftn \bar{G}_t \psi_u \rightn_{g_t(u)} \leq A e^{-at} \leftn \psi_u \rightn_u;\\
\text{if $i<j$, }&\leftn \bar{G}_{-t} \psi_u \rightn_{g_{-t}(u)} \leq A e^{-at} \leftn \psi_u \rightn_u.
\end{align*}
\end{enumerate}


\subsubsection{$\bar{\mathbb{R}}^n$--bundle description}
\label{Rbundle description}

Here is an alternative description of an Anosov re\-presentation, with which it is sometimes easier to work in practice.

Let $\bar{\mathbb{R}}^n=\mathbb{R}^n/ \{\pm \mathrm{Id}\}$; note that $\PSL_n(\mathbb{R})$ acts on $\bar{\mathbb{R}}^n$. Given a homomorphism $\rho\colon \pi_1(S)\to \PSL_n(\mathbb{R})$, consider the flat twisted $\bar{\mathbb{R}}^n$--bundle $T^1S \times_\rho \bar{\mathbb{R}}^n\to T^1S$. Let $(G_t)_{t\in \mathbb{R}}$ be the lift on $T^1S\times_\rho \bar{\mathbb{R}}^n$ of the geodesic flow $(g_t)_{t\in \mathbb{R}}$. Then $\rho$ is an Anosov representation if the bundle $T^1S\times_\rho \bar{\mathbb{R}}^n$ splits as a sum of line sub-bundles $V_1\oplus\cdots\oplus V_n$ (for the obvious definition of direct sum of lines  in $\bar{\mathbb{R}}^n$) with the property that: each line sub-bundle $V_i\to T^1S$ is invariant under the action of the flow $(G_t)_{t\in \mathbb{R}}$; and the line sub-bundles $V_i\to T^1S$ satisfy the Anosov property (\ref{AnosovProperty}). Note that we abuse the terminology ``line bundle" as the fibre $V_i(u)$ of $V_i\to T^1S$ identifies with the quotient of a line of $\mathbb{R}^n$ by $\pm \mathrm{Id}$; this discrepancy will have no effect in the following.


As a consequence of the above alternative bundle description, we will often think of the components $V_i$ of the Anosov section $\sigma_\rho=(V_1, \ldots , V_n)$ as line (sub-)bundles $V_i \to T^1S$ that are invariant under the action of the flow $(G_t)_{t\in \mathbb{R}}$.

The Anosov property (\ref{AnosovProperty}) of the flat section $\sigma= (V_1, \ldots ,V_n)$ has several important consequences, that we now review.

\begin{thm}[Labourie \cite{La1}]
\label{AnosovSection}
Let $T^{1}{S}\times_{\rho}M\to T^1S$ be the associated flat $M$--bundle of an Anosov representation $\rho$. It admits a unique, flat, continuous section satisfying the Anosov property (\ref{AnosovProperty}) as above; we shall refer to it as the Anosov section $\sigma_\rho \colon T^{1}\St \to T^{1}\St \times M$ of the Anosov representation $\rho$. In addition, $\sigma_\rho$ is smooth along the leaves of the geodesic foliation $\mathcal{F}$ of  $T^1S$, and is transversally H\"older continuous.
\end{thm}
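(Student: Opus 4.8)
The plan is to dispose of existence first — it is essentially built into the definition, since $\rho$ being Anosov means precisely that the associated $M$--bundle carries a flat continuous section satisfying property~(\ref{AnosovProperty}) — and then to concentrate on uniqueness and on the two regularity assertions, which carry the real content.

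For uniqueness I would lift everything to the universal cover and use flatness to pass to the orbit space of the geodesic flow. After trivialising $T^1\St\times M$, a flat section becomes a $\pi_1(S)$--equivariant map $\widetilde\sigma\colon T^1\St\to M$ that is constant along every $\widetilde g_t$--orbit, hence factors through the space of oriented geodesics of $\St$, i.e.\ through $\Sinf\times\Sinf$ minus the diagonal. Now the Anosov condition on $V_i^*\otimes V_j\cong\Hom(V_i,V_j)$ says exactly that $\bar G_t$ acts on this line by (essentially) the ratio of the forward growth rates of $V_j$ and of $V_i$, and that this ratio decays exponentially whenever $i>j$; equivalently, $V_1\oplus\cdots\oplus V_n$ is a complete flag of dominated splittings for the flow $(\bar G_t)$ on $T^1S\times_\rho\bar{\mathbb R}^n$. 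Dominated splittings are dynamically canonical: the exponential gap between consecutive rates detects any nonzero component of a vector lying in a strictly faster graded piece, so each partial sum $V_1\oplus\cdots\oplus V_k$ is characterised purely through the forward flow and each $V_k\oplus\cdots\oplus V_n$ purely through the backward flow, whence $V_k=(V_1\oplus\cdots\oplus V_k)\cap(V_k\oplus\cdots\oplus V_n)$. Since these descriptions do not refer to the section, two flat Anosov sections coincide. The $\pm\mathrm{Id}$ ambiguity caused by working in $\bar{\mathbb R}^n$ rather than $\mathbb R^n$ is harmless, as the argument is local and lifts; as a by-product one recovers the loxodromic statement of Theorem~\ref{thm:Labourie1} by specialising to periodic orbits.

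For the regularity, smoothness along the leaves of $\mathcal F$ is immediate: in any flat trivialisation a flat section is locally constant, and the leaves of $\mathcal F$ are precisely the geodesic orbits, along which $\sigma_\rho$ is therefore constant, hence real-analytic. Transversal Hölder continuity is the genuinely dynamical point, and I would obtain it by the classical Hadamard--Perron graph-transform argument: realise $\sigma_\rho$ as the unique fixed point of the operator that sends a continuous section of the bundle over $T^1S$ with fibre the relevant (partial) flag variety to its push-forward under a fixed large time of $(\bar G_t)$. By the estimates of~(\ref{AnosovProperty}) this operator is a fibrewise contraction with factor $\le Ae^{-at}$, while over the compact base $T^1S$ the geodesic flow is Lipschitz with a controlled expansion constant; a standard bunching estimate then shows the operator preserves a ball of $\alpha$--Hölder sections as soon as $\alpha>0$ is small enough relative to $a$ and this expansion constant, so its fixed point $\sigma_\rho$ is $\alpha$--Hölder.

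The main obstacle is the last step: carrying out the bunching estimate honestly, namely quantifying the competition between the fibrewise contraction rate $a$ and the transversal expansion of the geodesic flow on $T^1S$ — this is what fixes the admissible Hölder exponent — while simultaneously coping with the fact that the fibre is the nonlinear flag (or Grassmannian) variety rather than a vector space, which forces one to work in local charts and to control the metrics $\leftn\ \rightn$ on the lines $V_i^*\otimes V_j$ and their variation across nearby base points.
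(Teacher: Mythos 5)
This theorem is not proved in the paper at all: it is quoted verbatim from Labourie \cite{La1}, so there is no in-paper argument to compare yours against. Judged on its own terms, your outline follows the standard (and presumably Labourie's) route, and most of it is sound. Existence really is vacuous given the definition of ``Anosov'' in \S\ref{Mbundledescription}; leafwise smoothness of a flat section is immediate as you say; and your uniqueness argument via domination is correct and essentially complete: the Anosov property on the lines $V_i^*\otimes V_j$ is equivalent (as in Lemma~\ref{FlowingVector}) to exponential domination of the growth rates of the $V_i$ in the flat $\bar{\mathbb{R}}^n$--bundle, a condition independent of the auxiliary metrics since all continuous metrics over the compact base $T^1S$ are equivalent, and a dominated splitting into lines is dynamically determined by the intersections of the forward-slow and backward-slow filtrations. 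The $\pm\mathrm{Id}$ issue is indeed harmless.

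The genuine gap is the one you name yourself: the transverse H\"older continuity. What you give there is a plan, not a proof. The graph-transform strategy is the right one, but the conclusion of the theorem hinges precisely on the bunching computation you defer --- showing that the fibrewise contraction rate $a$ coming from (\ref{AnosovProperty}) beats $\alpha$ times the Lipschitz constant of $g_{-T}$ on $T^1S$ for some $\alpha>0$, uniformly over the compact base, in charts on the flag variety where the induced action of the linear cocycle is only locally a contraction near the attracting flag. Without that estimate (and without verifying that the operator preserves a H\"older ball, which requires controlling how the linearization varies from fibre to fibre), the transversal H\"older regularity --- the one assertion of the theorem that is actually used quantitatively later, e.g.\ in Lemma~\ref{cor03} --- is not established. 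So the proposal is an accurate road map of the standard proof with its hardest leg untravelled; to make it a proof you must either carry out the H\"older section theorem in this setting or cite it explicitly (Hirsch--Pugh--Shub style) with the hypotheses checked.
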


The following observation is an easy consequence of the uniqueness of the Anosov section, that we state as a lemma for future reference.
 
\begin{lem}
\label{lem:LineReversing}
Let $\sigma_\rho=(V_1, \ldots , V_n)$ be the Anosov section of some Anosov representation $\rho$, that lifts to $\widetilde{\sigma}_\rho=(\widetilde{V}_1, \ldots , \widetilde{V}_n)$. For $\ut\in T^1\St$ projecting to $u\in T^1S$, the fibres $\widetilde{V}_i(\ut)$ and $\widetilde{V}_{n-i+1}(-\ut)$ coincide as lines of $\mathbb{R}^n$.
\end{lem}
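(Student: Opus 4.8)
The plan is to exploit the uniqueness clause of Theorem~\ref{AnosovSection} by exhibiting a second flat section of the associated $M$--bundle that satisfies the Anosov property~(\ref{AnosovProperty}), and which must therefore coincide with $\sigma_\rho$. The natural candidate is obtained from $\sigma_\rho$ by pulling back along the flip involution of $T^1S$. Concretely, let $\iota\colon T^1S\to T^1S$ be the map $u\mapsto -u$ that reverses each unit tangent vector; it lifts to $\ut\mapsto-\ut$ on $T^1\St$ and conjugates the geodesic flow to its inverse, i.e. $g_t\circ\iota=\iota\circ g_{-t}$. Define a new section $\sigma'_\rho=(V'_1,\dots,V'_n)$ by $V'_i(u)=V_{n-i+1}(-u)$, with lift $\widetilde{V}'_i(\ut)=\widetilde{V}_{n-i+1}(-\ut)$. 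First I would check that $\sigma'_\rho$ is a genuine continuous section of the same flat $M$--bundle: equivariance under $\pi_1(S)$ is immediate from the equivariance of $\widetilde{\sigma}_\rho$ together with the fact that $\pi_1(S)$ commutes with $\iota$ on $T^1\St$, and the direct-sum condition $\mathbb{R}^n=\bigoplus_i V'_i$ holds because reindexing a line decomposition is still a line decomposition.

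Next I would verify the two defining properties. Flatness of $\sigma'_\rho$ follows from flatness of $\sigma_\rho$: since $\widetilde{V}_j(\ut)=\widetilde{V}_j(g_t\ut)$ for all $t$, we get $\widetilde{V}'_i(\ut)=\widetilde{V}_{n-i+1}(-\ut)=\widetilde{V}_{n-i+1}(g_{-t}(-\ut))=\widetilde{V}_{n-i+1}(-g_t\ut)=\widetilde{V}'_i(g_t\ut)$, using $-g_t\ut=g_{-t}(-\ut)$. For the Anosov property~(\ref{AnosovProperty}), observe that the line sub-bundle $V_i'^*\otimes V_j'$ over $u$ is $V_{n-j+1}^*\otimes V_{n-i+1}$ over $-u$, and that the flow $\bar G'_t$ on the $\mathrm{End}(\mathbb{R}^n)$--bundle built from $\sigma'_\rho$ is carried by $\iota$ to $\bar G_{-t}$ for $\sigma_\rho$. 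Hence the contraction/expansion estimates for $\sigma_\rho$ transfer directly: the index condition $i>j$ for $\sigma'_\rho$ becomes $n-j+1>n-i+1$, i.e. $i>j$ again, but the relevant flow is $\bar G_{-t}$, so the two inequalities in~(\ref{AnosovProperty}) get swapped in exactly the way that makes them consistent; pulling back the metric $\leftn\ \rightn$ along $\iota$ supplies the required metric. Thus $\sigma'_\rho$ is a flat section with the Anosov property.

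By the uniqueness statement in Theorem~\ref{AnosovSection}, $\sigma'_\rho=\sigma_\rho$, which is precisely the asserted equality $\widetilde{V}_i(\ut)=\widetilde{V}_{n-i+1}(-\ut)$ of lines in $\mathbb{R}^n$. I expect the only subtle point to be bookkeeping in the Anosov estimates: one must confirm that reversing the flow direction and simultaneously reversing the index order ($i\leftrightarrow n-i+1$, which flips the sign of $i-j$) interact correctly so that the sub-bundle $V'^*_i\otimes V'_j$ with $i>j$ really does satisfy the required decay under $\bar G'_t$ as $t\to+\infty$. Once the identification of $\bar G'_t$ with the $\iota$--conjugate of $\bar G_{-t}$ is made carefully, this is a matter of matching up the two displayed inequalities, and no analysis beyond what is already contained in~(\ref{AnosovProperty}) is needed. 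All other verifications (continuity, $\pi_1$--equivariance, the direct-sum condition, flatness) are formal.
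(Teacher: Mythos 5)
Your proposal is correct and follows exactly the paper's own argument: the paper also defines $\bar{\sigma}_\rho(u)=\bigl(V_n(-u),\ldots,V_1(-u)\bigr)$, checks flatness and the Anosov property, and concludes by the uniqueness clause of Theorem~\ref{AnosovSection}. Your write-up simply spells out in more detail the bookkeeping (conjugation of the flow by the flip and the index reversal $i\leftrightarrow n-i+1$) that the paper leaves to the reader.
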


\begin{proof}
Consider the section $\bar{\sigma}_\rho(u)= \big(V_n(-u), \ldots , V_1(-u) \big)$, for $u\in T^1S$. Then $\bar{\sigma}_\rho$ is flat, continuous, and one easily verifies that, for every $t\in \mathbb{R}$, $\bar{\sigma}_\rho(g_t(u))=\big(V_n(g_{-t}(-u)), \ldots , V_1(g_{-t}(-u))\big)$. Moreover, since $\sigma_\rho=(V_1, \ldots , V_n)$ is the Anosov section, it follows that $\bar{\sigma}_\rho$ also satisfies the Anosov property (\ref{AnosovProperty}), hence $\bar{\sigma}_\rho=\sigma_\rho$.
\end{proof}

A fundamental property of Anosov representations is the following.
 
 \begin{thm}[Labourie \cite{La1}]
\label{thm:Labourie3}
Let $\rho \colon \pi_{1}(S) \rightarrow \PSL_{n}(\mathbb{R})$ be an Anosov representation. Then $\rho$ is  injective and discrete. In addition, the image  $\rho(\gamma) \in \PSL_n(\mathbb{R})$ of any nontrivial  $\gamma \in \pi_{1}(S)$ is diagonalizable, and its eigenvalues are all real with distinct absolute values. 
\end{thm}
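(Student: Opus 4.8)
The plan is to reduce everything to the periodic orbits of the fixed $m_0$--geodesic flow and to exploit the three defining features of the Anosov section $\sigma_\rho=(V_1,\dots,V_n)$ of Theorem~\ref{AnosovSection}: it is flat, it is $\rho$--equivariant, and it satisfies the contraction estimate~(\ref{AnosovProperty}). Since the statement of Theorem~\ref{thm:Labourie3} is identical to Theorem~\ref{thm:Labourie1}, this argument proves both. \emph{Step 1 (diagonalizability).} Fix a nontrivial $\gamma\in\pi_1(S)$; as $(S,m_0)$ is closed hyperbolic, there is a closed $m_0$--geodesic freely homotopic to $\gamma$, of length $\ell=\ell_{m_0}(\gamma)>0$. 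Choose a unit tangent vector $\widetilde u\in T^1\St$ along the axis of $\gamma$ acting on $\St$, pointing in the direction of translation, so that $g_\ell(\widetilde u)=\gamma\widetilde u$; let $u\in T^1S$ be its projection, a periodic point of period $\ell$. Put $L_i:=\widetilde V_i(\widetilde u)\subset\mathbb R^n$. Flatness of $\sigma_\rho$ gives $\widetilde V_i(\widetilde u)=\widetilde V_i(g_\ell\widetilde u)$, while the $\rho$--equivariance of $\widetilde\sigma_\rho$ gives $\widetilde V_i(\gamma\widetilde u)=\rho(\gamma)\widetilde V_i(\widetilde u)$; combining the two, $\rho(\gamma)L_i=L_i$. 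Since $\mathbb R^n=L_1\oplus\dots\oplus L_n$, a lift $\widehat\rho(\gamma)\in\SL_n(\mathbb R)$ is simultaneously diagonalized by the $L_i$, and because each $L_i$ is a real line the corresponding eigenvalue $\mu_i$ is a nonzero real number. This already gives diagonalizability and reality of the eigenvalues.

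\emph{Step 2 (distinct absolute values).} Here I invoke~(\ref{AnosovProperty}). Over the periodic point $u$ the flow-invariant line sub-bundle $V_i^*\otimes V_j$ is preserved by the return map $\bar G_\ell$, and unwinding the flat structure in the universal cover---exactly in the spirit of the proof of Lemma~\ref{lem:LineReversing}, i.e. using that the lift of $(\bar G_t)$ to $T^1\St\times\mathrm{End}(\mathbb R^n)$ acts trivially on the $\mathrm{End}(\mathbb R^n)$--factor while $\pi_1(S)$ acts on it by conjugation---one checks that $\bar G_\ell$ acts on the line $(V_i^*\otimes V_j)(u)\cong\mathrm{Hom}(L_i,L_j)$ as multiplication by the scalar $\mu_i/\mu_j$. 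Iterating at the times $t=k\ell$ and using $g_{k\ell}(u)=u$, the contraction estimate for $i>j$ yields $|\mu_i/\mu_j|^k\le A\,e^{-ak\ell}$ for every $k\ge1$ (and every nonzero $\psi_u$, whose norm cancels); taking $k$--th roots and letting $k\to\infty$ gives $|\mu_i/\mu_j|\le e^{-a\ell}<1$. Hence $|\mu_1|>|\mu_2|>\dots>|\mu_n|$, so the eigenvalues of $\rho(\gamma)$ have pairwise distinct absolute values.

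\emph{Step 3 (injectivity and discreteness).} If $\rho(\gamma)=\mathrm{Id}$ for some nontrivial $\gamma$, then (as $n\ge2$) all its eigenvalues would have absolute value $1$, contradicting Step~2; hence $\rho$ is injective. For discreteness, the key point is that the constants $A$ and $a$ in~(\ref{AnosovProperty}) are global, so Step~2 provides the uniform estimate $|\mu_1(\gamma)/\mu_2(\gamma)|\ge e^{a\ell_{m_0}(\gamma)}\ge e^{a\,\mathrm{sys}_{m_0}(S)}>1$ for every nontrivial $\gamma\in\pi_1(S)$, where $\mathrm{sys}_{m_0}(S)$ is the systole of $(S,m_0)$. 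If a sequence of distinct nontrivial $\rho(\gamma_k)$ converged to $\mathrm{Id}$ in $\PSL_n(\mathbb R)$, the ratio of the absolute values of the two largest eigenvalues of $\rho(\gamma_k)$ would tend to $1$, contradicting this bound; hence $\rho(\pi_1(S))$ is discrete.

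The step I expect to require real care is Step~2: identifying the return map $\bar G_\ell$ on the flow-invariant line $V_i^*\otimes V_j$ over the periodic point with scalar multiplication by $\mu_i/\mu_j$ means tracking carefully the flat connection, the conjugation action of $\pi_1(S)$ on $\mathrm{End}(\mathbb R^n)$, and the chosen metric $\|\cdot\|$ on $V_i^*\otimes V_j$ along the compact orbit; one then has to upgrade the asymptotic Anosov inequality into the sharp bound $|\mu_i/\mu_j|\le e^{-a\ell}$ by iterating and passing to the limit $k\to\infty$. The uniformity of $a$ and $A$, independent of $\gamma$, is exactly what makes the discreteness argument of Step~3 work, and should be stressed.
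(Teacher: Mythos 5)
Your argument is correct, but it is worth saying up front that the paper itself does not prove this statement: Theorem~\ref{thm:Labourie3} is quoted from Labourie, and the only proof content in the text is the paragraph that follows it, which establishes diagonalizability exactly as in your Step~1 (flatness plus equivariance of $\widetilde{\sigma}_\rho$ along the axis of $\gamma$ force $\rho(\gamma)\widetilde{V}_i(g_\gamma)=\widetilde{V}_i(g_\gamma)$) and then merely asserts the strict ordering $|\lambda^\rho_1(\gamma)|>\dots>|\lambda^\rho_n(\gamma)|$ as ``a strong consequence of the Anosov property.'' Your Steps~2 and~3 correctly supply what the paper defers to \cite{La1}. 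In Step~2 the conjugation direction does come out as you claim: with the normalization $\widetilde{g}_\ell\ut=\gamma\ut$, the time-$\ell$ return map of $(\bar G_t)$ on the fibre over $u$, read in the trivialization given by the lift $\ut$, is $\psi\mapsto\rho(\gamma)^{-1}\psi\rho(\gamma)$, which on $\mathrm{Hom}(L_i,L_j)$ is multiplication by $\mu_i/\mu_j$; the forward contraction for $i>j$ then gives $|\mu_i|<|\mu_j|$, i.e. the ordering stated in the paper, and your iteration $|\mu_i/\mu_j|^k\le Ae^{-ak\ell}$ followed by taking $k$-th roots is indeed needed to absorb the constant $A$. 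The uniform bound $|\mu_1(\gamma)/\mu_2(\gamma)|\ge e^{a\,\mathrm{sys}_{m_0}(S)}$, together with continuity of the spectrum, is a standard and valid route to discreteness. So the proposal is a legitimate self-contained proof whose first step coincides with the paper's sketch and whose remaining steps fill in the cited material.
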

 
By ``$\rho(\gamma) \in \PSL_n(\mathbb{R})$ is diagonalizable", we mean that every lift $\widetilde{\rho(\gamma)} \in \SL_n(\mathbb{R})$ is a diagonalizable matrix. When $n$ is odd, $\PSL_n(\mathbb{R})=\SL_n(\mathbb{R})$ and there is no ambiguity. When $n$ is even, $\rho(\gamma) \in \PSL_n(\mathbb{R})$ admits two lifts $\pm \widetilde{\rho(\gamma)} \in \SL_n(\mathbb{R})$; however, the absolute values of the eigenvalues of $\rho(\gamma)\in \PSL_n(\mathbb{R})$ are well defined.

We now make the content of Theorem~\ref{thm:Labourie3} more precise, and also much stronger. Let $\widetilde{\sigma}_\rho=(\widetilde{V}_1, \ldots ,\widetilde{V}_n)$ that lifts the Anosov section $\sigma_\rho=(V_1, \ldots , V_n)$. Pick a nontrivial element $\gamma\in \pi_1(S)$. Let $\widetilde{\rho(\gamma)}\in \SL_n(\mathbb{R})$ that lifts $\rho(\gamma)$. Consider the oriented geodesic $g_\gamma\subset \St$ fixed by the isometric action of $\gamma$. Let $\ut \in T^1\St$ be a unit vector directing $g_\gamma$, and let us set $\widetilde{V}_i(g_\gamma)=\widetilde{V}_i(\ut)\subset \mathbb{R}^n$; $\sigma_\rho$ being flat, $\widetilde{V}_i(g_\gamma)$ does not depend on the choice of the unit tangent vector $\ut$. Since $\gamma \widetilde{u}\in g_{\gamma}$, and $\widetilde{V}_i(\gamma\ut)=\rho(\gamma)\widetilde{V}_i(\ut)$ (it is the equivariance property of the lift $\widetilde{\sigma}_\rho$), it follows from the above discussion that each line $\widetilde{V}_i(g_\gamma)$ is an eigenspace for $\widetilde{\rho(\gamma)}$; let us denote by $\lambda^{\rho}_i(\gamma)\in \mathbb{R}$ the corresponding eigenvalue: we shall refer to it as the $i$--\emph{th eigenvalue of} $\rho(\gamma)$. Moreover, a strong consequence
of the Anosov property $(\ref{AnosovProperty})$ is the following control on the eigenvalues: for every nontrivial $\gamma\in \pi_1(S)$,
\begin{eqnarray*}
|\lambda^{\rho}_1(\gamma) |> |\lambda^{\rho}_2(\gamma)| > \dots > |\lambda^{\rho}_n(\gamma)| \label{nom}. 
\end{eqnarray*}

Finally, let $\Rep^{\mathrm{Anosov}}_{\PSL_{n}(\mathbb{R})}(S)\subset \Rep_{\PSL_{n}(\mathbb{R})}(S)$ be the set of Anosov representations.

\begin{thm}[Labourie \cite{La1}]
\label{thm:Labourie}
The set of Anosov representations $\Rep^{\mathrm{Anosov}}_{\PSL_{n}(\mathbb{R})}(S)$ is open in the character variety $\Rep_{\PSL_{n}(\mathbb{R})}(S)$. 
\end{thm}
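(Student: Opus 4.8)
The plan is to show that the Anosov property is an open condition by exhibiting, for a representation $\rho_0$ satisfying (\ref{AnosovProperty}), a neighborhood $\mathcal{N}$ of $\rho_0$ in $\Rep_{\PSL_n(\mathbb{R})}(S)$ all of whose elements admit a flat section with the required exponential contraction/expansion estimates. The key structural fact I would use is that the Anosov condition, once we pass to the $\mathrm{End}(\mathbb{R}^n)$--bundle, is a hyperbolicity condition for the flow $(\bar G_t)$ acting on a vector bundle over the compact base $T^1S$; and hyperbolic splittings of flows on vector bundles over compact spaces are structurally stable. So the proof splits into two parts: (i) recasting ``$\rho$ is Anosov'' as ``the flat flow on $T^1S \times_\rho \bar{\mathbb{R}}^n$ (or on the $\mathrm{End}$--bundle) has a hyperbolic/dominated splitting into the prescribed number of line bundles,'' and (ii) invoking persistence of such splittings under $C^0$--small perturbations of the flow.

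For step (i), note that the flat twisted bundles $T^1S\times_\rho\bar{\mathbb{R}}^n$ and $T^1S\times_\rho\mathrm{End}(\mathbb{R}^n)$ depend continuously on $\rho$: fixing a trivialization $T^1\widetilde S\times \bar{\mathbb{R}}^n$ upstairs, the flow $(\bar G_t)$ is determined by the cocycle $\gamma\mapsto\rho(\gamma)$ over the $\pi_1(S)$--action combined with the (fixed, $\rho$--independent) geodesic flow $(\widetilde g_t)$. Thus a path $\rho_s\to\rho_0$ in the character variety gives a $C^0$--converging family of flows on a fixed bundle. For step (ii), I would appeal to the standard persistence theorem for dominated/hyperbolic splittings (e.g. via the graph transform or cone-field criterion): if the time-$t_0$ map of $(\bar G_t^{\rho_0})$ admits invariant cone fields witnessing the splitting $\bigoplus_{i<j}V_i^*\otimes V_j$ into pieces contracted for $t>0$ versus expanded for $t<0$, then every sufficiently $C^0$--close flow admits nearby invariant cone fields, hence a nearby continuous invariant splitting with the same contraction rates (with slightly worsened constants $A$, $a$). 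The number of line summands is locally constant because the splitting varies continuously, so for $\rho$ near $\rho_0$ we still get exactly $n$ invariant lines $V_i^\rho$; flatness of the resulting section is automatic from the construction (the splitting is built inside the flat bundle and is flow-invariant, and flow-invariance along geodesics is exactly flatness of the section in the sense of property (1)).

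The main obstacle, and the point requiring genuine care rather than citation, is making precise that the persistent invariant splitting actually refines all the way to $n$ \emph{lines} rather than merely to a coarser dominated flag, and that the resulting line bundles assemble into a genuine section of the $M$--bundle $T^1S\times_\rho M$ (equivalently, that the $n$ invariant lines are in direct sum fiberwise). For $\rho_0$ this is part of the hypothesis; for nearby $\rho$ one argues that each of the $n(n-1)$ line sub-bundles $V_i^*\otimes V_j$ of the $\mathrm{End}$--bundle persists individually (each is characterized by its own pair of cone fields / its own contraction exponent), and then recovers the $V_i$ from the $V_i^*\otimes V_j$ by a fiberwise linear-algebra argument, using continuity to guarantee the direct-sum condition survives the perturbation. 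Once the section is produced, Theorem~\ref{AnosovSection} gives its uniqueness, and the verification of the quantitative estimates in (\ref{AnosovProperty}) for the perturbed metric is the routine part. I would also remark that alternative, essentially equivalent routes are available — for instance invoking the characterization of Anosov representations via proximal/dominated behavior of the associated linear cocycle (as in the work of Guichard--Wienhard cited in the paper), for which openness is a formal consequence of upper semicontinuity of such dominated-splitting conditions — but the cone-field argument above is the most self-contained given what has been set up in this section.
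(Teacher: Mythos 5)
The paper does not actually prove this statement: Theorem~\ref{thm:Labourie} is imported verbatim from Labourie \cite{La1} (see also \cite{GuiW1,GuiW2}), and no argument for it appears anywhere in the text. So there is no internal proof to compare yours against; what can be said is that your sketch reconstructs the standard structural-stability argument that the cited references use, and in outline it is correct. Recasting the Anosov condition as a dominated splitting of the flat linear flow over the compact base $T^1S$, and then invoking persistence of dominated splittings (cone fields / graph transform) under $C^0$-small perturbation, is exactly the right mechanism; your observation that the full refinement into $n$ lines persists because each of the $n-1$ coarser dominated splittings (or each $V_i^*\otimes V_j$) persists individually, and that flow-invariance inside the flat bundle is precisely flatness of the section, is also correct.

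Two points deserve more care than your sketch gives them. First, the step ``a path $\rho_s\to\rho_0$ gives a $C^0$-converging family of flows on a fixed bundle'' is the crux, not a formality: for different $\rho$ the total spaces $T^1S\times_\rho\bar{\mathbb{R}}^n$ are different quotients of the noncompact space $T^1\widetilde S\times\bar{\mathbb{R}}^n$, so to speak of closeness of the flows you must first produce a bundle isomorphism $T^1S\times_{\rho}\bar{\mathbb{R}}^n\cong T^1S\times_{\rho_0}\bar{\mathbb{R}}^n$ depending continuously on $\rho$ and close to the identity (e.g.\ via a finite trivializing cover of the compact base $T^1S$ with transition data varying continuously in $\rho$, or equivalently via a $\rho$-equivariant, almost-equivariant-for-$\rho_0$ trivialization). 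This is standard but is exactly where compactness of $S$ enters, and omitting it leaves the persistence theorem with nothing to apply to. Second, the statement concerns the character variety $\Rep_{\PSL_n(\mathbb{R})}(S)=\Hom(\pi_1(S),\PSL_n(\mathbb{R}))\db\PSL_n(\mathbb{R})$, which carries the quotient topology; your argument produces an open set in $\Hom(\pi_1(S),\PSL_n(\mathbb{R}))$, and you should note that the Anosov condition is conjugation-invariant, so this open set is saturated and its image in the quotient is open. With these two points filled in, the argument is complete and agrees with the proof in the literature.
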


For a general treatment of Anosov representations from a surface group to a semisimple Lie group, see \cite{GuiW1,GuiW2}.

\subsection{The flag curve of an Anosov representation}
\label{FlagDescription} Recall that a (complete) \emph{flag} $F$ of $\mathbb{R}^n$ consists of a nested sequence of vector subspaces
$$
F=F^{(1)}\subset F^{(2)} \cdots \subset F^{(n-1)}
$$
where each $F^{(i)}$ is a subspace of $\mathbb{R}^n$ of dimension $i$. We will denote by $\mathrm{Flag}(\mathbb{R}^n)$ the flag variety of $\mathbb{R}^n$. A fundamental property of Anosov representations is the existence of an associated equivariant flag curve.

\begin{thm}[Labourie \cite{La1}]
Let $\rho \colon \pi_1(S)\to \PSL_n(\mathbb{R})$ be an Anosov representation. There exists a unique, continuous, $\rho$--equivariant flag curve $\mathcal{F}_\rho:\Sinf\to \mathrm{Flag}(\mathbb{R}^n)$ that satisfies the following properties: 
\begin{enumerate}
\item $\mathcal{F}_\rho \colon \Sinf\to \mathrm{Flag}(\mathbb{R}^n)$ is H\"older continuous; 
\item $\mathcal{F}_\rho$ is $2$--hyperconvex, namely, for every $x\ne y \in \Sinf$,
$$
\mathcal{F}_\rho^{(i)}(x)\bigoplus\mathcal{F}_\rho^{(n-i)}(y)=\mathbb{R}^n.
$$
\end{enumerate}
\end{thm}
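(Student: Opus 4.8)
The plan is to construct $\mathcal F_\rho$ directly from the Anosov section, verify its properties, and then prove uniqueness separately using density of fixed points. First I would lift the Anosov section to $\widetilde\sigma_\rho=(\widetilde V_1,\dots,\widetilde V_n)$ on $T^1\St$. By flatness, each $\widetilde V_i(\ut)$, viewed as a line of $\mathbb R^n$, is constant along the geodesic orbit of $\ut$, hence depends only on the ordered pair $(y,x)\in\Sinf^{(2)}:=\{(y,x)\in\Sinf\times\Sinf:\ y\ne x\}$ of endpoints of that geodesic; write $\widetilde V_i(y,x)$ for it. Obtained from the continuous, $\rho$--equivariant $\sigma_\rho$ by factoring through the quotient $T^1\St/(\text{geodesic flow})\cong\Sinf^{(2)}$, the maps $(y,x)\mapsto\widetilde V_i(y,x)$ are continuous and satisfy $\widetilde V_i(\gamma y,\gamma x)=\rho(\gamma)\widetilde V_i(y,x)$. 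Set $E^{(i)}(y,x):=\widetilde V_1(y,x)\oplus\cdots\oplus\widetilde V_i(y,x)$. The core of the argument is the claim, proved below, that $E^{(i)}(y,x)$ does not depend on $y$; granting it, define $\mathcal F_\rho^{(i)}(x):=E^{(i)}(y,x)$ for any $y\ne x$. Then the nesting $\mathcal F_\rho^{(1)}(x)\subset\cdots\subset\mathcal F_\rho^{(n-1)}(x)$ and $\rho$--equivariance are immediate, and $2$--hyperconvexity is a one-line computation: for $x\ne y$, applying Lemma~\ref{lem:LineReversing} in the form $\widetilde V_k(x,y)=\widetilde V_{n-k+1}(y,x)$ to the oriented geodesic from $y$ to $x$ gives $\mathcal F_\rho^{(i)}(x)\oplus\mathcal F_\rho^{(n-i)}(y)=\bigl(\bigoplus_{k\le i}\widetilde V_k(y,x)\bigr)\oplus\bigl(\bigoplus_{k>i}\widetilde V_k(y,x)\bigr)=\mathbb R^n$.

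For the independence of the backward endpoint, fix $x$. Since $\{y\in\Sinf:\ y\ne x\}$ is connected, it suffices to prove $y\mapsto E^{(i)}(y,x)$ is locally constant. Take geodesics $\ell,\ell'$ of $\St$ with common forward endpoint $x$ and nearby backward endpoints, so that $E^{(i)}(\ell')=\mathrm{graph}(\phi)$ for a unique small $\phi\in\Hom\bigl(E^{(i)}(\ell),\bigoplus_{j>i}\widetilde V_j(\ell)\bigr)\subset\bigoplus_{k\le i<j}\widetilde V_k(\ell)^{*}\otimes\widetilde V_j(\ell)$. As $\ell,\ell'$ are forward asymptotic, their lifted unit tangent vectors converge in $T^1\St$ under the forward geodesic flow; translating back into a compact fundamental domain by elements $\gamma_t\in\pi_1(S)$ of displacement $\asymp t$ and using the continuity (in fact uniform continuity on compacta) of $\widetilde\sigma_\rho$, the two translated line decompositions stay close, so the image of $\phi$ under the induced $\rho(\gamma_t)$--action on the $\Hom$--bundle remains bounded. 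But the index range $\{k\le i<j\}$ lies in $\{k<j\}$, for which the Anosov inequality~(\ref{AnosovProperty}) gives forward \emph{expansion} at rate $\gtrsim A^{-1}e^{at}$ (transported through the metric $\leftn\ \rightn$ and the equivariance to the comparison above); a quantity that expands exponentially yet stays bounded vanishes, so $\phi=0$ and $E^{(i)}(\ell')=E^{(i)}(\ell)$. The dual fact, that $\widetilde V_{n-i+1}(y,x)\oplus\cdots\oplus\widetilde V_n(y,x)$ depends only on $y$, then follows from Lemma~\ref{lem:LineReversing}, or by running the same argument backward.

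Making this estimate quantitative handles Hölder regularity: if $d(x,x')=\delta$ then geodesics with common forward endpoints $x$ and $x'$ stay $\epsilon$--close for a flow time $\asymp-\log\delta$, so the bound $\|\phi\|\lesssim e^{-at}$ becomes $d\bigl(\mathcal F_\rho^{(i)}(x),\mathcal F_\rho^{(i)}(x')\bigr)\lesssim\delta^{a/c}$ for a constant $c$ comparing the hyperbolic and combinatorial metrics on $\Sinf$; combining over $i=1,\dots,n-1$ yields Hölder continuity of $\mathcal F_\rho$ (alternatively, one invokes the transversal Hölder regularity of $\sigma_\rho$ from Theorem~\ref{AnosovSection}). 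For uniqueness, suppose $\mathcal F_\rho,\mathcal F_\rho'$ both satisfy (1)--(2). For $\gamma\ne1$, equivariance forces $\mathcal F_\rho(\gamma^{\pm})$ and $\mathcal F_\rho'(\gamma^{\pm})$ to be $\rho(\gamma)$--invariant flags, hence sums of the $n$ distinct eigenlines of $\rho(\gamma)$; writing $\gamma^{+}=\lim_m\gamma^{m}z$ and using the North--South dynamics of $\rho(\gamma)$ on $\mathrm{Flag}(\mathbb R^n)$ together with $2$--hyperconvexity at $z\ne\gamma^{-}$ (which keeps $\mathcal F_\rho^{(i)}(z)$ transverse to the repelling flag of $\rho(\gamma)$), one concludes the only possible value is the attracting eigenflag, whose $i$--th term is the span of the eigenlines of the $i$ largest eigenvalues, namely $\bigoplus_{k\le i}\widetilde V_k(\gamma^{-},\gamma^{+})$. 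Since the attracting fixed points $\gamma^{+}$ are dense in $\Sinf$ and both maps are continuous, $\mathcal F_\rho=\mathcal F_\rho'$; in particular it equals the curve constructed above.

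I expect the decisive difficulty to be the middle step: converting the abstract Anosov bound~(\ref{AnosovProperty}) into a genuine exponential expansion of the relevant $\Hom$--sub-bundle, keeping careful track of how the translating elements $\gamma_t$ interact with the $\mathrm{End}(\mathbb R^n)$--bundle dynamics and its metric. The Hölder bookkeeping and the point in the uniqueness argument that the invariant flag obtained at $\gamma^{+}$ is the \emph{attracting} one (rather than merely invariant) are comparatively minor, but the latter still genuinely uses the $2$--hyperconvexity hypothesis.
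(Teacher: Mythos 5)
This theorem is not proved in the paper: it is stated as a citation to Labourie \cite{La1}, and the only internal content the paper adds is the dictionary (\ref{FlagVersusSection}) between the flag curve and the Anosov section. So there is no in-paper argument to compare yours against; what I can say is that your reconstruction follows the standard (essentially Labourie's) route, and that it is consistent with (\ref{FlagVersusSection}): you define $\mathcal F_\rho^{(i)}(x)=\widetilde V_1(\ut)\oplus\cdots\oplus\widetilde V_i(\ut)$ and prove independence of the backward endpoint by writing $E^{(i)}(\ell')$ as a graph over $E^{(i)}(\ell)$ and playing the forward expansion of the blocks $\widetilde V_k^*\otimes\widetilde V_j$, $k\le i<j$, against the boundedness coming from uniform continuity of $\sigma_\rho$ on the compact quotient. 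That dichotomy is the right mechanism, and your hyperconvexity computation via Lemma~\ref{lem:LineReversing} is correct.

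Two points deserve tightening. First, in the uniqueness argument you say that $2$--hyperconvexity at $z\ne\gamma^-$ keeps $\mathcal F_\rho^{(i)}(z)$ transverse to the \emph{repelling flag} of $\rho(\gamma)$; what hyperconvexity literally gives is transversality to $\mathcal F_\rho^{(n-i)}(\gamma^-)$, which a priori is only \emph{some} $\rho(\gamma)$--invariant flag, not yet identified with the repelling one. You must first observe that $\bigl(\mathcal F_\rho(\gamma^+),\mathcal F_\rho(\gamma^-)\bigr)$ is a transverse pair of invariant flags, hence indexed by a permutation of the eigenlines, and then eliminate nontrivial permutations (by running the dynamics for both $\gamma$ and $\gamma^{-1}$, or by a continuity argument at a third point); this is precisely the permutation-elimination step the paper itself performs at the end of the proof of Theorem~\ref{CataFlag}, so it is a real, if routine, missing ingredient. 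Second, the H\"older step is only a sketch; the clean way out within this paper's logic is the one you mention in passing, namely to import the transversal H\"older regularity of $\sigma_\rho$ from Theorem~\ref{AnosovSection} together with the quantitative comparison between distance on $\Sinf$ and transverse distance in $T^1S$. With those two points filled in, the proposal is sound.
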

By \emph{$\rho$--equivariant}, we mean that, for every $\gamma\in \pi_1(S)$, for every $x\in \Sinf$,
$\mathcal{F}_\rho(\gamma x)=\rho(\gamma)\mathcal{F}_\rho(x)$.

%

The existence of the flag curve $\mathcal{F}_\rho\colon \Sinf\to \mathrm{Flag}(\mathbb{R}^n)$ comes again as a consequence of the Anosov dynamics. The flag curve $\mathcal{F}_\rho$ derives from the Anosov section $\sigma_{\rho} \colon T^1S\to T^{1}S\times_{\rho} M$, and both objects are related as follows. Let $\widetilde{\sigma}_\rho= (\widetilde{V}_1, \ldots ,\widetilde{V}_n )$ that lifts $\sigma_\rho= (V_1, \ldots ,V_n )$. For every $\widetilde{u}\in T^1\widetilde{S}$, let $g\subset \widetilde{S}$ be the oriented geodesic directed by $\widetilde{u}$, and let $x_g^+$ and $x_g^-\in \Sinf$ be its positive and negative endpoints, respectively. For every $i=1$, $\ldots$ , $n$, 
\begin{eqnarray}
\label{FlagVersusSection}
\widetilde{V}_{i}(\ut)=\mathcal{F}_\rho^{(i)}(x_g^+)\cap \mathcal{F}_\rho^{(n-i+1)}(x_g^-)\subset \mathbb{R}^n
\end{eqnarray}
with the consequence that
\begin{align*}
&\mathcal{F}_\rho^{(i)}(x_g^+)=\widetilde{V}_1(\ut)\subset\widetilde{V}_1(\ut)\oplus\widetilde{V}_2(\ut)\subset \cdots \subset \widetilde{V}_1(\ut)\oplus \cdots \oplus \widetilde{V}_i(\ut)  \\
&\mathcal{F}_\rho^{(i)}(x_g^-)=\widetilde{V}_n(\ut)\subset\widetilde{V}_{n-1}(\ut)\oplus\widetilde{V}_n(\ut)\subset \cdots \subset \widetilde{V}_{n-i+1}(\ut)\oplus \cdots \oplus \widetilde{V}_n(\ut).
\end{align*}

Note that, by the relation (\ref{FlagVersusSection}), one easily recover the Anosov section $\sigma_\rho$ starting from the flag curve $\mathcal{F}_\rho$. Note also that the $2$--hyperconvexity of $\mathcal{F}_\rho$ guarantees that  $\mathcal{F}_\rho^{(i)}(x_g^+)\cap \mathcal{F}_\rho^{(n-i+1)}(x_g^-)\ne \varnothing$.


Throughout, we will indifferently alternate between the point of view of the Anosov section $\sigma_{\rho} \colon T^1S\to T^{1}S\times_{\rho} M$, and the one of the flag curve $\mathcal{F}_\rho\colon\Sinf\to \mathrm{Flag}(\mathbb{R}^n)$, to our liking. The reader should simply keep in mind that manipulating one of the two objects is equivalent to manipulating the other.

We conclude this short review with one last comment about the flag curve $\mathcal{F}_\rho$.

\begin{thm}[Labourie \cite{La1}]
Let $\mathcal{F}_\rho\colon \Sinf\to \mathrm{Flag}(\mathbb{R}^n)$ be the flag curve of an Anosov representation $\rho$. The image $\mathcal{F}_\rho(\Sinf)$ is the limit set for the action of the subgroup $\rho(\pi_1(S))\subset \PSL_n(\mathbb{R})$, namely it is the intersection of all $\rho(\pi_1(S))$--invariant closed subsets in the flag variety $\mathrm{Flag}(\mathbb{R}^{n})$.
\end{thm}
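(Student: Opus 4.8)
The plan is to establish the two inclusions $\Lambda\subseteq\mathcal{F}_\rho(\Sinf)$ and $\mathcal{F}_\rho(\Sinf)\subseteq\Lambda$ separately, writing $\Lambda$ for the intersection of all nonempty closed $\rho(\pi_1(S))$--invariant subsets of $\mathrm{Flag}(\mathbb{R}^n)$; the first inclusion is soft, and the second rests on the proximal dynamics of the elements $\rho(\gamma)$ on the flag variety. For the first inclusion, observe that $\mathcal{F}_\rho(\Sinf)$ is compact, being the continuous image of $\Sinf\cong S^1$, hence closed in $\mathrm{Flag}(\mathbb{R}^n)$; and it is $\rho(\pi_1(S))$--invariant, since $\rho(\gamma)\mathcal{F}_\rho(x)=\mathcal{F}_\rho(\gamma x)$ by $\rho$--equivariance and $\Sinf$ is $\pi_1(S)$--invariant. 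Thus $\mathcal{F}_\rho(\Sinf)$ is itself one of the (nonempty) sets in the intersection defining $\Lambda$, giving $\Lambda\subseteq\mathcal{F}_\rho(\Sinf)$. It then suffices to prove that $\mathcal{F}_\rho(\Sinf)\subseteq C$ for \emph{every} nonempty closed $\rho(\pi_1(S))$--invariant subset $C\subseteq\mathrm{Flag}(\mathbb{R}^n)$: this yields $\mathcal{F}_\rho(\Sinf)\subseteq\Lambda$, hence equality, and incidentally that $\Lambda$ is nonempty.

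Next I would assemble the dynamical facts. Fix a nontrivial $\gamma\in\pi_1(S)$, with attracting and repelling fixed points $\gamma^{\pm}\in\Sinf$ for its action on $\Sinf$. By Theorem~\ref{thm:Labourie3}, $\rho(\gamma)$ is diagonalizable with real eigenvalues of pairwise distinct absolute values $|\lambda^{\rho}_1(\gamma)|>\cdots>|\lambda^{\rho}_n(\gamma)|$, the corresponding eigenlines being $\widetilde{V}_1(g_\gamma),\dots,\widetilde{V}_n(g_\gamma)$; and by the relation $(\ref{FlagVersusSection})$ and the identities displayed after it, $\mathcal{F}^{(i)}_\rho(\gamma^+)=\widetilde{V}_1(g_\gamma)\oplus\cdots\oplus\widetilde{V}_i(g_\gamma)$ while $\mathcal{F}^{(i)}_\rho(\gamma^-)=\widetilde{V}_n(g_\gamma)\oplus\cdots\oplus\widetilde{V}_{n-i+1}(g_\gamma)$. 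It follows that $\rho(\gamma)$ acts proximally on $\mathrm{Flag}(\mathbb{R}^n)$: its unique attracting fixed flag $\mathcal{F}_\rho(\gamma^+)$ has basin of attraction the open dense set of flags $F$ transverse to $\mathcal{F}_\rho(\gamma^-)$, that is, with $F^{(i)}\cap\mathcal{F}^{(n-i)}_\rho(\gamma^-)=0$ for all $i$; for such $F$ one has $\rho(\gamma)^kF\to\mathcal{F}_\rho(\gamma^+)$ as $k\to+\infty$, and symmetrically for $\gamma^{-1}$. Two elementary facts complete the list: passing to the Fuchsian uniformization afforded by the fixed hyperbolic metric $m_0$, $\pi_1(S)$ is a cocompact Fuchsian group, so the attracting fixed points $\{\gamma^+:\gamma\in\pi_1(S),\ \gamma\ne 1\}$ form a dense subset of $\Sinf$; and, by the $2$--hyperconvexity of $\mathcal{F}_\rho$, the flags $\mathcal{F}_\rho(x)$ and $\mathcal{F}_\rho(y)$ are transverse whenever $x\ne y$.

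The step I expect to be the main obstacle is a general--position property of the flag curve: \emph{for every flag $G\in\mathrm{Flag}(\mathbb{R}^n)$ there exists $x\in\Sinf$ with $G$ transverse to $\mathcal{F}_\rho(x)$}. I would prove this by contradiction: if no such $x$ existed, then $\Sinf=\bigcup_{i=1}^{n-1}B_i$ where $B_i=\{x\in\Sinf:\mathcal{F}^{(i)}_\rho(x)\cap G^{(n-i)}\ne 0\}$ is closed; by the Baire category theorem some $B_i$ contains a nontrivial arc $I$, so the rank--$i$ curve $x\mapsto\mathcal{F}^{(i)}_\rho(x)$ would map $I$ into the proper Schubert subvariety of $i$--planes meeting $G^{(n-i)}$, contradicting the non--degeneracy of the flag curve. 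For an $n$--Fuchsian $\rho$ this non--degeneracy is the classical statement that the osculating $i$--planes of the rational normal curve do not stay in such a Schubert variety along an arc (a Wronskian computation); in general it should be deduced from the irreducibility of $\rho$ -- no proper nonzero subspace of $\mathbb{R}^n$ is $\rho$--invariant -- together with the hyperconvexity of $\mathcal{F}_\rho$, and in any case these non--degeneracy properties of the flag curve are part of the material of \cite{La1}. This is the only place where input beyond the formal manipulations of the previous paragraph is required.

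Granting the general--position property, the proof closes as follows. Let $C\subseteq\mathrm{Flag}(\mathbb{R}^n)$ be nonempty, closed and $\rho(\pi_1(S))$--invariant, and fix a nontrivial $\gamma\in\pi_1(S)$. Pick $G\in C$. Since $\{x\in\Sinf:G\text{ transverse to }\mathcal{F}_\rho(x)\}$ is a nonempty open subset of $\Sinf$ -- nonempty by the general--position property, open since transversality of flags is an open condition and $\mathcal{F}_\rho$ is continuous -- and the repelling fixed points are dense, there is a hyperbolic $\delta\in\pi_1(S)$ with $G$ transverse to $\mathcal{F}_\rho(\delta^-)$ and with $\delta^+\ne\gamma^-$. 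Then $\rho(\delta)^kG\in C$ for all $k$ by invariance, and $\rho(\delta)^kG\to\mathcal{F}_\rho(\delta^+)$ by the proximality above, so $\mathcal{F}_\rho(\delta^+)\in C$ because $C$ is closed. As $\delta^+\ne\gamma^-$, the $2$--hyperconvexity of $\mathcal{F}_\rho$ makes $\mathcal{F}_\rho(\delta^+)$ transverse to $\mathcal{F}_\rho(\gamma^-)$, whence $\rho(\gamma)^k\mathcal{F}_\rho(\delta^+)\to\mathcal{F}_\rho(\gamma^+)$ and therefore $\mathcal{F}_\rho(\gamma^+)\in C$. Since $\gamma\ne 1$ was arbitrary, $C$ contains $\{\mathcal{F}_\rho(\gamma^+):\gamma\ne 1\}$, a set which is dense in $\mathcal{F}_\rho(\Sinf)$ by density of $\{\gamma^+\}$ in $\Sinf$ and continuity of $\mathcal{F}_\rho$; as $C$ is closed, $\mathcal{F}_\rho(\Sinf)\subseteq C$. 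Combined with the first paragraph, $\mathcal{F}_\rho(\Sinf)=\Lambda$, as claimed.
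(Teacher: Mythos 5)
The paper offers no proof of this theorem: it is quoted from Labourie \cite{La1} as background, so there is nothing internal to compare your argument against. Judged on its own terms, your architecture is the standard one for such limit-set statements (the soft inclusion via compactness and equivariance; proximality of $\rho(\gamma)$ on $\mathrm{Flag}(\mathbb{R}^n)$ with attracting and repelling flags $\mathcal{F}_\rho(\gamma^{\pm})$; density of fixed points; identification of the image of the flag curve with a minimal set), and every step except the one you flag yourself is correct.

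That flagged step --- for every flag $G$ there is some $x\in\Sinf$ with $G$ transverse to $\mathcal{F}_\rho(x)$ --- is a genuine gap, and not a merely technical one. With only the hypotheses the paper records ($2$--hyperconvexity, no irreducibility), the property is false: for a reducible Anosov representation of Barbot type, $\rho=r\oplus 1\colon\pi_1(S)\to\SL_3(\mathbb{R})$ acting on $\mathbb{R}^2\oplus\mathbb{R}e_3$ with $r$ Fuchsian, the flag curve is $x\mapsto\bigl(\xi(x),\,\xi(x)\oplus\mathbb{R}e_3\bigr)$ with $\xi$ the Fuchsian boundary map; this is $2$--hyperconvex, yet any flag $G$ with $G^{(1)}=\mathbb{R}e_3$ satisfies $G^{(1)}\subset\mathcal{F}^{(2)}_\rho(x)$ for every $x$, so it is transverse to no $\mathcal{F}_\rho(x)$. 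Worse, $\{F\colon F^{(1)}=\mathbb{R}e_3\}$ is then a nonempty closed invariant subset disjoint from $\mathcal{F}_\rho(\Sinf)$, so the theorem as literally stated fails for such a representation: the statement implicitly requires irreducibility (or Zariski density), which holds in Labourie's actual setting of Hitchin representations, where the Frenet property of the flag curve delivers your general-position lemma outright. Finally, even granting irreducibility, your Baire-category reduction leaves unproved the key claim that $x\mapsto\mathcal{F}^{(i)}_\rho(x)$ cannot remain in the Schubert hypersurface $\{W\colon W\cap G^{(n-i)}\ne 0\}$ along an arc; irreducibility only gives that the subspaces $\mathcal{F}^{(i)}_\rho(x)$ together span $\mathbb{R}^n$, which does not preclude their all meeting a fixed $(n-i)$--plane (all lines through a point of $\mathbb{RP}^2$ meet a fixed line while spanning $\mathbb{R}^3$). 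So the proof is incomplete at exactly the point where the real content lies.
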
 

\section{Preliminaries}

\subsection{A bunch of estimates}
\label{sect:Inequalities}

We prove several estimates of which we will make great use throughout.

A \emph{geodesic lamination} $\lambda$ in $S$ is a closed subset of $S$ that is a union of disjoint, complete, simple geodesics \cite{PeH,Bon4}. $\lambda$ is said to be \emph{maximal} in $S$ if every component of the complement $S-\lambda$ is isometric to an ideal triangle; see Figure~\ref{Fig1maxgeodlam}.


Fix a maximal geodesic lamination $\lambda\subset S$. Let $k$ be a simple arc transverse to $\lambda$ that does not backtrack, so that $k$ intersects each leave of $\lambda$ at most once. Given a component $d_{0}$ of $k-\lambda$ that does not contain any endpoint of $k$, let us denote by $g_{d_{0}}^{-}$ and $g_{d_{0}}^{+}\subset \lambda$ the two asymptotic geodesic leaves passing by the endpoints of $d_0$. Consider all components $d \subset k-\lambda$ that are bounded by $g_{d_{0}}^{-}$ and $g_{d_{0}}^{+}$, namely every component $d$ such that $g_{d_{0}}^{-}$ and  $g_{d_{0}}^{+}$ are both passing by the endpoints of $d$. As shown on Figure~\ref{Fig2}, such a subarc $d$ lies in one of two regions delimited by the subarc $d_{0}$ and the two leaves $g_{d_{0}}^{-}$ and $g_{d_{0}}^{+}$. Besides, the metric $m_0$ on $S$ being negatively curved, the two asymptotic geodesics $g_{d_{0}}^{-}$ and $g_{d_{0}}^{+}$ spread out in the opposite direction. As a result, one of two regions contains finitely many subarcs $d \subset k-\lambda$. We define the \emph{divergence radius} $r(d_{0})\in \mathbb{N}$ as the smallest number of subarcs contained in one of these two regions.


\begin{figure}[htbp]
\SetLabels
( .62*.24 ) $d_0$\\
( .43*.32 ) $d$\\
( .82*.86 ) $g^{+}_{d_0}$\\
( .82*.10 ) $g^{-}_{d_0}$\\
\endSetLabels
\centerline{\AffixLabels{\includegraphics[width=10cm, height=3cm]{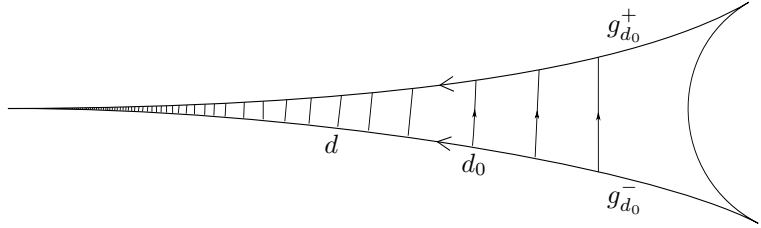}}}
\caption{A component of $S-\lambda$ that intersects a transverse, simple, nonbacktracking, oriented arc $k$.}
\vskip 10pt
\label{Fig2}
\end{figure}

For every integer $r\geq 0$, let $D_{r}$ be the set of components $d \subset k-\lambda$ such that $r(d)=r$.

\begin{lem}
\label{lem01}
For every integer $r\geq 0$,
$$
\mathrm{Card}(D_{r} ) \leq 4g-4
$$
where $g$ is the genus of the surface $S$. 
\end{lem}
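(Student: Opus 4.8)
The statement is a purely combinatorial/topological fact about how components of $k - \lambda$ with a fixed divergence radius $r$ are distributed, and the bound $4g-4$ is exactly (twice) the number of ideal triangles in $S-\lambda$, which is the Euler characteristic count $|\chi(S)| \cdot 2 = (2g-2)\cdot 2 = 4g-4$. So the first thing I would do is make explicit the correspondence between a component $d \subset k-\lambda$ and the spike (ideal vertex) of the complementary ideal triangle that $d$ ``cuts off.'' Concretely, for a component $d$ with $r(d) = r$, the finite side of the two regions bounded by $d$, $g_d^-$, $g_d^+$ contains $r$ subarcs; following the two asymptotic leaves $g_d^-$ and $g_d^+$ into this finite region, they must eventually enter a common ideal triangle $T_d$ of $S-\lambda$ and converge to the same ideal vertex $v_d$ of $T_d$ — this is where the negative curvature / asymptoticity is used, together with the fact that only finitely many subarcs lie on that side. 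The plan is to show the assignment $d \mapsto (T_d, v_d)$ is \emph{injective} on $D_r$, which immediately gives $\mathrm{Card}(D_r) \le \#\{\text{(ideal triangle, vertex) pairs}\} = 3 \cdot (2g-2)$; with a slightly sharper bookkeeping (each of the two regions on either side of $d$ determines a spike, and a fixed radius pins down which spike) I would aim instead for the cleaner bound $4g-4$ stated.

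The heart of the argument is the injectivity. Suppose $d, d' \in D_r$ are distinct with $(T_d, v_d) = (T_{d'}, v_{d'}) = (T, v)$. Then both $d$ and $d'$ are subarcs of $k$ that separate the spike at $v$ from the rest, so they are ``parallel'' across the spike: the region between $d$ and $d'$ (say $d'$ is closer to the vertex $v$) is a rectangle foliated by subarcs of leaves of $\lambda$ running from $d$ to $d'$, containing $d'$ in its interior boundary. But then the finite-side count for $d$ is strictly larger than that for $d'$ — every subarc counted on the finite side of $d'$ is also counted on the finite side of $d$, and $d'$ itself is an extra one — so $r(d) > r(d')$, contradicting $r(d) = r(d') = r$. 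Thus no two arcs in $D_r$ share a spike, and the map $d \mapsto v_d$ into the set of ideal vertices of $S - \lambda$ is injective. I would then count: $S - \lambda$ has $2(2g-2) = 4g-4$ ideal triangles when $\lambda$ is maximal (each triangle has Euler characteristic contribution consistent with $\chi(S) = 2-2g$), hence at most... actually one must be careful because distinct spikes of distinct triangles can be identified under the non-embedding of the triangle's closure, but the standard count of spikes (= ends of $S-\lambda$) of a maximal lamination is $6g-6$; the sharper statement uses that $k$ is a single simple non-backtracking arc, so it meets each ``side'' of a spike at most in a controlled way, cutting the count down to $4g-4$.

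The main obstacle I anticipate is precisely this last counting step — getting the constant right rather than merely ``some constant depending only on $g$.'' The injectivity into spikes gives a bound of order $6g-6$ for free; squeezing it to $4g-4$ requires using that a single embedded non-backtracking arc $k$ cannot realize a subarc in $D_r$ at \emph{both} spikes reachable through a given triangle simultaneously for the same $r$, or equivalently refining $d \mapsto v_d$ to land in a set of size $4g-4$ (e.g. the set of triangles, using that each triangle contributes subarcs to $D_r$ through at most its three spikes but an embedded arc and a fixed radius eliminate enough of these). I would handle this by a direct local analysis of how $k$ enters a single ideal triangle $T$: $k \cap T$ is a disjoint union of embedded arcs, each either crossing $T$ (entering one side, exiting another) or cutting off one of the three spikes, and for each spike the ``cutting-off'' subarcs are linearly ordered by nestedness with strictly increasing divergence radius, so \emph{at most one} subarc of $k$ per spike of $T$ lies in $D_r$ — and then a global Euler-characteristic / incidence count of (triangle, spike) incidences realized by $k$ yields $4g-4$. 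If this refinement proves delicate I would fall back on the weaker but still sufficient injectivity-into-spikes bound and note that the precise constant is inessential for the subsequent estimates, but the clean statement is what I would try to prove first.
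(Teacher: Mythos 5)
Your overall strategy --- assign to each component $d$ the spike (ideal vertex) of the complementary triangle containing $d$ that $d$ cuts off, observe that the components of $k-\lambda$ cutting off a fixed spike form a nested family, and deduce a bound on $\mathrm{Card}(D_r)$ from the count of spikes --- is exactly the argument that the paper's one-sentence proof gestures at, and it is the right skeleton. But two points are genuine problems. First, the paper defines $r(d)$ as the \emph{minimum} of the numbers of subarcs in the two regions delimited by $d$, $g_d^-$, $g_d^+$; note also that the region guaranteed to be finite is the one where the leaves \emph{diverge} (a subarc crossing there must be long, and $k$ is compact), not the converging one as your write-up assumes. With the minimum definition your injectivity fails: if the arcs of $k$ crossing a given spike are $f_0,\dots,f_N$ in nesting order, then $r(f_i)=\min(i,N-i)$, so for $r<N/2$ both $f_r$ and $f_{N-r}$ lie in $D_r$. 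The correct output of the nesting argument is ``at most two elements of $D_r$ per spike,'' not one. Second, your arithmetic is inconsistent: a maximal lamination on a closed genus-$g$ surface has $4g-4$ complementary ideal triangles (by the area count you cite), hence $3(4g-4)=12g-12$ spikes, not $6g-6$; so your argument yields a bound of the form $2(12g-12)$, or $12g-12$ if one reads $r(d)$ as the count on a single preferred side. Your proposed refinement down to $4g-4$ --- at most one element of $D_r$ per \emph{triangle} --- cannot work: nothing prevents $k$ from entering a single triangle through all three of its spikes and producing arcs of the same divergence radius at each, since the radii at the three spikes are governed by independent return patterns of $k$.

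That said, the defect concerns only the constant. Every later use of the lemma (the convergence of the infinite products in Proposition~\ref{pro1}, the estimate $m_r=\mathrm{O}(r)$, the divergence of $r(k\cap R)$) needs only that $\mathrm{Card}(D_r)$ is bounded independently of $r$ by a constant depending on the topology of $S$, and your nesting-into-spikes argument does establish that once the two corrections above are made. You were right to flag the constant as the delicate point; I would state and prove the bound $\mathrm{Card}(D_r)\le 2(12g-12)$ (or $12g-12$, depending on the reading of the definition) rather than chase $4g-4$, which the paper's own proof does not justify beyond asserting that there are $4g-4$ triangles.
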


\begin{lem}
\label{lem02}
There exist some constant $A>0$, depending on $k$, such that, for every component $d \subset k-\lambda$,
$$
\mathrm{length}(d)=\mathrm{O}\big (e^{-Ar(d)} \big ).
$$
\end{lem}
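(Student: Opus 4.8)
The plan is to show that the length of a component $d\subset k-\lambda$ decays exponentially in its divergence radius $r(d)$, by combining the hyperbolic geometry of the ideal triangles complementary to $\lambda$ with the combinatorial bound of Lemma~\ref{lem01}. First I would fix, once and for all, a lift $\widetilde{k}\subset\widetilde{S}$ of the transverse arc $k$ and work in the universal cover $\widetilde{S}=\mathbb{H}^2$ with the lifted lamination $\widetilde{\lambda}$. The key geometric input is the standard fact that two disjoint complete geodesics in $\mathbb{H}^2$ that are asymptotic to a common ideal point, but spread apart in the other direction, diverge exponentially fast: more precisely, if $g_{d_0}^-$ and $g_{d_0}^+$ bound a component $d_0$ with the picture of Figure~\ref{Fig2}, then the transverse distance between them along $\widetilde{k}$, measured at depth $j$ (i.e. across the $j$-th complementary region between them), is $\mathrm{O}(e^{-cj})$ for a constant $c>0$ depending only on the geometry near $\widetilde{k}$ and the (bounded) angles at which $\widetilde{k}$ crosses the leaves.

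The main steps, in order, would be: \emph{(i)} reduce to the case of a single ``fan'' of components $d$ all bounded by a fixed pair $g_{d_0}^-$, $g_{d_0}^+$ as in the definition of divergence radius, since any component $d\subset k-\lambda$ sits inside such a fan and $r(d)$ counts its position in the smaller of the two sides; \emph{(ii)} bound $\mathrm{length}(d)$ above by the width of the region of $\widetilde{k}$ lying ``beyond'' $d$ on its short side, i.e. by the transverse distance between the two leaves of $\widetilde{\lambda}$ bounding $d$, using that $d$ is an embedded non-backtracking subarc of the rectifiable arc $\widetilde{k}$ crossing $\widetilde{\lambda}$ at most once; \emph{(iii)} estimate that transverse distance by the exponential-divergence fact above, with the exponent proportional to $r(d)$ — here one uses that passing from one complementary region to the next (i.e. incrementing the depth count by one) multiplies the transverse width by a definite factor $e^{-c}<1$, because each intermediate ideal triangle contributes a uniformly bounded ``hyperbolic crossing'' and the geodesic boundaries of successive triangles nest; \emph{(iv)} absorb into the constant $A$ the finitely many exceptional components (those near the endpoints of $k$, or with $r(d)=0$), and note that by Lemma~\ref{lem01} there are at most $4g-4$ components at each fixed radius, so finitely many at each level — this lets us choose $A$ uniformly and gives the $\mathrm{O}(e^{-Ar(d)})$ bound with the implied constant depending on $k$ (through the angles of intersection and the length of $k$).

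The hard part will be step \emph{(iii)}: making precise and uniform the claim that each increment of the divergence radius costs a fixed multiplicative factor in transverse width. One clean way is to observe that the two boundary leaves of a component $d$ at radius $r$ both separate $d$ from $g_{d_0}^-$ (say), so they are ``deeper'' into the fan; since all these leaves are pairwise disjoint geodesics sharing the ideal endpoint of $g_{d_0}^-$, the transverse distances along $\widetilde{k}$ form a monotone sequence, and one needs a lower bound on the \emph{ratio} of consecutive terms. This can be extracted either from an explicit horocyclic coordinate computation (place the common ideal point at $\infty$ in the upper half-plane, so the leaves become vertical lines and $\widetilde{k}$ a compact arc; the $x$-coordinates of the leaves then form a sequence whose gaps one controls using that the complementary ideal triangles have a definite size because their third vertices stay a bounded hyperbolic distance from $\widetilde{k}$, which in turn follows from $k$ being a fixed transverse arc), or by invoking a uniform collar/Margulis-type estimate for the ideal triangles of $S-\lambda$. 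I expect the bookkeeping to match the combinatorial depth $r(d)$ of the definition to the analytic depth (number of nested triangles crossed) to require a little care, but it is essentially forced by the definitions once the exponential-divergence estimate is in place.
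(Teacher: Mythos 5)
The paper's own ``proof'' of this lemma is a single sentence deferring to classical hyperbolic geometry estimates in \cite{FLP}, so there is no detailed argument to compare against; your outline supplies essentially the standard argument behind that citation (exponential convergence of asymptotic leaves toward their common ideal point, crossing angles of the compact arc $k$ bounded away from zero, and the identification of the combinatorial depth $r(d)$ with the length over which $g_d^-$ and $g_d^+$ fellow-travel), and the strategy is sound. The one phrase I would tighten is the claim that the third vertices of the intermediate ideal triangles stay ``a bounded hyperbolic distance from $\widetilde{k}$'' (those vertices are ideal points); the clean way to make step \emph{(iii)} uniform is the device the paper itself uses later in Lemma~\ref{edgepath}, namely that $g_d^-$ and $g_d^+$ traverse a common edge-path of combinatorial length comparable to $r(d)$ in a train-track neighborhood of $\lambda$, each edge having hyperbolic length bounded below, after which the exponential closeness of two geodesics that fellow-travel for a long time gives the stated bound.
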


\begin{proof}
Since $\lambda$ is maximal, and $k$ is simple and does not backtrack, Lemma~\ref{lem01} comes as a consequence of the fact that the complement $S-\lambda$ is made of $4g-4$ ideal triangles. Lemma~\ref{lem02} follows from classical hyperbolic geometry estimates; see \cite{FLP} for instance.
\end{proof}


Let $\sigma_\rho=(V_1, \ldots, V_n)$ be the Anosov section of some Anosov representation $\rho$, that lifts to $\widetilde{\sigma}_\rho= (\widetilde{V}_1, \dots ,\widetilde{V}_n )$; see \S\ref{bundledescription}. Since $\sigma_\rho$ is flat, the lift $\widetilde{\sigma}_\rho=(\widetilde{V}_1, \ldots, \widetilde{V}_n)$ associates to every oriented geodesic $g\subset \St$ a line decomposition $\widetilde{V}_{1}(g) \oplus \cdots \oplus \widetilde{V}_{n}(g)$ of $\mathbb{R}^n$. 

Let $\widetilde{\lambda}\subset \widetilde{S}$ that lifts the maximal geodesic lamination $\lambda\subset S$; see Figure~\ref{Fig1maxgeodlam}. Consider a transverse, simple, nonbacktracking, oriented arc $k$ to $\widetilde{\lambda}$. Orient positively the leaves of $\widetilde{\lambda}$ intersecting $k$ for the transverse orientation determined by the oriented arc $k$, namely so that the angle between $k$ and every leaf of $\widetilde{\lambda}$ is positively oriented. For every component $d \subset k-\widetilde{\lambda}$, $g_{d}^{+}$ and $g_{d}^{-}\subset \widetilde{\lambda}$ denote respectively the oriented geodesics passing by the positive and the negative endpoints of the oriented subarc $d$. Finally, let $\mathrm{dist}_{T^1S}$ be the distance on the unit tangent bundle $T^1S$; and let $\mathrm{dist}_{\mathbb{RP}^{n-1}}$ be a metric on $\mathbb{RP}^{n-1}$.

\begin{lem}
\label{cor03}
There exist some constant $K>0$, depending on $k$ and $\rho$, such that, for every $i=1$, $\ldots$ , $n$, for every component $d\subset k-\widetilde{\lambda}$,
$$
\mathrm{dist}_{\mathbb{RP}^{n-1}} \big (\widetilde{V}_i(g_{d}^{+}), \widetilde{V}_i(g_{d}^{-}) \big )=\mathrm{O}\big (e^{-Kr(d)}\big ).
$$
\end{lem}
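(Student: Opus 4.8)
The plan is to reduce the projective-distance estimate to two inputs: first, that the Anosov section $\sigma_\rho$ is transversally Hölder continuous (Theorem~\ref{AnosovSection}), so that $\widetilde{V}_i$ varies Hölder-continuously in the transverse direction; and second, the geometric decay estimate of Lemma~\ref{lem02}, which bounds the transverse separation of $g_d^+$ and $g_d^-$ exponentially in the divergence radius $r(d)$. Concretely, I would work in $T^1S$ (equivalently $T^1\St$ via the lift): choose unit tangent vectors $u_d^+$ and $u_d^-$ in $T^1\St$ directing the oriented geodesics $g_d^+$ and $g_d^-$, positioned so that they are ``close'' — e.g.\ footpoints on a common short transversal to $\widetilde\lambda$. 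Since $\sigma_\rho$ is flat, $\widetilde V_i(g_d^\pm)=\widetilde V_i(u_d^\pm)$ independently of the choice of directing vector along the leaf, so this is legitimate.

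The key step is to bound $\mathrm{dist}_{T^1S}\big(\pi(u_d^+),\pi(u_d^-)\big)$ in terms of $r(d)$. The two geodesics $g_d^+$, $g_d^-$ share the subarc $d$ of $k$ in their complement and stay a bounded hyperbolic distance apart over a long stretch (they are the boundary leaves bounding all the subarcs sandwiched between them); more precisely, I would exploit that one of the two regions cut off by $d$, $g_d^+$, $g_d^-$ contains exactly $r(d)$ subarcs of $k-\widetilde\lambda$, so by Lemma~\ref{lem02} (applied to those intervening subarcs, or directly to the geometry of the ideal-triangle complement) the two leaves run parallel, at distance $\mathrm{O}(e^{-Ar(d)})$, before diverging. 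Pushing footpoints a bounded distance along each leaf into this parallel region and comparing the resulting unit tangent vectors gives $\mathrm{dist}_{T^1S}\big(\pi(u_d^+),\pi(u_d^-)\big)=\mathrm{O}(e^{-Ar(d)})$.

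Then I apply transversal Hölder continuity of $\sigma_\rho$: there is $\alpha\in(0,1]$ and $C>0$ (depending on $\rho$, and on $k$ through the compact set swept out) with
$$
\mathrm{dist}_{\mathbb{RP}^{n-1}}\big(\widetilde V_i(u_d^+),\widetilde V_i(u_d^-)\big)\le C\,\mathrm{dist}_{T^1S}\big(\pi(u_d^+),\pi(u_d^-)\big)^{\alpha}=\mathrm{O}\big(e^{-\alpha A r(d)}\big),
$$
and setting $K=\alpha A$ finishes the proof. One subtlety to be careful about: transversal Hölder continuity is a local statement, so I should observe that all the relevant unit vectors project into a fixed compact subset of $T^1S$ (the arc $k$ being compact and $\lambda$ a fixed lamination), so a uniform Hölder constant is available; and that passing from $T^1\St$-distance to $T^1S$-distance is harmless since the comparisons are between nearby points.

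The main obstacle I anticipate is the geometric step: making precise that the two boundary leaves $g_d^\pm$ actually stay $\mathrm{O}(e^{-Ar(d)})$-close over a long enough stretch to position the footpoints, rather than merely having short gaps between consecutive subarcs. This is a standard fact about maximal laminations and ideal-triangle complements (the ``parallel-then-diverge'' picture behind Figure~\ref{Fig2}), and it is exactly the content that Lemma~\ref{lem01} and Lemma~\ref{lem02} are designed to package; so I expect the authors to simply invoke those lemmas together with a short hyperbolic-geometry comparison, which is the route I would take as well.
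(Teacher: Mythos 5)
Your proposal is correct and follows essentially the same route as the paper: take the unit vectors directing $g_d^{\pm}$ based at the two endpoints of $d$, bound their distance in $T^1S$ by a constant times $\mathrm{length}(d)$ (using that the two leaves are asymptotic and $k$ is compact), apply the transversal H\"older continuity of the Anosov section, and conclude with Lemma~\ref{lem02}, so that $K=\mu A$. The ``parallel-then-diverge'' subtlety you flag is handled in the paper simply by noting that $g_d^-$ and $g_d^+$ both converge to, or diverge from, their common ideal endpoint, which already gives $\mathrm{dist}_{T^1S}(u_d^+,u_d^-)\leq C\,\mathrm{length}(d)$ without any further footpoint repositioning.
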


\begin{proof}
Let $u_d^-$ and $u_d^+\in T^1\St$ be respectively the unit vectors based at the positive and the negative endpoints of the oriented subarc $d\subset k-\widetilde{\lambda}$ that direct the oriented geodesics $g_{d}^{-}$ and $g_{d}^{+}$. Note that $g_{d}^{-}$ and $g_{d}^{+}$ both  converge to, or diverge from their common endpoint. Hence, by compacity of $k$, for every component $d\subset k-\widetilde{\lambda}$, $\mathrm{dist}_{T^1S}(u_d^+,u_d^-) \leq C\, \text{length}(d)$ for some $C\geq0$ (depending on $k$). Since $\widetilde{\sigma}_\rho(\ut)=\big (\widetilde{V}_1(\ut), \ldots, \widetilde{V}_n(\ut) \big)$ depends locally H\"older continuously on $\ut \in T^1\St$, $\mathrm{dist}_{\mathbb{RP}^{n-1}} \big (\widetilde{V}_i(g_{d}^{+}), \widetilde{V}_i(g_{d}^{-}) \big )\leq C'\, {\text{length}(d)}^\mu$ for some $C'\geq 0$ and some $\mu \in (0,1]$ (both $C'$ and $\mu$ depending on $k$ and $\rho$). An application of Lemma~\ref{lem02} then yields the desired estimate.
\end{proof}

For every $\varepsilon=(\varepsilon_{1}, \ldots, \varepsilon_{n})\in \mathbb{R}^n$, for every oriented geodesic $g\subset \widetilde{\lambda}$, we will denote by $T^{\varepsilon}_{g}\colon \mathbb{R}^n\to\mathbb{R}^n$ the linear map that acts on each line $\widetilde{V}_i(g)\subset \mathbb{R}^n$ by multiplication by $e^{\varepsilon_i}$.

Let $k$ be a transverse, simple, nonbacktracking, oriented arc to $\widetilde{\lambda}$. Orient positively the leaves of $\widetilde{\lambda}$ intersecting $k$ for the transverse orientation determined by $k$. Pick a norm $\left \Vert \ \right \Vert_{\mathbb{R}^n}$ on $\mathbb{R}^n$; let $\leftn \ \rightn$ be the induced norm on the vector space of linear endomorphisms $\mathrm{End}(\mathbb{R}^n)$. Finally, let $\left \Vert \ \right \Vert_{\mathrm{Mat}(\mathbb{R}^n)}$ be a norm on the vector space of square matrices $\mathrm{Mat}_n(\mathbb{R})$.

\begin{lem}
\label{lem04}
For every component $d \subset k-\widetilde{\lambda}$, for every $\varepsilon\in \mathbb{R}^n$, there exists some constant $K>0$, depending on $k$ and $\rho$, such that
$$
\leftn T^{\varepsilon}_{g_d^{-}}\circ T^{-\varepsilon}_{g_d^{+}}-\mathrm{Id} \rightn=\mathrm{O}\left (e^{2 \left \Vert \varepsilon \right \Vert_{\mathbb{R}^n}-Kr(d)} \right).
$$
\end{lem}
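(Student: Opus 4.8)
The plan is to compare the two linear maps $T^{\varepsilon}_{g_d^{-}}$ and $T^{\varepsilon}_{g_d^{+}}$ by comparing the line decompositions $\widetilde V_1(g_d^{-})\oplus\cdots\oplus\widetilde V_n(g_d^{-})$ and $\widetilde V_1(g_d^{+})\oplus\cdots\oplus\widetilde V_n(g_d^{+})$ of $\mathbb R^n$, which by Lemma~\ref{cor03} are exponentially close in $r(d)$. First I would write down an explicit formula for $T^{\varepsilon}_g$ in terms of the decomposition $\widetilde V_1(g)\oplus\cdots\oplus\widetilde V_n(g)$: if $P_i^g\colon\mathbb R^n\to\widetilde V_i(g)$ denotes the projection onto the $i$-th line along the sum of the others, then $T^{\varepsilon}_g=\sum_i e^{\varepsilon_i}P_i^g$, so that $T^{-\varepsilon}_{g_d^{+}}=\sum_i e^{-\varepsilon_i}P_i^{g_d^{+}}$ and $T^{\varepsilon}_{g_d^{-}}=\sum_j e^{\varepsilon_j}P_j^{g_d^{-}}$. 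The composition is then $\sum_{i,j}e^{\varepsilon_j-\varepsilon_i}P_j^{g_d^{-}}P_i^{g_d^{+}}$.

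The key point is that, were the two decompositions equal, we would have $P_j^{g_d^{-}}P_i^{g_d^{+}}=\delta_{ij}P_i$ and the composition would be exactly $\mathrm{Id}$. So I would expand
$$
T^{\varepsilon}_{g_d^{-}}\circ T^{-\varepsilon}_{g_d^{+}}-\mathrm{Id}
=\sum_{i,j}e^{\varepsilon_j-\varepsilon_i}\bigl(P_j^{g_d^{-}}P_i^{g_d^{+}}-\delta_{ij}P_i^{g_d^{+}}\bigr),
$$
where I have used $\sum_i P_i^{g_d^{+}}=\mathrm{Id}$ to absorb the $-\mathrm{Id}$ term. Now I would invoke the standard fact that the projections $P_i^g$ depend (locally Lipschitz-)continuously on the line decomposition $(\widetilde V_1(g),\ldots,\widetilde V_n(g))$ as a point of $M$ — this uses the $2$-hyperconvexity/transversality so that the decomposition stays in a compact piece of $M$, with uniform constants as $d$ ranges over $k-\widetilde\lambda$ by compactness of $k$ (cf.\ the argument in Lemma~\ref{cor03}). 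Hence $\leftn P_i^{g_d^{-}}-P_i^{g_d^{+}}\rightn=\mathrm{O}(e^{-K'r(d)})$ for some $K'>0$ by Lemma~\ref{cor03}, and since $\sum_i P_i^{g_d^{+}}=\mathrm{Id}$ one gets $\leftn P_j^{g_d^{-}}P_i^{g_d^{+}}-\delta_{ij}P_i^{g_d^{+}}\rightn=\leftn (P_j^{g_d^{-}}-P_j^{g_d^{+}})P_i^{g_d^{+}}\rightn=\mathrm{O}(e^{-K'r(d)})$, the bound being uniform because all the $P_i^{g_d^{+}}$ are uniformly bounded. Finally the scalar coefficients satisfy $|e^{\varepsilon_j-\varepsilon_i}|\le e^{2\|\varepsilon\|_{\mathbb R^n}}$, and there are $n^2$ terms, so summing gives $\leftn T^{\varepsilon}_{g_d^{-}}\circ T^{-\varepsilon}_{g_d^{+}}-\mathrm{Id}\rightn=\mathrm{O}(e^{2\|\varepsilon\|_{\mathbb R^n}-K'r(d)})$, which is the claimed estimate with $K=K'$.

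The main obstacle I anticipate is the uniformity of the Lipschitz constant for the map (line decomposition) $\mapsto$ (tuple of projections): a priori a line decomposition can degenerate so that the projections blow up, and one must rule this out for the family $\{(\widetilde V_i(g_d^{\pm}))\}_{d\subset k-\widetilde\lambda}$. This is handled exactly as in the proof of Lemma~\ref{cor03}: the arc $k$ is compact, $\widetilde\sigma_\rho$ is continuous, hence the relevant decompositions all lie in a fixed compact subset of $M$ on which the projection maps are smooth, giving a uniform constant. A minor bookkeeping point is matching the exponent: Lemma~\ref{cor03} gives closeness of individual lines in $\mathbb{RP}^{n-1}$, and one must note that closeness of all $n$ lines of a decomposition in $M$ (a compact subset) translates into closeness of the projections with the same exponential rate; this is immediate from compactness and the fact that $M$ embeds smoothly as the space of projection tuples.
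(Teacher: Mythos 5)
Your proof is correct and takes essentially the same route as the paper: the paper picks a unit basis adapted to the decomposition $\widetilde{V}_1(g_d^-)\oplus\cdots\oplus\widetilde{V}_n(g_d^-)$, asserts the entrywise bound $Me^{2\Vert\varepsilon\Vert_{\mathbb{R}^n}-Kr(d)}$ on the matrix of $T^{\varepsilon}_{g_d^{-}}\circ T^{-\varepsilon}_{g_d^{+}}-\mathrm{Id}$ as ``an easy calculation'' from Lemma~\ref{cor03}, and then uses compactness of $k$ to keep the adapted bases in a compact subset of $(\mathbb{R}^n)^n$ so that matrix and operator norms are uniformly comparable. Your identity $T^{\varepsilon}_{g_d^{-}}\circ T^{-\varepsilon}_{g_d^{+}}-\mathrm{Id}=\sum_{i,j}e^{\varepsilon_j-\varepsilon_i}\bigl(P_j^{g_d^-}-P_j^{g_d^+}\bigr)P_i^{g_d^+}$ is a coordinate-free version of that same calculation, and your non-degeneracy remark about the projections is exactly the paper's compactness observation.
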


\begin{proof}
Let $\mathcal{B}_d$ be a basis of unit vectors for $\left \Vert \ \right \Vert_{\mathbb{R}^n}$ that is adapted to the line decomposition $\widetilde{V}_1(g_{d}^{-})\oplus \cdots \oplus \widetilde{V}_n(g_{d}^{-})=\mathbb{R}^n$. Let $\mathrm{Mat}_{\mathcal{B}_{d}}(T^{\varepsilon}_{g_d^{-}}\circ T^{-\varepsilon}_{g_d^{+}}-\mathrm{Id})$ be the matrix representation of the linear map $T^{\varepsilon}_{g_d^{-}}\circ T^{-\varepsilon}_{g_d^{+}}-\mathrm{Id}$ with respect to the basis $\mathcal{B}_{d}$. By an easy calculation, it follows from Lemma~\ref{cor03} that, for every $d\subset k-\widetilde{\lambda}$,
$$
\left \Vert  \mathrm{Mat}_{\mathcal{B}_d}(T^{\varepsilon}_{g_d^{-}}\circ T^{-\varepsilon}_{g_d^{+}}-\mathrm{Id}) \right \Vert_{\mathrm{Mat}_n(\mathbb{R})}\leq Me^{2 \left \Vert \varepsilon \right \Vert_{\mathbb{R}^n}-Kr(d)}
$$
for some $M\geq 0$ (depending on $k$ and $\rho$). In addition, since $k$ is compact, the set of adapted basis $\{\mathcal{B}_d\}_{d\subset k-\widetilde{\lambda}}$ as above lies in some compact subset of ${(\mathbb{R}^n)}^n$. Thus
$$
\leftn T^{\varepsilon}_{g_d^{-}}\circ T^{-\varepsilon}_{g_d^{+}}-\mathrm{Id}  \rightn  \leq M' \left \Vert  \mathrm{Mat}_{\mathcal{B}_d}(T^{\varepsilon}_{g_d^{-}}\circ T^{-\varepsilon}_{g_d^{+}}-\mathrm{Id}) \right \Vert_{\mathrm{Mat}_n(\mathbb{R})}
$$
for some $M'\geq 0$ (depending on $k$ and $\rho$). Hence, for every subarc $d\subset k-\widetilde{\lambda}$,
$$ 
\leftn T^{\varepsilon}_{g_d^{-}}\circ T^{-\varepsilon}_{g_d^{+}}-\mathrm{Id}  \rightn  \leq M''e^{2 \left \Vert \varepsilon \right \Vert_{\mathbb{R}^n}-Kr(d)}
$$
for some $M''\geq 0$ (depending on $k$ and $\rho$), which proves the requested estimate.
\end{proof}

\subsection{Transverse cocycles for geodesic laminations}
\label{sect:TransCocycles} 

We need to remind the reader of the definition of transverse cocycles for geodesic laminations, along with their main properties. See \cite{Bon1,Bon3} for complementary details. 

Let $\lambda\subset S$ be a geodesic lamination. A \emph{transverse cocycle} $\alpha$ for $\lambda$ can be thought as a transverse signed measure for $\lambda$ that is finitely additive only. More precisely, $\alpha$ assigns to every transverse arc $k$ to $\lambda$ a number $\alpha(k)$, with the property that $\alpha(k)=\alpha(k_{1})+\alpha(k_{2})$, whenever $k_{1}$ and $k_{2}$ are two subarcs of $k$ with disjoint interior and such that $k=k_{1} \cup k_{2}$. In addition, $\alpha$ is homotopy invariant, namely $\alpha(k)=\alpha(k')$ whenever the transverse arc $k$ can be mapped onto the transverse arc $k'$ via a homotopy preserving the leaves of $\lambda$. 

In the context of this paper, we will be mostly considering transverse cocycles for the \emph{orientation cover} $\La$ of a maximal geodesic lamination $\lambda\subset S$. Here, $\La$ is the orientation cover of $\lambda$ in the sense of foliation theory, namely $\La$ is a foliation that is a $2$--cover of the foliation $\lambda$ whose leaves are  oriented in a continuous fashion. To be able to talk about transverse cocycles for the orientation cover $\La$, we need an ``ambient surface'' for $\La$, so that we can consider transverse arcs to $\La$: let $U\subset S$ be an open neighborhood of $\lambda$ obtained after puncturing the interior of each ideal triangle in $S-\lambda$; the orientation cover $\La \to \lambda$ extends to a $2$--cover $\widehat{U}\to U$. Therefore, by \emph{transverse cocycle} for $\La$, we will always mean a transverse cocycle $\alpha$ for the geodesic lamination $\La$ embedded in some open surface $\widehat{U}$ as above, though we will often omit to mention the ``ambient'' surface $\widehat{U}$ and refer to it only when needed. Let $\HO$ be the vector space of transverse cocycles for $\La$. It follows from \cite[\S5]{Bon3} that the dimension of $\HO$ is finite, with actual dimension, since $\lambda$ is maximal, equal to $12g-11$.


Let $k\subset \widehat{U}$ be a transverse, nonbacktracking, simple arc to $\La$. Orient $k$ accor\-dingly, namely so that the angle between the oriented arc $k$ and each oriented leaf of $\La$ is positively oriented. For every component $d \subset k-\La$, $k_d$ is the subarc of $k$ joining the negative endpoint of $k$ to any point contained in $d$. Pick a norm $\left \Vert  \ \right \Vert_{\HO}$ on the vector space $\HO$.

\begin{lem}
\label{lem05}
There exists some constant $C\geq 0$, depending on $k$, such that, for every transverse cocycle  $\alpha \in \HO$, for every component $d \subset k-\La$,
$$
\alpha(k_d) \leq C \left \Vert  \alpha \right \Vert_{\HO} \left ( r(d)+1\right )
$$
where $r(d)$ is the divergence radius of $d$ (see \S\ref{sect:Inequalities}).
 \end{lem}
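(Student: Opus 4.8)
The plan is to reduce the inequality for a general transverse cocycle $\alpha \in \HO$ to a finite computation by using the finite-dimensionality of $\HO$ together with the combinatorial control on the divergence radius from \S\ref{sect:Inequalities}. First I would fix, once and for all, a basis $\alpha_1, \ldots, \alpha_N$ of $\HO$ (here $N = 12g-11$); since all norms on the finite-dimensional space $\HO$ are equivalent, it suffices to prove the estimate for each basis element $\alpha_j$ and then conclude by linearity and the triangle inequality, absorbing the change-of-norm constants into $C$. So the problem becomes: for a single fixed transverse cocycle $\alpha$, show $|\alpha(k_d)| \leq C(r(d)+1)$ with $C$ depending on $k$ and $\alpha$.

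The key step is to decompose the subarc $k_d$ according to the divergence radius. Recall that $k_d$ joins the negative endpoint of $k$ to a point in $d$; write $k_d$ as a union of components of $k - \La$ (together with the finitely many plaques of $\La$ it crosses, on which $\alpha$ contributes nothing since $\alpha$ is a transverse cocycle — or rather, group consecutive subarcs). Using finite additivity of $\alpha$, $\alpha(k_d)$ is a sum of the values $\alpha$ takes on the components $d' \subset k - \La$ lying "before" $d$. Now I would organize this sum by divergence radius: the components $d'$ occurring in $k_d$ with $r(d') = r$ number at most $4g-4$ by Lemma~\ref{lem01}, and moreover — this is the crucial point — if $d$ itself has divergence radius $r(d)$, then every component $d'$ appearing in the decomposition of $k_d$ has $r(d') \leq r(d) + c$ for some universal constant $c$ (the divergence radius of a subarc nested inside the fan determined by $d$ cannot exceed that of $d$ by more than a bounded amount, since the asymptotic leaves bounding these regions spread apart). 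Thus $\alpha(k_d)$ is a sum of at most $(4g-4)(r(d)+c+1)$ terms.

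It then remains to bound $|\alpha(d')|$ uniformly over all components $d' \subset k - \La$. This follows because the transverse arc $k$ is compact: the components $d'$ accumulate only onto leaves of $\La$, their lengths are $\mathrm{O}(e^{-Ar(d')})$ by Lemma~\ref{lem02}, and $\alpha$, being a fixed transverse cocycle, is in particular bounded on small transverse arcs (homotopy invariance plus the local product structure of $\La$ near a leaf means $\alpha$ of a short transverse arc near a given leaf is controlled; and there are only finitely many leaves of $\La$ that $k$ can accumulate on, by maximality). Hence there is a constant $M$, depending on $k$ and $\alpha$, with $|\alpha(d')| \leq M$ for every component $d' \subset k - \La$. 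Combining, $|\alpha(k_d)| \leq M(4g-4)(r(d)+c+1) = \mathrm{O}(r(d)+1)$, and reinstating the basis argument gives the dependence on $\left\Vert \alpha \right\Vert_{\HO}$.

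The main obstacle I expect is the second step — making precise the claim that components $d'$ nested inside the configuration determined by $d$ have divergence radius bounded in terms of $r(d)$, and that only boundedly many such components appear in $k_d$ at each radius level. This is essentially a hyperbolic-geometry bookkeeping statement about how the fan of leaves through $d$ sits inside the picture of Figure~\ref{Fig2}, and getting the constant $c$ right (or even just its existence) requires care with the definition of divergence radius as a minimum over the two sides of $d_0$. Everything else — finite-dimensionality, finite additivity, uniform boundedness of $\alpha$ on components via compactness of $k$ — is routine.
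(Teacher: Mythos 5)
Your overall shape (finitely many contributions per radius level, each bounded by a multiple of $\left\Vert\alpha\right\Vert_{\HO}$) is the right one, and the reduction to a single fixed cocycle via finite-dimensionality of $\HO$ is harmless, but the central decomposition step does not work. You propose to write $\alpha(k_d)$ as the sum of the values of $\alpha$ on the components $d'\subset k_d-\La$. This fails for two independent reasons. First, $k_d\cap\La$ is in general a Cantor set, so $k_d$ contains infinitely many such components; a transverse cocycle is only \emph{finitely} additive, so no such countable decomposition is available --- this is precisely the feature that distinguishes transverse cocycles from transverse measures, and "grouping consecutive subarcs" only returns you to the original problem of evaluating $\alpha$ on an arc meeting a Cantor set. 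Second, even term by term the decomposition computes the wrong thing: a component $d'$ of $k-\La$ is an arc disjoint from $\La$, hence homotopic to a point through arcs meeting no leaf, so $\alpha(d')=0$ by homotopy invariance and finite additivity. All of the information of $\alpha$ is carried by $k\cap\La$, not by the gaps, so your third step (uniformly bounding $|\alpha(d')|$) is vacuous. In addition, the "crucial point" you flag is false as stated: the components $d'$ lying in $k_d$ accumulate onto leaves of $\La$, and their divergence radii $r(d')$ are unbounded no matter what $r(d)$ is. Lemma~\ref{lem01} bounds the \emph{number} of components of $k-\La$ at each fixed radius over all of $k$; it says nothing about the radii of the components preceding $d$.

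The mechanism that actually localizes $\alpha$ onto finitely many pieces is the train track: the paper shrinks $\widehat U$ to a train track neighborhood of $\La$, so that $\alpha$ is determined by the finite system of weights it assigns to the edges, and $\alpha(k_d)$ becomes an integer linear combination of these weights; see \cite[\S1]{Bon3}. The divergence radius enters because $r(d)$ measures the combinatorial depth of $d$ in the train track (essentially the length of the common edge path followed by the two leaves bounding the region containing $d$), so the number of edge weights occurring in the expression for $\alpha(k_d)$, counted with multiplicity, is $\mathrm{O}\left(r(d)+1\right)$, while each weight is bounded by $C\left\Vert\alpha\right\Vert_{\HO}$ since the weights depend linearly on $\alpha$ and there are finitely many edges. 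That combinatorial bookkeeping is the content of the lemma, and it is the piece your argument is missing.
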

 
 \begin{proof}
By reducing the size of the open neighborhood $\widehat{U}$ of $\La$, we can make it a \emph{train track neighborhood for $\La$}; see for instance \cite{PeH, Bon4}. The above estimate then comes as a corollary that the quantity $\alpha(k_d)$ is a linear function of the finite system of weights on the \emph{edges} of the train track $\widehat{U}$ that is determined by the transverse cocycle $\alpha$; see \cite[\S1]{Bon3} for details.
 \end{proof}

Let $\mathfrak{R}:\La\to\La$ be the \emph{orientation reversing involution}. For every $\alpha\in \HO$, $\mathfrak{R}^*\alpha$ is the \emph{pullback transverse cocycle of $\alpha$ by} $\mathfrak{R}$. We define the vector space of \emph{transverse $n$--twisted cocycles} for $\La$ as
$$
\CO=\left \{ \varepsilon=(\varepsilon_{1}, \ldots, \varepsilon_{n})\in{\CH(\widehat{\lambda})}^n /\, \mathfrak{R}^*\varepsilon_{i}=-\varepsilon_{n-i+1}\text{, }\sum \varepsilon_i = 0 \right \}.
$$

\begin{lem}
\label{lem06}
The dimension of the vector space $\CO$ is equal to $(n-1)(6g-6)+ \left \lfloor\frac{n-1}{2} \right \rfloor$.
\end{lem}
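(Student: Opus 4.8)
\emph{The plan} is to reduce the statement to a dimension count for the linear involution $\sigma:=\mathfrak{R}^{*}$ acting on $\HO$, constrained by the single linear condition $\sum_i\varepsilon_i=0$. Write $\mathcal{C}:=\HO$; since $\mathfrak{R}$ is an involution of $\La$, the map $\sigma$ is a linear involution of the finite-dimensional vector space $\mathcal{C}$, so $\mathcal{C}=\mathcal{C}^{+}\oplus\mathcal{C}^{-}$ splits into the $(+1)$- and $(-1)$-eigenspaces of $\sigma$, with $\dim\mathcal{C}^{+}+\dim\mathcal{C}^{-}=\dim\mathcal{C}=12g-11$. The one genuinely geometric input is the identification $\mathcal{C}^{+}\cong\CH(\lambda)$: pulling transverse cocycles back under the orientation double cover $\widehat{U}\to U$ (whose nontrivial deck transformation restricts to $\mathfrak{R}$ on $\La$) embeds $\CH(\lambda)$ into $\mathcal{C}$ with image exactly the $\sigma$-invariant cocycles, since a $\sigma$-invariant cocycle takes equal values on the two lifts of any transverse arc of $U$ and therefore descends, while every pullback is manifestly $\sigma$-invariant. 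Because $\lambda$ is a maximal geodesic lamination in the closed genus-$g$ surface $S$, Bonahon's computation \cite{Bon3} gives $\dim\CH(\lambda)=6g-6$; hence $\dim\mathcal{C}^{+}=6g-6$ and $\dim\mathcal{C}^{-}=6g-5$.

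Next I would unwind the twist relation $\sigma\varepsilon_{i}=-\varepsilon_{n-i+1}$. The involution $i\mapsto n+1-i$ of $\{1,\dots,n\}$ has $\lfloor n/2\rfloor$ free two-element orbits $\{i,n+1-i\}$ and, when $n$ is odd, the single fixed index $\tfrac{n+1}{2}$. On a two-element orbit the relation makes $\varepsilon_{i}\in\mathcal{C}$ free and then forces $\varepsilon_{n+1-i}=-\sigma\varepsilon_{i}$ (the partner equation being automatic since $\sigma^{2}=\mathrm{Id}$), while on a fixed index it reads $\sigma\varepsilon=-\varepsilon$, i.e.\ $\varepsilon\in\mathcal{C}^{-}$. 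Hence the subspace of $\HO^{\,n}$ cut out by the twist relation alone has dimension $\lfloor n/2\rfloor(12g-11)$ when $n$ is even and $\lfloor n/2\rfloor(12g-11)+(6g-5)$ when $n$ is odd. Imposing $\sum_i\varepsilon_i=0$ on top of this: grouped by orbits, a two-element orbit contributes $\varepsilon_{i}+\varepsilon_{n+1-i}=\varepsilon_{i}-\sigma\varepsilon_{i}$, which sweeps out all of $\mathcal{C}^{-}$ as $\varepsilon_{i}$ varies, and the fixed-index term already lies in $\mathcal{C}^{-}$; so the summation map from the twist subspace into $\mathcal{C}$ has image exactly $\mathcal{C}^{-}$, and the constraint $\sum_i\varepsilon_i=0$ lowers the dimension by precisely $\dim\mathcal{C}^{-}=6g-5$. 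Thus
$$
\dim\CO=\begin{cases}\tfrac{n}{2}(12g-11)-(6g-5),&n\text{ even},\\ \tfrac{n-1}{2}(12g-11),&n\text{ odd},\end{cases}
$$
and a one-line simplification in each case rewrites the right-hand side as $(n-1)(6g-6)+\lfloor\tfrac{n-1}{2}\rfloor$.

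\emph{The main obstacle} is the geometric identification $\mathcal{C}^{+}\cong\CH(\lambda)$ together with the input $\dim\CH(\lambda)=6g-6$ from \cite{Bon3}; the rest is orbit bookkeeping and arithmetic. A secondary point that deserves care is the surjectivity onto $\mathcal{C}^{-}$ of the summation map on the twist subspace, since that is exactly what makes the two parity cases collapse to the single stated formula.
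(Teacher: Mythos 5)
Your proposal is correct and follows essentially the same route as the paper: decompose $\HO$ into the $\pm1$--eigenspaces of $\mathfrak{R}^*$, identify the $+1$--eigenspace with $\CH(\lambda)$ of dimension $6g-6$ (hence the $-1$--eigenspace has dimension $6g-5$), do the orbit bookkeeping for the twist relation, and observe that $\sum_i\varepsilon_i=0$ constrains only the $-1$--components and cuts the dimension by exactly $6g-5$. Your explicit check that the summation map surjects onto $\mathcal{C}^-$ makes precise a point the paper leaves as "an easy calculation," but the argument is the same.
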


\begin{proof}
Set $E=\HO$. Let $\tau: E \to E$, $\alpha \mapsto \mathfrak{R}^{*}\alpha$ be the pullback endomorphism. $\tau$ being an involution, the space $E$ splits as a direct sum $E^+\oplus E^-$, where $E^{\pm}$ is the $\pm 1$--eigenspace. One easily verifies that the subspace $E^{+}$ corresponds to the space of transverse cocycles for the maximal geodesic lamination $\lambda$, whose dimension is $6g-6$ \cite[\S5]{Bon3}; the dimension of $E^-$ is thus is $6g-5$. Therefore, for every $i=1$, $\ldots$ , $n$, we have $\varepsilon_{i}=\varepsilon_{i}^+ +\varepsilon_{i}^-$ for $\varepsilon_{i}^{\pm} \in E^\pm$. Since $\varepsilon=(\varepsilon_{1}, \ldots, \varepsilon_{n})\in \CO$, for every $i=1$, $\ldots$ , $n$,
$$
\mathfrak{R}^*\varepsilon_{i}=-\varepsilon_{n-i+1}
$$
which is equivalent to
$$
\begin{cases}
\varepsilon_{i}^+=-\varepsilon_{n-i+1}^+\\
\varepsilon_{i}^-=\varepsilon_{n-i+1}^-
\end{cases}.
$$

By an easy calculation, it follows from the above identities that the dimension of the vector space
$$
\left \{ \varepsilon=(\varepsilon_{1}, \ldots, \varepsilon_{n})\in{\CH(\widehat{\lambda})}^n / \, \mathfrak{R}^*\varepsilon_{i}=-\varepsilon_{n-i+1} \right \}
$$
is equal to $(n-1)(6g-6)+\left \lfloor\frac{n-1}{2} \right \rfloor + (6g-5)$.
Besides, observe that the se\-cond condition $\sum \varepsilon_i = 0$ is in fact a condition on the component $\varepsilon_i^-\in E^-$ only. Hence the dimension of the vector space $\CO$ is equal to  $\big [(n-1)(6g-6)+\left \lfloor\frac{n-1}{2} \right \rfloor+ $ $(6g-5)\big ]-(6g-5)=(n-1)(6g-6)+\left \lfloor\frac{n-1}{2} \right \rfloor$. 
\end{proof}



We conclude these preliminaries with recalling the correspondence between transverse cocycles and transverse H\"older distributions for a given geodesic lamination $\lambda$; see \cite[\S6]{Bon3} for details. 

A \emph{transverse H\"older distribution for} $\lambda$ assigns to every transverse arc $k$ a H\"older distribution $\alpha_k$ on $k$, namely $\alpha_k$ is a continuous, linear function defined on the space of H\"older continuous functions; similarly as for transverse cocycles, this assignment is homotopy invariant. 

Let $k$ be a transverse arc to $\lambda$. Choose an arbitrary orientation for $k$; let $x_k^+$ be the positive endpoint of the oriented arc $k$; and let $x_d^+$ and $x_d^-$ be respectively the positive and negative endpoints of each component $d\subset k-\lambda$. Finally, let $k_d$ be the subarc of $d$ joining the negative endpoint $x_k^-$ of $k$ to an arbitrary point in $d$. The key ingredient of the above correspondence is the following fundamental formula. 


\begin{thm}[Bonahon \cite{Bon3}] \textsc{(Gap Formula)}
\label{gap}
Let $\alpha$ be a transverse H\"older cocycle for a geodesic lamination $\lambda$. For every H\"older continuous function $\varphi\colon k \to \mathbb{R}$ defined on an oriented arc $k$ transverse to $\lambda$, set
$$
\alpha(\varphi)=\alpha(k) \varphi(x_k^+) + \sum_{d \subset k-\lambda}\alpha(k_d) ( \varphi(x_d^-) -\varphi(x_d^+))
$$
where the indexing $d$ ranges over all components of $k-\lambda$ (= gaps). The above summation is convergent and defines a transverse H\"older distribution for $\lambda$.
\end{thm}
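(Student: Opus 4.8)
The plan is to break the statement into two parts: \emph{(i)} the series defining $\alpha(\varphi)$ converges absolutely, and \emph{(ii)} the resulting functional $\varphi\mapsto\alpha(\varphi)$ is a transverse H\"older distribution, i.e. it is linear, bounded for the H\"older norm, and homotopy invariant. The engine of the whole argument is the interplay of three facts already available: the finiteness $\mathrm{Card}(D_r)\le 4g-4$ of Lemma~\ref{lem01}, the exponential decay $\mathrm{length}(d)=\mathrm{O}(e^{-Ar(d)})$ of Lemma~\ref{lem02}, and the at-most-linear growth $|\alpha(k_d)|\le C\|\alpha\|\,(r(d)+1)$ of (the geodesic-lamination version of) Lemma~\ref{lem05}; H\"older continuity of $\varphi$ is what converts length decay into decay of the factor $\varphi(x_d^-)-\varphi(x_d^+)$.

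First I would prove convergence. Fix a H\"older exponent $\nu\in(0,1]$ and a H\"older constant $H$ for $\varphi$. Since the two endpoints $x_d^\pm$ of a gap $d$ are joined inside $k$ by the subarc $d$ itself, their $k$-distance is exactly $\mathrm{length}(d)$, so $|\varphi(x_d^-)-\varphi(x_d^+)|\le H\,\mathrm{length}(d)^{\nu}=\mathrm{O}(e^{-A\nu r(d)})$. Multiplying by the bound on $|\alpha(k_d)|$, the $d$-th term of the series is $\mathrm{O}\bigl(\|\alpha\|\,H\,(r(d)+1)\,e^{-A\nu r(d)}\bigr)$ with constants depending only on $k$ (and $\lambda$). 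Regrouping the gaps by divergence radius and using that there are at most $4g-4$ of them with any prescribed $r(d)=r$,
\[
\sum_{d\subset k-\lambda}\bigl|\alpha(k_d)\bigr|\,\bigl|\varphi(x_d^-)-\varphi(x_d^+)\bigr|\;\le\;(4g-4)\,C'\,\|\alpha\|\,H\sum_{r\ge 0}(r+1)\,e^{-A\nu r}\;<\;\infty ,
\]
the last series being of the form $\sum_{r\ge0}(r+1)q^r$ with $q=e^{-A\nu}<1$. Hence the sum converges absolutely and $\alpha(\varphi)$ is well defined, independently of the order of the gaps.

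The same estimate yields continuity: adding the boundary term gives $|\alpha(\varphi)|\le |\alpha(k)|\,\|\varphi\|_\infty+(4g-4)C'\|\alpha\|\,H\sum_{r\ge0}(r+1)e^{-A\nu r}$, and since both $\|\varphi\|_\infty$ and $H$ are dominated by the $\nu$-H\"older norm of $\varphi$, the functional $\alpha_k:=(\varphi\mapsto\alpha(\varphi))$ is bounded on the space of $\nu$-H\"older functions on $k$, for every $\nu\in(0,1]$ — which is exactly what it means for $\alpha_k$ to be a H\"older distribution. Linearity in $\varphi$ is immediate from the formula once absolute convergence is in hand. For homotopy invariance I would argue term by term: a homotopy carrying $k$ to $k'$ through arcs transverse to $\lambda$ and preserving its leaves induces a bijection $d\leftrightarrow d'$ between the gaps of $k$ and those of $k'$, matching $k_d$ to $k'_{d'}$ and $x_d^\pm$ to $x_{d'}^\pm$; since $\alpha(k)=\alpha(k')$ and $\alpha(k_d)=\alpha(k'_{d'})$ by the homotopy invariance of the transverse cocycle $\alpha$, and $\varphi$ transported by the homotopy takes the matching values, the two series coincide term by term. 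The remaining compatibility (behaviour under subdividing $k$ into subarcs) reduces to the finite additivity of $\alpha$, combined with bookkeeping of how a gap of $k$ breaks up when a subdivision point lands in its interior.

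The hard part is the convergence estimate of the second paragraph — not because any single step is difficult, but because the three cited lemmas must be made to interlock cleanly: one has to check that the constant $C$ in the cocycle bound, the constant $A$ in the length bound, and the basepoint used to control $\|\varphi\|_\infty$ can all be chosen uniformly along the compact arc $k$ (each lemma does allow a $k$-dependent constant, which is precisely what is needed), and that the divergence-radius decomposition genuinely exhausts the gaps, i.e. only finitely many gaps have any given divergence radius, which is Lemma~\ref{lem01}. Once these are lined up the convergence is a geometric series and the rest is formal; the only place calling for genuine care afterward is the compatibility of the formula with subdivision of the transverse arc, since H\"older distributions, unlike measures, do not restrict to subarcs in a naive way.
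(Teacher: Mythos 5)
Your proposal is correct, but note that the paper does not actually prove this statement: its ``proof'' is the single line ``See \cite[\S5]{Bon3},'' so the theorem is quoted from Bonahon rather than established here. What you have written is a faithful reconstruction of Bonahon's argument, and it dovetails nicely with the machinery the paper sets up in \S\ref{sect:Inequalities} for its own convergence proofs (compare the proof of Proposition~\ref{pro1}, which runs the same three estimates in the same order): group the gaps by divergence radius, use Lemma~\ref{lem01} to bound the number of gaps per radius, Lemma~\ref{lem02} plus H\"older continuity of $\varphi$ to get exponential decay of $\varphi(x_d^-)-\varphi(x_d^+)$, and Lemma~\ref{lem05} to control the linear growth of $\alpha(k_d)$, yielding a convergent series of the form $\sum_r (r+1)q^r$. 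Three small points to keep straight. First, the divergence radius is defined in \S\ref{sect:Inequalities} only for gaps not containing an endpoint of $k$, so the two boundary gaps $d^{\pm}$ must be set aside as finitely many harmless terms (and, as the paper observes after the theorem, the boundary term $\alpha(k)\varphi(x_k^+)$ combines with the $d^+$ term so that $\alpha(\varphi)$ depends only on $\varphi|_{k\cap\lambda}$ --- this is also what makes your term-by-term homotopy-invariance argument go through, since a leaf-preserving homotopy only matches up points of $k\cap\lambda$). Second, Lemma~\ref{lem05} is stated for the orientation cover $\La$, but, as you note, its train-track proof applies verbatim to $\lambda$ itself. Third, the paper's Lemmas~\ref{lem01} and \ref{lem02} are proved under the standing assumption that $\lambda$ is maximal, whereas the theorem is stated for an arbitrary geodesic lamination; in the general case one replaces the divergence-radius bookkeeping by the combinatorics of edge paths in a train track carrying $\lambda$, which is how \cite{Bon3} phrases it, but for the maximal laminations used throughout this paper your version suffices.
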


\begin{proof}
See  \cite[\S5]{Bon3}.
\end{proof}

A transverse H\"older distribution $\alpha$ for the geodesic lamination $\lambda$ defines a transverse cocycle $\alpha$ in a natural fashion: let $k$ be a transverse arc to $\lambda$; consider a H\"older continuous function $\varphi\colon k \to \mathbb{R}$ defined on the arc $k$ that is identically equal to $1$ on $k\cap \lambda$; the value of the transverse cocycle $\alpha$ at $k$ is set to be $\alpha(k)=\alpha(\varphi)$. By Theorem~\ref{gap},  this definition is valid for any $\varphi$ as above, as the value $\alpha(\varphi)$ of a transverse H\"older distribution depends only on the values achieved by $\varphi$ on $k\cap \lambda$.


Conversely, given a transverse cocycle $\alpha$ for $\lambda$, Theorem~\ref{gap} enables us to reconstruct the corresponding transverse H\"older distribution.


 


\section{Cataclysms}
\label{sect:Cataclysms}

We now tackle the construction of cataclysm deformations for Anosov representations along a maximal geodesic lamination $\lambda\subset S$. The construction will mostly take place in the universal cover $\widetilde{S}\supset\widetilde{\lambda}$. As in \S\ref{sect:TransCocycles}, $\La$ will denote the orientation cover of $\lambda$.

\subsection{Shearing map between two ideal triangles}
\label{ShearTriangle}

Consider an Anosov representation $\rho \colon \pi_{1}(S) \rightarrow \PSL_{n}(\mathbb{R})$ along with its Anosov section $\sigma_\rho=(V_1, \ldots ,V_n)$. Let $P$ and $Q$ be two ideal triangles in the complement $\widetilde{S}-\widetilde{\lambda}$. We begin with defining the \emph{shearing map} $\varphi_{PQ}\in\SL_n(\mathbb{R})$ between the triangles $P$ and $Q$.

Let $\mathcal{P}_{PQ}$ be the set of ideal triangles in $\widetilde{S}-\widetilde{\lambda}$ lying between $P$ and $Q$. Let $k$ be a simple, nonbacktracking, oriented arc transverse to $\widetilde{\lambda}$ joining a point in the interior of $P$ to a point in the interior of $Q$. Orient positively the leaves of $\widetilde{\lambda}$ intersecting $k$ for the transverse orientation determined by the oriented arc $k$. 

Let $\varepsilon=(\varepsilon_{1},  \ldots, \varepsilon_{n}) \in \CO$ be a transverse $n$--twisted cocycle for the orientation cover $\La$ of $\lambda$; see \S\ref{sect:TransCocycles}. For every triangle $R\in \mathcal{P}_{PQ}$, let $g_{R}^{-}$ and $g_{R}^{+}\subset \widetilde{\lambda}$ be the two leaves bounding $R_j$ that are  the closest to the triangles $P$ and $Q$, respectively. As in \S\ref{sect:Inequalities}, the Anosov section $\sigma_\rho=(V_1, \ldots , V_n)$ enables us to associate to the oriented geodesics $g_{R}^{-}$ and $g_{R}^{+}$ the linear maps $T_{g_{R}^{-}}^{\varepsilon(P,R)}$ and $T_{g_{R}^{+}}^{-\varepsilon(P,R)}$, respectively, where $\varepsilon(P,R)\in \mathbb{R}^n$ is defined as follows. As in \S\ref{sect:TransCocycles}, let $U\subset S$ be an open neighborhood of the maximal geodesic lamination $\lambda$ together with its associated $2$--cover $\widehat{U}\supset \widehat{\lambda}$. Consider an oriented arc $k_{PR}\subset \widetilde{U} (\subset \widetilde{S})$ transverse to $\widetilde{\lambda}$ joining a point in the interior of the triangle $P$ to a point in the interior of the triangle $R$. The arc $k_{PR}$ projects onto an oriented arc $p(k_{PR})\subset U (\subset S)$ that is transverse to $\lambda$. Observe that the geodesic lamination $\La$ and the surface $\widehat{U}$ being both oriented, $\La\subset \widehat {U}$ inherits a well-defined transverse orientation. In particular, the oriented arc  $p(k_{PR})\subset U$ admits a preferred lift $\widehat{p(k_{PR})}\subset\widehat{U}$, namely $\widehat{p(k_{PR})}$ is the unique oriented arc transverse to $\La$ that lifts $p(k_{PR})$ and that is oriented accordingly (namely, the angle between the oriented arc $p(k_{PR})$ and each of the oriented leaves of $\La$ is positively oriented). We set $\varepsilon(P,R)=\varepsilon \big (\widehat{p(k_{PR})} \big )\in \mathbb{R}^n$.


Given a finite subset $\mathcal{P}=\{R_{1}, R_{2}, \ldots , R_{m} \} \subset \mathcal{P_{PQ}}$, where the indexing $j$ of $R_j$ increases as one goes from $P$ to $Q$, consider the linear map
\smallskip
$$
\varphi_{\mathcal{P}}=T_{g_{1}^{-}}^{\varepsilon(P,R_{1})}\circ T_{g_{1}^{+}}^{-\varepsilon(P,R_{1})}\circ T_{g_{2}^{-}}^{\varepsilon(P,R_{2})}\circ T_{g_{2}^{+}}^{-\varepsilon(P,R_{2})} \circ \cdots \circ T_{g_{m}^{-}}^{\varepsilon(P,R_{m})}\circ T_{g_{m}^{+}}^{-\varepsilon(P,R_{m})} \circ T_{g_{Q}^{-}}^{\varepsilon(P,Q)}
$$
where we set $g_{j}^{+}=g_{R_j}^{+}$ and $g_{j}^{-}=g_{R_j}^{-}\subset \widetilde{\lambda}$ to alleviate notation. Finally, we need a bunch of norms. Pick a norm $\left \Vert \ \right \Vert_{\HO}$ on the vector space of transverse cocycles $\HO$, and endow the vector space of transverse $n$--twisted cocycles $\CO$ with the norm $\left \Vert \varepsilon Ê\right \Vert=\mathrm{max}_{\,i}\left  \Vert \varepsilon_i Ê\right \Vert_{\small \HO}$ for $\varepsilon=(\varepsilon_1, \ldots, \varepsilon_n)\in \CO$. Likewise, let $\left \Vert \ \right \Vert_{\mathbb{R}^n}$ be the \emph{max norm} on $\mathbb{R}^n$, namely $\left \Vert X \right \Vert_{\mathbb{R}^n}=\mathrm{max}_{\,i} \left \vert x_i \right \vert$ for $X=(x_1, \ldots , x_n)\in \mathbb{R}^n$, and let $\leftn \ \rightn$ be the induced norm on $\mathrm{End}(\mathbb{R}^n)$.

\begin{pro}
\label{pro1}
For $\varepsilon\in \CO$ small enough,
$$
\lim_{\mathcal{P} \to \mathcal{P}_{PQ}}\varphi_{\mathcal{P}}
$$ 
exists and is an element of $\SL_n(\mathbb{R})$.
\end{pro}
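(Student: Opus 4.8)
The plan is to show that the net $(\varphi_{\mathcal P})_{\mathcal P\subset\mathcal P_{PQ}}$ is Cauchy with respect to the operator norm $\leftn\ \rightn$, and then to verify that the limit lies in $\SL_n(\mathbb R)$. First I would fix the transverse arc $k$ from the interior of $P$ to the interior of $Q$ and observe that every triangle $R\in\mathcal P_{PQ}$ corresponds to exactly one component $d_R\subset k-\widetilde\lambda$, with $g_R^-=g_{d_R}^-$ and $g_R^+=g_{d_R}^+$. Enumerating a finite subset $\mathcal P=\{R_1,\dots,R_m\}$ according to the order along $k$, I would rewrite $\varphi_{\mathcal P}$ by inserting telescoping factors: between consecutive terms $T_{g_j^+}^{-\varepsilon(P,R_j)}$ and $T_{g_{j+1}^-}^{\varepsilon(P,R_{j+1})}$ one has $\varepsilon(P,R_{j+1})=\varepsilon(P,R_j)+\varepsilon(k_{R_jR_{j+1}})$ by finite additivity of the twisted cocycle, so each ``gap'' between $g_j^+$ and $g_{j+1}^-$ contributes a factor of the form $T_{g_{j+1}^-}^{\delta_j}\circ T_{g_j^+}^{-\delta_j}$ (up to conjugation by a fixed bounded linear map), which by Lemma~\ref{lem04} is $\mathrm{Id}+\mathrm{O}(e^{2\|\varepsilon\|_{\mathbb R^n}-Kr(d)})$ where $d$ ranges over the components of $k-\widetilde\lambda$ strictly between $d_{R_j}$ and $d_{R_{j+1}}$.

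Next I would use the divergence-radius estimates of \S\ref{sect:Inequalities} to control the product. By Lemma~\ref{lem05}, $\|\varepsilon(P,R)\|_{\mathbb R^n}=\alpha$-type quantity bounded by $C\|\varepsilon\|(r(d_R)+1)$, so for $\varepsilon$ small the exponential weight $e^{2\|\varepsilon(P,R)\|_{\mathbb R^n}}$ grows at most like $e^{2C\|\varepsilon\|(r(d)+1)}$, i.e.\ subexponentially in $r(d)$ once $\|\varepsilon\|$ is small. Combined with the $e^{-Kr(d)}$ decay from Lemma~\ref{lem04} and the bound $\mathrm{Card}(D_r)\leq 4g-4$ from Lemma~\ref{lem01}, the sum $\sum_{d\subset k-\widetilde\lambda}e^{2C\|\varepsilon\|(r(d)+1)-Kr(d)}\leq (4g-4)\sum_{r\geq 0}e^{2C\|\varepsilon\|(r+1)-Kr}$ converges provided $2C\|\varepsilon\|<K$, which fixes the meaning of ``$\varepsilon$ small enough.'' A telescoping/product argument of the form $\|\prod_j(\mathrm{Id}+E_j)-\prod_{j\in\mathcal P'}(\mathrm{Id}+E_j)\|\leq \bigl(\exp\sum_j\|E_j\|\bigr)\cdot\sum_{j\notin\mathcal P'}\|E_j\|$ then shows that $\varphi_{\mathcal P}$ is Cauchy as $\mathcal P\to\mathcal P_{PQ}$, so the limit exists in $\mathrm{End}(\mathbb R^n)$; here one also needs that the ``missing'' triangles, being far out along $k$, all have large divergence radius, so that the tail sum is small.

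Finally, each $T_g^\varepsilon$ has determinant $\prod_i e^{\varepsilon_i}=e^{\sum_i\varepsilon_i}=1$ because $\sum\varepsilon_i=0$ by the definition of $\CO$; likewise $T_g^{-\varepsilon}$ and the telescoped factors all have determinant $1$. Since $\det$ is continuous, $\det\bigl(\lim_{\mathcal P\to\mathcal P_{PQ}}\varphi_{\mathcal P}\bigr)=1$, and the limit is invertible (its inverse is the limit of $\varphi_{\mathcal P}^{-1}$ by the same estimates, or because it is close to $\mathrm{Id}$ in the relevant range); hence the limit lies in $\SL_n(\mathbb R)$.

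The main obstacle is the interplay between the two competing exponential rates: the twisted cocycle $\varepsilon(P,R)$ itself grows linearly in the divergence radius $r(d_R)$ (Lemma~\ref{lem05}), so the naive bound $e^{2\|\varepsilon(P,R)\|_{\mathbb R^n}}$ on each factor is \emph{not} uniformly bounded over all gaps; convergence is rescued only because this growth is linear in $r$ while the decay from Lemma~\ref{lem04} is exponential in $r$, and the smallness condition on $\varepsilon$ must be chosen precisely so that $2C\|\varepsilon\|<K$. Making this quantitatively rigorous — tracking constants through Lemmas~\ref{lem01}, \ref{lem02}, \ref{cor03}, \ref{lem04}, \ref{lem05}, and through the conjugations by the fixed bounded change-of-basis maps, and verifying uniformity of all constants over the (compact) arc $k$ — is the technical heart of the argument.
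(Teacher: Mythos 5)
Your quantitative core --- bounding each near-identity factor by Lemma~\ref{lem04}, controlling the exponents $\left\Vert\varepsilon(P,R)\right\Vert_{\mathbb{R}^n}$ linearly in the divergence radius via Lemma~\ref{lem05}, summing over divergence radii via Lemma~\ref{lem01} under the condition $2C\left\Vert\varepsilon\right\Vert<K$, and then running the standard bounded-infinite-product and Cauchy estimate --- is exactly the paper's argument, and your identification of the two competing exponential rates as the crux is on target. The determinant observation ($\det T_g^{\varepsilon}=e^{\sum_i\varepsilon_i}=1$ because $\sum_i\varepsilon_i=0$ for $\varepsilon\in\CO$) correctly accounts for the limit landing in $\SL_n(\mathbb{R})$.

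However, the decomposition you propose to feed into these estimates is the wrong one, and as stated it would fail. You pair $T_{g_j^+}^{-\varepsilon(P,R_j)}$ with $T_{g_{j+1}^-}^{\varepsilon(P,R_{j+1})}$ across the ``gap'' between consecutive triangles of the \emph{finite} subset $\mathcal{P}$. But Lemma~\ref{lem04} applies to the two leaves $g_d^-$, $g_d^+$ bounding a single component $d\subset k-\widetilde{\lambda}$: these are two asymptotic sides of one ideal triangle, which is what makes them exponentially close in terms of the divergence radius. The leaves $g_j^+$ and $g_{j+1}^-$ on either side of a gap of $\mathcal{P}$ are separated by all the triangles of $\mathcal{P}_{PQ}$ omitted from $\mathcal{P}$; they are not asymptotic, they need not be close at all, and the exponents involved there are unbounded, so the gap factor is not $\mathrm{Id}+\mathrm{O}(e^{-Kr})$. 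The repair is simply not to regroup: the factors $T_{g_j^-}^{\varepsilon(P,R_j)}\circ T_{g_j^+}^{-\varepsilon(P,R_j)}$ already present in the definition of $\varphi_{\mathcal{P}}$ are precisely the pairs to which Lemma~\ref{lem04} applies (with $d=k\cap R_j$ and the \emph{same} exponent on both sides), and with that grouping your boundedness and Cauchy estimates go through verbatim; this is what the paper does. (The regrouping by gaps, with telescoped cocycles $\varepsilon(R_j,R_{j+1})$, is the alternative description the paper develops only later, in Proposition~\ref{ShearMap2}, and there it requires auxiliary geodesics $h_j$ and a separate comparison argument.) A further small imprecision: the tail of the net is controlled not because the omitted triangles are ``far out along $k$'' but because, by Lemma~\ref{lem01}, only finitely many triangles have divergence radius below any given threshold.
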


\begin{proof}
Set 
$$
\psi_{\mathcal{P}}=T_{g_{1}^{-}}^{\varepsilon(P,R_{1})}\circ T_{g_{1}^{+}}^{-\varepsilon(P,R_{1})}\circ T_{g_{2}^{-}}^{\varepsilon(P,R_{2})}\circ T_{g_{2}^{+}}^{-\varepsilon(P,R_{2})} \circ \cdots \circ T_{g_{m}^{-}}^{\varepsilon(P,R_{m})}\circ T_{g_{m}^{+}}^{-\varepsilon(P,R_{m})}.
$$ 

If the set of ideal triangles $\mathcal{P}_{PQ}$ is finite, there is nothing to prove. We thus assume that $\mathcal{P}_{PQ}$ is an infinite set.

We begin with showing that $\psi_{\mathcal{P}}$ is uniformely bounded.
$$
\leftn \psi_{\mathcal{P}}  \rightn  \leq  \leftn T_{g_{1}^{-}}^{\varepsilon(P,R_{1})} \circ T_{g_{1}^{+}}^{-\varepsilon(P,R_{1})} \rightn   \cdots \leftn  T_{g_{m}^{-}}^{\varepsilon(P,R_{m})}\circ T_{g_{m}^{+}}^{-\varepsilon(P,R_{m})} \rightn. 
$$
By Lemma~\ref{lem04}, for every $j=1$, $\ldots$ , $m$, 
$$
\leftn T^{\varepsilon(P,R_{j})}_{g_{j}^{-}} \circ T^{-\varepsilon(P,R_{j})}_{g_{j}^{+}}-\mathrm{Id} \rightn=\mathrm{O}\left ( e^ {2 \left \Vert \varepsilon(P,R_{j})\right \Vert_{\mathbb{R}^n}-Kr(k \cap R_j)}\right)
$$
for some $K>0$ (depending on $k$ and $\rho$). Therefore, there exists some $M\geq0$ (depending on $k$ and $\rho$) such that
$$
\leftn \psi_{\mathcal{P}}  \rightn \leq \prod_{j=1}^{m}\left (1+ M e^ {2 \left \Vert \varepsilon(P,R_{j})\right \Vert_{\mathbb{R}^n}-Kr(k\cap R_{j})} \right). 
$$
The convergence of the infinite product on the right-hand side is guaranteed whenever the series $\sum_{j=1}^{m}e^ {2 \left \Vert \varepsilon(P,R_{j})\right \Vert_{\mathbb{R}^n}-Kr(k\cap R_{j})}$ is convergent. By Lemma~\ref{lem05}, 
$$
\left \Vert \varepsilon(P,R_{j})\right \Vert_{\mathbb{R}^n}=\left \Vert \varepsilon \big (\widehat{p(k_{PR_j})} \big )\right \Vert_{\mathbb{R}^n} \leq C \left \Vert  \varepsilon \right \Vert \left ( r \big (\widehat{p(k)}\cap \widehat{R}_j \big )+1\right )
$$
where $\widehat{R}_j \subset \widehat{U}$ is the lift of the (punctured) triangle $R_j\subset U$. Since $\widehat{U}\to U$ is a $2$--cover, $r \big(\widehat{p(k)}\cap \widehat{R}_j \big)\leq r(k\cap R_j)$, where $r(.)$ is the divergence radius (see \S\ref{sect:Inequalities}). Hence
$$
\sum_{j=1}^{m}e^ {2 \left \Vert \varepsilon(P,R_{j})\right \Vert_{\mathbb{R}^n}} e^{-Kr(k\cap R_{j})} \leq \sum_{j=1}^{m}e^ {2 C \left \Vert  \varepsilon \right \Vert \left ( r(p(k)\cap R_j)+1\right )} e^{-Kr(k\cap R_{j})}.
$$
By Lemma~\ref{lem01}, the above series is bounded by finitely many series of the form $\sum_{r=0}^{\infty}e^ {2 C \left \Vert \varepsilon \right \Vert(r+1)} e^{-Kr}$; this implies that $\leftn \psi_{\mathcal{P}} \rightn$ is uniformly bounded whenever $\left \Vert \varepsilon \right \Vert<K/2C$.

We now prove that $\psi_{\mathcal{P}}$ converges as $\mathcal{P}$ goes to $\mathcal{P}_{PQ}$. Let $\mathcal{P}_{m}$ be an increasing sequence of finite ideal triangles converging to $\mathcal{P}_{PQ}$ with $\mathrm{Card}(\mathcal{P}_{m})=m$. Consider the maps $\psi_{\mathcal{P}_{m}}$ and $\psi_{\mathcal{P}_{m+1}}$. Since $\mathcal{P}_{m+1}$ contains one more triangle $R$ than $\mathcal{P}_{m}$, and $\leftn \psi_{\mathcal{P}} \rightn$ is uniformely bounded,
$$
\psi_{\mathcal{P}_{m}}=\psi_{\mathcal{P}}\psi_{\mathcal{P^{'}}}\text{ and   }\,\psi_{\mathcal{P}_{m+1}}=\psi_{\mathcal{P}}\circ T_{g_{R}^{-}}^{\varepsilon(P,R)} \circ T_{g_{R}^{+}}^{-\varepsilon(P,R)}  \circ\psi_{\mathcal{P^{'}}}
$$
where $\mathcal{P}_{m}=\mathcal{P}\cup\mathcal{P}^{'}$. By Lemma~\ref{lem04},
\begin{eqnarray*}
\leftn \psi_{\mathcal{P}_{m+1}} - \psi_{\mathcal{P}_{m}}  \rightn &\leq& \leftn \psi_{\mathcal{P}}  \rightn \, \leftn T_{g_{R}^{-}}^{\varepsilon(P,R)} \circ T_{g_{R}^{+}}^{-\varepsilon(P,R)} - \mathrm{Id} \rightn \, \leftn \psi_{\mathcal{P}^{'}} \rightn  \\ \\
&\leq& M' e^ {2 \left \Vert \varepsilon(P,R)\right \Vert_{\mathbb{R}^n}-Kr(k\cap R)}
\end{eqnarray*}
for some $M'\geq0$ (depending on $k$ and $\rho$). Since $\mathcal{P}_{PQ}$ is an infinite set, Lemma~\ref{lem01} implies that $\lim_{m  \to \infty,\text{ } R\in \mathcal{P}_{m}} r(k\cap R) = \infty$. In particular, the sequence $\psi_{\mathcal{P}_{m}}$ is Cauchy,  and thus convergent whenever $\left \Vert \varepsilon \right \Vert<K/2C$. In fine, $\lim_{\mathcal{P} \to \mathcal{P}_{PQ}} \psi_{\mathcal{P}}$, and so $\lim_{\mathcal{P} \to \mathcal{P}_{PQ}} \varphi_{\mathcal{P}}=\psi_{\mathcal{P}}\circ T_{g_{Q}^{-}}^{\varepsilon(P,Q)}$, are well-defined maps for $\varepsilon\in \CO$ small enough. 
\end{proof}

The above proof also provides the following estimate, that will come handy later.
\begin{cor}
\label{cor08}
There exists some constant $B>0$, depending on $k$ and $\rho$, such that, for $\varepsilon \in \CO$ small enough, 
$$
\varphi_{PQ}=\psi_{PQ}\circ T_{g_{Q}^{-}}^{\varepsilon(P,Q)}
$$ 
where $\psi_{PQ}=\mathrm{Id}+\mathrm{O}\left (  \sum_{R \in {\mathcal{P}}_{PQ}} e^{-Br(k\cap R)}  \right )$.
\end{cor}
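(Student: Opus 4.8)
The plan is to extract the estimate directly from the bookkeeping already performed in the proof of Proposition~\ref{pro1}, rather than starting over. Recall that $\varphi_{PQ}=\psi_{PQ}\circ T_{g_Q^-}^{\varepsilon(P,Q)}$ where $\psi_{PQ}=\lim_{\mathcal{P}\to\mathcal{P}_{PQ}}\psi_{\mathcal{P}}$ and $\psi_{\mathcal{P}}$ is the ordered composition of the factors $T_{g_j^-}^{\varepsilon(P,R_j)}\circ T_{g_j^+}^{-\varepsilon(P,R_j)}$ over the triangles $R_j\in\mathcal{P}$. So the only thing to prove is that $\psi_{PQ}=\mathrm{Id}+\mathrm{O}\bigl(\sum_{R\in\mathcal{P}_{PQ}}e^{-Br(k\cap R)}\bigr)$ for a suitable constant $B>0$ depending on $k$ and $\rho$.

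First I would write each factor as $T_{g_j^-}^{\varepsilon(P,R_j)}\circ T_{g_j^+}^{-\varepsilon(P,R_j)}=\mathrm{Id}+E_j$, where by Lemma~\ref{lem04} together with the bound $\left\Vert\varepsilon(P,R_j)\right\Vert_{\mathbb{R}^n}\leq C\left\Vert\varepsilon\right\Vert(r(k\cap R_j)+1)$ (from Lemma~\ref{lem05}, using that the $2$--cover $\widehat U\to U$ does not increase divergence radius), one has $\leftn E_j\rightn\leq M e^{2C\left\Vert\varepsilon\right\Vert(r(k\cap R_j)+1)}e^{-Kr(k\cap R_j)}$. Choosing $\left\Vert\varepsilon\right\Vert<K/4C$, say, and absorbing the constant $e^{2C\left\Vert\varepsilon\right\Vert}$ and $M$ into a new constant, this is $\leqslant M'''e^{-Br(k\cap R_j)}$ with $B=K/2>0$ (any $B$ strictly between $0$ and $K-2C\left\Vert\varepsilon\right\Vert$ works). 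Then I would expand the telescoping product
$$
\psi_{\mathcal{P}}-\mathrm{Id}=\prod_{j}(\mathrm{Id}+E_j)-\mathrm{Id}=\sum_{\varnothing\neq I\subset\{1,\dots,m\}}\prod_{j\in I}^{\rightarrow}E_j,
$$
so that $\leftn\psi_{\mathcal{P}}-\mathrm{Id}\rightn\leq\prod_j(1+\leftn E_j\rightn)-1$. Using $\log(1+x)\leq x$ gives $\prod_j(1+\leftn E_j\rightn)\leq\exp\bigl(\sum_j\leftn E_j\rightn\bigr)$, and since $\sum_j\leftn E_j\rightn\leq M'''\sum_{R\in\mathcal{P}_{PQ}}e^{-Br(k\cap R)}$ which is a convergent series bounded by a fixed constant $\Sigma$ (this is exactly the convergence established via Lemma~\ref{lem01} in the proof of Proposition~\ref{pro1}), the elementary inequality $e^t-1\leq te^{\Sigma}$ for $t\in[0,\Sigma]$ yields $\leftn\psi_{\mathcal{P}}-\mathrm{Id}\rightn\leq M'''e^{\Sigma}\sum_{R\in\mathcal{P}_{PQ}}e^{-Br(k\cap R)}$. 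Passing to the limit $\mathcal{P}\to\mathcal{P}_{PQ}$ (legitimate since $\psi_{\mathcal{P}}\to\psi_{PQ}$ and the right-hand bound is uniform in $\mathcal{P}$) gives the claimed estimate for $\psi_{PQ}$, and hence the corollary with $B=K/2$ and the constant $M'''e^{\Sigma}$ renamed.

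There is essentially no serious obstacle here: the corollary is a "for free'' byproduct and the only mild care needed is (i) making sure the implied constant in the $\mathrm{O}(\cdot)$ can be taken independent of the choice of exhausting sequence $\mathcal{P}$, which is why I package the bound through $\exp(\sum\leftn E_j\rightn)$ with the uniform upper bound $\Sigma$, and (ii) being slightly careful that the exponent $B$ produced is genuinely positive, which forces the smallness hypothesis $\left\Vert\varepsilon\right\Vert<K/2C$ already imposed in Proposition~\ref{pro1} (indeed one can keep the very same threshold). I would also remark, for use later, that the same argument bounds $\leftn\varphi_{PQ}\rightn$ and $\leftn\varphi_{PQ}^{-1}\rightn$ uniformly, since $T_{g_Q^-}^{\varepsilon(P,Q)}$ and its inverse have norm $\mathrm{O}\bigl(e^{\left\Vert\varepsilon(P,Q)\right\Vert_{\mathbb{R}^n}}\bigr)$.
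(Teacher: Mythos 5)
Your proposal is correct and matches the paper's intent: the paper states Corollary~\ref{cor08} as a direct byproduct of the proof of Proposition~\ref{pro1}, and your argument is exactly that bookkeeping made explicit, writing each factor as $\mathrm{Id}+E_j$ with $\leftn E_j\rightn$ controlled by Lemmas~\ref{lem04} and~\ref{lem05}, and summing (equivalently, one could telescope the Cauchy increments already estimated in that proof). The care you take about uniformity of the implied constant in $\mathcal{P}$ and positivity of $B$ under the same smallness threshold $\left\Vert\varepsilon\right\Vert<K/2C$ is consistent with the paper.
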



We emphasize the fact that the shearing map $\varphi_{PQ}\in \SL_{n}(\mathbb{R})$ is parametrized by the transverse $n$--twisted cocycle $\varepsilon \in \CO$. 

\subsection{Composition of shearing maps}
\label{ShearComposition}
Let $\varphi_{\mathcal{P}}$ be as in Proposition~\ref{pro1}. The shearing map $\varphi_{PQ}$ satisfies the following properties.

\begin{thm}
\label{ShearComp}
For $\varepsilon\in \CO$ small enough, for every plaques $P$, $Q$, $R$ of $\widetilde{S}-\widetilde{\lambda}$, the map $\varphi_{\mathcal{P}}$ converges to a linear map $\varphi_{PQ}\in \SL_n(\mathbb{R})$ as $\mathcal{P}$ tends to $\mathcal{P}_{PQ}$. In addition, $\varphi_{QP}=\varphi_{PQ}^{-1}$ and $\varphi_{PR}=\varphi_{PQ}\varphi_{QR}$.
\end{thm}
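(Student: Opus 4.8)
The plan is to reduce the cocycle identities $\varphi_{QP}=\varphi_{PQ}^{-1}$ and $\varphi_{PR}=\varphi_{PQ}\varphi_{QR}$ to the corresponding \emph{finite} statements about the approximants $\varphi_{\mathcal{P}}$, where they are essentially formal. The existence of the limit $\varphi_{PQ}=\lim_{\mathcal{P}\to\mathcal{P}_{PQ}}\varphi_{\mathcal{P}}\in\SL_n(\mathbb{R})$ is already Proposition~\ref{pro1}; note that each factor $T_g^{\pm\varepsilon(P,R)}$ lies in $\SL_n(\mathbb{R})$ because $\varepsilon\in\CO$ satisfies $\sum_i\varepsilon_i=0$, so $\varphi_{\mathcal{P}}\in\SL_n(\mathbb{R})$ and hence so is the limit. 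The one delicate point is that the "$\varepsilon(P,R)$'' shifts appearing in the definition of $\varphi_{\mathcal{P}}$ are all computed \emph{relative to the basepoint triangle} $P$; changing the basepoint conjugates/rescales the individual factors, and one must check these rescalings telescope correctly. This is the step I expect to be the main obstacle, and it is really a bookkeeping lemma about the twisted cocycle rather than anything analytic.

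First I would record the basepoint–change rule. For plaques $P,P'$ and any plaque $R\in\mathcal{P}_{PQ}\cap\mathcal{P}_{P'Q}$, the additivity and homotopy invariance of $\varepsilon$ give $\varepsilon(P',R)=\varepsilon(P,R)-\varepsilon(P,P')$ as elements of $\mathbb{R}^n$ (choosing the transverse arcs compatibly, using that $\widehat{p(k_{P'R})}$ is homotopic to the concatenation of a reverse of $\widehat{p(k_{PP'})}$ with $\widehat{p(k_{PR})}$). Consequently $T_g^{\varepsilon(P',R)}=T_{g_Q^-}^{-\varepsilon(P,P')}\,T_g^{\varepsilon(P,R)}\,T_{g_Q^-}^{\varepsilon(P,P')}$ only after one is careful that $T^\delta_g$ and $T^{\delta'}_{g}$ commute and that the relevant lines $\widetilde V_i(g)$ are used; more usefully, factoring a common $T_{g_\bullet}^{\varepsilon(P,P')}$ through the whole product shows
$$
\varphi^{(P')}_{\mathcal{P}}=T^{-\varepsilon(P,P')}_{?}\circ\varphi^{(P)}_{\mathcal{P}}\circ\cdots,
$$
where the superscript indicates the basepoint; I would make this precise by writing out $\varphi_{\mathcal{P}}$ as a product in which each "$T^{\varepsilon(P,R_j)}_{g_j^-}\circ T^{-\varepsilon(P,R_j)}_{g_j^+}$'' block is intrinsic (independent of basepoint, since the \emph{difference} $\varepsilon(P,R_j)$ does not enter as an absolute shift but as a conjugating element that cancels between the $g_j^-$ and $g_j^+$ factors once one tracks that $T^\delta$ acts on the same line decomposition on both sides of a gap up to the exponentially small error of Lemma~\ref{cor03}). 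This is exactly the content that makes the definition of $\varphi_{PQ}$ basepoint-free in the limit.

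With the basepoint issue dispatched, the identities are immediate at the finite level. Given finite $\mathcal{P}\subset\mathcal{P}_{PR}$, split it as $\mathcal{P}=\mathcal{P}'\sqcup\{\text{triangles in }\mathcal{P}_{QR}\}$ adapted to $Q$ sitting between $P$ and $R$: the product defining $\varphi_{\mathcal{P}}$ then literally factors as $\varphi^{(P)}_{\mathcal{P}'}\circ\varphi^{(Q)}_{\mathcal{P}''}$ after inserting $T^{\varepsilon(P,Q)}_{g_Q^-}\circ T^{-\varepsilon(P,Q)}_{g_Q^-}=\mathrm{Id}$ at the junction and absorbing the basepoint change $P\rightsquigarrow Q$ into $\mathcal{P}''$ via the rule above. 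Passing to the limit $\mathcal{P}'\to\mathcal{P}_{PQ}$, $\mathcal{P}''\to\mathcal{P}_{QR}$ (legitimate since each subproduct converges by Proposition~\ref{pro1} and matrix multiplication is continuous) yields $\varphi_{PR}=\varphi_{PQ}\varphi_{QR}$. The relation $\varphi_{QP}=\varphi_{PQ}^{-1}$ follows by taking $R=P$ and noting $\varphi_{PP}=\mathrm{Id}$ (the defining product over the empty family of intermediate triangles is $T^{\varepsilon(P,P)}_{g_P^-}=T^0=\mathrm{Id}$), or directly by observing that reversing the orientation of the transverse arc $k$ reverses the order of the product and replaces each $T^\delta$ by $T^{-\delta}$, which is exactly the inverse product. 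I would present the cocycle computation as the body of the proof and isolate the basepoint-change rule as a short displayed claim proved first, since that is where all the genuine (if elementary) work lies.
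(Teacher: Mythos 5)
Your overall strategy is the paper's own: rewrite $\varphi_{\mathcal P}$ so that the exponents become the intrinsic consecutive differences $\varepsilon(R_j,R_{j+1})$, after which the composition law is formal, and obtain the inverse by reversing the transverse arc. But the two steps you defer are exactly where the proof lives, and as written they have genuine gaps. First, the blocks $T^{\varepsilon(P,R_j)}_{g_j^-}\circ T^{-\varepsilon(P,R_j)}_{g_j^+}$ are \emph{not} basepoint-independent, and the product does not ``literally factor'' at $Q$: replacing $\varepsilon(P,R_j)$ by $\varepsilon(Q,R_j)=\varepsilon(P,R_j)-\varepsilon(P,Q)$ inserts a factor $T^{-\varepsilon(P,Q)}_{g_j^-}\circ T^{\varepsilon(P,Q)}_{g_j^+}$ into \emph{every} block of the tail, and these telescope only up to a per-block error coming from the mismatch between the line decompositions at $g_j^+$ and at $g_{j+1}^-$. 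One must prove that the infinite product of these errors converges to $\mathrm{Id}$, which requires the counting bound of Lemma~\ref{lem01}, the train-track edge-path argument (Lemma~\ref{edgepath}), and the linear growth bound of Lemma~\ref{lem05} on $\left\Vert\varepsilon(P,R_j)\right\Vert_{\mathbb{R}^n}$; this is precisely the content of Proposition~\ref{ShearMap2}, and it is the analytic heart of the theorem. You correctly identify it as the main obstacle and then do not carry it out.

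Second, the inverse relation. Your first route ($R=P$ and $\varphi_{PP}=\mathrm{Id}$) is circular: it invokes the composition law for the non-monotone triple $(P,Q,P)$, which is not covered by the ``$Q$ between $P$ and $R$'' case you established. Your second route mis-states the key computation: reversing the orientation of $k$ does \emph{not} simply replace $T^{\delta}$ by $T^{-\delta}$. It replaces the preferred lift of the arc to $\widehat U$ by the other sheet of the orientation cover, so the exponent becomes $(\mathfrak R^*\varepsilon)(R_j,R_{j+1})=\bigl(-\varepsilon_n(R_j,R_{j+1}),\ldots,-\varepsilon_1(R_j,R_{j+1})\bigr)$, \emph{and} the line decomposition attached to the reversed geodesic is $(\widetilde V_n,\ldots,\widetilde V_1)$ by Lemma~\ref{lem:LineReversing}. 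Only the combination of these two index reversals --- that is, precisely the twisted condition $\mathfrak R^*\varepsilon_i=-\varepsilon_{n-i+1}$ in the definition of $\CO$ --- yields $T^{\varepsilon(R_{j+1},R_j)}_{\mathfrak R(h_j)}=\bigl(T^{\varepsilon(R_j,R_{j+1})}_{h_j}\bigr)^{-1}$; for a general $n$--tuple of transverse cocycles the inverse relation fails. Finally, the theorem concerns arbitrary plaques, whereas Proposition~\ref{pro1} gives convergence only for triangles joined by a single arc $k$, with a smallness condition depending on $k$; you still need to chain through finitely many transverse arcs covering $S$ so that ``$\varepsilon$ small enough'' depends on $\rho$ alone (the paper's Lemma~\ref{lem11}).
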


\begin{proof}
[Proof of Theorem~\ref{ShearComp}]
The demonstration will take place in several steps. We will consider an alternative description for the shearing map $\varphi_{PQ}$ for which the composition property is immediate by construction. 

Let $k$ be a transverse, simple, nonbacktracking, oriented arc to $\widetilde{\lambda}$ joining a point in the interior of $P$ to a point in the interior of $Q$. For every integer $r>0$, let $\mathcal{P}^{r}_{PQ}$ be the finite set of triangles $R\in \mathcal{P}_{PQ}$ such that the divergence radius $r(k\cap R)\leq r$; see \S\ref{sect:Inequalities}. Index the elements of $\mathcal{P}_{PQ}^{r}$ as $R_{1}$, $R_{2}$, $\ldots$ , $R_{m}$ so that the indexing $j$ of $R_{j}$ increases as one goes from $P$ to $Q$. For every $j=1$, $\ldots$ , $m$, pick a geodesic $h_{j}$ separating the interior of $R_{j}$ from the interior of $R_{j+1}$. Pick also  a geodesic $h_{0}$ between $P$ and $R_{1}$, and a geodesic $h_{m}$ between $R_{m}$ and $Q$, and orient positively the $h_{j}$ for the transverse orientation determined by the oriented arc $k$.

Set
\begin{eqnarray*}
\mathcal{\varphi}^{r}_{PQ}=T_{h_{0}}^{\varepsilon(P,R_{1})}\circ T_{h_{1}}^{\varepsilon(R_{1},R_{2})}\circ T_{h_{2}}^{\varepsilon(R_{2},R_{3})}\circ\cdots\circ T_{h_{m}}^{\varepsilon(R_{m},Q)}.
\end{eqnarray*}

\begin{pro}
\label{ShearMap2}
For $\varepsilon\in \CO$ small enough, $\mathcal{\varphi}^{r}_{PQ}$ is convergent as $r$ tends to $\infty$ and
$$
\textstyle \lim_{r \to \infty} \mathcal{\varphi}^{r}_{PQ}=\mathcal{\varphi}_{PQ}.
$$
\end{pro}

\begin{proof}
We will first estimate the difference between the map $\psi_{\mathcal{P}^{r}_{PQ}}$ of the proof of Proposition~\ref{pro1} and the map $\psi^{r}_{PQ}=\varphi^{r}_{PQ}\circ T_{h_{m}}^{-\varepsilon(PQ)}$. By reordering the terms in the expression of $\psi^{r}_{PQ}$, we have
$$
\psi^{r}_{PQ}=T_{h_{0}}^{\varepsilon(P,R_{1})}\circ T_{h_{1}}^{-\varepsilon(P,R_{1})}
\circ T_{h_{1}}^{\varepsilon(P,R_{2})}\circ T_{h_{2}}^{-\varepsilon(P,R_{2})}
\circ \cdots \circ T_{h_{m-1}}^{\varepsilon(P,R_{m})}\circ T_{h_{m}}^{-\varepsilon(P,R_{m})}
$$
and as previously,
$$
\psi_{\mathcal{P}^{r}_{PQ}}=T_{g_{1}^{-}}^{\varepsilon(P,R_{1})}\circ T_{g_{1}^{+}}^{-\varepsilon(P,R_{1})}\circ T_{g_{2}^{-}}^{\varepsilon(P,R_{2})}\circ T_{g_{2}^{+}}^{-\varepsilon(P,R_{2})} \circ \cdots \circ T_{g_{m}^{-}}^{\varepsilon(P,R_{m})}\circ T_{g_{m}^{+}}^{-\varepsilon(P,R_{m})}.
$$
Note that the map $\psi^{r}_{PQ}$ is obtained from $\psi_{\mathcal{P}^{r}_{PQ}}$ by replacing each term $T_{g_{j}^{-}}^{\varepsilon(P,R_{j})}\circ T_{g_{j}^{+}}^{-\varepsilon(P,R_{j})}$ by $T_{h_{j-1}}^{\varepsilon(P,R_{j})}\circ T_{h_{j}}^{-\varepsilon(P,R_{j})}$.


Consider a train track neighborhood of the maximal geodesic lamination $\widetilde{\lambda}$ for which the transverse arc $k$ is a \emph{switch}; see \cite{PeH, Bon4}. 

\begin{lem}
\label{edgepath}
The two geodesics $g_{j}^{+}$ and $g_{j+1}^{-}\subset \widetilde{\lambda}$ follow the same edge-path of length $2r$ in the above train track.
\end{lem}


\begin{proof}
If they do not, there exists an ideal triangle $R$ between $R_{j}$ and $R_{j+1}$ whose sides $g_{R}^{-}$ and $g_{R}^+$ follow the same edge-paths of length $2r$ as $g_{j}^{+}$ and $g_{j+1}^{-}$, respectively. $g_{R}^-$ and $g_{R}^+$ being  asymptotic, it implies that $g_{R}^-$ and $g_{R}^+$ must follow the same edge-path of length $r$, hence $R\in \mathcal{P}^{r}_{PQ}$ which contradicts the assumption.
\end{proof}

In one hand, since the geodesic $h_{j}$ lies between $g_{j}^{+}$ and $g_{j+1}^{-}$, it follows the same edge-path of length $2r$ in the train track. In particular, the distance between any two of these three geodesics is thus a $\mathrm{O}(e^{-Ar})$ for some $A\geq0$ (depending on $k$). On the other hand, by Lemma~\ref{edgepath} and Lemma~\ref{lem02}, the distance between $g_{j}^{-}$ and $g_{j}^{+}$ is a $\mathrm{O}(e^{-Ar(k\cap R_{j})})$. Recall that $R_{j}\in \mathcal{P}^r_{PQ}$, so $r(k \cap R_{j})\leq r$. The above discussion implies that the distance between $h_{j}$ and $h_{j+1}$ is also a $\mathrm{O}(e^{-Ar(k\cap R_{j})})$. Following the arguments in the proof of Lemma~\ref{lem04}, the previous estimates show that we can find some constants  $M\geq0$ and $K>0$ (both depending on $k$ and $\rho$) for which, for every $j$,
$$
\leftn T_{h_{j}}^{\varepsilon(P,R_{j})} \circ T_{h_{j+1}}^{-\varepsilon(P,R_{j})} -\mathrm{Id}\rightn \leq Me^ {2 C \left \Vert \varepsilon\right \Vert (r(k \cap R_{j})+1)}e^{-K r(k\cap R_{j})}
$$
and 
$$
\leftn T_{g_{j}^{-}}^{\varepsilon(P,R_{j})}\circ T_{g_{j}^{+}}^{-\varepsilon(P,R_{j})} -\mathrm{Id}\rightn \leq Me^ {2 C \left \Vert \varepsilon\right \Vert (r(k \cap R_{j})+1)}e^{-K r(k\cap R_{j})}.
$$

Let $\psi$ be any map obtained from $\psi_{\mathcal{P}^{r}_{PQ}}$ by replacing some of the $m$ terms  $T_{g_{j}^{-}}^{\varepsilon(P,R_{j})}\circ T_{g_{j}^{+}}^{-\varepsilon(P,R_{j})}$ by $T_{h_{j-1}}^{\varepsilon(P,R_{j})}\circ T_{h_{j}}^{-\varepsilon(P,R_{j})}$ or by the identity. As in the proof of Proposition~\ref{pro1}, it follows from the latter estimates that  
\begin{eqnarray*}
\log \leftn \psi \rightn &=&\mathrm{O}\left ( \sum_{j=1}^{m}e^ {2 C \left \Vert \varepsilon\right \Vert (r(k \cap R_{j})+1)} e^{-K r(k \cap R_{j})} \right )\\ \\
&=&\mathrm{O}\left (\sum_{r=0}^{\infty} e ^{2 C \left \Vert \varepsilon\right \Vert (r+1)} e ^{-K r}\right ).
\end{eqnarray*}
Consequently, the norm of such a map $\psi$ is uniformely bounded, whenever $\left \Vert \varepsilon \right \Vert\leq K/2C$.

Let $\psi_{l}$ be obtained from $\psi_{\mathcal{P}_{PQ}^{r}}$ by replacing each $T_{g_{j}^{-}}^{\varepsilon(P,R_{j})}\circ T_{g_{j}^{+}}^{-\varepsilon(P,R_{j})}$ with $j\leq l$ by $T_{h_{j-1}}^{\varepsilon(P,R_{j})}\circ T_{h_{j}}^{-\varepsilon(P,R_{j})}$, so that $\psi_{0}=\psi_{\mathcal{P}_{PQ}^{r}}$ and $\psi_{m}=\psi^{r}_{PQ}$. Again, as in the proof of Proposition~\ref{pro1}, we estimate the difference between $\psi_{l-1}$ and $\psi_{l}$. We have $\psi_{l-1}=\psi \circ T_{g_{l}^{-}}^{\varepsilon(P,R_{l})} \circ T_{g_{l}^{+}}^{-\varepsilon(P,R_{l})} \circ \psi'$ and  $\psi_{l}=\psi \circ T_{h_{l-1}}^{\varepsilon(P,R_{l})}\circ T_{h_{l}}^{-\varepsilon(P,R_{l})} \circ \psi'$, where $\psi$ and $\psi'$ are obtained from replacing some $T_{g_{j}^{-}}^{\varepsilon(P,R_{j})}\circ T_{g_{j}^{+}}^{-\varepsilon(P,R_{j})}$ by $T_{h_{j-1}}^{\varepsilon(P,R_{j})}\circ T_{h_{j}}^{-\varepsilon(P,R_{j})}$ or the identity. As observed above, $\leftn \psi  \rightn$ and $\leftn \psi'  \rightn$ are uniformely bounded. Hence
\begin{eqnarray*}
\leftn \psi_{l-1}-\psi_{l}  \rightn &\leq& \leftn \psi  \rightn \, \leftn T_{g_{l}^{-}}^{\varepsilon(P,R_{l})} \circ T_{g_{l}^{+}}^{-\varepsilon(P,R_{l})} - T_{h_{l-1}}^{\varepsilon(P,R_{l})}\circ T_{h_{l}}^{-\varepsilon(P,R_{l})} \rightn \, \leftn \psi' \rightn\\ \\
&=&\mathrm{O} \left ( e^{4C\left \Vert \varepsilon \right \Vert (r+1)-K r}  \right ).
\end{eqnarray*}
Therefore,
$$
\leftn \psi^{r}_{PQ}-\psi_{\mathcal{P}_{PQ}^{r}}  \rightn=\leftn \psi_{m}-\psi_{0} \rightn \leq m\, \mathrm{O} \left ( e^{4C\left \Vert \varepsilon \right \Vert (r+1)-K r}  \right )=\mathrm{O}\left (r e^{4C\left \Vert \varepsilon \right \Vert (r+1)-K r}  \right )
$$
since $m=\mathrm{Card}(\mathcal{P}^r_{PQ})=\mathrm{O}(r)$ by Lemma~\ref{lem01}.
We conclude that $\psi^{r}_{PQ}$ and $\psi_{\mathcal{P}_{PQ}^{r}}$ have the same limit as $r$ tends to $\infty$ whenever $\left \Vert  \varepsilon \right \Vert < K/4C$ (recall that, by Proposition~\ref{ShearMap2}, $\psi_{\mathcal{P}_{PQ}^{r}}$ converges whenever $\left \Vert \varepsilon \right \Vert<K /2C$). At last, observe that $h_{m}$ converges to $g_{Q}^{-}$, which implies that both $\varphi^{r}_{PQ}=\psi^{r}_{PQ}\circ T_{h_{m}}^{\varepsilon(P,Q)}$ and $\varphi_{\mathcal{P}_{PQ}^{r}}=\psi_{\mathcal{P}_{PQ}^{r}}\circ T_{g_{Q}^{-}}^{\varepsilon(P,Q)}$ converge to the same limit $\varphi_{PQ}$. 
\end{proof}

\begin{cor}
\label{cor008}
Let $k$ be a transverse, simple, nonbacktracking, oriented arc to $\widetilde{\lambda}$. For $\varepsilon\in \CO$ small enough, for every triangles $P$, $Q$, $R\subset \widetilde{S}-\widetilde{\lambda}$ intersecting the arc $k$, $\varphi_{QP}=(\varphi_{PQ})^{-1}$ and $\varphi_{PR}=\varphi_{PQ}\varphi_{QR}$.
\end{cor}

\begin{proof}
Let $k$ be as above. First, suppose that the oriented arc $k$ intersects the triangles $P$, $Q$ and $R$ in this order. Then, the composition property $\varphi_{PR}=\varphi_{PQ}\varphi_{QR}$ is a straightforward consequence of the 	alternative description of Proposition~\ref{ShearMap2} for the shearing map $\varphi_{PQ}$.


Likewise, let $\mathfrak{R}(k)$ be the arc $k$, but oriented in the opposite direction; in particular, the oriented arc $\mathfrak{R}(k)$ is oriented from $Q$ to $P$. Orient positively the leaves of $\widetilde{\lambda}$  that intersects $\mathfrak{R}(k)$ for the transverse orientation determined by the oriented arc $\mathfrak{R}(k)$. Then, with the same notations as in Proposition~\ref{ShearMap2}, we have that $\mathcal{\varphi}_{QP}=\textstyle \lim_{r \to \infty} \mathcal{\varphi}^{r}_{QP}$, where
\begin{eqnarray*}
\mathcal{\varphi}^{r}_{QP}=T_{\mathfrak{R}(h_{m})}^{\varepsilon(Q,R_{m})}\circ T_{\mathfrak{R}(h_{m-1})}^{\varepsilon(R_{m},R_{m-1})}\circ\cdots\circ  T_{\mathfrak{R}(h_{1})}^{\varepsilon(R_{2},R_{1})}\circ T_{\mathfrak{R}(h_{m})}^{\varepsilon(R_{1},P)}
\end{eqnarray*}
with the difference that each oriented geodesic $h_j$ has been replaced by $\mathfrak{R}(h_j)$, which denotes the same geodesic $h_j$ but oriented in the opposite direction. Let us consider the general term $T_{\mathfrak{R}(h_{j})}^{\varepsilon(R_{j+1},R_{j})}$. Since $\varepsilon\in \CO$, it follows from the definition of $\varepsilon(R_{j+1},R_{j})=\big(\varepsilon_1(R_{j+1},R_{j}), \ldots , \varepsilon_n(R_{j+1},R_{j})\big)$ (see \S\ref{ShearTriangle}) that
\begin{eqnarray*}
\varepsilon(R_{j+1},R_{j})&=&(\mathfrak{R}^*\varepsilon)(R_{j},R_{j+1})\\
&=&\big (-\varepsilon_n(R_{j},R_{j+1}), \ldots , -\varepsilon_1(R_{j},R_{j+1}) \big).
\end{eqnarray*}
Moreover, by Lemma~\ref{lem:LineReversing}, the line decomposition associated with the oriented geodesic $\mathfrak{R}(h_{j})$ is $\widetilde{V}_{n}(h_{j}) \oplus \cdots \oplus \widetilde{V}_{1}(h_{j})=\mathbb{R}^n$. As a result, by definition of the linear map $T_{\mathfrak{R}(h_{j})}^{\varepsilon(R_{j+1},R_{j})}$ (see \S\ref{sect:Inequalities}),
$$
T_{\mathfrak{R}(h_{j})}^{\varepsilon(R_{j+1},R_{j})}=\Big ( T_{h_{j}}^{\varepsilon(R_{j},R_{j+1})}\Big)^{-1}
$$ 
and we conclude immediately that $\mathcal{\varphi}_{QP}=\textstyle \lim_{r \to \infty} \mathcal{\varphi}^{r}_{QP}=(\mathcal{\varphi}_{PQ})^{-1}$. The general case follows from these two special cases. 
\end{proof}

In all previous statements, the size of the transverse $n$--twisted cocycle $\varepsilon\in \CO$ depends on the considered transverse arc $k$ and on the Anosov representation $\rho$. 

\begin{lem}
\label{lem11}
For $\varepsilon\in \CO$ small enough, for every triangles $P$, $Q$, $R$ of $\widetilde{S}-\widetilde{\lambda}$, the map $\varphi_{\mathcal{P}}$ converges to a linear map $\varphi_{PQ}\in \SL_n(\mathbb{R})$, as $\mathcal{P}$ tends to $\mathcal{P}_{PQ}$. In addition, $\varphi_{QP}=\varphi_{PQ}^{-1}$ and $\varphi_{PR}=\varphi_{PQ}\varphi_{QR}$.
\end{lem}

\begin{proof}
Pick in the surface $S$ finitely many tranverse arcs $k_{1}$, $\ldots$ , $k_{N}$ to $\lambda$ such that each component of $S-\lambda$ meets at least one of the $k_{i}$. Given two triangles $P$ and $Q$ in $\widetilde{S}-\widetilde{\lambda}$, there is a finite sequence of triangles $R_{0}=P$, $R_{1}$, $\ldots$ , $R_{N}$, $R_{N+1}=Q$ such that each $R_{j}$ separates $R_{j-1}$ from $R_{j+1}$, and such that $R_{j}$ and $R_{j+1}$ meets the same lift $\widetilde{k}_{i_{j}}$. Choose $\varepsilon$ small enough so that the convergence of the $\varphi_{R_{j}, R_{j+1}}$ is guaranteed for every $j=1$, $\ldots$ , $N$. It follows that $\varphi_{PQ}=\lim_{\mathcal{P}\to\mathcal{P}_{PQ}}\varphi_{\mathcal{P}}$ exists, and is equal to $\varphi_{R_{0}R_{1}}\varphi_{R_{1}R_{2}} \ldots \varphi_{R_{N}R_{N+1}}$.
\end{proof}

This achieves the proof of Theorem~\ref{ShearComp}.
\end{proof}

As a consequence of Lemma~\ref{lem11}, we can find $\varepsilon \in \CO$ small enough for which the shearing map $\varphi_{PQ}$ is defined for every triangles $P$ and $Q$ in the complement $\St-\widetilde{\lambda}$, and satisfies the composition property of Lemma~\ref{cor008}. The condition ``$\varepsilon \in \CO$ small enough" becomes a condition depending on the Anosov representation $\rho$ only.


\subsection{Cataclysm deformations}
\label{CataDeformations}

Let  $\varepsilon\in \CO$ be a transverse $n$--twisted cocycle sufficiently small. Fix a triangle $P_0\subset \widetilde{S}-\widetilde{\lambda}$. The $\varepsilon$--\emph{cataclysm deformation of the Anosov representation $\rho$ along the maximal geodesic lamination $\lambda$} is the homomorphism $\Lambda^{\varepsilon}\rho\colon \pi_1(S)\to \PSL_n(\mathbb{R})$ defined as follows: for every $\gamma\in \pi_1(S)$,
$$
\Lambda^{\varepsilon}\rho(\gamma)=\varphi_{P_0\gamma P_0}\circ \rho(\gamma)\\
$$
where $\varphi_{P_0\gamma P_0}$ is the shearing map between the two triangles $P_0$ and $\gamma P_0\subset \St-\widetilde{\lambda}$; see \S\ref{ShearTriangle}.

We must verify that $\Lambda^{\varepsilon}\rho\colon\pi_1(S)\to \PSL_n(\mathbb{R})$ is a group homomorphism. Put $\rho'=\Lambda^{\varepsilon}\rho$. By definition of the shearing map $\varphi_{PQ}$, one easily verifies that it satisfies the following equivariant property: for every $\gamma\in \pi_1(S)$, for every $P$, $Q\subset \widetilde{S}-\widetilde{\lambda}$, $\varphi_{\gamma P \gamma Q}=\rho(\gamma)\circ\varphi_{PQ}\circ \rho(\gamma)^{-1}$. Thus, for every $\gamma_1$, $\gamma_2\in \pi_1(S)$,
\begin{eqnarray*}
\rho'(\gamma_1\gamma_2)&=&\varphi_{P_0\gamma_1\gamma_2 P_0} \circ \rho(\gamma_1\gamma_2)\\
&=&\varphi_{P_0\gamma_1 P_0}\circ\varphi_{\gamma_{1} P_0\gamma_1 \gamma_2 P_0}\circ \rho(\gamma_1)\circ\rho(\gamma_2)\\
&=&\varphi_{P_0\gamma_1 P_0}\circ\rho(\gamma_1)\circ \varphi_{ P_0\gamma_2 P_0}\circ\rho(\gamma_2)\\
&=&\rho'(\gamma_1)\rho'(\gamma_2).
\end{eqnarray*}

Note that a different choice of triangle $P_0\subset \widetilde{S}-\widetilde{\lambda}$ yields another homomorphism $\rho''$ that is conjugate to the previous $\rho'$; in particular, $\rho'=\Lambda^{\varepsilon}\rho$ defines without any ambiguity a point in the character variety $\Rep_{\PSL_{n}(\mathbb{R})}(S)$.

Recall that the set of Anosov representations $\Rep^{\text{Anosov}}_{\PSL_{n}(\mathbb{R})}(S)$  is open in the character variety $\Rep_{\PSL_{n}(\mathbb{R})}(S)$ \cite{La1, GuiW2}. We can now state the main result of this section.

\begin{thm}
\label{CataTheorem}
Let $\rho$ be an Anosov representation. There exist a neighborhood $\mathcal{U}^\rho$ of $0\in \CO$, and a continuous, injective map
\begin{eqnarray*}
\Lambda\colon \mathcal{U}^\rho& \to &\Rep^{\mathrm{Anosov}}_{\PSL_{n}(\mathbb{R})}(S)\\
\varepsilon=(\varepsilon_1, \ldots , \varepsilon_n)&\mapsto&\Lambda^{\varepsilon}\rho
\end{eqnarray*}
such that $\Lambda^{0}\rho=\rho$.
\end{thm}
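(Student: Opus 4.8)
The plan is to assemble the theorem from the pieces already constructed, and then address the three genuinely new assertions: that $\Lambda^\varepsilon\rho$ is Anosov for small $\varepsilon$, that $\Lambda^0\rho=\rho$, and that the map $\Lambda$ is continuous and injective on a neighborhood of $0$.

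First I would dispense with the easy points. The fact that $\Lambda^\varepsilon\rho\colon\pi_1(S)\to\PSL_n(\mathbb{R})$ is a well-defined homomorphism and a well-defined point of the character variety has already been checked in \S\ref{CataDeformations}. For $\Lambda^0\rho=\rho$: when $\varepsilon=0$, every linear map $T_g^{\varepsilon(P,R)}$ is the identity, hence every $\varphi_{\mathcal P}$ is the identity and so is every shearing map $\varphi_{PQ}$; therefore $\Lambda^0\rho(\gamma)=\varphi_{P_0\gamma P_0}\circ\rho(\gamma)=\rho(\gamma)$. For continuity of $\varepsilon\mapsto\Lambda^\varepsilon\rho$: it suffices to show $\varepsilon\mapsto\varphi_{P_0\gamma P_0}$ is continuous for each fixed $\gamma$, and this follows from the uniform estimates in the proof of Proposition~\ref{pro1} and Corollary~\ref{cor08}; indeed, fixing a transverse arc $k$ joining $P_0$ to $\gamma P_0$, the partial products $\varphi_{\mathcal P}$ depend continuously (in fact real-analytically) on $\varepsilon$ through the exponentials $e^{\varepsilon_i}$, and by Corollary~\ref{cor08} the tail $\leftn\varphi_{PQ}-\varphi_{\mathcal P}\rightn$ is dominated by $\sum_{R\notin\mathcal P}e^{-Br(k\cap R)}$, a bound that is uniform for $\varepsilon$ in a fixed small ball by the argument following Lemma~\ref{lem05}; uniform convergence of a continuous family gives continuity of the limit. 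Since the character variety carries the quotient topology and $\Lambda^\varepsilon\rho$ is assembled from finitely many word-length-bounded generators, continuity of $\Lambda$ follows.

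The substantive point is that $\Lambda^\varepsilon\rho$ is Anosov for $\varepsilon$ small. Here I would invoke Theorem~\ref{thm:Labourie} (openness of $\Rep^{\mathrm{Anosov}}_{\PSL_n(\mathbb R)}(S)$ in the character variety): since $\Lambda^0\rho=\rho$ is Anosov and $\Lambda$ is continuous at $0$, there is a neighborhood $\mathcal U^\rho$ of $0\in\CO$ whose image lies in the open set $\Rep^{\mathrm{Anosov}}_{\PSL_n(\mathbb R)}(S)$. This is the cleanest route and avoids reconstructing the Anosov section by hand; the alternative — deforming the flag curve $\mathcal F_\rho$ directly and checking the contraction estimates of \S\ref{bundledescription}(\ref{AnosovProperty}) — is available from the estimates in \S\ref{sect:Inequalities} but is more laborious and not needed for this statement.

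The main obstacle is injectivity of $\Lambda$ near $0$. One cannot argue purely formally, since a priori distinct cocycles could produce conjugate representations. The strategy I would pursue is to recover $\varepsilon$ from $\rho'=\Lambda^\varepsilon\rho$ via a geometric invariant. For each oriented leaf $g$ of $\widetilde\lambda$ and each $i$, the shearing construction records, through the comparison of the flag decorations $\mathcal F_\rho$ and $\mathcal F_{\rho'}$ across $g$, a quantity whose "jump" along a transverse arc $k$ is precisely $\varepsilon_i(k)$; concretely, $\varphi_{PQ}$ has been written (Corollary~\ref{cor08}) as $\psi_{PQ}\circ T_{g_Q^-}^{\varepsilon(P,Q)}$ with $\psi_{PQ}=\mathrm{Id}+\mathrm{O}(\sum e^{-Br(k\cap R)})$, and one can extract $\varepsilon(P,Q)\in\mathbb R^n/(\text{diagonal})$ from the way $\varphi_{PQ}$ rescales the lines $\widetilde V_i(g_Q^-)$ of the \emph{original} Anosov section, modulo the $\psi_{PQ}$ correction. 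Passing to the limit as $Q$ ranges over triangles accumulating on a leaf isolates $\varepsilon$ on that leaf, and letting the arc $k$ vary reconstructs the transverse $n$--twisted cocycle $\varepsilon\in\CO$ from $\rho'$ alone (the equivariance conditions $\mathfrak R^*\varepsilon_i=-\varepsilon_{n-i+1}$ and $\sum\varepsilon_i=0$ are automatic from Lemma~\ref{lem:LineReversing} and from working in $\SL_n$). Since this reconstruction is canonical, conjugate $\rho'$ give the same $\varepsilon$, so $\Lambda$ is injective; a continuity/openness argument (the reconstruction map is a continuous left inverse defined on a neighborhood of $\rho$) then lets one shrink $\mathcal U^\rho$ so that injectivity holds honestly. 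I expect the technical heart of the full proof to be making this "$\varepsilon$ is read off from the deformed flag curve" step precise and uniform in the small-$\varepsilon$ regime.
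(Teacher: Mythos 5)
Your proposal follows the paper's own route: the homomorphism property and $\Lambda^{0}\rho=\rho$ are handled in \S\ref{CataDeformations}, the Anosov property for small $\varepsilon$ is obtained exactly as you suggest from continuity at $0$ together with the openness of $\Rep^{\mathrm{Anosov}}_{\PSL_{n}(\mathbb{R})}(S)$ (Theorem~\ref{thm:Labourie}), and injectivity is proved by recovering $\varepsilon$ from the deformed representation. Your ``read off how the shearing maps rescale the lines $\widetilde{V}_i$'' invariant is precisely the paper's quantity $\delta^{\rho\rho'}_i(k)$ of \S\ref{DiffCocy}, whose absolute convergence and identification with $\varepsilon_i(k)$ are Lemma~\ref{lem28} and Proposition~\ref{pro34}, giving injectivity as in Corollary~\ref{Cor36}.
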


We will refer to the map $\Lambda\colon \mathcal{U}^\rho \to \Rep^{\mathrm{Anosov}}_{\PSL_{n}(\mathbb{R})}(S)$ as the \emph{cataclysm map based at $\rho$ along the maximal geodesic $\lambda$}. The transverse $n$--twisted cocycle $\varepsilon\in\CO$ is the \emph{shear parameter} of the cataclysm deformation $\Lambda^{\varepsilon}\rho$; it determines the ``magnitude'' of the cataclysm. The injectivity of the cataclysm map $\Lambda$ will be proved in \S\ref{DiffCocy}; see Corollary~\ref{Cor36}.

\section{Cataclysms and flag curves}
\label{Sect:Catac}

We now study the effect of a cataclysm deformation on the associated equivariant flag curve $\mathcal{F}_{\rho}\colon\Sinf \rightarrow \mathrm{Flag}(\mathbb{R}^{n})$ of some Anosov representation $\rho$.

Let $\rho'=\Lambda^{\varepsilon}\rho$ be the $\varepsilon$--cataclysm deformation of $\rho$ along a maximal geodesic lamination $\lambda\subset S$ for some transverse $n$--twisted cocycle $\varepsilon \in \CO$ small enough. Fix an ideal triangle $P_0\subset \widetilde{S}-\widetilde{\lambda}$, and consider the equivariant family of shearing maps $\{\varphi_{P_0P}\}_{P\subset \widetilde{S}-\widetilde{\lambda}}\subset \SL_n(\mathbb{R})$; see \S\ref{ShearTriangle}. Let $\mathcal{V}_{\lambda}\subset \Sinf$ be the set of vertices of the ideal triangles in $\widetilde{S}-\widetilde{\lambda}$; note that the set $\mathcal{V}_{\lambda}\subset \Sinf$ is $\pi_1(S)$--invariant. For every $x\in \mathcal{V}_{\lambda}$, let $P\subset \widetilde{S}-\widetilde{\lambda}$ be an ideal triangle whose $x$ is a vertex, and let $\varphi_{P_0P}\in \PSL_n(\mathbb{R})$ be the associated shearing map. Set
$$
\mathcal{F}_{\rho}'(x)=\varphi_{P_0P}\big(\mathcal{F}_{\rho}(x) \big)
$$
where $\mathcal{F}_{\rho}(x)$ is the image of the vertex $x\in \mathcal{V}_{\lambda}$ by the flag curve $\mathcal{F}_{\rho}\colon\Sinf \rightarrow \mathrm{Flag}(\mathbb{R}^{n})$.

\begin{lem}
\label{FlagExt1}
The above relation defines a $\rho'$--equivariant flag map $\mathcal{F}_{\rho}'\colon \mathcal{V}_{\lambda}\to \mathrm{Flag} \big ( \mathbb{R}^n \big )$. 
\end{lem}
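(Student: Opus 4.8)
The plan is to verify two things: that $\mathcal{F}_\rho'$ is well-defined (independent of the choice of ideal triangle $P$ having $x$ as a vertex), and that it satisfies the required equivariance $\mathcal{F}_\rho'(\gamma x)=\rho'(\gamma)\mathcal{F}_\rho'(x)$ for all $\gamma\in\pi_1(S)$. The fact that $\mathcal{F}_\rho'(x)$ is again a complete flag is automatic, since $\varphi_{P_0P}\in\SL_n(\mathbb{R})$ acts on $\mathrm{Flag}(\mathbb{R}^n)$.

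For well-definedness, suppose $P$ and $P'$ are two ideal triangles in $\widetilde{S}-\widetilde{\lambda}$ that both have $x\in\mathcal{V}_\lambda$ as a vertex. Since $x$ is an ideal vertex, the leaf $g$ of $\widetilde\lambda$ emanating toward $x$ (more precisely, the leaves of $\widetilde\lambda$ with $x$ as an endpoint) separates the various triangles incident to $x$; any triangle $R$ lying between $P$ and $P'$ also has $x$ as a vertex, and all the leaves $g_R^-,g_R^+$ crossed in forming $\varphi_{PP'}$ share the endpoint $x$. The key point is then that the associated linear maps $T_g^{\varepsilon}$ fix the line $\mathcal{F}_\rho^{(1)}(x)=\widetilde V_1(g)$ and, more generally, preserve each subspace of the flag $\mathcal{F}_\rho(x)$: indeed, by the relation (\ref{FlagVersusSection}), $\mathcal{F}_\rho^{(i)}(x)=\widetilde V_1(g)\oplus\cdots\oplus\widetilde V_i(g)$ for any geodesic $g$ with positive endpoint $x$, so $T_g^\varepsilon$, which is diagonal in the decomposition $\bigoplus_j \widetilde V_j(g)$, stabilizes every $\mathcal{F}_\rho^{(i)}(x)$. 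Hence each factor in the infinite product defining $\varphi_{PP'}$ fixes the flag $\mathcal{F}_\rho(x)$, and therefore $\varphi_{PP'}(\mathcal{F}_\rho(x))=\mathcal{F}_\rho(x)$. Writing $\varphi_{P_0P'}=\varphi_{P_0P}\varphi_{PP'}$ via the composition property of Lemma~\ref{lem11}, we get $\varphi_{P_0P'}(\mathcal{F}_\rho(x))=\varphi_{P_0P}(\mathcal{F}_\rho(x))$, so the definition does not depend on the chosen triangle. I expect this ``flag-fixing'' argument — carefully checking that all the shearing leaves between two triangles sharing the vertex $x$ really do have $x$ as a common endpoint, so that the subtlety about orientations in the orientation cover does not spoil the diagonality of each $T_g^\varepsilon$ on $\mathcal{F}_\rho(x)$ — to be the main (though still routine) obstacle.

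For equivariance, fix $\gamma\in\pi_1(S)$ and $x\in\mathcal{V}_\lambda$, and pick a triangle $P$ with vertex $x$; then $\gamma P$ has vertex $\gamma x$, so by definition $\mathcal{F}_\rho'(\gamma x)=\varphi_{P_0,\gamma P}\big(\mathcal{F}_\rho(\gamma x)\big)=\varphi_{P_0,\gamma P}\big(\rho(\gamma)\mathcal{F}_\rho(x)\big)$, using the $\rho$-equivariance of $\mathcal{F}_\rho$. Now use the composition property together with the equivariance property $\varphi_{\gamma P\,\gamma Q}=\rho(\gamma)\,\varphi_{PQ}\,\rho(\gamma)^{-1}$ of the shearing maps (established in \S\ref{CataDeformations}): write $\varphi_{P_0,\gamma P}=\varphi_{P_0,\gamma P_0}\,\varphi_{\gamma P_0,\gamma P}=\varphi_{P_0,\gamma P_0}\,\rho(\gamma)\,\varphi_{P_0 P}\,\rho(\gamma)^{-1}$. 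Substituting,
$$
\mathcal{F}_\rho'(\gamma x)=\varphi_{P_0,\gamma P_0}\,\rho(\gamma)\,\varphi_{P_0P}\,\rho(\gamma)^{-1}\rho(\gamma)\big(\mathcal{F}_\rho(x)\big)=\varphi_{P_0,\gamma P_0}\,\rho(\gamma)\,\varphi_{P_0P}\big(\mathcal{F}_\rho(x)\big)=\rho'(\gamma)\,\mathcal{F}_\rho'(x),
$$
where the last equality is the definition $\rho'(\gamma)=\Lambda^\varepsilon\rho(\gamma)=\varphi_{P_0,\gamma P_0}\circ\rho(\gamma)$ together with $\mathcal{F}_\rho'(x)=\varphi_{P_0P}(\mathcal{F}_\rho(x))$. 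This shows $\mathcal{F}_\rho'$ is $\rho'$-equivariant, completing the proof. (No continuity is claimed here — $\mathcal{F}_\rho'$ is so far only defined on the countable, dense set $\mathcal{V}_\lambda$; extending it to a Hölder flag curve on all of $\Sinf$ is presumably the subject of a later lemma.)
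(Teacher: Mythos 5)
Your proposal is correct and follows essentially the same route as the paper: well-definedness is obtained by observing that every triangle (and hence every shearing leaf) between two triangles sharing the vertex $x$ has $x$ as an ideal endpoint, so each factor $T_g^{\pm\varepsilon}$ fixes the flag $\mathcal{F}_\rho(x)$ and $\varphi_{PQ}$ fixes it too, while equivariance follows from the equivariance of $\mathcal{F}_\rho$ and of the family $\{\varphi_{P_0P}\}$. The only difference is that you write out explicitly the equivariance computation that the paper dismisses as straightforward, and you correctly note that the diagonality of $T_g^\varepsilon$ preserves $\mathcal{F}_\rho^{(i)}(x)$ regardless of whether $x$ is the positive or negative endpoint of $g$.
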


\begin{proof}
There is an ambiguity in the definition of $\mathcal{F}_{\rho}'(x)$, as a vertex $x\in \mathcal{V}_{\lambda}$ may belong to several ideal triangles  in $\widetilde{S}-\widetilde{\lambda}$. Suppose that there is another triangle $Q\subset \widetilde{S}-\widetilde{\lambda}$ whose $x$ is a vertex, and let us compare the images $\varphi_{P_0P}\big(\mathcal{F}_{\rho}(x)\big)$ and $\varphi_{P_0Q}\big(\mathcal{F}_{\rho}(x)\big)$. By the composition property of Lemma~\ref{lem11}, 
$$
\varphi_{P_0Q}\big(\mathcal{F}_{\rho}(x)\big)= \varphi_{P_0P}\big (\varphi_{PQ}\big(\mathcal{F}_{\rho}(x)\big)\big).
$$ 
Observe that, since $P$ and $Q$ share the same vertex $x\in \mathcal{V}_{\lambda}$, any ideal triangle $R$ between $P$ and $Q$ admits $x$ as one of its vertices. Therefore, for every such triangle $R$, both the linear maps $T_{g_R^{-}}^{\varepsilon(P,R)}$ and $T_{g_R^{+}}^{-\varepsilon(P,R)}$ (see \S\ref{ShearTriangle}) fix the flag $\mathcal{F}_{\rho}(x)$. It follows from the definition of the shearing map $\varphi_{PQ}$ that $\varphi_{PQ}\big(\mathcal{F}_{\rho}(x)\big)=\mathcal{F}_{\rho}(x)$, and thus that $\varphi_{P_0Q}\big(\mathcal{F}_{\rho}(x)\big)= \varphi_{P_0P}\big(\mathcal{F}_{\rho}(x)\big)$. The $\rho'$--equivariance comes as a straightforward consequence of the equivariance property of the flag curve $\mathcal{F}_\rho$ and of the family of shearing maps $\{\varphi_{P_0P}\}_{P\subset \widetilde{S}- \widetilde{\lambda}}$.
\end{proof}

Let ${\partial_{\infty}\widetilde{\lambda}}\subset \Sinf$ be the set of ideal endpoints of all leaves contained in the geodesic lamination $\widetilde{\lambda}\subset \widetilde{S}$; note that ${\partial_{\infty}\widetilde{\lambda}}\supset \mathcal{V}_\lambda$. We wish to extend the previous flag map $\mathcal{F}_{\rho}' \colon \mathcal{V}_{\lambda}\to \mathrm{Flag}(\mathbb{R}^n)$ to a flag map $\mathcal{F}_{\rho}' \colon \partial_{\infty}\widetilde{\lambda} \to \mathrm{Flag}(\mathbb{R}^n)$. To this end, we generalize the way to define $\mathcal{F}_{\rho}' \colon \mathcal{V}_{\lambda}\to \mathrm{Flag}(\mathbb{R}^n)$ of Lemma~\ref{FlagExt1}.

Let $g \subset \widetilde{\lambda}$ be a geodesic leaf. Consider a triangle $P\subset\widetilde{S}-\widetilde{\lambda}$ such that there exists a simple, nonbacktracking, oriented arc $k$ transverse to $\widetilde{\lambda}$ joining a point in the interior of $P_0$ to a point in the interior of $P$, and intersecting the leaf $g$. Orient positively the leaves of $\widetilde{\lambda}$ for the transverse orientation defined by the oriented arc $k$. Let $\mathcal{P}_{P_0g}$ be the set of ideal triangles of $\widetilde{S}-\widetilde{\lambda}$ lying between the ideal triangle $P_0$ and the geodesic $g$. Similarly as in \S\ref{ShearTriangle}, set
$$
\psi_{\mathcal{P}}= \prod_{j=1}^m \left (T_{g_{j}^{-}}^{\varepsilon(P_0,R_j)}\circ T_{g_{j}^{+}}^{-\varepsilon(P_0,R_j)} \right )
$$
where $\mathcal{P}=\{R_{1}, R_{2}, \ldots , R_{m} \} \subset \mathcal{P}_{P_0g}$ is a finite subset, and where the indexing $j$ of $R_j$ increases as one goes from $P_0$ to $g$. 

\begin{lem}
\label{ShearingMaps}
For $\varepsilon\in \CO$ small enough, for every leaf $g\subset \widetilde{\lambda}$,
$$
\psi_{P_0g}=\lim_{\mathcal{P} \to \mathcal{P}_{P_0g}}\psi_{\mathcal{P}}
$$ 
exists and is an element of $\SL_n(\mathbb{R})$.
\end{lem}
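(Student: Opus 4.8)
The plan is to mimic the proof of Proposition~\ref{pro1} almost verbatim, since the only structural difference is that the product $\psi_{\mathcal{P}}$ now terminates at a leaf $g$ rather than being bracketed by a second triangle $Q$, and there is no final factor $T_{g_Q^-}^{\varepsilon(P,Q)}$. First I would observe that if $\mathcal{P}_{P_0 g}$ is finite there is nothing to prove, so I assume it is infinite. Then, exactly as before, I would pick the transverse arc $k$ from the interior of $P_0$ through $g$, orient the leaves of $\widetilde{\lambda}$ positively with respect to $k$, and note that each triangle $R_j\in\mathcal{P}_{P_0 g}$ is crossed by $k$, so the divergence radius $r(k\cap R_j)$ is defined and, by Lemma~\ref{lem01}, the number of triangles with a given divergence radius is bounded by $4g-4$.

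The uniform boundedness of $\leftn\psi_{\mathcal{P}}\rightn$ then follows in precisely the same way: by Lemma~\ref{lem04} each factor satisfies
$$
\leftn T^{\varepsilon(P_0,R_{j})}_{g_{j}^{-}} \circ T^{-\varepsilon(P_0,R_{j})}_{g_{j}^{+}}-\mathrm{Id} \rightn=\mathrm{O}\left ( e^ {2 \left \Vert \varepsilon(P_0,R_{j})\right \Vert_{\mathbb{R}^n}-Kr(k \cap R_j)}\right),
$$
so $\leftn\psi_{\mathcal{P}}\rightn\le\prod_j\bigl(1+Me^{2\|\varepsilon(P_0,R_j)\|_{\mathbb{R}^n}-Kr(k\cap R_j)}\bigr)$, and combining the bound $\|\varepsilon(P_0,R_j)\|_{\mathbb{R}^n}\le C\|\varepsilon\|\,(r(k\cap R_j)+1)$ from Lemma~\ref{lem05} (together with the fact that the $2$--cover $\widehat U\to U$ does not increase the divergence radius) with Lemma~\ref{lem01}, the relevant series is dominated by finitely many copies of $\sum_{r\ge0}e^{2C\|\varepsilon\|(r+1)}e^{-Kr}$, which converges whenever $\|\varepsilon\|<K/2C$. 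Next, for the convergence itself I would take an increasing exhaustion $\mathcal{P}_m\nearrow\mathcal{P}_{P_0 g}$ with $\mathrm{Card}(\mathcal{P}_m)=m$, write $\psi_{\mathcal{P}_{m+1}}=\psi_{\mathcal{P}}\circ T_{g_R^-}^{\varepsilon(P_0,R)}\circ T_{g_R^+}^{-\varepsilon(P_0,R)}\circ\psi_{\mathcal{P}'}$ where $R$ is the one extra triangle, and use Lemma~\ref{lem04} plus the uniform bound to get $\leftn\psi_{\mathcal{P}_{m+1}}-\psi_{\mathcal{P}_m}\rightn=\mathrm{O}\bigl(e^{2\|\varepsilon(P_0,R)\|_{\mathbb{R}^n}-Kr(k\cap R)}\bigr)$; since $\mathcal{P}_{P_0 g}$ is infinite, $r(k\cap R)\to\infty$ along the exhaustion, so the sequence $\psi_{\mathcal{P}_m}$ is Cauchy in $\mathrm{End}(\mathbb{R}^n)$ and hence convergent for $\|\varepsilon\|<K/2C$. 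Finally, each $\psi_{\mathcal{P}}$ is a composition of maps of determinant $1$ (each $T_g^\delta$ multiplies the $i$--th coordinate by $e^{\delta_i}$ and $\sum\delta_i=0$ because $\varepsilon\in\CO$), so $\det\psi_{\mathcal{P}}=1$; since $\mathrm{SL}_n(\mathbb{R})$ is closed in $\mathrm{End}(\mathbb{R}^n)$ and the limit is invertible (it is a uniform limit of elements whose inverses are also uniformly bounded, by the same estimate applied to the reversed product), the limit lies in $\SL_n(\mathbb{R})$.

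I do not anticipate a genuine obstacle here: the argument is a direct transcription of Proposition~\ref{pro1} with the endpoint $Q$ replaced by the leaf $g$, and the one point that deserves a sentence of care is why one still gets an element of $\SL_n(\mathbb{R})$ rather than merely a bounded endomorphism, namely the invertibility of the limit. The cleanest way to see invertibility is to note that the same estimates bound the product taken in the opposite order, which converges to a two--sided inverse; alternatively one can observe that $\psi_{\mathcal P}$ preserves the line decomposition $\widetilde V_1(g)\oplus\cdots\oplus\widetilde V_n(g)$ up to a controlled perturbation and read off invertibility from the triangular structure. Independence of the limit from the auxiliary choices (the arc $k$, the exhaustion, the geodesics used) follows as in the earlier propositions, but for the present statement only existence and membership in $\SL_n(\mathbb{R})$ are asserted, so I would keep that remark brief.
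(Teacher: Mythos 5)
Your proposal is correct and follows essentially the same route as the paper: the paper's own proof simply observes that $\psi_{P_0g}$ is the infinite product defining $\psi_{P_0P}$ (for a triangle $P$ beyond $g$) with some factors replaced by the identity, and invokes the convergence already established in Proposition~\ref{pro1}/Theorem~\ref{ShearComp} rather than re-running the estimates from Lemmas~\ref{lem01}--\ref{lem05} as you do. Your extra care about why the limit lands in $\SL_n(\mathbb{R})$ (determinant $1$ of each factor since $\sum\varepsilon_i=0$, plus continuity of the determinant) is a reasonable addition that the paper leaves implicit.
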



\begin{proof}
By Theorem~\ref{ShearComp}, whenever $\varepsilon\in\CO$ is small enough, for every $P \subset \St-\widetilde{\lambda}$, the linear map $\psi_{P_0P}$ is well defined. The map $\psi_{P_0g}$ is obtained  via the same infinite product that defines $\psi_{P_0P}$, with the difference that some factors are replaced by the identity: it is thus convergent for $\varepsilon \in \CO$ small enough. Besides, similarly as in Corollary~\ref{cor08}, the following estimate
$$
\psi_{P_0g}=\mathrm{O}\left (  \sum_{R \in {\mathcal{P}}_{P_0g}} e^{-Br(k\cap R)}  \right )
$$
holds for some $B>0$ (depending on $k$ and on $\rho$).
\end{proof}

Having defined the family of linear maps $\{\psi_{P_0g}\}_{g\subset \widetilde{\lambda}}$, for every $x \in {\partial_{\infty}\widetilde{\lambda}}$, set
$$
\mathcal{F}_{\rho}'(x)=\psi_{P_0g}\big (\mathcal{F}_{\rho}(x) \big)
$$
where $g\subset \widetilde{\lambda}$ is a geodesic whose $x$ is an endpoint.

\begin{lem}
\label{lem31}
The above relation defines a $\rho'$--equivariant flag map $\mathcal{F}_{\rho}' \colon \partial_{\infty}\widetilde{\lambda} \to \mathrm{Flag}(\mathbb{R}^n)$ that extends the flag map $\mathcal{F}_{\rho}' \colon \mathcal{V}_{\lambda} \to \mathrm{Flag}(\mathbb{R}^n)$ of Lemma~\ref{FlagExt1}.
\end{lem}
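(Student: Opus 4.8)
The plan is to verify three things in turn: that $\mathcal{F}_\rho'(x)$ is well defined independently of the choice of geodesic $g\subset\widetilde\lambda$ having $x$ as an endpoint; that the resulting map is $\rho'$--equivariant; and that it agrees with the map of Lemma~\ref{FlagExt1} on $\mathcal{V}_\lambda$. For well-definedness, suppose $g$ and $g'$ are two leaves of $\widetilde\lambda$ sharing the ideal endpoint $x$. First I would reduce to the case where both leaves can be reached from $P_0$ by a common transverse arc, using Lemma~\ref{lem11} (the composition property) to relate $\psi_{P_0 g}$ and $\psi_{P_0 g'}$ via $\psi_{P_0 g'}=\psi_{P_0 g}\circ\psi_{g g'}$, where $\psi_{g g'}$ is the limit of the product of factors $T_{g_R^-}^{\varepsilon(g,R)}\circ T_{g_R^+}^{-\varepsilon(g,R)}$ over the triangles $R$ lying between $g$ and $g'$. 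The key point is that every such intermediate triangle $R$ has $x$ among its vertices (since $g$ and $g'$ both limit onto $x$, the region between them is a ``fan'' of triangles all sharing the vertex $x$), so each linear map $T_{g_R^\pm}^{\pm\varepsilon(g,R)}$ fixes the flag $\mathcal{F}_\rho(x)$ — exactly the mechanism used in the proof of Lemma~\ref{FlagExt1}. Hence $\psi_{g g'}$ fixes $\mathcal{F}_\rho(x)$, giving $\psi_{P_0 g'}(\mathcal{F}_\rho(x))=\psi_{P_0 g}(\mathcal{F}_\rho(x))$.

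Next, for the claim that $\mathcal{F}_\rho'$ extends the earlier map on $\mathcal{V}_\lambda$: if $x\in\mathcal{V}_\lambda$ is a vertex of a triangle $P$, then one of the two sides of $P$ incident to $x$ is a leaf $g\subset\widetilde\lambda$ with $x$ as an endpoint, and the set of triangles between $P_0$ and $g$ coincides (up to triangles all having $x$ as a vertex, whose contributions fix $\mathcal{F}_\rho(x)$ as above) with the set between $P_0$ and $P$. Comparing the defining products for $\psi_{P_0 g}$ and $\varphi_{P_0 P}$ and invoking once more that the discrepant factors fix $\mathcal{F}_\rho(x)$, we get $\psi_{P_0 g}(\mathcal{F}_\rho(x))=\varphi_{P_0 P}(\mathcal{F}_\rho(x))$, so the two definitions agree.

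For $\rho'$--equivariance, I would establish the equivariance of the family $\{\psi_{P_0 g}\}$ itself: for $\gamma\in\pi_1(S)$ and a leaf $g$, the set of triangles between $P_0$ and $\gamma g$ is the $\gamma$--image of the set between $\gamma^{-1}P_0$ and $g$, and since each $T_g^\varepsilon$ transforms under $\gamma$ by conjugation by $\rho(\gamma)$ (because $\widetilde\sigma_\rho$ is $\rho$--equivariant and $\varepsilon$ is a transverse cocycle invariant under the deck action), one obtains $\psi_{P_0\,\gamma g}=\varphi_{P_0\,\gamma P_0}\circ\rho(\gamma)\circ\psi_{P_0 g}\circ\rho(\gamma)^{-1}$ — the same manipulation used to check $\Lambda^\varepsilon\rho$ is a homomorphism in \S\ref{CataDeformations}. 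Combined with $\mathcal{F}_\rho(\gamma x)=\rho(\gamma)\mathcal{F}_\rho(x)$ and the definition $\rho'(\gamma)=\varphi_{P_0\gamma P_0}\circ\rho(\gamma)$, this yields $\mathcal{F}_\rho'(\gamma x)=\rho'(\gamma)\mathcal{F}_\rho'(x)$.

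The main obstacle is the well-definedness step: one must be careful that the ``fan'' structure really does force every intermediate triangle to contain $x$ as a vertex, and that the limits $\psi_{g g'}$ (over a possibly infinite fan of triangles) converge — but this convergence is already guaranteed by the estimate in Lemma~\ref{ShearingMaps}, and the fan observation is the same elementary hyperbolic-geometry fact exploited in Lemma~\ref{FlagExt1}. Once these are in place, the equivariance and extension claims are formal consequences of the composition and equivariance properties of the shearing maps. I would also remark that, at this stage, $\mathcal{F}_\rho'$ is only defined on the (non-closed) set $\partial_\infty\widetilde\lambda$, and the genuinely delicate continuous extension to all of $\Sinf$ — together with the verification that it is a H\"older flag curve of an Anosov representation — is deferred to subsequent results.
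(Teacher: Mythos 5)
Your proposal is correct in substance and covers the three required points, but it takes a noticeably different route from the paper on two of them. For well-definedness, the paper's argument is shorter: it first observes that a point $x\in\partial_\infty\widetilde\lambda-\mathcal{V}_\lambda$ is the endpoint of a \emph{unique} leaf of $\widetilde\lambda$, so no comparison is needed there at all; only for $x\in\mathcal{V}_\lambda$ does it compare $\psi_{P_0g^\pm_P}(\mathcal{F}_\rho(x))$ with $\varphi_{P_0P}(\mathcal{F}_\rho(x))$ (stripping off the fixed-flag factors $T^{\pm\varepsilon(P_0,P)}_{g_P^\pm}$), delegating the fan argument entirely to Lemma~\ref{FlagExt1}. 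Your unified ``fan between two leaves $g,g'$'' argument works, but note that the identity $\psi_{P_0g'}=\psi_{P_0g}\circ\psi_{gg'}$ you invoke is \emph{not} literally Lemma~\ref{lem11}, which concerns the triangle-based maps $\varphi_{PQ}$; the analogous composition property for the leaf-based maps $\psi_{P_0g}$ has to be re-derived by the same telescoping/reordering as in Proposition~\ref{ShearMap2} (the factors attached to triangles adjacent to $g$ itself need a moment's care). The same caveat applies to your equivariance formula $\psi_{P_0\gamma g}=\varphi_{P_0\gamma P_0}\circ\rho(\gamma)\circ\psi_{P_0g}\circ\rho(\gamma)^{-1}$, which is correct but again presupposes a splitting of the infinite product at $\gamma P_0$. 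The paper sidesteps this entirely: it gets equivariance on $\mathcal{V}_\lambda$ for free from Lemma~\ref{FlagExt1} and then extends it to all of $\partial_\infty\widetilde\lambda$ by a limiting process, using the continuity of $\mathcal{F}_\rho$ and the convergence $\psi_{Pg_n}\to\psi_{Pg}$ for leaves $g_n\to g$ bounding triangles. Your direct approach buys a cleaner logical structure (no appeal to density of $\mathcal{V}_\lambda$-adjacent leaves) at the cost of re-proving composition identities for $\psi_{P_0g}$; the paper's buys brevity by leaning on the already-established triangle case plus continuity. Your closing remark that the extension to all of $\Sinf$ is deferred is accurate and matches the paper's organization (Theorem~\ref{CataFlag}).
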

\begin{proof}
Again, we must check that there is no ambiguity in the definition of the map $\mathcal{F}_{\rho}' \colon\partial_{\infty}\widetilde{\lambda} \to \mathrm{Flag}(\mathbb{R}^n)$, and that the newly defined flag map coincides with the map $\mathcal{F}_{\rho}' \colon \mathcal{V}_{\lambda} \to \mathrm{Flag}(\mathbb{R}^n)$ of Lemma~\ref{FlagExt1}.

Observe that if $x\in {\partial_{\infty}\widetilde{\lambda}}-\mathcal{V}_\lambda$, there is a unique geodesic $g\subset \widetilde{\lambda}$ with $x$ as an endpoint. The above relation thus associates to such a point $x$ a unique flag $\mathcal{F}_{\rho}'(x)\in \mathrm{Flag}(\mathbb{R}^n)$.

Now, suppose that $x\in \mathcal{V}_\lambda$, namely $g$ is one of two edges $g^{-}_{P}$ and $g^{+}_{P}$ bounding some triangle $P\subset \widetilde{S}-\widetilde{\lambda}$.
Let $\varphi_{P_0P}$ be the shearing map associated with $P$. We must verify that 
$$
\psi_{P_0g^{\pm}_{P}} \big (\mathcal{F}_{\rho}(x) \big)=\varphi_{P_0P} \big (\mathcal{F}_{\rho}(x) \big ).
$$
If $g=g^{-}_{P}$, then 
$$  
\psi_{P_0g^{-}_{P}} \big (\mathcal{F}_{\rho}(x)\big )=\psi_{P_0P}\big (\mathcal{F}_{\rho}(x)\big )=\psi_{P_0P}\circ T_{g_{P}^{-}}^{\varepsilon(P_0,P)}\big (\mathcal{F}_{\rho}(x)\big )=\varphi_{P_0P}\big (\mathcal{F}_{\rho}(x)\big )
$$
since the flag $\mathcal{F}_{\rho}(x)$ is fixed by the linear map $T_{g_{P}^{-}}^{\varepsilon(P_0,P)}$. If $g = g^{+}_{P}$,
\begin{eqnarray*}
\psi_{P_0,g^{+}_{P}}\big (\mathcal{F}_{\rho}(x)\big )=\psi_{P_0P}\circ T_{g_{P}^{-}}^{\varepsilon(P_0,P)}\circ T_{g_{P}^{+}}^{-\varepsilon(P_0,P)}\big (\mathcal{F}_{\rho}(x)\big )&=&\varphi_{P_0P}\big (\mathcal{F}_{\rho}(x)\big )
\end{eqnarray*}
since the flag $\mathcal{F}_{\rho}(x)$ is fixed by $T_{g^{+}_{P}}^{-\varepsilon(P_0,P)}$. As a result, $\mathcal{F}_{\rho}' \colon \partial_{\infty}\widetilde{\lambda} \to \mathrm{Flag}(\mathbb{R}^n)$ is a well-defined map, that extends the previous map  $\mathcal{F}_{\rho}' \colon \mathcal{V}_{\lambda} \to \mathrm{Flag}(\mathbb{R}^n)$  of Lemma~\ref{FlagExt1}. 

In particular, the restriction ${\mathcal{F}_{\rho}'}_{|\mathcal{V}_{\lambda}} \colon  \mathcal{V}_{\lambda}\to\mathrm{Flag}(\mathbb{R}^n)$ is $\rho'$--equivariant. Let $x\in {\partial_{\infty}\widetilde{\lambda}}-\mathcal{V}_\lambda$ be an endpoint of the leaf $g\subset \widetilde{\lambda}$, and let $(g_n)_n\subset \widetilde{\lambda}$ be a sequence of leaves converging to $g$, where each $g_n$ bounds some triangle $R_n\subset \widetilde{S}-\widetilde{\lambda}$. Since $\lim_{n\to \infty}\psi_{Pg_n}=\psi_{Pg}$, and $\mathcal{F}_{\rho}  \colon \Sinf  \to \mathrm{Flag}(\mathbb{R}^n)$ is continuous,
$$
\lim_{n\to \infty}\psi_{Pg_n}\big (\mathcal{F}_{\rho}(x_{g_n})\big )=\psi_{Pg}\big (\mathcal{F}_{\rho}(x)\big )
$$
where $(x_{g_n})_n\subset \partial_{\infty}\widetilde{\lambda}$ is a sequence of endpoints of $(g_n)_n$ that converges to $x\in {\partial_{\infty}\widetilde{\lambda}}-\mathcal{V}_\lambda$. The $\rho'$--equivariance property thus extends to the flag map $\mathcal{F}_{\rho}' \colon \partial_{\infty}\widetilde{\lambda} \to \mathrm{Flag}(\mathbb{R}^n)$ by limiting process.
\end{proof}

Now, let $\mathcal{F}_{\rho'} \colon \Sinf\rightarrow\mathrm{Flag}(\mathbb{R}^n)$ be the equivariant flag curve associated with the Anosov representation $\rho'=\Lambda^{\varepsilon}\rho$. 
\begin{thm}
\label{CataFlag}
The restriction ${\mathcal{F}_{\rho'}}_{| \partial_{\infty}\widetilde{\lambda}}\colon \partial_{\infty}\widetilde{\lambda}\to\mathrm{Flag}(\mathbb{R}^n)$ coincides with the flag map $\mathcal{F}_{\rho}' \colon\partial_{\infty}\widetilde{\lambda}\to \mathrm{Flag}(\mathbb{R}^n)$ of Lemma~\ref{lem31}.
\end{thm}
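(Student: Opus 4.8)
The plan is to exploit the characterization of the flag curve from Theorem~\ref{thm:Labourie2} as the \emph{unique} H\"older continuous, $\rho'$--equivariant, $2$--hyperconvex flag curve, and to show that the map $\mathcal{F}_\rho'$ of Lemma~\ref{lem31} extends to such a curve. Since $\mathcal{F}_{\rho'}$ is the unique flag curve of $\rho'$, it suffices to prove that $\mathcal{F}_\rho'$ agrees with $\mathcal{F}_{\rho'}$ on a dense subset of $\Sinf$, and the natural dense subset is $\partial_\infty\widetilde\lambda$: indeed, the endpoints of the leaves of a maximal geodesic lamination are dense in $\Sinf$. So the real content is to identify $\mathcal{F}_{\rho'}$ on $\partial_\infty\widetilde\lambda$ with $\mathcal{F}_\rho'$, and the cleanest route is to check that $\mathcal{F}_\rho'$, suitably understood, \emph{is} (the restriction of) the flag curve, by verifying the defining properties.

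First I would recall the interpretation of the flag curve in terms of the Anosov section via formula \eqref{FlagVersusSection}: for an oriented geodesic $g$ with endpoints $x_g^\pm$, the lines $\widetilde V_i(g)$ reconstruct $\mathcal{F}_\rho(x_g^+)$ and $\mathcal{F}_\rho(x_g^-)$. The cataclysm deformation $\rho'=\Lambda^\varepsilon\rho$ is built precisely so that its Anosov section, transported to the ideal triangle containing $P_0$, is obtained from $\sigma_\rho$ by the shearing maps $\psi_{P_0g}$: more precisely, along a leaf $g$ bounding a triangle $P$, the new decomposition is $\psi_{P_0g}\big(\widetilde V_1(g)\oplus\cdots\oplus\widetilde V_n(g)\big)$. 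Plugging this into \eqref{FlagVersusSection} for $\rho'$ gives exactly $\mathcal{F}_\rho'(x_g^\pm)=\psi_{P_0g}(\mathcal{F}_\rho(x_g^\pm))$, which is the formula of Lemma~\ref{lem31}. Thus the steps are: (i) verify that the linear maps $\{\psi_{P_0g}\}$ assemble (by the composition property of Lemma~\ref{lem11}, restated in Lemma~\ref{ShearingMaps}) into a $\rho'$--equivariant family, which is Lemma~\ref{lem31}; (ii) verify that the resulting $\rho'$--equivariant line field along the leaves of $\widetilde\lambda$ satisfies the Anosov property \eqref{AnosovProperty} for the geodesic flow twisted by $\rho'$, using the estimates of \S\ref{sect:Inequalities} (Lemmas~\ref{cor03}, \ref{lem04}, \ref{lem05}) which show $\psi_{P_0g}$ is a H\"older-controlled perturbation of the identity as one moves transversally; (iii) invoke uniqueness (Theorem~\ref{AnosovSection}, Theorem~\ref{thm:Labourie2}) to conclude that the flag curve $\mathcal{F}_{\rho'}$ of $\rho'$ restricted to $\partial_\infty\widetilde\lambda$ must coincide with $\mathcal{F}_\rho'$.

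An alternative, perhaps more economical, route avoids re-proving the Anosov property from scratch: show directly that $\mathcal{F}_\rho'\colon\partial_\infty\widetilde\lambda\to\mathrm{Flag}(\mathbb{R}^n)$ is H\"older continuous and extends continuously to all of $\Sinf$, using the estimate $\psi_{P_0g}=\mathrm{Id}+\mathrm{O}\big(\sum_{R\in\mathcal{P}_{P_0g}}e^{-Br(k\cap R)}\big)$ from the proof of Lemma~\ref{ShearingMaps} together with Lemmas~\ref{lem01}, \ref{lem02}, \ref{lem05} to bound the variation of $\psi_{P_0g}$ and hence of $\mathcal{F}_\rho'$ in terms of the transverse distance; then observe that the resulting continuous, $\rho'$--equivariant map is automatically $2$--hyperconvex because $2$--hyperconvexity is a closed condition and holds on the dense set $\partial_\infty\widetilde\lambda$ (where it follows from $2$--hyperconvexity of $\mathcal{F}_\rho$ and the fact that each $\psi_{P_0g}\in\SL_n(\mathbb{R})$ preserves transversality of flags). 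Uniqueness in Theorem~\ref{thm:Labourie2} then forces the continuous extension of $\mathcal{F}_\rho'$ to equal $\mathcal{F}_{\rho'}$, and in particular the two agree on $\partial_\infty\widetilde\lambda$, which is the assertion.

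The main obstacle I anticipate is step (ii)/the H\"older-continuity estimate: one must control the variation of $\psi_{P_0g}$ as the leaf $g$ varies within $\widetilde\lambda$, uniformly, and in particular across the ``jumps'' that occur when crossing a gap triangle. This is exactly the sort of telescoping-product estimate carried out in the proofs of Proposition~\ref{pro1}, Proposition~\ref{ShearMap2} and Corollary~\ref{cor08}, but here it must be organized as a function of the \emph{endpoint} $x\in\partial_\infty\widetilde\lambda$ rather than of a transverse arc — so I would set up a transverse arc $k$ realizing the relevant pair of leaves and reduce to those earlier estimates, with the divergence radius $r(k\cap R)\to\infty$ controlling the tail. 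The rest (equivariance, $2$--hyperconvexity, invoking uniqueness) is comparatively formal. I therefore expect the proof in the paper to proceed along the lines of the second route, citing Lemma~\ref{ShearingMaps} and its estimate, establishing continuity/H\"olderness and $2$--hyperconvexity on $\partial_\infty\widetilde\lambda$, and then appealing to the uniqueness clause of Theorem~\ref{thm:Labourie2}.
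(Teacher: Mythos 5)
Your overall strategy---identify the sheared flag map with the genuine flag curve of $\rho'$ by appealing to some uniqueness or rigidity principle---is in the right spirit, but both routes you sketch have genuine gaps. In the second route, the claim that $2$--hyperconvexity ``is a closed condition and holds on the dense set $\partial_\infty\widetilde\lambda$'' is backwards: the condition $\mathcal{F}^{(i)}(x)\oplus\mathcal{F}^{(n-i)}(y)=\mathbb{R}^n$ is an \emph{open} condition on pairs of flags, so transversality along a dense set of pairs can perfectly well degenerate in the limit; it does not pass to the closure. Moreover, the continuous extension of $\mathcal{F}'_\rho$ to all of $\Sinf$ is the hard point, not a formality: for $x\in\Sinf\setminus\partial_\infty\widetilde\lambda$ the geodesic ray toward $x$ crosses infinitely many leaves whose divergence radii do \emph{not} tend to infinity, so the infinite product defining a ``shearing map toward $x$'' has no reason to converge, and the estimate of Lemma~\ref{ShearingMaps} gives you no modulus of continuity in terms of the topology of $\Sinf$ at such points. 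In the first route, the uniqueness clause of Theorem~\ref{AnosovSection} applies to flat sections defined over all of $T^1S$, whereas your candidate line field $(W_1,\ldots,W_n)$ is only defined over the flow-invariant subset $\widehat\lambda\subset T^1S$; extending it across the complementary triangles is essentially equivalent to the extension problem of the second route.

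The paper's proof circumvents both difficulties by a purely dynamical argument that never leaves $\widehat\lambda$ and never verifies the Anosov property for the candidate. Since $\rho'$ is Anosov, its Anosov section $\sigma_{\rho'}=(V'_1,\ldots,V'_n)$ already exists; the key lemma (Lemma~\ref{FlowingVector}) says that under the flow $G_t$ the component in $V'_{i_0}$ dominates all components in $V'_j$ with $j<i_0$. Taking a vector $Y_i(u)$ in the candidate line $W_i(u)$, decomposing it along the $V'_j(u)$, and flowing along a recurrent orbit $g_{t_k}(u)\to u_\infty$ in the compact invariant set $\widehat\lambda$, one finds that the normalized images converge both into $V'_{i_0}(u_\infty)$ and into $W_i(u_\infty)$, forcing $W_i=V'_{i_0}$; the resulting permutation is constant by flatness, transverse continuity and connectedness of $\widehat\lambda$, and is pinned to the identity by evaluating at a leaf bounding $P_0$, where $\varphi_{P_0P_0}=\mathrm{Id}$. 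If you want to salvage your approach, this domination argument is the rigidity input you are missing: it replaces both the global uniqueness of the flag curve and the verification of the Anosov property for the sheared section.
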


\begin{proof}
[Proof of Theorem~\ref{CataFlag}]

It is convenient to switch back to the Anosov section point of view. Indeed, it is the Anosov dynamics which makes everything work here.

We begin with a lemma. Let $T^1S\times_{\rho} \bar{\mathbb{R}}^n\to T^1S$ be the flat $\bar{\mathbb{R}}^n$--bundle of an Anosov representation $\rho$; see \S\ref{Rbundledescription}. Let $(G_t)_{t\in \mathbb{R}}$ be the flow on $T^1S\times_{\rho} \bar{\mathbb{R}}^n$ that lifts the geodesic flow $(g_t)_{t\in\mathbb{R}}$ on $T^1S$. The Anosov section $\sigma_{\rho}=(V_1, \ldots , V_n)$ provides a line decomposition $V_1\oplus \cdots\oplus V_n$ of $T^1S\times_{\rho} \bar{\mathbb{R}}^n\to T^1S$ with the property that each line sub-bundle $V_i\to T^1S$ is invariant under the action of the flow $(G_t)_{t\in \mathbb{R}}$. Finally, pick a Riemannian metric $\left \Vert \ \right \Vert_{u}$ on $T^1S\times_{\rho} \bar{\mathbb{R}}^n\to T^1S$.

\begin{lem}
\label{FlowingVector}
For every $i>j$, for every $u\in T^1S$, for every vectors $X_i(u)\in V_i(u)$ and $X_j(u)\in V_j(u)$, 
$$
\lim_{t\to + \infty}\frac{\left \Vert G_t X_j(u)\right \Vert_{g_t(u)}}{\left \Vert G_t X_i(u)\right \Vert_{g_t(u)}} =0.
$$
\end{lem} 

\begin{proof}
The lift $\leftn  \ \rightn_{\ut}$ of $\leftn  \ \rightn_{u}$ defines a $\pi_1(S)$--invariant norm on $\mathbb{R}^n$. Let $\leftn  \ \rightn_{\ut}$ be the induced norm on the vector space of linear endomorphisms $\mathrm{End}(\mathbb{R}^n)$, namely, for every $\psi \in \mathrm{End}(\mathbb{R}^n)$, for every $\ut \in T^1 \St$,
\begin{eqnarray*}
\leftn  \psi \rightn_{\ut}=\sup_{X \in \mathbb{R}^n}  \frac{\left \Vert \psi X \right \Vert_{\ut}}{\left \Vert X \right \Vert_{\ut}}.
\end{eqnarray*}
By construction, $\leftn  \ \rightn_{\ut}$ is $\pi_1(S)$--invariant, and thus descends to a metric $\leftn  \ \rightn_{u}$ on the flat bundle $T^1S\times_\rho \mathrm{End}(\mathbb{R}^n)\to T^1S$. In particular, by restricting, $\leftn  \ \rightn_{u}$ provides a metric on each line sub-bundle $V_i^*\otimes V_j\to T^1S$ of the bundle $T^1S\times_\rho \mathrm{End}(\mathbb{R}^n)\to T^1S$; see \S\ref{bundledescription}.

Given $X_i(u)\in V_i(u)$ and $X_j(u)\in V_j(u)$, consider the vector $(X_i(u))^*\otimes X_j(u)\in V_i^*\otimes V_j(u)$. Recall that the action of the flow $(\bar{G}_t)_{t\in \mathbb{R}}$ on the line bundle $V_i^*\otimes V_j$ is contracting; see \S\ref{bundledescription}. Hence, for every $t>0$,
\begin{eqnarray*}
\left \Vert G_t X_j(u)  \right \Vert_{g_t(u)} &=& \left \Vert \left [\bar{G}_t \big (  (X_i(u))^*\otimes X_j(u)\big ) \right ] \big (G_t X_i(u) \big ) \right \Vert_{g_t(u)} \\
&\leq& \leftn \bar{G}_t \big (  (X_i(u))^*\otimes X_j(u)\big )  \rightn_{g_t(u)} \left \Vert G_t  X_i(u) \right \Vert_{g_t(u)} \\
&\leq& A e^{-at} \leftn (X_i(u))^*\otimes X_j(u)  \rightn_u  \left \Vert G_t  X_i(u) \right \Vert_{g_t(u)}
\end{eqnarray*}
for some  $A\geq 0$ and $a>0$, which proves the assertion. Note that the very first line makes use of the ``flatness'' property for the lines $V_i\to T^1S$. 
\end{proof}

Consider the associate flat $M$--bundle $T^1S\times_{\rho'}M \to T^1S$ of the Anosov representation $\rho'=\Lambda^{\varepsilon}\rho$. Identify the oriented geodesic lamination $\La$ with its corres\-ponding subset in $T^1S$; note that $\widehat{\lambda}\subset T^1S$ is a compact subset that is the union of some leaves of the geodesic foliation $\mathcal{F}$ of $T^1S$, and that is invariant under the action of the geodesic flow $(g_t)_{t\in\mathbb{R}}$. Let $\mathcal{F}_{\rho}' \colon \partial_{\infty}\widetilde{\lambda}\to \mathrm{Flag}(\mathbb{R}^n)$ be the flag map of Lemma~\ref{lem31}. Making use of $\mathcal{F}_{\rho}'$, we define a flat section $\sigma_{\rho}'=(W_1, \ldots, W_n)$ over the geodesic lamination $\La$ as follows. Let $\widetilde{\La} \subset T^1\St$ that lifts $\La\subset T^1S$. For every $i=1$, $\ldots$ , $n$, for every $\ut\in \widetilde{\La}$, set
$$
\widetilde{W}_{i}(\ut)={\mathcal{F}'_\rho}^{(i)}(x_g^+)\cap {\mathcal{F}'_\rho}^{(n-i+1)}(x_g^-)\subset \mathbb{R}^n
$$
where $x_g^+$ and $x_g^-\in \Sinf$ are respectively the positive and the negative endpoints of the oriented geodesic $g\subset \widetilde{\lambda}$ directed by the unit vector $\ut$. The $\rho'$--equivariance of the flag map $\mathcal{F}_{\rho}':  \partial_{\infty}\widetilde{\lambda}\to \mathrm{Flag}(\mathbb{R}^n)$ implies that the flat section $\widetilde{\sigma}'_\rho=(\widetilde{W}_1, \ldots, \widetilde{W}_n)$, that is defined over $\widetilde{\La}$, is $\rho'$--equivariant. In particular, it descends to a flat section $\sigma_{\rho}'=(W_1, \ldots, W_n)$ of the bundle $T^1S\times_{\rho'}M \to T^1S$, that is defined over the geodesic lamination $\La\subset T^1\St$. Note that the flatness property implies that $\sigma_{\rho}'$ is continuous along the leaves of $\La$.


Now, let $\sigma_{\rho'}=(V'_1, \ldots , V'_n)$ be the Anosov section of the Anosov representation $\rho'=\Lambda^{\varepsilon}\rho$, and let ${\sigma_{\rho'}}_{| \La}=(V'_1, \ldots , V'_n)_{| \La}$ be its restriction to $\La\subset T^1S$. We will show that the two flat sections ${\sigma_{\rho'}}_{|\La}=(V'_1, \ldots , V'_n)_{| \La}$ and $\sigma_{\rho}'=(W_1, \ldots, W_n)$ defined over $\La$ coincide.


Let $p \colon T^1S\times_{\rho'}\bar{\mathbb{R}}^n \to T^1S$ be the flat $\bar{\mathbb{R}}^n$--bundle of \S\ref{Rbundle description}.   Let $u\in \La$. Pick a nonzero vector $Y_{i}(u)$ in the fibre $W_{i}(u)\subset p^{-1}(u)$. Let $Y_{i}(u)=\sum_{j=1}^{n}X'_j(u)$ be its decomposition with respect to the line decomposition $V'_1(u)\oplus \cdots \oplus V'_n(u)$ of the fibre $p^{-1}(u)$. We will prove that $W_i(u)=V'_i(u)$.


 Let $(t_k)_k \to +\infty$ such that the sequence $(g_{t_k}u)_k$ converges to $u_\infty\in \La$ (such a $(t_k)_k$ exists since $\La$ is compact). Consider the vector $\frac{G_{t_k} Y_{i}(u)}{\left \Vert G_{t_k} X'_{i_0}(u) \right \Vert_{g_{t_k}(u)}} \in p^{-1}(g_{t_k}(u))$, where $i_0$ is the largest integer $j$ such that the component $X'_{j}(u)\neq 0$. The section ${\sigma_{\rho'}}_{|\La}=(V'_1, \ldots, V'_n)_{| \La}$ being flat, for every $k$,
\begin{eqnarray}
\label{Decomp}
\frac{G_{t_k} Y_{i}(u)}{\left \Vert G_{t_k} X'_{i_0}(u) \right \Vert_{g_{t_k}(u)}}=\frac{G_{t_k} X'_{i_0}(u)}{\left \Vert G_{t_k} X'_{i_0}(u) \right \Vert_{g_{t_k}(u)}} +  \sum_{j=1}^{i_0-1} \frac{G_{t_k} X'_j(u)}{\left \Vert G_{t_k} X'_{i_0}(u) \right \Vert_{g_{t_k}(u)}}. 
\end{eqnarray}
with, for every $j=1$, $\ldots$ , $i_0$,
\begin{eqnarray}
\label{SeqVect}
\frac{G_{t_k} X'_j(u)}{\left \Vert G_{t_k} X'_{i_0}(u) \right \Vert_{g_{t_k}(u)}}\in V'_j(g_{t_k}(u)).
\end{eqnarray}
By Lemma~\ref{FlowingVector}, and by continuity of ${\sigma_{\rho'}}_{|\La}=(V'_1, \ldots, V'_n)_{| \La}$ along the leaves of $\La$, for every $j=1$, $\ldots$ , $i_0$, the sequence (\ref{SeqVect}) converges to a vector in the fibre $V'_{j}(u_\infty)$; this vector is the zero vector for all $j\leq i_0-1$; and it is a unit vector for $j=i_0$; let $Z_0\in V'_{i_0}(u_\infty)$ be this vector. It follows from (\ref{Decomp}) that
$$
\lim_{k\to + \infty} \frac{G_{t_k} Y_{i}(u)}{\left \Vert G_{t_k} X'_{i_0}(u) \right \Vert_{g_{t_k}(u)}}=\lim_{k\to + \infty}\frac{G_{t_k} X'_{i_0}(u)}{\left \Vert G_{t_k} X'_{i_0}(u) \right \Vert_{g_{t_k}(u)}}=Z_{i_0}\in V'_{i_0}(u_\infty).
$$
On the other hand, the section $\sigma_{\rho}'=(W_1, \ldots, W_n)$ being flat and continuous along the leaves of $\La$, for every $k$, 
$$
\frac{G_{t_k} Y_{i}(u)}{\left \Vert G_{t_k} X'_{i_0}(u) \right \Vert_{g_{t_k}(u)}}\in W_{i}(g_{t_k}(u)),
$$
hence $Z_0=\lim_{k\to + \infty} \frac{G_{t_k} Y_{i}(u)}{\left \Vert G_{t_k} X'_{i_0}(\ut) \right \Vert_{g_{t_k}(u)}}\in W_i(u_\infty)$. Since $\left \Vert Z_0 \right \Vert_{u_\infty}=1$, $Z_0\neq 0$ in particular. Therefore, $W_i(u_\infty)=V'_{i_0}(u_\infty)$, which implies by flatness that the fibres $W_i(u)$ and $V'_{i_0}(u)$ coincide too. 

The lines $W_1(u)$, $\ldots$ , $W_n(u)$ being linearly independent, it follows from the above discussion that, for every $u\in \La$, 
$$
\sigma'_{\rho}(u)=\big (W_1(u), \ldots, W_n(u)\big )=\big (V'_{i_1(u)}(u), \ldots, V'_{i_n(u)}(u)\big )
$$ 
for some permutation $(i_1(u), \ldots , i_n(u))$ of the set $\{1, \ldots , n\}$ that depends on the point $u\in \La$. Naturally, $\sigma'_{\rho}$ and $\sigma_{\rho'}$ being flat, the $n$-tuplet $u\mapsto(i_1(u), \ldots , i_n(u))$ is a constant map along the leaves of $\La$. 


Now, observe that both the sections ${\sigma_{\rho'}}_{|\La}=(V'_1, \ldots, V'_n)_{| \La}$ and $\sigma_{\rho}'=(W_1, \ldots, W_n)$ are transversally continuous: ${\sigma_{\rho'}}_{|\La}$ is transversally continuous as it is the restriction of the Anosov section $\sigma_{\rho'}$; and $\sigma_{\rho}'$ is transversally continuous as a consequence of the estimate in the proof of Lemma~\ref{ShearingMaps}. Besides, since $\lambda$ is maximal, the geodesic lamination $\La$ is connected.      It follows from the above facts that the function $u\mapsto (i_1(u), \ldots , i_n(u))$ is constant on $\La$. Hence, for every $u\in \La$, 
$$
\sigma'_{\rho}(u)=\big (W_1(u), \ldots, W_n(u)\big )=\big (V'_{i_1}(u), \ldots, V'_{i_n}(u)\big )
$$
for some permutation $(i_1, \ldots , i_n)$ of the set $\{1, \ldots , n\}$. 

Finally, consider $\widetilde{\sigma}'_{\rho}=\big (\widetilde{V}'_{i_1}, \ldots, \widetilde{V}'_{i_n}\big )$ that lifts $\sigma'_{\rho}$. Let $\ut_0\in \widetilde{\La}$ be a point along a leaf that projects to a geodesic leaf bounding the ideal triangle $P_0\subset \St-\widetilde{\lambda}$. By construction of $\sigma'_{\rho}$, for every $j=1$, $\ldots$ , $n$, 
$$
\widetilde{V}'_{i_j}(\ut_0)= \varphi_{P_0P_0} \big (\widetilde{V}_j(\ut_0)\big )
$$ 
where $\varphi_{P_0P_0}\in \SL_n(\mathbb{R})$ is the shearing map associated with the triangle $P_0$, and where $\widetilde{\sigma}_\rho=(\widetilde{V}_1, \ldots , \widetilde{V}_n)$ lifts the Anosov section $\sigma_\rho$ of the initial Anosov representation $\rho$. Since $\varphi_{P_0P_0}=\mathrm{Id}$, $\widetilde{V}'_{i_j}(\ut_0)=\widetilde{V}_j(\ut_0)$ for every $j$, which implies that $i_j=j$. We conclude that, for every $u\in \La$, $\sigma'_{\rho}(u)=\sigma_{\rho'}(u)$; equivalently, the flag maps $\mathcal{F}'_{\rho}$ and ${\mathcal{F}_{\rho'}}_{| \partial_{\infty}\widetilde{\lambda}}$ coincide on $\partial_{\infty}\widetilde{\lambda}\subset \Sinf$. This achieves the proof of Theorem~\ref{CataFlag}.
\end{proof}

\begin{figure}
\vskip 10pt
\SetLabels
( .17*.68 ) $P$\\
( .21*.64 ) $k$\\
( .26*.34 ) $P_0$\\
( .52*.92 ) $\varphi_{P_0P}$\\
( .5*.21) $\varphi_{P_0P_0=\mathrm{Id}}$\\
( .2*1.03 ) $\mathcal{F}_\rho(z)$\\
( .48*.69 ) $\mathcal{F}_\rho(x)$\\
( -.01*.77 ) $\mathcal{F}_\rho(y)$\\
( .91*.97 ) $\mathcal{F}_{\rho'}(z)$\\
( .69*1.01 ) $\mathcal{F}_{\rho'}(y)$\\
( 1.01*.79 ) $\mathcal{F}_{\rho'}(x)$\\
\endSetLabels
\centerline{\AffixLabels{\includegraphics{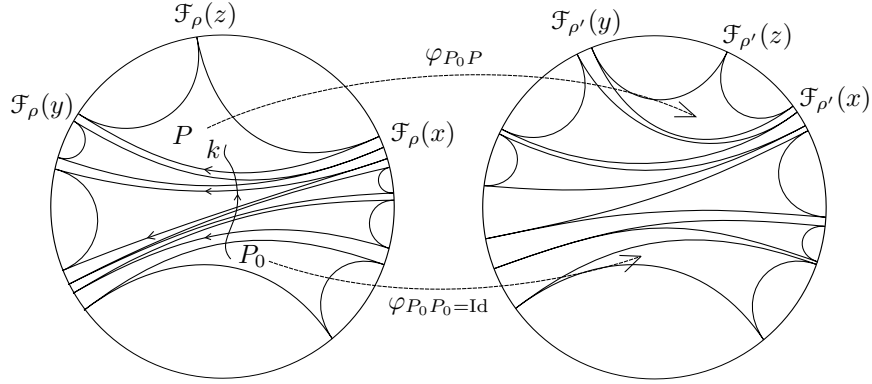}}}
\caption{Shearing maps between $\rho$ and its cataclysm deformation $\rho'=\Lambda^{\varepsilon}\rho$.}
\label{Fig3}
\end{figure}

\begin{rem}
\label{RemFlagCurve}
Theorem~\ref{CataFlag} gives a simple, geometric description of a $\varepsilon$--cataclysm deformation $\rho'=\Lambda^{\varepsilon}\rho$: the $\rho$--equivariant flag curve $\mathcal{F}_{\rho}\colon\Sinf\rightarrow\mathrm{Flag}(\mathbb{R}^n)$ is mapped onto the $\rho'$--equivariant flag curve $\mathcal{F}_{\rho'}\colon\Sinf\rightarrow\mathrm{Flag}(\mathbb{R}^n)$ via the equivariant family of shearing maps $\Lambda^\varepsilon=\{\varphi_{P_0P}\}_{P\subset\St-\widetilde{\lambda}} \subset \SL_n(\mathbb{R})$. More precisely, if $x$, $y$, $z\in \Sinf$ are the vertices of some ideal triangle $P\subset \St-\widetilde{\lambda}$, the shearing map $\varphi_{P_0P}$ sends the flag triplet  $\mathcal{F}_{\rho}(P)=\big (\mathcal{F}_{\rho}(x), \mathcal{F}_{\rho}(y), \mathcal{F}_{\rho}(z)\big)$ to the flag triplet $\mathcal{F}_{\rho'}(P)=\big(\mathcal{F}_{\rho'}(x), \mathcal{F}_{\rho'}(y), \mathcal{F}_{\rho'}(z)\big)$; see Figure~\ref{Fig3}. In particular, a cataclysm should be understood as a deformation of the Anosov representation $\rho$ via a deformation of its associated flag curve $\mathcal{F}_{\rho}$. 

Equivalently, in terms of Anosov sections, the cataclysm map $\Lambda^\varepsilon$ sends the Anosov section $\sigma_\rho=(V_1, \ldots , V_n)$ to the Anosov section $\sigma_{\rho'}=(V'_1, \ldots , V'_n)$; see \S\ref{FlagDescription}.
\end{rem}

\section{Geometric properties of cataclysms}
\label{GeoProperties}

We now establish some geometric properties of cataclysms. In particular, the main result of this section is the variation formula of Theorem~\ref{IntCycle} for the length functions $\ell^\rho_i$ \cite{Dr1} of an Anosov representation $\rho$.

\subsection{The shear as a summation}
\label{DiffCocy}

Given a cataclysm deformation $\rho'=\Lambda^{\varepsilon}\rho$, we give a description of the shear $\varepsilon\in \CO$ as a certain summation.

Let $k$ be a transverse, simple, nonbacktracking, oriented arc to $\widetilde{\lambda}$. Orient posi\-tively the leaves of $\widetilde{\lambda}$ intersecting $k$ for the transverse orientation determined by the oriented arc $k$. As in \S\ref{sect:Inequalities}, for every component $d \subset k-\widetilde{\lambda}$, $g_d^+$ and $g_d^-\subset \widetilde{\lambda}$ are the two leaves passing by the positive and the negative endpoints of the oriented subarc $d \subset k-\widetilde{\lambda}$. Let $\ut^{+}_{d}$ and $\ut^{-}_{d}\in T^1\St$ be respectively the unit tangent vectors based at the positive and the negative endpoints of each oriented subarc $d \subset k-\widetilde{\lambda}$, that direct the oriented leaves $g_d^+$ and $g_d^-\subset \widetilde{\lambda}$. Finally, fix a triangle $P_0\subset \St-\widetilde{\lambda}$. Let $R_d\subset \widetilde{S}-\widetilde{\lambda}$ be the ideal triangle containing the subarc $d$; and let  $\varphi_d=\varphi_{P_0R_d}\in \SL_n(\mathbb{R})$ be the associated shearing map.

Let $\sigma_{\rho}=(V_1, \ldots, V_n)$ and $\sigma_{\rho'}=(V'_1, \ldots, V'_n)$ be respectively the Anosov sections of $\rho$ and $\rho'=\Lambda^{\varepsilon} \rho$, that lift to $\widetilde{\sigma}_{\rho}=(\widetilde{V}_1, \ldots, \widetilde{V}_n)$ and to $\widetilde{\sigma}_{\rho'}=(\widetilde{V}'_1, \ldots, \widetilde{V}'_n)$. By Theorem~\ref{CataFlag} and Remark~\ref{RemFlagCurve}, for every subarc $d \subset k-\widetilde{\lambda}$,
\begin{eqnarray}
\label{LinebundleRelation}
\varphi_{d}\big (\widetilde{V}_{i}(\ut^{\pm}_d) \big )=\widetilde{V}'_i(\ut^{\pm}_d).
\end{eqnarray}

Let $T^1S\times_{\rho}\bar{\mathbb{R}}^n\to T^1S$ and $T^1S\times_{\rho'}\bar{\mathbb{R}}^n\to T^1S$ be respectively the flat bundles of the Anosov representations $\rho$ and $\rho'=\Lambda^{\varepsilon}\rho$ (see \S\ref{Rbundle description}), endowed with the metrics $\left \Vert \ \right \Vert_u$ and $\left \Vert \ \right \Vert'_u$, respectively. In particular, by restricting, for every $i=1$, $\ldots$ , $n$, this provides a metric on each of the line sub-bundles $V_i\to T^1S$ and $V'_i\to T^1S$. Identify the oriented geodesic lamination $\La$ with its corresponding subset in $T^1S$. Pick  a unit section $X_i\colon \La\to V_i$ (i.e. $\left \Vert  X_i(u)  \right \Vert_u=1$ for every $u \in \La$),  that lifts to $\widetilde{X}_i(\ut)\in \widetilde{V}_i(\ut)\subset \mathbb{R}^n$, $\ut\in \widetilde{\widehat{\lambda}}$ (such a section $\widetilde{X}_i\colon \widetilde{\La}\to \widetilde{V}_i$ is not necessarily continuous). By (\ref{LinebundleRelation}), for every subarc $d\subset k-\widetilde{\lambda}$, 
$$
\varphi_{d}\widetilde{X}_{i}(\ut^{\pm}_d)\in \widetilde{V}'_i(\ut^{\pm}_d)\subset \mathbb{R}^n.
$$ 

For every $i=1$, $\ldots$ , $n$, let $\delta^{\rho\rho'}_i(k)$ be the sum defined as 
\begin{eqnarray*}
\delta^{\rho\rho'}_i(k)=\sum_{ \substack{d \subset k-\widetilde{\lambda} \\ d \neq d^{\pm}}} \log \frac{\Big\| \varphi_{d} \widetilde{X}_{i}(\ut^{-}_{d}) \Big\|'_{\ut^{-}_{d}}}{\Big\| \varphi_{d} \widetilde{X}_{i}(\ut^{+}_{d}) \Big\|'_{\ut^{+}_{d}}}&-& \log\left \Vert  \varphi_{d^-} \widetilde{X}_{i}(\ut_{d^-}^{+}) \right \Vert'_{\ut_{d^-}^{+}}\\ 
&+& \log\Big\|  \varphi_{d^+} \widetilde{X}_{i}(\ut_{d^+}^{-}) \Big\|'_{\ut_{d^+}^{-}}
\end{eqnarray*}
where the indexing $d$ ranges over all the components in $k-\widetilde{\lambda}$, and where $d^{+}$ and $d^{-}$ are the two components containing respectively the positive and the negative endpoints of the oriented arc $k$. Note that the value of the sum $\delta^{\rho\rho'}_i(k)$ is clearly independent of the choice of the lift $\widetilde{X}_i(\ut)\in \widetilde{V}_i(\ut)$, $\ut \in \widetilde{\La}$.

\begin{lem}
\label{lem28}
For $\varepsilon\in \CO$ small enough, for every transverse, simple, nonbacktracking, oriented arc $k$ to $\widetilde{\lambda}$, for every $i=1$, $\ldots$ , $n$, the series $\delta^{\rho\rho'}_i(k)$ is absolutely convergent. 
\end{lem}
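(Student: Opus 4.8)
The plan is to bound every term of the series $\delta^{\rho\rho'}_i(k)$ by $\mathrm{O}(e^{-c\,r(d)})$ for some constant $c>0$ once $\varepsilon\in\CO$ is small enough, and then to sum this estimate by means of Lemma~\ref{lem01}, exactly as in the proofs of Proposition~\ref{pro1} and Theorem~\ref{ShearComp}. The two boundary contributions $-\log\| \varphi_{d^-} \widetilde{X}_{i}(\ut_{d^-}^{+}) \|'_{\ut_{d^-}^{+}}$ and $\log\| \varphi_{d^+} \widetilde{X}_{i}(\ut_{d^+}^{-}) \|'_{\ut_{d^+}^{-}}$ are single finite real numbers, so the claim reduces to the absolute convergence of $\sum_{d\neq d^{\pm}}T_d$, where $T_d=\log\bigl(\| \varphi_{d} \widetilde{X}_{i}(\ut^{-}_{d}) \|'_{\ut^{-}_{d}}\big/\| \varphi_{d} \widetilde{X}_{i}(\ut^{+}_{d}) \|'_{\ut^{+}_{d}}\bigr)$. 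Because, as observed in the text, the value of $\delta^{\rho\rho'}_i(k)$ --- hence of each individual term $T_d$ --- does not depend on the chosen unit lift $\widetilde{X}_i$, I would estimate $T_d$ using a lift of $V_i$ adapted to the particular component $d$.

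Concretely, for each $d$ I would choose the unit vectors $\widetilde{X}_i(\ut^{-}_d)\in\widetilde{V}_i(\ut^{-}_d)$ and $\widetilde{X}_i(\ut^{+}_d)\in\widetilde{V}_i(\ut^{+}_d)$ so that they are close in $\mathbb{R}^n$: since $\ut^{-}_d$ and $\ut^{+}_d$ are joined in $T^1\St$ by a path of length $\mathrm{O}(\mathrm{length}(d))=\mathrm{O}(e^{-A\,r(d)})$ (Lemma~\ref{lem02}, as in the proof of Lemma~\ref{cor03}) while $\mathrm{dist}_{\mathbb{RP}^{n-1}}\bigl(\widetilde{V}_i(g_d^+),\widetilde{V}_i(g_d^-)\bigr)=\mathrm{O}(e^{-K\,r(d)})$ by Lemma~\ref{cor03}, and since a unit vector of a line can be picked continuously in the line (for a fixed continuous metric), this can be arranged --- after possibly changing one vector by a sign --- so that $\| \widetilde{X}_i(\ut^{-}_d)-\widetilde{X}_i(\ut^{+}_d) \|_{\mathbb{R}^n}=\mathrm{O}(e^{-c_0\,r(d)})$ with $c_0>0$ independent of $d$. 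I would then write
\[
T_d=\Bigl(\log\| \varphi_{d} \widetilde{X}_{i}(\ut^{-}_{d}) \|'_{\ut^{-}_{d}}-\log\| \varphi_{d} \widetilde{X}_{i}(\ut^{-}_{d}) \|'_{\ut^{+}_{d}}\Bigr)+\Bigl(\log\| \varphi_{d} \widetilde{X}_{i}(\ut^{-}_{d}) \|'_{\ut^{+}_{d}}-\log\| \varphi_{d} \widetilde{X}_{i}(\ut^{+}_{d}) \|'_{\ut^{+}_{d}}\Bigr).
\]
The first bracket compares the two norms $\|\cdot\|'_{\ut^{-}_d}$ and $\|\cdot\|'_{\ut^{+}_d}$ on the single fixed nonzero vector $\varphi_{d}\widetilde{X}_{i}(\ut^{-}_{d})$; as $\ut^{\pm}_d$ are $\mathrm{O}(e^{-A\,r(d)})$--close in $T^1\St$ and the bundle metric is smooth (with $T^1S$ compact), these norms are uniformly equivalent with ratio $1+\mathrm{O}(e^{-A\,r(d)})$, so the first bracket is $\mathrm{O}(e^{-A\,r(d)})$.

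The hard part is the second bracket, which equals $\log\bigl(\| \varphi_{d}\widetilde{X}_{i}(\ut^{+}_{d})+\varphi_{d}w \|'_{\ut^{+}_{d}}\big/\| \varphi_{d}\widetilde{X}_{i}(\ut^{+}_{d}) \|'_{\ut^{+}_{d}}\bigr)$ with $w=\widetilde{X}_i(\ut^{-}_d)-\widetilde{X}_i(\ut^{+}_d)$, the point being that the shearing map $\varphi_d=\varphi_{P_0R_d}$ need not be uniformly bounded. Here I would use Corollary~\ref{cor08} and write $\varphi_d=\psi_d\circ T^{\varepsilon(P_0,R_d)}_{g^{-}_{R_d}}$. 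On one hand $\det\psi_d=1$ --- because $\varphi_d\in\SL_n(\mathbb{R})$ and $\det T^{\varepsilon(P_0,R_d)}_{g^{-}_{R_d}}=e^{\sum_j\varepsilon_j(P_0,R_d)}=1$, the last equality holding since $\sum_j\varepsilon_j=0$ for $\varepsilon\in\CO$ --- while $\leftn\psi_d\rightn$ is bounded independently of $d$ (for $\varepsilon$ small) by the estimate of Corollary~\ref{cor08} together with Lemma~\ref{lem01}, whence $\leftn\psi_d^{-1}\rightn$ is uniformly bounded as well. On the other hand $\leftn T^{\pm\varepsilon(P_0,R_d)}_{g^{-}_{R_d}}\rightn=\mathrm{O}\bigl(e^{\,C\|\varepsilon\|(r(d)+1)}\bigr)$ by Lemma~\ref{lem05}, combined with the compactness argument for adapted bases used in the proof of Lemma~\ref{lem04}. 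Therefore the distortion of $\varphi_d$ satisfies $\leftn\varphi_d\rightn\,\leftn\varphi_d^{-1}\rightn=\mathrm{O}(e^{2C\|\varepsilon\|\,r(d)})$, and since $\| \widetilde{X}_i(\ut^{+}_d) \|_{\mathbb{R}^n}$ is bounded away from $0$ and the metrics $\|\cdot\|'$ are uniformly equivalent to a fixed norm on $\mathbb{R}^n$,
\[
\frac{\| \varphi_{d}w \|'_{\ut^{+}_{d}}}{\| \varphi_{d}\widetilde{X}_{i}(\ut^{+}_{d}) \|'_{\ut^{+}_{d}}}=\mathrm{O}\bigl(\leftn\varphi_d\rightn\,\leftn\varphi_d^{-1}\rightn\,\| w \|_{\mathbb{R}^n}\bigr)=\mathrm{O}\bigl(e^{(2C\|\varepsilon\|-c_0)\,r(d)}\bigr).
\]
For $\|\varepsilon\|<c_0/4C$ the right-hand side is $\leq 1/2$ for all but finitely many $d$, and for those $d$ the elementary bound $|\log(1+t)|\leq 2|t|$ controls the second bracket by $\mathrm{O}\bigl(e^{(2C\|\varepsilon\|-c_0)\,r(d)}\bigr)$ as well.

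Combining the two brackets, $|T_d|=\mathrm{O}(e^{-c\,r(d)})$ with $c=\min\{A,\,c_0/2\}>0$ for all but finitely many $d$, each remaining $T_d$ being a finite real number; hence by Lemma~\ref{lem01} one has $\sum_{d\neq d^{\pm}}|T_d|\leq(\text{a finite sum})+\sum_{r\geq 0}(4g-4)\,\mathrm{O}(e^{-c r})<\infty$, which is the asserted absolute convergence, valid as soon as $\|\varepsilon\|<c_0/4C$. I expect the only genuine obstacle to be the distortion estimate for $\varphi_d$: one must verify that its at-most-exponential growth, of rate $\mathrm{O}(\|\varepsilon\|)$, is dominated by the decay rate $c_0$ of $w$, which is exactly what forces a smallness threshold on $\varepsilon$, in the same spirit as in Proposition~\ref{pro1} and Theorem~\ref{ShearComp}.
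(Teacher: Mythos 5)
Your per-term analysis is essentially the paper's own argument: both proofs rest on the decomposition $\varphi_d=\psi_{P_0R_d}\circ T^{\varepsilon(P_0,R_d)}_{g^{-}_{R_d}}$ from Corollary~\ref{cor08}, bound the twist factor by $e^{C\Vert\varepsilon\Vert(r(d)+1)}$ via Lemma~\ref{lem05}, play this against the H\"older decay $\Vert\widetilde X_i(\ut^{-}_d)-\widetilde X_i(\ut^{+}_d)\Vert=\mathrm{O}(e^{-Kr(d)})$ coming from Lemma~\ref{lem02} and Lemma~\ref{cor03}, and then sum with Lemma~\ref{lem01} under a smallness condition of the type $\Vert\varepsilon\Vert<K/2C$. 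Your bookkeeping differs only mildly and is sound: you control the distortion $\leftn\varphi_d\rightn\,\leftn\varphi_d^{-1}\rightn$ by combining the uniform bound on $\psi_d$ with $\det\psi_d=1$ (each factor of the defining infinite product already has determinant $1$, so this is fine), where the paper instead exploits that $\widetilde X_i(\ut^{-}_d)$ is an eigenvector of $T^{\varepsilon(P_0,R_d)}_{g^{-}_{R_d}}$; and your explicit first bracket, comparing $\Vert\cdot\Vert'_{\ut^{-}_d}$ with $\Vert\cdot\Vert'_{\ut^{+}_d}$ on one fixed vector using the exponential closeness of the basepoints, is if anything a more careful treatment of the basepoint dependence of the metric than the paper's one-line reduction to a fixed norm on $\mathbb{R}^n$.

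The one step you are missing concerns the order of quantifiers in the statement: your threshold $\Vert\varepsilon\Vert<c_0/4C$ has constants $c_0$ and $C$ depending on the chosen arc $k$, whereas the lemma asserts a single neighborhood of $0\in\CO$ that works simultaneously for every transverse, simple, nonbacktracking arc. The paper closes this at the end of its proof: the quantity $\delta^{\rho\rho'}_i$ is finitely additive, so if $k=k_1\cup k_2$ with disjoint interiors and $\delta^{\rho\rho'}_i(k_1)$, $\delta^{\rho\rho'}_i(k_2)$ are absolutely convergent, then so is $\delta^{\rho\rho'}_i(k)$; one then argues exactly as in Lemma~\ref{lem11}, choosing finitely many arcs $k_1$, $\ldots$ , $k_N$ in $S$ meeting every component of $S-\lambda$, decomposing an arbitrary $k$ into pieces carried (via the $\pi_1(S)$--equivariance of the construction) by lifts of these finitely many arcs, and taking the minimum of the finitely many thresholds. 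Add this finite-cover argument and your proof matches the paper's in full.
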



\begin{proof}
Fix an arc $k$ as above. Pick a metric $\left \Vert \ \right \Vert$ on $\mathbb{R}^n$. Since $k\cap \widetilde{\lambda}$ is compact, the lifted metric $\left \Vert \ \right \Vert'_{|k\cap \widetilde{\La}}$ on the line bundle  ${\widetilde{V_i}}'_{|k\cap \widetilde{\La}}$ is equivalent to the restriction of the metric $\left \Vert \ \right \Vert$ to  ${\widetilde{V_i}}'_{|k\cap \widetilde{\La}}\subset \mathbb{R}^n$. In particular, to prove the absolute convergence of the series $\delta_i^{\rho\rho'}$, it is sufficient to show the convergence of the series
\begin{eqnarray}
\label{Series}
\sum_{d \subset k-\widetilde{\lambda}}\ \left \vert  \log \frac{\left \Vert  \varphi_{d} \widetilde{X}_{i}(\ut^{-}_{d}) \right \Vert}{\left \Vert \varphi_{d} \widetilde{X}_{i}(\ut^{+}_{d}) \right \Vert} \right \vert.
\end{eqnarray}
To do so, we begin with finding an estimate for each term $\left \vert  \log \frac{\left \Vert  \varphi_{d} \widetilde{X}_{i}(\ut^{-}_{d}) \right \Vert}{\left \Vert \varphi_{d} \widetilde{X}_{i}(\ut^{+}_{d}) \right \Vert} \right \vert$ of this series.

By Theorem~\ref{AnosovSection}, the fibre $V_i(u)$ depends H\"older continuously on the point $u\in\La$. Since the choice of the lift $\widetilde{X}_i(\ut)$ in (\ref{Series}) is irrelevant, we may assume the lift $\widetilde{X}_i(\ut)\in\mathbb{R}^n$, $\ut\in \widetilde{\La}$ to be locally H\"older continuous for the norm $\left \Vert \ \right \Vert$ of $\mathbb{R}^n$. By Lemma~\ref{lem02}, for every subarc $d\subset k-\widetilde{\lambda}$ whose divergence radius $r(d)$ (see \S\ref{sect:Inequalities}) is large enough,
$$
\left \Vert \widetilde{X}_{i}(\ut^{-}_{d})-\widetilde{X}_{i}(\ut^{+}_{d}) \right \Vert=\mathrm{O} \big (e^{-Kr(d)} \big )
$$
for some $K>0$ (depending on $k$ and $\rho$). By Corollary~\ref{cor08}, $\varphi_{d}=\psi_{P_0R_d}\circ T^{\varepsilon(P_0,R_{d})}_{g^{-}_{R_{d}}}$, and thus
\begin{eqnarray*}
\left \Vert \varphi_{d}\widetilde{X}_{i}(\ut^{-}_{d})-\varphi_{d}\widetilde{X}_{i}(\ut^{+}_{d}) \right \Vert &\leq&  \left\Vert \psi_{d}\right\Vert     \Big\|     T^{\varepsilon(P_0,R_{d})}_{g^{-}_{R_{d}}}    \Big\|    \left \Vert \widetilde{X}_{i}(\ut^{-}_{d})-\widetilde{X}_{i}(\ut^{+}_{d}) \right \Vert.
\end{eqnarray*} 
The estimates in Corollary~\ref{cor08} and in Lemma~\ref{lem05} then show that, for every subarc $d\subset k-\widetilde{\lambda}$ whose divergence radius $r(d)$ is large enough,
\begin{eqnarray}
\label{Ineq1}
\left \Vert \varphi_{d}\widetilde{X}_{i}(\ut^{-}_{d})-\varphi_{d}\widetilde{X}_{i}(\ut^{+}_{d}) \right \Vert =\mathrm{O} \big  ( e^{C \left \Vert \varepsilon \right \Vert (r(d)+1)}e^{-Kr(d)} \big )
\end{eqnarray} 
for some $C\geq 0$ (depending on $k$ and $\rho$).

We now determine a lower and upper bound for  the term $\log\frac{\left \Vert  \varphi_{d} \widetilde{X}_{i}(\ut^{-}_{d}) \right \Vert}{\left \Vert \varphi_{d} \widetilde{X}_{i}(\ut^{+}_{d}) \right \Vert}$. For every subarc $d\subset k-\widetilde{\lambda}$,
\begin{eqnarray}
\label{Ineq2}
\frac{  \Big\| \varphi_{d} \widetilde{X}_{i}(\ut^{+}_{d}) \Big\|}{\left \Vert \varphi_{d} \widetilde{X}_{i}(\ut^{-}_{d}) \right \Vert} &\leq& 1 + \frac{1}{\left \Vert \varphi_{d} \widetilde{X}_{i}(\ut^{-}_{d}) \right \Vert} \left \Vert \varphi_{d}\widetilde{X}_{i}(\ut^{-}_{d})-\varphi_{d}\widetilde{X}_{i}(\ut^{+}_{d}) \right \Vert.
\end{eqnarray}
Again, let us write $\varphi_{d}=\psi_{P_0R_d}\circ T^{\varepsilon(P_0,R_{d})}_{g^{-}_{R_{d}}}$. The estimate in Corollary~\ref{cor08} shows that the family $\{\psi_{P_0R_d}\}_{d\subset k-\widetilde{\lambda}}\subset \SL_n(\mathbb{R})$ is bounded, and in addition, that it remains bounded away from $0\in \mathrm{Mat}_n(\mathbb{R})$, whenever $\varepsilon \in \CO$ is small enough. Therefore, for every $X\in \mathbb{R}^n$,
\begin{eqnarray}
\label{Ineq3}
\left \Vert \varphi_{d}(X) \right \Vert \geq m \, \Big\| T^{\varepsilon(P_0,R_{d})}_{g^{-}_{R_{d}}}(X) \Big\|
\end{eqnarray}
for some $m>0$ (depending on $k$  and $\rho$). By combining estimates (\ref{Ineq1}), (\ref{Ineq2}) and (\ref{Ineq3}), 
\begin{eqnarray*}
\frac{\left \Vert  \varphi_{d} \widetilde{X}_{i}(\ut^{+}_{d}) \right \Vert}{\left \Vert \varphi_{d} \widetilde{X}_{i}(\ut^{-}_{d}) \right \Vert} &\leq& 1 + \frac{1}{m}\frac{1}{ \Big\|  T^{\varepsilon(P_0,R_{d})}_{g^{-}_{R_{d}}}  \big (\widetilde{X}_{i}(\ut^{-}_{d})\big )\Big\|}\left \Vert \varphi_{d}\widetilde{X}_{i}(\ut^{-}_{d})-\varphi_{d}\widetilde{X}_{i}(\ut^{+}_{d}) \right \Vert\\ \\
&\leq& 1 + \frac{1}{m}e^{-\varepsilon_{i}(P_0, R_{d})} \, \mathrm{O}\big (e^{C \left \Vert \varepsilon \right\Vert(r(d)+1)}e^{-Kr(d)}\big )\\ \\
&\leq& 1 + \mathrm{O} \big (e^{2 C \left \Vert \varepsilon \right \Vert(r(d)+1)-Kr(d)}\big )
\end{eqnarray*}
Hence, for every subarc $d\subset k-\widetilde{\lambda}$ whose divergence radius $r(d)$ is large enough,
\begin{eqnarray}
\label{Ineq4}
- \mathrm{O} \big (e^{2 C \left \Vert \varepsilon \right \Vert(r(d)+1)-Kr(d)}\big )\leq \log \frac{\left \Vert  \varphi_{d} \widetilde{X}_{i}(\ut^{-}_{d}) \right \Vert}{\left \Vert \varphi_{d}  \widetilde{X}_{i}(\ut^{+}_{d}) \right \Vert}.
\end{eqnarray}

Likewise, a similar calculation yields
\begin{eqnarray*}
\frac{\left \Vert  \varphi_{d}  \widetilde{X}_{i}(\ut^{-}_{d}) \right \Vert}{\left \Vert \varphi_{d}  \widetilde{X}_{i}(\ut^{+}_{d}) \right \Vert} &\leq& 1 + \frac{1}{m}\frac{1}{\Big\| T^{\varepsilon(P_0,R_{d})}_{g^{-}_{R_{d}}}  \big ( \widetilde{X}_{i}(\ut^{+}_{d}) \big )\Big\|}\left \Vert \varphi_{d} \widetilde{X}_{i}(\ut^{-}_{d})-\varphi_{d} \widetilde{X}_{i}(\ut^{+}_{d}) \right \Vert.
\end{eqnarray*}
Note that
\begin{align*}
\Big\| T^{\varepsilon(P_0,R_{d})}_{g^{-}_{R_{d}}}  \big ( \widetilde{X}_{i}(\ut^{-}_{d}) \big)-T^{\varepsilon(P_0,R_{d})}_{g^{-}_{R_{d}}}  \big ( \widetilde{X}_{i}(\ut^{+}_{d})\big  )\Big\| &=\mathrm{O} \Big (\Big\| T^{\varepsilon(P_0,R_{d})}_{g^{-}_{R_{d}}} \Big\| \, \left \Vert \widetilde{X}_{i}(\ut^{-}_{d})-\widetilde{X}_{i}(\ut^{+}_{d}) \right \Vert\Big)\\
&=\mathrm{O} \Big (e^{C \left\Vert \varepsilon \right \Vert (r(d)+1)}e^{-Kr(d)}\Big).
\end{align*}
Hence 
\begin{eqnarray*}
\Big\|  T^{\varepsilon(P_0,R_{d})}_{g^{-}_{R_{d}}}  \big ( \widetilde{X}_{i}(\ut^{+}_{d}) \big )\Big\| &\geq&\Big\| T^{\varepsilon(P_0,R_{d})}_{g^{-}_{R_{d}}}  \big ( \widetilde{X}_{i}(\ut^{-}_{d}) \big) \Big\|  - \mathrm{O} \Big (e^{C \left\Vert \varepsilon \right \Vert (r(d)+1)}e^{-Kr(d)}\Big)\\
&\geq& e^{-\varepsilon_{i}(P_0,R_{d})} - \mathrm{O} \Big (e^{C \left\Vert \varepsilon \right \Vert (r(d)+1)}e^{-Kr(d)}\Big)\\
&\geq& e^{-C \left\Vert \varepsilon\right\Vert(r(d)+1)} - \mathrm{O} \Big (e^{C \left\Vert \varepsilon \right \Vert (r(d)+1)}e^{-Kr(d)}\Big)\\ 
&\geq& e^{-C \left\Vert \varepsilon\right\Vert(r(d)+1)}\Big (1 - \mathrm{O}\Big(e^{2C \left\Vert \varepsilon \right \Vert (r(d)+1)-Kr(d)}\Big)\Big ).
\end{eqnarray*}
Observe that, whenever $\left\Vert \varepsilon\right\Vert<K/2C$, for every subarc $d\subset k-\widetilde{\lambda}$ whose divergence radius $r(d)$ is large enough, the right-hand side is positive. Therefore,
\begin{eqnarray*}
\frac{\left \Vert  \varphi_{d}  \widetilde{X}_{i}(\ut^{-}_{d}) \right \Vert}{\left \Vert \varphi_{d} 
 \widetilde{X}_{i}(\ut^{+}_{d}) \right \Vert}&\leq& 1 + \frac{1}{m}\frac{\mathrm{O} \big (e^{C\left \Vert \varepsilon \right \Vert(r(d)+1)-Kr(d)}\big )}{e^{-C \left\Vert \varepsilon\right\Vert(r(d)+1)}\Big (1 - \mathrm{O}\Big(e^{2C \left\Vert \varepsilon \right \Vert (r(d)+1)-Kr(d)}\Big)\Big )}\\
&\leq& 1 + \frac{1}{m}\frac{\mathrm{O} \big (e^{2C\left \Vert \varepsilon \right \Vert(r(d)+1)-Kr(d)}\big )}{\Big (1 - \mathrm{O}\Big(e^{2C \left\Vert \varepsilon \right \Vert (r(d)+1)-Kr(d)}\Big)\Big )}\\ \\
&\leq& 1 +\mathrm{O} \big (e^{2C\left \Vert \varepsilon \right \Vert(r(d)+1)-Kr(d)}\big ).
\end{eqnarray*}
Hence, for every subarc $d\subset k-\widetilde{\lambda}$ whose divergence radius $r(d)$ is large enough,
\begin{eqnarray}
\label{Ineq5}
\log \frac{\left \Vert  \varphi_{d}  \widetilde{X}_{i}(\ut^{-}_{d}) \right \Vert}{\left \Vert \varphi_{d} 
 \widetilde{X}_{i}(\ut^{+}_{d}) \right \Vert} \leq \mathrm{O} \big (e^{2C\left \Vert \varepsilon \right \Vert(r(d)+1)-Kr(d)}\big ).
\end{eqnarray}
The convergence of the series~(\ref{Series}) then follows from estimates (\ref{Ineq4}) and (\ref{Ineq5}), and from an application of Lemma~\ref{lem01}, whenever $\varepsilon\in\CO$ is small enough. 

Finally, note the following additivity property. Let $k_{1}$ and $k_{2}$ be two subarcs of $k$ with disjoint interior such that $k=k_{1} \cup k_{2}$, and assume that both series $\delta^{\rho\rho'}_i(k_{1})$ and $\delta^{\rho\rho'}_i(k_{2})$ are absolutely convergent. Then $\delta^{\rho\rho'}_i(k)=\delta^{\rho\rho'}_i(k_{1})+\delta^{\rho\rho'}_i(k_{2})$, which implies that $\delta^{\rho\rho'}_i(k)$ is also absolutely convergent. The same argument as in the proof of Lemma~\ref{lem11} then shows that we can find $\varepsilon\in\CO$ small enough so that, for every transverse, simple, nonbacktracking, oriented arc $k$ to $\widetilde{\lambda}$, the series $\delta^{\rho\rho'}_i(k)$ is absolutely convergent.
\end{proof}

\begin{rem}
\label{rem29}
A consequence of the absolute convergence in Lemma~\ref{lem06} is that, for $\varepsilon\in \CO$ small enough, the series 
$$
\delta^{\rho\rho'}_i(k)=\sum_{ \substack{ d \subset k-\widetilde{\lambda} \\ d \neq d^{\pm}}} \log \frac{\left\Vert \varphi_{d} \widetilde{X}_{i}(\ut^{-}_{d}) \right\Vert'_{\ut^{-}_{d}}}{\left \Vert \varphi_{d} \widetilde{X}_{i}(\ut^{+}_{d}) \right \Vert'_{\ut^{+}_{d}}} - \log\left \Vert  \varphi_{d^-} \widetilde{X}_{i}(\ut_{d^-}^{+}) \right \Vert'_{\ut_{d^-}^{+}} + \log\Big\|  \varphi_{d^+} \widetilde{X}_{i}(\ut_{d^+}^{-}) \Big\|'_{\ut_{d^+}^{-}}
$$  
is \emph{commutatively} convergent. 
\end{rem}

\begin{pro}
\label{pro34}
For $\varepsilon\in \CO$ small enough, for every transverse, simple, nonbacktracking arc $k$ to $\widetilde{\La}$, the $n$--tuplet $\delta^{\rho\rho'}(k)=\big (\delta^{\rho\rho'}_{1}(k), \ldots , \delta^{\rho\rho'}_{n}(k)\big )$ is equal to the $n$--uplet $\varepsilon(k)=\big (\varepsilon_{1}(k), \ldots , \varepsilon_{n}(k)\big )\in \mathbb{R}^n$.
\end{pro}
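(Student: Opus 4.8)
The plan is to show that the summation $\delta^{\rho\rho'}_i(k)$ reproduces exactly the twisted cocycle value $\varepsilon_i(k)$ by comparing, along a transverse arc $k$, the metric expansions of the sheared section against the unsheared one leaf-by-leaf. The starting point is the identity $(\ref{LinebundleRelation})$, which tells us that for every component $d\subset k-\widetilde{\lambda}$ the shearing map $\varphi_d$ carries $\widetilde V_i(\ut_d^{\pm})$ onto $\widetilde V'_i(\ut_d^{\pm})$. Combined with Corollary~\ref{cor08}, which writes $\varphi_d=\psi_{P_0R_d}\circ T^{\varepsilon(P_0,R_d)}_{g_{R_d}^-}$ with $\psi_{P_0R_d}=\mathrm{Id}+\mathrm{O}\bigl(\sum_{R}e^{-Br(k\cap R)}\bigr)$, this lets us extract the ``main term'' of each summand of $\delta^{\rho\rho'}_i(k)$: the logarithm of the ratio of norms across a gap $d$ is, up to an exponentially small error controlled by the estimates of Lemma~\ref{lem28}, equal to the difference of the exponents $\varepsilon_i$ picked up by the map $T^{\varepsilon}_{g^-}$ on the two adjacent leaves bounding the relevant triangles.

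The key step is a telescoping argument. First I would reduce to the case where $k$ consists of a single small transverse arc meeting only finitely many leaves, using the additivity property of $\delta^{\rho\rho'}_i$ established at the end of the proof of Lemma~\ref{lem28} and the additivity (finite) of the twisted cocycle $\varepsilon_i$; the general case then follows by the same compactness/exhaustion argument as in Lemma~\ref{lem11}, once absolute (hence commutative, by Remark~\ref{rem29}) convergence is in hand. For a finite arc, order the gaps $d^-=d_0,d_1,\dots,d_m=d^+$ from the negative to the positive endpoint of $k$, with $d_{j}$ and $d_{j+1}$ separated by the triangle $R_j$ whose bounding leaves are $g_{R_j}^-$ (closer to $P_0$) and $g_{R_j}^+$. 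Writing each $\varphi_{d_j}$ via Corollary~\ref{cor08} and using that $\psi_{P_0R_{d_j}}$ differs from $\psi_{P_0R_{d_{j+1}}}$ by a factor that is $\mathrm{Id}+\mathrm{O}(e^{-Br(k\cap R_j)})$, the logarithm $\log\bigl(\|\varphi_{d_{j+1}}\widetilde X_i(\ut^-_{d_{j+1}})\|'/\|\varphi_{d_j}\widetilde X_i(\ut^+_{d_j})\|'\bigr)$ collapses, modulo a convergent error series, to $\varepsilon_i(P_0,R_{d_{j+1}})-\varepsilon_i(P_0,R_{d_j})$, which is exactly $\varepsilon_i$ of the lift of the portion of $k$ crossing from $R_{d_j}$ to $R_{d_{j+1}}$ (by the cocycle property of $\varepsilon_i$ and the definition of $\varepsilon(P,R)$ in \S\ref{ShearTriangle}). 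Summing over $j$, the internal terms telescope; the two boundary corrections $-\log\|\varphi_{d^-}\widetilde X_i(\ut^+_{d^-})\|'$ and $+\log\|\varphi_{d^+}\widetilde X_i(\ut^-_{d^+})\|'$ are precisely what is needed to cancel the spurious endpoint contributions of $\varphi_{d^\mp}$ and leave the net value $\varepsilon_i(\widehat{p(k)})=\varepsilon_i(k)$.

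There is one bookkeeping subtlety that I expect to be the main obstacle: keeping careful track of the role of the base triangle $P_0$ and of the twisted (rather than ordinary) nature of the cocycle, i.e.\ making sure that all the $\varepsilon(P_0,R)$ appearing in the $\varphi_d$'s recombine, via the cocycle relation $\varepsilon(P_0,R')-\varepsilon(P_0,R)=\varepsilon(R,R')$ and the definition of the preferred lift $\widehat{p(k_{PR})}$, into the single quantity $\varepsilon(k)$ read off along $k$ itself, with the correct orientations of the leaves so that the exponents $e^{\varepsilon_i}$ and not $e^{-\varepsilon_i}$ (or the reversed index $n-i+1$, via Lemma~\ref{lem:LineReversing}) appear. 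The analytic input — absolute convergence of all the error series — is already supplied by Lemmas~\ref{lem01}, \ref{lem02}, \ref{lem05} and the estimates inside the proof of Lemma~\ref{lem28}, so once the combinatorics of the telescoping is set up correctly the proof is essentially a matter of organizing the cancellation; I would present it by first isolating the ``leading order'' equality $\log\bigl(\|\varphi_{d_{j+1}}\widetilde X_i(\ut^-_{d_{j+1}})\|'/\|\varphi_{d_j}\widetilde X_i(\ut^+_{d_j})\|'\bigr)=\varepsilon_i(R_{d_j},R_{d_{j+1}})+\text{(error)}$ as a lemma, then summing.
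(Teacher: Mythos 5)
Your proposal follows essentially the same route as the paper: use the commutative convergence of Remark~\ref{rem29} to reorganize $\delta_i^{\rho\rho'}(k)$ into a telescoping sum pairing $\ut_j^+$ with $\ut_{j+1}^-$, extract the leading term $\varepsilon_i(R_j,R_{j+1})$ from each cross-gap ratio via Corollary~\ref{cor08} (the factor $T^{\varepsilon(R_j,R_{j+1})}_{g^-_{R_{j+1}}}$ scales $\widetilde{X}_i(\ut^-_{j+1})$ by exactly $e^{\varepsilon_i(R_j,R_{j+1})}$), and conclude by the finite additivity of the cocycle. The one adjustment is that you cannot literally reduce to an arc meeting finitely many leaves; the paper instead truncates to the finite sets $\mathcal{P}^r_{PQ}$ of triangles of divergence radius at most $r$ and lets $r\to\infty$, the accumulated error being $\mathrm{O}\big(r\,e^{2C\Vert\varepsilon\Vert(r+1)-Kr}\big)\to 0$ since $\mathrm{Card}(\mathcal{P}^r_{PQ})=\mathrm{O}(r)$ by Lemma~\ref{lem01}.
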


In the above statement, the transverse $n$--twisted cocycle $\varepsilon\in \CO$ is regarded as a $\pi_1(S)$--invariant transverse $n$--twisted cocycle for the lift $\widetilde{\La}$.

\begin{proof}
Fix an arc $k$ as above. By Lemma~\ref{lem28} and Remark~\ref{rem29}, for  every $i=1$, $\ldots$ , $n$, whenever $\varepsilon\in \CO$ is small enough,
\begin{eqnarray*}
\delta^{\rho\rho'}_{i}(k)=\lim_{r\to \infty}\sum_{\substack{d \subset k-\widetilde{\lambda} \\ d \neq d^\pm \\ r(d)\leq r}} \log \frac{\left \Vert  \varphi_{d} \widetilde{X}_{i}(\ut^{-}_{d}) \right \Vert'_{u^{-}_{d}}}{\left \Vert \varphi_{d} \widetilde{X}_{i}(\ut^{+}_{d}) \right \Vert'_{\ut^{+}_{d}}} &-& \log\left \Vert  \varphi_{d^-} \widetilde{X}_{i}(\ut_{d^-}^{+}) \right \Vert'_{\ut_{d^-}^{+}}\\ &+& \log\left \Vert  \varphi_{d^+} \widetilde{X}_{i}(\ut_{d^+}^{-}) \right \Vert'_{\ut_{d^+}^{-}}
\end{eqnarray*}
where $r(d)$ is the divergence radius of the subarc $d\subset k-\widetilde{\lambda}$ (see \S\ref{sect:Inequalities}). We wish to show that $\delta^{\rho\rho'}_{i}(k)=\varepsilon_i(k)$.

With the same notation as in \S\ref{sect:Cataclysms}, let $P$ and $Q\subset \St-\widetilde{\lambda}$ be the two ideal triangles whose interiors are joined by the oriented transverse arc $k$ to $\widetilde{\lambda}$. Put $m_r=\mathrm{Card}(\mathcal{P}^r_{PQ})$. Index the elements of $\mathcal{P}_{PQ}^{r}$ as $R^{r}_{1}$, $R^{r}_{2}$, $\ldots$ , $R^{r}_{m}$ so that the indexing $j$ of $R^{r}_{j}$ increases as one goes from $P$ to $Q$, and for convenience, set $R^r_{0}=P$ and $R^r_{m+1}=Q$. Finally, let us set $d_j=k\cap R^r_j$. Then
\begin{eqnarray*}
\delta^{\rho\rho'}_{i}(k)=\lim_{r \to \infty} \,\sum_{j=1}^{m_r} \log \frac{\Big\|  \varphi_{d_{j}} \widetilde{X}_{i}(\ut^{-}_{j}) \Big\|'_{\ut^{-}_{j}}}{\Big\| \varphi_{d_{j}} \widetilde{X}_{i}(\ut^{+}_{j}) \Big\|'_{\ut^{+}_{j}}} &-& \log\left \Vert  \varphi_{d_{0}} \widetilde{X}_{i}(\ut_{0}^{+}) \right \Vert'_{\ut_{0}^{+}} \\ &+& \log\left \Vert  \varphi_{d_{m+1}} \widetilde{X}_{i}(\ut_{m+1}^{-}) \right \Vert'_{\ut_{m+1}^{-}}.
\end{eqnarray*}
We should emphasize the fact that in the above series, the endpoints $\ut_j^\pm \in k\cap \widetilde{\La}$ of each oriented subarc $d_j\subset k-\widetilde{\lambda}$ all depend on the integer $r$. By reordering the terms,
$$
\delta^{\rho\rho'}_{i}(k)=\lim_{r \to \infty}\sum_{j=0}^{m_r} \log  \frac{\left \Vert  \varphi_{d_{j+1}} \widetilde{X}_{i}(u^{-}_{j+1}) \right \Vert'_{u_{j+1}^{-}}}{\left \Vert  \varphi_{d_{j}} \widetilde{X}_{i}(u^{+}_{j}) \right \Vert'_{u_{j}^{+}}}. 
$$

Pick a metric $\left \Vert \ \right \Vert$ on $\mathbb{R}^n$. $k\cap \widetilde{\lambda}$ being compact, the lifted metric $\left \Vert \ \right \Vert'_{|k\cap \widetilde{\La}}$ on the line bundle ${\widetilde{V_i}}'_{|k\cap \widetilde{\La}}$ is equivalent to the restriction of $\left \Vert \ \right \Vert$ to ${\widetilde{V_i}'}_{|k\cap \widetilde{\La}} \subset \mathbb{R}^n$. Note that, by definition of $\mathcal{P}_{PQ}^{r}$, we have $r(d) > r$ for every subarc $d\subset k-\widetilde{\lambda}\setminus \bigcup d_j$. Thus, by Lemma~\ref{lem02}, for every $j=1$, $\ldots$ , $m_r$,
$$
\mathrm{dist}_{T^1S}\big (\ut^-_{j+1},\ut^+_{j}\big )= \mathrm{O} \big(e^{-Ar} \big )
$$ 
for some $A>0$ (depending on $k$ and $\rho$). The lifted metric $\left \Vert \ \right \Vert'_{\ut}$ on $\mathbb{R}^n$ depending smoothly on $\ut\in T^1\St$, it follows from the above estimate that 
$$
\delta^{\rho\rho'}_{i}(k)=\lim_{r \to \infty}\sum_{j=0}^{m_r} \log  \frac{\left \Vert  \varphi_{d_{j+1}} \widetilde{X}_{i}(\ut^{-}_{j+1}) \right \Vert'_{\ut_{j+1}^{-}}}{\left \Vert  \varphi_{d_{j}} \widetilde{X}_{i}(\ut^{+}_{j}) \right \Vert'_{\ut_{j}^{+}}}=\lim_{r \to \infty}\sum_{j=0}^{m_r} \,\log  \frac{\left \Vert  \varphi_{d_{j+1}} \widetilde{X}_{i}(\ut^{-}_{j+1}) \right \Vert}{\left \Vert  \varphi_{d_{j}} \widetilde{X}_{i}(\ut^{+}_{j}) \right \Vert}. 
$$

We now focus attention on the series on the right-hand side and calculate its value. To do so, we begin with finding an estimate for each term of this series. By applying Corollary~\ref{cor08},
\begin{eqnarray*}
\varphi_{d_{j+1}}\widetilde{X}_{i}(\ut^-_{j+1})&=&\varphi_{d_{j}}\psi_{R^r_{j}R^r_{j+1}} T_{g^-_{R^r_{j+1}}}^{\varepsilon(R^r_{j},R^r_{j+1})}\big (\widetilde{X}_{i}(\ut^-_{j+1})\big )\\ \\
&=&e^{\varepsilon_{i}(R^r_{j},R^r_{j+1})}\varphi_{d_{j}}\psi_{R^r_{j}R^r_{j+1}}\big (\widetilde{X}_{i}(\ut^-_{j+1})\big )\\ \\
&=&e^{\varepsilon_{i}(R^r_{j},R^r_{j+1})}\varphi_{d_{j}} \Big( \widetilde{X}_{i}(\ut^-_{j+1}) + \phi_{j} \big (\widetilde{X}_{i}(\ut^-_{j+1} ) \big ) \Big )
\end{eqnarray*}
where $\phi_{j}\in \SL_n(\mathbb{R})$ is a linear map such that $\leftn \phi_{j}\rightn=\mathrm{O}\Big ( \sum_{R \in \mathcal{P}_{R^r_{j}R^r_{j+1}}} e^{-Br(k\cap R)}  \Big )$ for some $B$ (depending on $k$ and $\rho$). Thus 
\begin{eqnarray*}
\varphi_{d_{j+1}}\widetilde{X}_{i}(\ut^-_{j+1})&=&e^{\varepsilon_{i}(R^r_{j},R^r_{j+1})}\varphi_{d_{j}} \left(  \widetilde{X}_{i}(\ut^+_{j})+\big (\widetilde{X}_{i}(\ut^-_{j+1})-\widetilde{X}_{i}(\ut^+_{j}) \big ) + \phi_{j}\big (\widetilde{X}_{i}(\ut^-_{j+1} \big )     \right ) \\ \\
&=&e^{\varepsilon_{i}(R^r_{j},R^r_{j+1})}\varphi_{d_{j}}\widetilde{X}_{i}(\ut^+_{j})   +    e^{\varepsilon_{i}(R^r_{j},R^r_{j+1})}\varphi_{d_{j}}\big (\widetilde{X}_{i}(\ut^-_{j+1})-\widetilde{X}_{i}(\ut^+_{j}) \big )\\ \\ &&\text{$+$ }  e^{\varepsilon_{i}(R^r_{j},R^r_{j+1})}\varphi_{d_{j}} \phi_{j}\big (\widetilde{X}_{i}(\ut^-_{j+1}) \big ).
\end{eqnarray*}

Therefore,
\begin{align*}
\frac{\left \Vert \varphi_{d_{j+1}}\widetilde{X}_{i}(\ut^-_{j+1})\right \Vert}{\left \Vert \varphi_{d_{j}} \widetilde{X}_{i}(\ut^+_{j})  \right \Vert} = e^{\varepsilon_i(R^r_{j}, R^r_{j+1})}  \Bigg [    1 &+  \mathrm{O} \Bigg ( \frac{\left \Vert \varphi_{d_{j}} \big( \widetilde{X}_{i}(\ut^-_{j+1})-\widetilde{X}_{i}(\ut^+_{j}) \big ) \right \Vert}{\left \Vert \varphi_{d_{j}} \widetilde{X}_{i}(\ut^+_{j})  \right \Vert} \Bigg ) \\
& \qquad \qquad \qquad+ \mathrm{O} \Bigg (\frac{\left \Vert \varphi_{d_{j}} \phi_{j}\big (\widetilde{X}_{i}(\ut^-_{j+1}) \big )\right \Vert}{\Big\| \varphi_{d_{j}} \widetilde{X}_{i}(\ut^+_{j}) \Big\|} \Bigg ) \Bigg ]
\end{align*}
which implies that
\begin{align*}
\log\frac{\left \Vert \varphi_{d_{j+1}}\widetilde{X}_{i}(\ut^-_{j+1})\right \Vert}{\left \Vert \varphi_{d_{j}} \widetilde{X}_{i}(\ut^+_{j})  \right \Vert}=\varepsilon_i(R^r_{j}, R^r_{j+1}) &+   \mathrm{O} \Bigg (\frac{\left \Vert \varphi_{d_{j}} \big( \widetilde{X}_{i}(\ut^-_{j+1})-\widetilde{X}_{i}(\ut^+_{j}) \big ) \right \Vert}{\left \Vert \varphi_{d_{j}} \widetilde{X}_{i}(\ut^+_{j})  \right \Vert} \Bigg )\\ 
&\qquad \qquad \qquad +   \mathrm{O} \Bigg (\frac{\left \Vert \varphi_{d_{j}} \phi_{j}\big (\widetilde{X}_{i}(\ut^-_{j+1}) \big )\right \Vert}{\Big\| \varphi_{d_{j}} \widetilde{X}_{i}(\ut^+_{j}) \Big\|} \Bigg ).
\end{align*}
Similar arguments as in the proof of Lemma~\ref{lem28} show that, provided that $\left\Vert \varepsilon\right\Vert<K/2C$, for $r$ large enough, for every $j=1$, $\dots$ , $m_r$,
$$
\frac{\left \Vert \varphi_{d_{j}} \big( \widetilde{X}_{i}(\ut^-_{j+1})-\widetilde{X}_{i}(\ut^+_{j}) \big ) \right \Vert}{\left \Vert \varphi_{d_{j}} \widetilde{X}_{i}(\ut^+_{j})  \right \Vert}=\mathrm{O}(e^{2C\left \Vert \varepsilon \right \Vert(r+1) -Kr})
$$ 
for some $K>0$ and $C$ (both depending on $k$ and $\rho$). Likewise, 
$$
\frac{\left \Vert \varphi_{d_{j}} \phi_{j}\big (\widetilde{X}_{i}(\ut^-_{j+1}) \big )\right \Vert}{\Big\| \varphi_{d_{j}} \widetilde{X}_{i}(\ut^+_{j}) \Big\|} =\mathrm{O}(e^{2C\left \Vert \varepsilon \right \Vert(r+1) -Kr}).
$$

As a result,
\begin{align*}
\delta_i^{\rho\rho'}(k)= \lim_{r\to \infty}\,\sum_{j=0}^{m_r}  \log \frac{ \Big\| \varphi_{d_{j+1}}\widetilde{X}_{i}(\ut^-_{j+1}) \Big\|}{ \Big\| \varphi_{d_{j}} \widetilde{X}_{i}(\ut^+_{j})  \Big\|}  &= \lim_{r\to \infty}\,\sum_{j=0}^{m_r}  \varepsilon_i(R^r_j, R^r_{j+1}) \\
&\qquad \qquad+ \lim_{r\to \infty}\, \sum_{j=0}^{m_r} \mathrm{O}(e^{2C\left \Vert \varepsilon \right \Vert(r+1) -Ar}).
\end{align*}
Finally, observe that $\varepsilon_{i}(k)=\sum_{j=0}^{m_r}  \varepsilon_i(R^r_j, R^r_{j+1})$, and that, by Lemma~\ref{lem01}, $m_r=\mathrm{Card}(\mathcal{P}^r_{PQ})=\mathrm{O}(r)$. We conclude that
$$
\delta_i^{\rho\rho'}(k)=\varepsilon_{i}(k).
$$
\end{proof}

\begin{cor}
\label{Cor36}
Let $\rho$ be an Anosov representation, and let $\mathcal{U}^\rho$ be some open neighborhood of $0\in\CO$ small enough. The cataclysm map 
\begin{eqnarray*}
\Lambda: \mathcal{U}^\rho& \to &\Rep^{\text{Anosov}}_{\PSL_{n}(\mathbb{R})}(S)\\
\varepsilon&\mapsto&\Lambda^{\varepsilon}\rho
\end{eqnarray*}
is injective.
\end{cor}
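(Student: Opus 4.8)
The plan is to recover the shear parameter $\varepsilon$ directly from the pair $(\rho,\rho')$, $\rho'=\Lambda^{\varepsilon}\rho$, by means of Proposition~\ref{pro34}, and then to observe that the recovery formula depends on $\rho'$ only through its conjugacy class. First I would recall that, by its very definition, a transverse $n$--twisted cocycle $\varepsilon\in\CO$ is completely determined by the collection of its values $\varepsilon(k)=(\varepsilon_{1}(k),\ldots,\varepsilon_{n}(k))\in\mathbb{R}^{n}$ as $k$ ranges over transverse, simple, nonbacktracking, oriented arcs to $\widetilde{\lambda}$. Hence it suffices to show that if $\rho'_{1}:=\Lambda^{\varepsilon_{1}}\rho$ and $\rho'_{2}:=\Lambda^{\varepsilon_{2}}\rho$ define the same point of $\Rep_{\PSL_{n}(\mathbb{R})}(S)$, then $\varepsilon_{1}(k)=\varepsilon_{2}(k)$ for every such $k$; and by Proposition~\ref{pro34} this amounts to proving $\delta^{\rho\rho'_{1}}(k)=\delta^{\rho\rho'_{2}}(k)$.

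The heart of the argument is then to check that the summation $\delta^{\rho\rho'}(k)$ is an invariant of $\rho$ and of the conjugacy class of $\rho'$ alone, i.e.\ that it is insensitive to which cataclysm shearing family was used to build it. Here I would invoke Theorem~\ref{CataFlag} and Remark~\ref{RemFlagCurve}: the shearing map $\varphi_{P_{0}P}$ is an element of $\SL_{n}(\mathbb{R})$ carrying the flag triple $\mathcal{F}_{\rho}(P)=(\mathcal{F}_{\rho}(x),\mathcal{F}_{\rho}(y),\mathcal{F}_{\rho}(z))$ at the vertices of $P$ onto $\mathcal{F}_{\rho'}(P)$. Any two elements of $\SL_{n}(\mathbb{R})$ realizing the same such correspondence differ by the pointwise stabilizer of the flag triple $\mathcal{F}_{\rho}(P)$; invoking that the flag curve of an Anosov representation puts triples of distinct points in general position (a strengthening of the pairwise $2$--hyperconvexity recorded in the excerpt), this stabilizer is reduced to the group $\{\zeta\,\mathrm{Id}:\zeta^{n}=1\}$ of real central matrices, which is trivial in $\PSL_{n}(\mathbb{R})$ and, crucially, leaves unchanged every norm $\|\varphi_{d}\widetilde{X}_{i}(\widetilde{u}_{d}^{\pm})\|'_{\widetilde{u}_{d}^{\pm}}$ entering the definition of $\delta^{\rho\rho'}_{i}(k)$. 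Likewise, replacing $\rho'$ by a conjugate $g\rho'g^{-1}$ replaces $\mathcal{F}_{\rho'}$ by $g\cdot\mathcal{F}_{\rho'}$ and, after correspondingly transporting the auxiliary metric $\|\cdot\|'$ by $g$, leaves all these norms — hence $\delta^{\rho\rho'}(k)$ — unchanged. Thus $\delta^{\rho\rho'}(k)$ depends only on $\rho$ and $[\rho']$.

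Combining these two paragraphs: if $[\rho'_{1}]=[\rho'_{2}]$, then for every transverse arc $k$ one gets $\varepsilon_{1}(k)=\delta^{\rho\rho'_{1}}(k)=\delta^{\rho\rho'_{2}}(k)=\varepsilon_{2}(k)$, hence $\varepsilon_{1}=\varepsilon_{2}$. Consequently, on a neighborhood $\mathcal{U}^{\rho}$ of $0\in\CO$ small enough for Proposition~\ref{pro34} to apply, the cataclysm map $\Lambda\colon\mathcal{U}^{\rho}\to\Rep^{\mathrm{Anosov}}_{\PSL_{n}(\mathbb{R})}(S)$ is injective.

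The step I expect to be the main obstacle is the rigidity statement of the second paragraph: controlling the pointwise stabilizer of the flag triples $\mathcal{F}_{\rho}(P)$, so that the family $\{\varphi_{P_{0}P}\}$ — equivalently, the map $\delta^{\rho\rho'}$ — is determined up to center by the two flag curves. If general position of triples of the Anosov flag curve is not available in this generality (the excerpt only asserts pairwise $2$--hyperconvexity), one has to argue more carefully, for instance by first noting that on the $\pi_{1}(S)$--orbit of $P_{0}$ the shearing maps are manifestly determined, since $\varphi_{P_{0}\gamma P_{0}}=\rho'(\gamma)\rho(\gamma)^{-1}$, and then propagating the equality of the two shearing families — hence the equality $\delta^{\rho\rho'_{1}}(k)=\delta^{\rho\rho'_{2}}(k)$ — using the composition rule of Lemma~\ref{lem11} together with the equivariance $\varphi_{\gamma P\gamma Q}=\rho(\gamma)\circ\varphi_{PQ}\circ\rho(\gamma)^{-1}$ of the family $\{\varphi_{PQ}\}$.
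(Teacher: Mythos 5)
Your reduction is exactly the paper's: the proof given there is two sentences long, namely that if $\Lambda^{\varepsilon}\rho=\Lambda^{\varepsilon'}\rho=\rho'$ then $\varepsilon=\delta^{\rho\rho'}=\varepsilon'$ by Proposition~\ref{pro34}. So the first and last paragraphs of your proposal are the intended argument.

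The well-definedness issue you isolate in your second paragraph is a real one that the paper's one-line proof passes over in silence: $\delta^{\rho\rho'}_i(k)$ is defined using the shearing maps $\varphi_d$ built from $\varepsilon$, so equating the two evaluations of Proposition~\ref{pro34} requires knowing that the summation built from the $\varepsilon$--family and the one built from the $\varepsilon'$--family coincide. Your first proposed fix is sound \emph{if} the pointwise stabilizer of the flag triple $\mathcal{F}_\rho(P)$ is central (then $\varphi'_d=\pm\varphi_d$ in $\SL_n(\mathbb{R})$ and every norm entering $\delta^{\rho\rho'}_i(k)$ is unchanged), but, as you yourself suspect, pairwise $2$--hyperconvexity does not give this for $n\geq 4$: the stabilizer of two transverse flags is a full torus, and a third flag transverse to each of the first two only constrains that torus on the coordinates where its line has nonzero components. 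Your fallback is only a sketch and does not close the gap as stated: the $\pi_1(S)$--orbit of $P_0$ consists solely of the lifts of the single triangle $p(P_0)\subset S-\lambda$, so equality of the homomorphisms determines $\varphi_{P_0\gamma P_0}$ but says nothing directly about $\varphi_{P_0P}$ for plaques $P$ lying over the other $4g-5$ triangles, and the composition rule of Lemma~\ref{lem11} together with equivariance does not by itself propagate the equality to those. So the step you single out as the main obstacle remains open in your write-up; to be fair, it is not addressed in the paper's own proof either.
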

\begin{proof}
Let $\varepsilon$ and $\varepsilon'\in \CO$ be such that $\Lambda^{\varepsilon}\rho=\Lambda^{\varepsilon'}\rho$. Then $\varepsilon=\delta^{\rho\rho'}=\varepsilon'$ by Proposition~\ref{pro34}.
\end{proof}

\subsection {Length functions of an Anosov representation}
\label{LengthFunctions}

In \cite{Dr1}, we extend \emph{Thurs\-ton's length function of Fuchsian representations} \cite{Th1, Th2, Bon4, Bon1} to an important class of Anosov representations known as \emph{Hitchin representations} \cite{La1, Gui, FoGo}. More precisely, to every Hitchin representation $\rho\colon \pi_1(S)\to \PSL_n(\mathbb{R})$, we associate $n$ length functions $\ell_i^\rho \colon \CH(S)\to \mathbb{R}$ defined on the space of \emph{H\"older geodesic currents} $\CH(S)$ \cite{Bon2}. The construction of the lengths $\ell^\rho_i$ extends to every Anosov representation and we begin with reviewing some of this construction.


Consider the flat, $\bar{\mathbb{R}}^n$--bundle $T^1S\times_{\rho} \bar{\mathbb{R}}^n\to T^1S$ of an Anosov representation $\rho$ as in \S\ref{Rbundle description}. Let $(G_t)_{t\in \mathbb{R}}$ be the flow on $T^1S\times_{\rho} \bar{\mathbb{R}}^n$ that lifts the geodesic flow $(g_t)_{t\in \mathbb{R}}$ on $T^1S$. The Anosov section $\sigma_{\rho}=(V_1, \ldots , V_n)$ provides a line decomposition $V_1\oplus \cdots\oplus V_n$ of the bundle $T^1S\times_{\rho} \bar{\mathbb{R}}^n\to T^1S$ with the property that each line sub-bundle $V_i\to T^1S$ is invariant under the action of the flow $(G_t)_{t\in \mathbb{R}}$. Finally, pick a Riemannian metric $\left \Vert \ \right \Vert_{u}$ on $T^1S\times_{\rho} \bar{\mathbb{R}}^n\to T^1S$. 

Let $\mathcal{F}$ be the geodesic foliation of the unit tangent bundle $T^1S$. Let $u_0\in T^1S$, and let $X_i(u_0)\in V_i(u_0)$ be a vector. For every $u \in T^1S$ lying on the same geodesic leaf as $u_0$, set
$$
\omega^\rho_i(u)=-{\frac{d}{dt}\log \Big\| G_{t}X_i(u_0) \Big\|_{g_t(u_0)} dt}_{|t=t_u}
$$
where $t_u \in \mathbb{R}$ is such that $u=g_{t_u}(u_0)$. The above expression defines a $1$--form $\omega^\rho_i$ on $T^1S$ along the leaves of the geodesic foliation $\mathcal{F}$. One easily verifies that the definition of $\omega^\rho_i$ is independent of the choices of $u_0\in T^1S$ and $X_i(u_0)\in V_i(u_0)$, and thus only depends on the metric $\left \Vert \ \right \Vert_u$. In addition, because of the regularity of the line bundles $V_i\to T^1S$ (see Theorem~\ref{AnosovSection}), the $1$--forms $\omega^\rho_i$ satisfy the following properties: they are smooth, closed along the leaves of the geodesic foliation $\mathcal{F}$; and are transversally H\"older continuous.  We refer the reader to \cite{Dr1} for details.

Fix a maximal geodesic lamination $\lambda\subset S$. Let $\La$ be its orientation cover as in \S\ref{sect:TransCocycles}. In the context of this article, we will be interested in length functions $\ell_i^\rho$ defined on the vector space $\HO$ of transverse cocycles for $\La$ only. In particular, we now give an alternative definition of the lengths $\ell_i^\rho$ in the special case of $\HO$, that differs from the one in \cite{Dr1}, but that better suits our purposes here. 

\begin{figure}[htbp]
\SetLabels
( .4*.88 ) $e_1$\\
( .4*.13 ) $e_2$\\
( .8*.88 ) $e_3$\\
( .63*.61 ) $k_1$\\
( .63*.4 ) $k_2$\\
\endSetLabels
\vskip 10pt
\centerline{\AffixLabels{\includegraphics{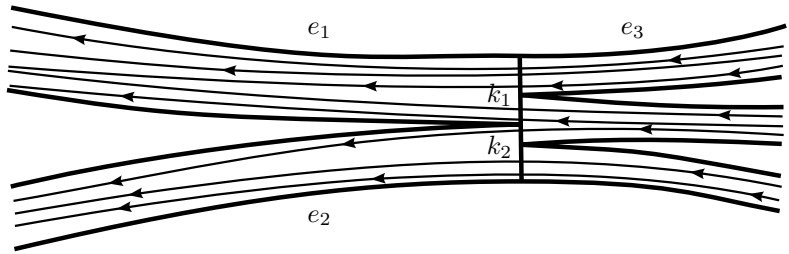}}}
\caption{An oriented geodesic lamination $\La$ within a train track $\widehat{U}$.}
\label{Fig5*}
\end{figure}

Identify the oriented geodesic lamination $\widehat{\lambda}$ with its corresponding subset in $T^1S$; note that $\La\subset T^1S$ is a closed subset that is the union of some leaves of the geodesic foliation $\mathcal{F}$. In particular, the geodesic lamination $\La$ inherits $n$ $1$--forms $\omega^{\rho}_i$ that are smooth, closed along its leaves, and transversally H\"older continuous. Let $\widehat{U}\supset \La$ be an open surface as in \S\ref{sect:TransCocycles}. We may assume without loss of generality that $\widehat{U}$ is a train track \cite{PeH, Bon4} for the oriented lamination $\widehat{\lambda}$. Let  $e_1$, $\ldots$ , $e_m$ be the oriented edges of the train track $\widehat{U}$; and let $k_1$, $\ldots$ , $k_m \subset \widehat{U}$ be the ingoing lids of each of the corresponding edge $e_j$; see Figure~\ref{Fig5*}. For every edge $e_j$, complete the partial foliation induced by $\La \cap e_j$ in a full foliation of $e_j$. By integrating the $1$--form $\omega_i^\rho$ along each oriented plaque in the edge $e_j$, and considering the negative endpoint of each oriented plaque, we define a function $h_j:k_j\to \mathbb{R}$ on the transverse arc $k_j$, that is H\"older continuous due to the regularity of $\omega_i^\rho$. Let $\alpha\in \HO$ be a transverse cocycle. By Theorem~\ref{gap}, $\alpha$ assigns on each transverse arc $k_j$ a H\"older distribution $\alpha_{k_j}$. The length $\ell_i^\rho(\alpha)$ of the transverse cocycle $\alpha\in \HO$ is defined as
$$
\ell_i^\rho(\alpha)=\sum_{j=1}^m \alpha_{k_j}(h_j)=\int_{\La} \omega_i^\rho d\alpha
$$
where $\alpha_{k_j}(h_j)$ is the value of the distribution $\alpha_{k_j}$ at the function $h_j$. One easily verifies that the value $\ell_i^\rho(\alpha)$ is independent of the choice of the train track $\widehat{U}\supset \La$. In addition, a homological argument shows that the value $\ell_i^\rho(\alpha)$ is independent of the metric  $\left \Vert \ \right \Vert_u$ that we chose on the bundle $T^1S\times_{\rho} \bar{\mathbb{R}}^n\to T^1S$, and of which we made use to define the $1$--form $\omega_i^\rho$. Finally, note that the length 
$$
\ell_i^\rho\colon \HO\to \mathbb{R}
$$ 
is a linear function on the vector space of transverse cocycles $\HO$.

\subsection{$1$--forms $\Delta_i^{\rho\rho'}$}
\label{DifferenceForms}

Given an Anosov representation $\rho$, let $\rho'=\Lambda^{\varepsilon}\rho$ be a $\varepsilon$--cataclysm deformation along a maximal geodesic lamination $\lambda\subset S$ for some transverse $n$--twisted cocycle $\varepsilon \in \CO$ small enough. For every $i=1$, $\ldots$ , $n$, set
$$
\Delta_i^{\rho\rho'}=\omega^{\rho'}_i- \omega^{\rho}_i
$$
where $ \omega^{\rho}_i$ and $ \omega^{\rho'}_i$ are the $1$--forms as in \S\ref{LengthFunctions}. Therefore, $\Delta_i^{\rho\rho'}$ defines a $1$--form that is smooth, closed along the leaves of the oriented geodesic lamination $\La$, and is transversally H\"older continuous. We wish to relate the $1$--form $\Delta^{\rho\rho'}_i$ to the shear $\varepsilon\in \CO$. In particular, the main result of this section is Proposition~\ref{cor46}.

Let $T^1S\times_{\rho}\bar{\mathbb{R}}^n\to T^1S$ and $T^1S\times_{\rho'}\bar{\mathbb{R}}^n\to T^1S$ be respectively the flat bundles of the Anosov representations $\rho$ and $\rho'=\Lambda^{\varepsilon}\rho$, endowed with the metrics $\left \Vert \ \right \Vert_u$ and $\left \Vert \ \right \Vert'_u$, respectively. Let $V_i\to T^1S$ and $V_i'\to T^1S$ be the associated line bundles of $\rho$ and $\rho'$. 


Consider an open surface $\widehat{U}\supset \La$ as in \S\ref{sect:TransCocycles}. Recall that the complement $\widehat{U}-\La$ is made of one-holed ideal hexagons $\widehat{P}$, where each $\widehat{P}$ is the lift of some (punctured) ideal triangle $P\subset S-\lambda$. Finally, identify the oriented geodesic lamination $\La$ with its corresponding subset in $T^1S$.

Let $\widehat{P} \subset \widehat{U}-\widehat{\lambda}$ be a one-holed hexagon, and let $h_1$, $\dots$ , $h_6\subset \La$ be the six oriented edges of $\widehat{P}$. Along the boundary $\partial \widehat{P}=h_1\cup\cdots \cup h_6$ of the one-holed hexagon $\widehat{P}$, we consider the function $F_{i,\partial \widehat{P}}\colon \partial\widehat{P} \to \mathbb{R}$ defined as follows.  Let $\widetilde{\widehat{P}} \subset \widetilde{\widehat{U}}-\widetilde{\widehat{\lambda}}$ that lifts $\widehat{P}\subset \widehat{U}-\La$, with $\partial \widetilde{\widehat{P}}=\widetilde{h}_1\cup\cdots \cup \widetilde{h}_6$; for every $j=1$, $\ldots$ , $6$, for every $u\in h_j$, set 
$$
{F_{i, \partial \widehat{P}}}_{| h_j}(u)=-\log\left \Vert \varphi_{P_0\widetilde{P}} \widetilde{X}_i(\ut) \right \Vert'_{\ut} 
$$
where: $\ut\in \widetilde{h}_j$ is a lift of $u$; $\widetilde{X}_i$ is the lift of some unit section $X_i$ (for the metric $\left \Vert \ \right\Vert_u$) of the line bundle $V_i\to T^1S$; and $\varphi_{P_0\widetilde{P}}$ is the shearing map associated with the ideal triangle $\widetilde{P}\subset \St- \widetilde{\lambda}$ ($\widetilde{P}$ is the triangle such that the one-holed hexagon $\widetilde{\widehat{P}}\subset \widetilde{\widehat{U}}-\widetilde{\La}$ projects to $\widetilde{P}$; and $P_0\subset \St-\widetilde{\lambda}$ is a triangle that we fix). A key step in proving Proposition~\ref{cor46} is the following observation.

\begin{lem}
\label{Lem38}
For every $u\in \La$ that lies along the boundary $\partial \widehat{P}$ of the one-holed hexagon $\widehat{P}$,
$$
\Delta^{\rho\rho'}_i(u)=d_{u} {F_{i, \partial \widehat{P}}}
$$
where the differential is taken along $\partial \widehat{P}$. 
\end{lem}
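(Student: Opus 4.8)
The plan is to unwind both sides of the claimed identity into the $1$-forms built from the Anosov sections $\sigma_\rho$ and $\sigma_{\rho'}$ and check that the difference of logarithmic derivatives along a leaf of $\La$ equals the logarithmic derivative of the explicit function $F_{i,\partial\widehat P}$. First I would fix a leaf $h_j$ of $\La\cap \partial\widehat P$, pass to the universal cover, and direct $h_j$ by the geodesic flow $(g_t)_{t\in\mathbb R}$; write $u=g_t(u_0)$. By definition (\S\ref{LengthFunctions}),
$$
\omega^{\rho}_i(u)=-\frac{d}{dt}\log\bigl\| G_t X_i(u_0)\bigr\|_{g_t(u_0)}\Big|_{t=t_u},\qquad
\omega^{\rho'}_i(u)=-\frac{d}{dt}\log\bigl\| G'_t X'_i(u_0)\bigr\|'_{g_t(u_0)}\Big|_{t=t_u},
$$
where $X_i$, $X'_i$ are (local, leafwise-smooth) sections of $V_i$, $V_i'$; leafwise these do not depend on the chosen section. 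Next I would use Theorem~\ref{CataFlag} together with Remark~\ref{RemFlagCurve}: over the leaf $\widetilde h_j$ bounding $\widetilde P$, the restricted Anosov section $\sigma_{\rho'}$ is obtained from $\sigma_\rho$ by applying the single linear map $\varphi_{P_0\widetilde P}\in\SL_n(\mathbb R)$, i.e.\ $\widetilde V'_i(\ut)=\varphi_{P_0\widetilde P}\bigl(\widetilde V_i(\ut)\bigr)$ for all $\ut\in\widetilde h_j$ (this is exactly relation (\ref{LinebundleRelation}), valid here because $\widetilde h_j$ is a leaf bounding $\widetilde P$). Since $\varphi_{P_0\widetilde P}$ is a \emph{constant} linear map (independent of $\ut$) and the flow $G'_t$ is flat (acts as the geodesic flow on the base and trivially on the fibre in the universal cover), we may take $\widetilde X'_i(\ut)=\varphi_{P_0\widetilde P}\widetilde X_i(\ut)$ as our leafwise section of $V'_i$, and then $G'_t\widetilde X'_i(\ut)=\varphi_{P_0\widetilde P}\bigl(G_t\widetilde X_i(\ut)\bigr)$ — the shearing map commutes with the flat flow on each leaf.

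With this choice, for $u\in h_j$ lifted to $\ut=g_t(\ut_0)$,
$$
\bigl\|G'_t X'_i(u_0)\bigr\|'_{g_t(u_0)}=\bigl\|\varphi_{P_0\widetilde P}\bigl(G_t\widetilde X_i(\ut_0)\bigr)\bigr\|'_{\ut},
$$
so
$$
\Delta^{\rho\rho'}_i(u)=\omega^{\rho'}_i(u)-\omega^{\rho}_i(u)
=-\frac{d}{dt}\log\frac{\bigl\|\varphi_{P_0\widetilde P}(G_t\widetilde X_i(\ut_0))\bigr\|'_{\ut}}{\bigl\|G_t\widetilde X_i(\ut_0)\bigr\|_{\ut}}\Big|_{t=t_u}.
$$
Now I would observe that, since $\ut\mapsto \varphi_{P_0\widetilde P}(G_t\widetilde X_i(\ut_0))$ differs from $\ut\mapsto \widetilde X'_i(\ut)=\varphi_{P_0\widetilde P}\widetilde X_i(\ut)$ only by the scalar $\|G_t\widetilde X_i(\ut_0)\|_{\ut}$ (another consequence of flatness: both are nonzero vectors in the same line $\widetilde V'_i(\ut)$), the ratio $\|\varphi_{P_0\widetilde P}(G_t\widetilde X_i)\|'/\|G_t\widetilde X_i\|$ equals $\|\varphi_{P_0\widetilde P}\widetilde X_i(\ut)\|'_{\ut}$ up to a factor that cancels in the logarithmic derivative along the leaf — more precisely, writing $G_t\widetilde X_i(\ut_0)=c(t)\,\widetilde X_i(\ut)$ with $c(t)>0$, the numerator is $c(t)\|\varphi_{P_0\widetilde P}\widetilde X_i(\ut)\|'_{\ut}$ and the denominator is $c(t)\|\widetilde X_i(\ut)\|_{\ut}=c(t)$, so the whole expression is $\|\varphi_{P_0\widetilde P}\widetilde X_i(\ut)\|'_{\ut}$, independent of the choice of scaling of $\widetilde X_i$. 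Therefore
$$
\Delta^{\rho\rho'}_i(u)=-\frac{d}{dt}\log\bigl\|\varphi_{P_0\widetilde P}\widetilde X_i(g_t(\ut_0))\bigr\|'_{g_t(\ut_0)}\Big|_{t=t_u}
=d_u\Bigl(-\log\bigl\|\varphi_{P_0\widetilde P}\widetilde X_i(\cdot)\bigr\|'\Bigr)
=d_u F_{i,\partial\widehat P},
$$
the differentials being taken along $\partial\widehat P$; this is exactly the assertion. I would close by noting well-definedness: the definition of $F_{i,\partial\widehat P}$ uses a lift $\widetilde P$ and a section $\widetilde X_i$, but changing the lift of $u$ within a single leaf rescales $\widetilde X_i$ by a positive scalar which drops out of the leafwise differential, and the six edges $h_1,\dots,h_6$ are treated simultaneously by the same argument applied to each edge of $\widehat P$ (each is a leaf bounding the \emph{same} triangle $\widetilde P$, so the \emph{same} map $\varphi_{P_0\widetilde P}$ is used).

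The main obstacle is the bookkeeping around relation (\ref{LinebundleRelation}) and the precise claim that $\varphi_{P_0\widetilde P}$ commutes with the flat flow on a leaf $\widetilde h_j$ bounding $\widetilde P$: one must be careful that (\ref{LinebundleRelation}) was established (via Theorem~\ref{CataFlag}) only for leaves that actually bound the triangle $R_d$ (equivalently $\widetilde P$), which is exactly the situation here since $\widetilde h_1,\dots,\widetilde h_6$ all bound $\widetilde{\widehat P}$ hence project to edges of $\widetilde P$. One must also check that the norms $\|\cdot\|_u$ and $\|\cdot\|'_u$ are leafwise smooth (this is part of their choice as Riemannian metrics on the flat bundles) so that the logarithmic derivatives make sense and the chain rule applies. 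The rest — cancellation of the scaling factor $c(t)$, independence of the chosen lift — is routine once the flatness identity $G'_t\circ\varphi_{P_0\widetilde P}=\varphi_{P_0\widetilde P}\circ G_t$ on $\widetilde h_j$ is in hand.
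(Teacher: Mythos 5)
Your proposal is correct and follows essentially the same route as the paper: both use flatness to commute $\varphi_{P_0\widetilde P}$ with the lifted flows along a leaf bounding $\widetilde P$, reduce $\Delta^{\rho\rho'}_i$ to the leafwise logarithmic derivative of $\left\Vert \varphi_{P_0\widetilde P}\widetilde X_i(\cdot)\right\Vert'$, and use the unit normalization of $X_i$ to cancel the scaling factor (your $c(t)$ plays exactly the role of the paper's constant $\mu_{\ut}$, the only cosmetic difference being that the paper works with a unit section $X'_i$ of $V'_i$ while you feed $\varphi_{P_0\widetilde P}\widetilde X_i$ directly into the definition of $\omega^{\rho'}_i$, which is legitimate since that definition is independent of the chosen section). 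The only loose point is your aside on well-definedness of $F_{i,\partial\widehat P}$ under change of lift, which really rests on the $\rho'$-equivariance coming from $\varphi_{P_0\gamma\widetilde P}=\Lambda^\varepsilon\rho(\gamma)\,\varphi_{P_0\widetilde P}\,\rho(\gamma)^{-1}$ rather than a mere positive rescaling, but this is peripheral to the identity being proved.
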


In the above statement, $\left \Vert \ \right \Vert'_u$ is the metric chosen on the flat bundle $T^1S\times_{\rho'}\bar{\mathbb{R}}^n\to T^1S$ to define the $1$--form $\omega^{\rho'}_i$; see  \S\ref{LengthFunctions}.

\begin{proof}
It follows from the equivariance property of the shearing map $\varphi_{P_0\widetilde{P}}$ that the function ${F_{i, \partial \widehat{P}}}$ is well defined. We must check that it is smooth. 

Let $u\in \partial \widehat{P}$ that lies along the oriented edge $h_j \subset \La$, and let $\widetilde{\widehat{P}}$, $\widetilde{h}_j$, $\ut\in \widetilde{h}_j$, and $\widetilde{P}$ as above. By Theorem~\ref{CataFlag} and Remark~\ref{RemFlagCurve}, for every $t\in \mathbb{R}$, $\varphi_{P_0\widetilde{P}} \widetilde{X}_i\big (g_t(\ut) \big)\in \widetilde{V}'_i(g_t(\ut))$. Moreover, $X_i(u)\in V_i(u)$ being a unit section (for the metric $\left \Vert \ \right\Vert_u$), and the fibre $V_i(u)$ depending smoothly on $u\in T^1S$ along the leaves of the geodesic foliation $\mathcal{F}$, one easily verifies that the function
$$
t\mapsto\log \left \Vert \varphi_{P_0\widetilde{P}} \widetilde{X}_i\big (g_t(\ut) \big)\right \Vert'_{g_t(\ut)}
$$
is differentiable, which implies that ${F_{i, \partial \widehat{P}}}$ is smooth. Therefore, for every $u\in\widehat{P}$,
$$
d_{u} {F_{i, \partial \widehat{P}}}=d_{\ut}\log\left \Vert \varphi_{P_0\widetilde{P}} \widetilde{X}_i(\ut)  \right \Vert'_{\widetilde{u}}={\frac{d}{dt}\Big (\log\left \Vert \varphi_{P_0\widetilde{P}} \widetilde{X}_i(g_t(\ut))  \right \Vert'_{g_t(\ut)} \Big )dt}_{|t=0}
$$
defines a smooth, closed $1$--form along $\partial \widehat{P}\subset \widetilde{\La}$. 

By definition of the $1$--forms $\omega^\rho_i$ and  $\omega^{\rho'}_i$ (see \S\ref{LengthFunctions}), for every $u\in \widehat{\lambda}$,
\begin{eqnarray*}
\Delta_i^{\rho\rho'}(u)=\big (\omega_i^{\rho'}-\omega_i^{\rho}\big )(u)={\frac{d}{dt} \Big (\log \left \Vert \widetilde{G}_{t}\widetilde{X}_i(\widetilde{u}) \right \Vert_{g_t(\widetilde{u})}-\log \left \Vert \widetilde{G}'_{t}\widetilde{X}'_i(\widetilde{u}) \right \Vert'_{g_t(\widetilde{u})} \Big )dt}_{|t=0}
\end{eqnarray*}
where $(G_{t})_{t\in \mathbb{R}}$ and $(G'_{t})_{t\in \mathbb{R}}$ are the flows on the flat bundles $T^1S\times_{\rho} \bar{\mathbb{R}}^n\to T^1S$ and $T^1S\times_{\rho'} \bar{\mathbb{R}}^n\to T^1S$, respectively; see \S\ref{Rbundle description}.


Let $\widetilde{X}'_i(\ut)$ be the lift of some unit section $X'_i(u)$ (for the metric $\left \Vert \ \right\Vert'_u$) of the line bundle $V'_i\to T^1S$. Since, for every $\ut\in \widetilde{\La}$, $\varphi_{P_0\widetilde{P}}\big ( \widetilde{V}_i(\widetilde{u})\big )= \widetilde{V}'_i(\widetilde{u})$, we have $\varphi_{P_0\widetilde{P}}\widetilde{X}_i(\widetilde{u})=\mu_{\ut} \widetilde{X}'_i(\widetilde{u})$ for some $\mu_{\ut} \in \mathbb{R}$. In addition, because of the flat connections on $T^1S\times_{\rho} \bar{\mathbb{R}}^n$ and $T^1S\times_{\rho'} \bar{\mathbb{R}}^n$, and since the shearing map $\varphi_{P_0\widetilde{P}}$ is a linear map, for every $t\in \mathbb{R}$,
$$
\widetilde{G}'_t \widetilde{X}'_i(\widetilde{u})=\mu_{\ut} \varphi_{P_0\widetilde{P}}\big(\widetilde{G}_t \widetilde{X}_i(\widetilde{u})\big ).
$$
As a result, for every $t\in \mathbb{R}$, for every $\ut \in \partial \widetilde{\widehat{P}}$,
\begin{eqnarray*}
\log \left \Vert \widetilde{G}_{t}\widetilde{X}_i(\widetilde{u}) \right \Vert_{g_t(\widetilde{u})}  \! \! \! -  \log \left \Vert \widetilde{G}'_{t}\widetilde{X}'_i(\ut) \right \Vert'_{g_t(\widetilde{u})} \! \! &=& \! \!  - \log \frac{\left \Vert \widetilde{G}'_{t}\widetilde{X}'_i(\widetilde{u}) \right \Vert'_{g_t(\widetilde{u})}}{\left \Vert \widetilde{G}_{t}\widetilde{X}_i(\widetilde{u}) \right \Vert_{g_t(\widetilde{u})}}\\
&=&\! \! - \log \frac{{\left \vert \mu_{\ut} \right \vert \left \Vert\varphi_{P_0\widetilde{P}}\big ( \widetilde{G}_t \widetilde{X}_i(\widetilde{u})\big )\right \Vert'}_{g_t(\widetilde{u})}}{\left \Vert \widetilde{G}_{t}\widetilde{X}_i(\widetilde{u}) \right \Vert_{g_t(\widetilde{u})}}\\
&=&\! \! - \log \left \Vert \varphi_{P_0\widetilde{P}} \Bigg (\frac{\widetilde{G}_t \widetilde{X}_i(\widetilde{u})}{\left \Vert \widetilde{G}_{t}\widetilde{X}_i(\widetilde{u}) \right \Vert_{g_t(\widetilde{u})}}  \Bigg ) \right \Vert'_{g_t(\widetilde{u})}\\ \\
& &\qquad \qquad \qquad \qquad  \qquad \qquad - \log\left \vert \mu_{\ut} \right \vert\\ \\
&=&\! \! -\log\left \Vert\varphi_{P_0\widetilde{P}}\widetilde{X}_i(g_t(\widetilde{u}))  \right \Vert'_{g_t(\widetilde{u})} - \log\left \vert \mu_{\ut} \right \vert . \\
\end{eqnarray*} 
Note that the very last step makes use of the fact that $X_i(u)$ is a unit section (for the metric $\left \Vert \ \right\Vert_u$) of the line bundle $V_i\to T^1S$. 

We thus conclude that, for every $u\in \La$ that lies along the boundary $\partial \widehat{P}$ of some one-holed hexagon $\widehat{P}$,
\begin{eqnarray*}
\Delta_i^{\rho\rho'}(u)&=&{\frac{d}{dt}\Big (-\log\left \Vert \varphi_{P_0\widetilde{P}}\widetilde{X}_i(g_t(\widetilde{u})) \right \Vert'_{g_t(\widetilde{u})} - \log\left \vert \mu_{\ut} \right \vert\Big )}_{|t=0}dt\\
&=&-d_{\widetilde{u}}\log\left \Vert \varphi_{P_0\widetilde{P}} \widetilde{X}_i(\widetilde{u}) \right \Vert'_{\widetilde{u}}.
\end{eqnarray*}
\end{proof}

\begin{figure}[htbp]
\SetLabels
( .67*.75 ) $h_1$\\
( .53*.65 ) $\Omega_1$\\
( .32*.75 ) $h_6$ \\
( .17*.5 ) $h_5$ \\
( .67*.24 ) $h_3$ \\
( .32*.24 ) $h_4$ \\
(.83 *.5 ) $h_2$ \\
( .64*.5 ) $\Omega_2$\\
\endSetLabels
\vskip 10pt
\centerline{\AffixLabels{\includegraphics{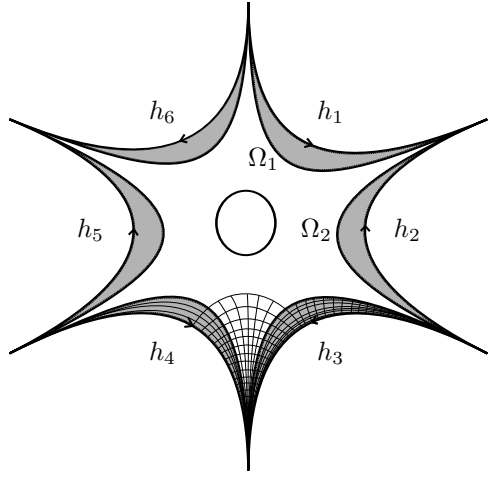}}}
\caption{A one-holed hexagon $\widehat{P}$ in $\widehat{U}-\La$ and its two transverse foliations.}
\label{Fig4}
\end{figure}

\begin{lem}
\label{FormExtended}
The $1$--form $\Delta^{\rho\rho'}_i$ (defined along the leaves of $\La$) extends a H\"older continuous, closed $1$--form defined on the open surface $\widehat{U}$. 
\end{lem}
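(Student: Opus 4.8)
The plan is to exhibit, on each one-holed hexagon $\widehat{P}\subset\widehat{U}-\La$, an explicit primitive for $\Delta^{\rho\rho'}_i$ whose restriction to the boundary $\partial\widehat{P}$ agrees with the function $F_{i,\partial\widehat{P}}$ of Lemma~\ref{Lem38}, and then to patch these local primitives into a globally defined, Hölder continuous, closed $1$--form on $\widehat{U}$. The natural candidate for the primitive on $\widehat{P}$ is the function
$$
\widehat{F}_{i,\widehat{P}}(u)=-\log\left\Vert \varphi_{P_0\widetilde{P}}\,\widetilde{X}_i(\ut)\right\Vert'_{\ut},\qquad u\in\widehat{P},
$$
where now $\widetilde{X}_i$ is the lift of a \emph{full} smooth unit section $X_i$ of $V_i\to T^1S$ over a neighborhood of $\widehat{P}$ in $\widehat{U}$ (one completes the partial foliation $\La\cap\widehat{P}$ to a full foliation of $\widehat{P}$, as in the definition of the $\ell^\rho_i$ in \S\ref{LengthFunctions}, and uses the smoothness of $V_i\to T^1S$ along leaves of $\mathcal{F}$ from Theorem~\ref{AnosovSection}). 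The same computation as in the proof of Lemma~\ref{Lem38}, carried out leafwise on the foliated hexagon rather than just on $\partial\widehat{P}$, shows $d\widehat{F}_{i,\widehat{P}}=\Delta^{\rho\rho'}_i$ along the leaves of the completed foliation, and that $\widehat{F}_{i,\widehat{P}}|_{\partial\widehat{P}}=F_{i,\partial\widehat{P}}$. First I would record that $\Delta^{\rho\rho'}_i$ is therefore closed on the interior of each hexagon, and Hölder continuous there, with boundary values controlled by $F_{i,\partial\widehat{P}}$.

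Next I would globalize. Over $\La$ itself the form $\Delta^{\rho\rho'}_i$ is already defined and is transversally Hölder, smooth and closed along leaves; over each hexagon we have just produced a Hölder, leafwise-closed extension. The point is that these agree on the overlap $\partial\widehat{P}\subset\La$: on $\partial\widehat{P}$ the hexagon-side form is $d F_{i,\partial\widehat{P}}$, which by Lemma~\ref{Lem38} equals $\Delta^{\rho\rho'}_i$. Hence the pieces glue to a single $1$--form $\Delta^{\rho\rho'}_i$ on all of $\widehat{U}$ that is Hölder continuous (the estimates of \S\ref{DiffCocy}, in particular Corollary~\ref{cor08} and Lemma~\ref{lem28}, guarantee that the shearing maps $\varphi_{P_0\widetilde{P}}$ vary Hölder-continuously transverse to $\La$, so $F_{i,\partial\widehat{P}}$ and its $\widehat{U}$-extension depend Hölder-continuously on the transverse parameter). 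Closedness on $\widehat{U}$ then follows because a continuous $1$--form that is closed on the dense open set $\widehat{U}-\partial\widehat{\lambda}$ — both on the hexagon interiors and, by the leafwise-closedness plus transverse regularity argument of \cite{Dr1}, in a neighborhood of $\La$ — is closed everywhere; equivalently, one checks that its integral over every small loop vanishes, using that the loop can be pushed off the (measure-zero, one-dimensional) lamination.

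The main obstacle I expect is matching regularity across the boundary leaves $\partial\widehat{\lambda}$: one must verify that the leafwise-defined form over $\La$ and the hexagon primitives fit together not merely continuously but so that the resulting global form is genuinely \emph{closed} as a current/de~Rham $1$--form on the surface $\widehat{U}$, i.e. that no distributional contribution is concentrated along $\partial\widehat{\lambda}$. This is exactly the transverse-Hölder-regularity phenomenon exploited in the definition of the length functions $\ell^\rho_i$ in \S\ref{LengthFunctions} (where $\int_{\La}\omega^\rho_i\,d\alpha$ is shown to be metric-independent by a homological argument): the relevant input is that $\Delta^{\rho\rho'}_i$, as a transversally Hölder family of leafwise-closed forms with a globally defined continuous primitive near each hexagon, represents a well-defined cohomology class, so its boundary (as a current) vanishes. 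I would handle this by invoking the transverse regularity from Theorem~\ref{AnosovSection} together with the train-track description of $\widehat{U}$, reducing the closedness check to finitely many edges and switches of the train track, where it is a direct consequence of the additivity and Hölder estimates already established.
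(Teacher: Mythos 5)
Your overall skeleton is the same as the paper's: produce on each one-holed hexagon $\widehat{P}$ a primitive whose boundary restriction is $F_{i,\partial\widehat{P}}$, invoke Lemma~\ref{Lem38} to see that its differential matches $\Delta_i^{\rho\rho'}$ on $\partial\widehat{P}$, and conclude that the glued form is H\"older continuous and closed because it is exact on each hexagon interior. But your choice of primitive has a genuine gap. You take $\widehat{F}_{i,\widehat{P}}(u)=-\log\bigl\Vert \varphi_{P_0\widetilde{P}}\,\widetilde{X}_i(\ut)\bigr\Vert'_{\ut}$ over the \emph{whole} hexagon, which requires evaluating the section $\widetilde{V}_i$ (and the metric) along a two-dimensional family of unit vectors tangent to the leaves of a completed foliation of $\widehat{P}$. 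Theorem~\ref{AnosovSection} gives smoothness of $\sigma_\rho$ only along the leaves of the geodesic foliation $\mathcal{F}$ of $T^1S$ and mere H\"older regularity transversally; the interior leaves of the completed foliation are not geodesics, so your candidate function is only H\"older on the interior of $\widehat{P}$ and its differential is not a well-defined continuous $1$--form there. Moreover, your claim that ``the same computation as in Lemma~\ref{Lem38} carried out leafwise'' identifies $d\widehat{F}_{i,\widehat{P}}$ with $\Delta_i^{\rho\rho'}$ on interior leaves relies on the identity $\varphi_{P_0\widetilde{P}}\bigl(\widetilde{V}_i(\ut)\bigr)=\widetilde{V}'_i(\ut)$, which Theorem~\ref{CataFlag} establishes only for $\ut$ over the lamination $\widetilde{\La}$, not for interior leaves.

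The paper avoids both problems by extending $F_{i,\partial\widehat{P}}$ \emph{formally} rather than geometrically: it chooses disjoint collar neighborhoods $\Omega_j$ of the six edges, bump functions $\theta_j$, coordinates $(s,t)$ adapted to a vertical/horizontal foliation of $\widehat{P}$, and sets $F_{i,\widehat{P}}(s,t)=\sum_j\theta_j(s,t)\,{F_{i,\partial\widehat{P}}}_{|h_j}(t)$. This function refers only to the boundary values of $F_{i,\partial\widehat{P}}$ (which are smooth in $t$ by the argument in Lemma~\ref{Lem38}) and to smooth cutoffs, so it is genuinely smooth on the interior of $\widehat{P}$, and its differential is a smooth exact $1$--form there restricting to $\Delta_i^{\rho\rho'}$ on $\partial\widehat{P}$. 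If you replace your primitive by such a cutoff extension, the rest of your outline (gluing, H\"older continuity from the transverse regularity of the shearing maps, and closedness in the sense of local path-independence of integrals via exactness on hexagon interiors) goes through; the current-theoretic detour in your last paragraph, and in particular the idea of pushing a small loop off the lamination, is neither needed nor available, since a loop crossing $\La$ cannot in general be homotoped into a single complementary hexagon.
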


A H\"older continuous $1$--form $\omega$ on $\widehat{U}$ is  \emph{closed} if its path integral $\int_\gamma \omega$ along any path $\gamma\subset \widehat{U}$ locally depends only on the endpoints of $\gamma$. In other words, the endpoints of $\gamma$ being frozen, a small perturbation of $\gamma$ does not change the value of $\int_\gamma \omega$.  

\begin{proof}

 Let $\widehat{P} \subset \widehat{U}-\widehat{\lambda}$ be a one-holed hexagon, and let $h_1$, $\dots$ , $h_6\subset \La$ be the six (oriented) edges of $\widehat{P}$. Consider the function $F_{i,\partial \widehat{P}}$ of Lemma~\ref{Lem38}. Let $\Omega_1$, $\ldots$ , $\Omega_6\subset \widehat{P}$ be open neighborhoods of the six edges $h_1$, $\dots$ , $h_6$, respectively, as shown on Figure~\ref{Fig4}; we choose the $\Omega_j$ so that their closures $\bar{\Omega}_i$ in $\widehat{P}$ are pairwise disjoint. For every $j=1$, $\ldots$ , $6$, let $\theta_j\colon \widehat{P}\to \mathbb{R}$ be a bump function that is identically equal to $1$ near $h_j$ and vanishes outside of $\Omega_j$. 

Foliate  the one-holed hexagon $\widehat{P}$ with vertical leaves and horizontal leaves as in Figure~\ref{Fig4} (the horizontal foliation refers to the one that is parallel to the edges  $h_j$). Observe that the two transverse foliations are naturally oriented: the orientation of the hori\-zontal foliation is determined by the oriented edges $h_j$; and the surface $\widehat{U}$ being oriented, the vertical foliation inherits the transverse orientation. Let $s$ and $t$ be local coordinates along the leaves of the vertical foliation and the leaves of the horizontal foliation, respectively. Without loss of gene\-rality, one can arrange that $t$ coincides with the time coordinate along the oriented edges $h_j$ of $\widehat{P}$.

Let $F_{i,\widehat{P}} \colon \widehat{P}   \to \mathbb{R}$ be the function defined by 
$$
F_{i,\widehat{P}}(s,t)=\sum \theta_j(s,t) {F_{i,\partial \widehat{P}}}_{| h_j}(t).
$$ 
One easily verifies that it is a smooth function on the interior of $\widehat{P}$ that extends the previous function $F_{i,\partial \widehat{P}}$ defined along the boundary $\partial\widehat{P}$. Its differential $d F_{i,\widehat{P}}$ provides a smooth, exact $1$--form on the interior of $\widehat{P}$, that extends the previous $d F_{i,\partial \widehat{P}}$ defined along $\partial\widehat{P}$. Hence, by Lemma~\ref{Lem38},
$$
{dF_{i,\widehat{P}}}_{|\partial\widehat{P}}=\Delta_i^{\rho\rho'}.
$$

As a result, $\Delta_i^{\rho\rho'}$ (that is defined along $\La$) extends to a $1$--form defined on $\widehat{U}$ that is smooth, exact on the interior of each one-holed hexagon $\widehat{P}\subset \widehat{U}-\La$. Moreover, it follows from the construction, and the H\"older regularity of the $1$--form $\Delta_i^{\rho\rho'}$ along the leaves of $\La$, that the extension $\Delta_i^{\rho\rho'}$ is H\"older continuous on $\widehat{U}$. In particular, for any path $\gamma \subset \widehat{U}$, the path integral $\int_\gamma \Delta_i^{\rho\rho'}$ is well defined. Besides, the exactness of $\Delta_i^{\rho\rho'}$ on the interior of each one-holed hexagon in $\widehat{U}-\La$ implies that the integral $\int_\gamma \Delta_i^{\rho\rho'}$ locally depends on the endpoints of $\gamma$ only, which proves that the H\"older continuous $1$--form $\Delta_i^{\rho\rho'}$ is closed. 
\end{proof}

Consider the H\"older continuous, closed $1$--form $\Delta^{\rho\rho'}_i$ of Lemma~\ref{FormExtended}, that is defined on $\widehat{U}\supset \La$.

\begin{pro}
\label{cor46}
For $\varepsilon\in \CO$ small enough, for every transverse, simple, nonbacktracking, oriented arc $k\subset \widehat{U}$ to $\widehat{\lambda}$,
$$
\varepsilon_{i}(k)=\int_k \Delta^{\rho\rho'}_i - F_{i,\widehat{P}^+}(u_k^+) + F_{i,\widehat{P}^-}(u_k^-).
$$
where: $u_k^+$ and $u_k^-$ are respectively the positive and the negative endpoints of the oriented arc $k$; and $\widehat{P}^+$ and $\widehat{P}^-$ are the one-holed hexagons containing the endpoints $u_k^+$ and $u_k^-$, respectively.
\end{pro}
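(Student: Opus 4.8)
The strategy is to combine Proposition~\ref{pro34} with a telescoping argument that rewrites the summation $\delta^{\rho\rho'}_i(k)$ in terms of the primitives $F_{i,\widehat{P}}$ supplied by Lemma~\ref{FormExtended}. Throughout one uses the identification of the oriented cover $\widehat{\lambda}\subset\widehat{U}$ with a subset of $T^1S$ via positively directed unit tangent vectors, so that the $1$--form $\Delta^{\rho\rho'}_i$ of \S\ref{DifferenceForms} and its extension to $\widehat{U}$ (Lemma~\ref{FormExtended}) are available.

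First I would lift the oriented transverse arc $k\subset\widehat{U}$ to an oriented arc $\widetilde{k}\subset\widetilde{\widehat{U}}\subset\St$ transverse to $\widetilde{\lambda}$, with the conventions of \S\ref{DiffCocy}, so that Proposition~\ref{pro34} gives $\varepsilon_i(k)=\delta^{\rho\rho'}_i(\widetilde{k})$; it then suffices to prove $\delta^{\rho\rho'}_i(\widetilde{k})=\int_k\Delta^{\rho\rho'}_i-F_{i,\widehat{P}^+}(u_k^+)+F_{i,\widehat{P}^-}(u_k^-)$. The crux is a local computation on each component $d\subset k-\widehat{\lambda}$: writing $\widehat{R}_d\subset\widehat{U}-\widehat{\lambda}$ for the one-holed hexagon containing $d$, the lift of the ideal triangle $R_d\subset\St-\widetilde{\lambda}$, the extension of $\Delta^{\rho\rho'}_i$ to $\widehat{U}$ is by the proof of Lemma~\ref{FormExtended} exact on $\widehat{R}_d$ with primitive $F_{i,\widehat{R}_d}$, which extends continuously to $\partial\widehat{R}_d\subset\widehat{\lambda}$ as $F_{i,\partial\widehat{R}_d}$. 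Since $d$ is a compact arc with interior in $\widehat{R}_d$, endpoints $x_d^\pm\in\partial\widehat{R}_d$, and avoiding the puncture, the fundamental theorem of calculus yields $\int_d\Delta^{\rho\rho'}_i=F_{i,\partial\widehat{R}_d}(x_d^+)-F_{i,\partial\widehat{R}_d}(x_d^-)$; and because $F_{i,\partial\widehat{R}_d}\vert_{h_j}(u)=-\log\|\varphi_{P_0R_d}\widetilde{X}_i(\widetilde{u})\|'_{\widetilde{u}}$, with $\widetilde{u}$ the lift of $u$ determined by the orientation of the relevant leaf of $\widehat{\lambda}$ --- which at $x_d^\pm$ is exactly $\widetilde{u}_d^\pm$ --- this evaluates, for interior components, to $\int_d\Delta^{\rho\rho'}_i=\log\bigl(\|\varphi_d\widetilde{X}_i(\widetilde{u}_d^-)\|'_{\widetilde{u}_d^-}/\|\varphi_d\widetilde{X}_i(\widetilde{u}_d^+)\|'_{\widetilde{u}_d^+}\bigr)$, where $\varphi_d=\varphi_{P_0R_d}$. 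Running the same computation for the two extreme components $d^\pm$, whose inner endpoint is $u_k^\pm$ in the interior of $\widehat{P}^\pm=\widehat{R}_{d^\pm}$, gives $-\log\|\varphi_{d^-}\widetilde{X}_i(\widetilde{u}_{d^-}^+)\|'_{\widetilde{u}_{d^-}^+}=\int_{d^-}\Delta^{\rho\rho'}_i+F_{i,\widehat{P}^-}(u_k^-)$ and $\log\|\varphi_{d^+}\widetilde{X}_i(\widetilde{u}_{d^+}^-)\|'_{\widetilde{u}_{d^+}^-}=\int_{d^+}\Delta^{\rho\rho'}_i-F_{i,\widehat{P}^+}(u_k^+)$.

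Substituting these identities into the definition of $\delta^{\rho\rho'}_i(k)$ from \S\ref{DiffCocy} and telescoping, the interior summands contribute $\sum_{d\neq d^\pm}\int_d\Delta^{\rho\rho'}_i$, and the two correction terms contribute $\int_{d^-}\Delta^{\rho\rho'}_i+\int_{d^+}\Delta^{\rho\rho'}_i$ together with $F_{i,\widehat{P}^-}(u_k^-)-F_{i,\widehat{P}^+}(u_k^+)$. Since $\Delta^{\rho\rho'}_i$ is a genuine H\"older continuous $1$--form on the compact arc $k$ and the complement of $\bigcup_d d$ in $k$, namely $k\cap\widehat{\lambda}$, has measure zero, the bound $|\int_d\Delta^{\rho\rho'}_i|=\mathrm{O}(\mathrm{length}(d))$ with $\sum_d\mathrm{length}(d)=\mathrm{length}(k)<\infty$ shows that $\sum_d\int_d\Delta^{\rho\rho'}_i$ is absolutely convergent with sum $\int_k\Delta^{\rho\rho'}_i$, which in passing re-derives the absolute convergence of Lemma~\ref{lem28}. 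Hence $\delta^{\rho\rho'}_i(\widetilde{k})=\int_k\Delta^{\rho\rho'}_i-F_{i,\widehat{P}^+}(u_k^+)+F_{i,\widehat{P}^-}(u_k^-)$, and Proposition~\ref{pro34} concludes.

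The part I expect to require the most care is not analytic but combinatorial: matching up all the orientation conventions --- the intrinsic orientation of the cover $\widehat{\lambda}$, the compatible orientation chosen on $k$, the orientations induced on the leaves of $\widetilde{\lambda}$ crossed by $\widetilde{k}$, and the identification of $\widehat{\lambda}\subset\widehat{U}$ with a subset of $T^1S$ via positively directed unit tangent vectors --- so that the boundary values of $F_{i,\partial\widehat{P}}$ land exactly on the unit vectors $\widetilde{u}_d^\pm$ appearing in $\delta^{\rho\rho'}_i$, with the correct signs, and so that the exactness of the extended $\Delta^{\rho\rho'}_i$ on a \emph{one-holed} hexagon (with no monodromy around the puncture) is used legitimately. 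Once Lemmas~\ref{Lem38} and~\ref{FormExtended} and Proposition~\ref{pro34} are invoked, the remainder is bookkeeping.
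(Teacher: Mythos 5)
Your proposal is correct and follows essentially the same route as the paper's proof: decompose $\int_k\Delta^{\rho\rho'}_i$ over the components of $k-\La$, use the exactness of the extended $1$--form on each one-holed hexagon together with the boundary values of the primitives $F_{i,\partial\widehat{P}}$ to evaluate each piece as a logarithmic ratio, telescope against the definition of $\delta^{\rho\rho'}_i(k)$ keeping the two boundary corrections, and invoke Proposition~\ref{pro34}. The only difference is cosmetic (you run the identity from $\delta^{\rho\rho'}_i$ toward $\int_k\Delta^{\rho\rho'}_i$ rather than the reverse, and you spell out why $\sum_d\int_d\Delta^{\rho\rho'}_i=\int_k\Delta^{\rho\rho'}_i$, which the paper leaves implicit).
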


\begin{proof}
Let $k\subset \widehat{U}$ be a transverse, oriented arc to $\La$ as above, that lifts to arc $\widetilde{k}\subset \widetilde{\widehat{U}}$ transverse to $\widetilde{\La}$. Then
$$
\int_k \Delta^{\rho\rho'}_i=\int_{\widetilde{k}} \Delta^{\rho\rho'}_i= \sum_{\widetilde{d}\subset \widetilde{k}-\widehat{\La}}\int_{\widetilde{d}} \Delta^{\rho\rho'}_i
$$
where the indexing $\widetilde{d}$ ranges over the set of components of $\widetilde{k}-\widetilde{\La}$.

Recall that the $1$--form $\Delta^{\rho\rho'}_i$ is exact in the interior of each one-holed hexagon in $\widehat{U}-\La$, and that, for every (oriented) subarc $\widetilde{d} \subset \widetilde{k}-\widetilde{\La}$
$$
\int_{\widetilde{d}} \Delta_i^{\rho\rho'}=F_{i,\widetilde{d}}(\ut_{d}^{+}) - F_{i,\widetilde{d}}(\ut_{d}^{-})
$$
where: $F_{i,\widetilde{d}}$ is the function of Lemma~\ref{FormExtended} defined on the interior of the one-holed hexagon in $\widetilde{\widehat{U}}-\widetilde{\La}$ that contains the subarc $\widetilde{d}\subset \widetilde{k}-\widetilde{\La}$; and $\ut_{d}^{+}$ and $\ut_{d}^{-}$ are respectively the positive and the negative endpoints of $\widetilde{d}\subset \widetilde{k}-\widetilde{\widehat{\lambda}}$. In particular, for every subarc $\widetilde{d} \subset \widetilde{k}-\widetilde{\La}$ that does not contain any of the endpoints $u_{\widetilde{k}}^+$ and $u_{\widetilde{k}}^-$ of $k$,
$$
\int_{\widetilde{d}} \Delta_i^{\rho\rho'}=-\log \left \Vert \varphi_{\widetilde{d}} \widetilde{X}_i(\ut^{+}_{d})  \right \Vert'_{\ut_{d}^{+}}   + \log \left \Vert \varphi_{\widetilde{d}} \widetilde{X}_i(\ut^{-}_{d})  \right \Vert'_{\ut_{d}^{-}}
$$
where $\varphi_{\widetilde{d}}\in \SL_n(\mathbb{R})$ is the shearing map associated with the ideal triangle of $\St-\widetilde{\lambda}$ containing the subarc $\widetilde{d}$. In addition,
$$
\int_{\widetilde{d}^+} \Delta^{\rho\rho'}_i=F_{i,\widetilde{d}^+}(\ut_{d^+}^{+})+\log \left \Vert \varphi_{\widetilde{d}^+} \widetilde{X}_i(\ut_{d^+}^{-})  \right \Vert'_{\ut_{d^+}^{-}} 
$$
and
$$
\int_{\widetilde{d}^-} \Delta^{\rho\rho'}_i=-\log \left \Vert \varphi_{\widetilde{d}^-} \widetilde{X}_i(\ut_{d^-}^{+})  \right \Vert'_{\ut_{d^-}^{+}} -F_{i,\widetilde{d}^-}(\ut_{d^-}^{-})
$$
where $\widetilde{d}^+$ and $\widetilde{d}^-$ are the (oriented) subarcs containing the positive and the negative endpoints $u_{\widetilde{k}}^+$ and $u_{\widetilde{k}}^-$. As a result,
\begin{eqnarray*}
\int_k \Delta^{\rho\rho'}_i&=&\sum_{\substack{\widetilde{d}\subset \widetilde{k}-\widetilde{\widehat{\lambda}}\\ \widetilde{d} \neq \widetilde{d}^{\pm}}} \log \frac{\left \Vert \varphi_{\widetilde{d}} \widetilde{X}_i(\ut^{-}_{d})  \right \Vert'_{\ut_{d}^{-}}}{\left \Vert \varphi_{\widetilde{d}} \widetilde{X}_i(\ut^{+}_{d})  \right \Vert'_{\ut_{d}^{+}}} + F_{i,\widetilde{d}^+}(u_{\widetilde{k}}^+)+\log \left \Vert \varphi_{\widetilde{d}^+} \widetilde{X}_i(\ut_{d^+}^{-})  \right \Vert'_{\ut_{d^+}^{-}} \\
&&\qquad\qquad\qquad\qquad\qquad\qquad-\log \left \Vert \varphi_{\widetilde{d}^-} \widetilde{X}_i(\ut_{d^-}^{+})  \right \Vert'_{\ut_{d^-}^{+}} -F_{i,\widetilde{d}^-}(u_{\widetilde{k}}^-)
\end{eqnarray*}
since $\ut_{d^+}^{+}=u_{\widetilde{k}}^+$ and $\ut_{d^-}^{-}=u_{\widetilde{k}}^-$. The result then follows from Proposition~\ref{pro34}, provided that $\varepsilon\in \CO$ is small enough.
\end{proof}

\subsection{Thurston's intersection number}
\label{ThurstonForm}

The vector space $\HO$ of transverse cocycles for $\La$ admits a natural symplectic form $\tau\colon\HO\times\HO\to \mathbb{R}$ known as  \emph{Thurston's intersection number} \cite{Th2, Bon3, Bon4}. This pairing is defined as follows.

Consider an open surface $\widehat{U}\supset \La$ as in \S\ref{sect:TransCocycles}. Let $k_1$, $\ldots$ , $k_m\subset \widehat{U}$ be a finite family of disjoint transverse arcs to the geodesic lamination $\La$ such that every leaf intersects at least one $k_j$. Thus, $\La-\bigcup k_j$ is made of oriented arcs that can be regrouped into finitely many parallel classes; two oriented arcs belong to the same parallel class if their positive (negative resp.) endpoints lie in the same arcs $k_{j'}$ ($k_{j}$ resp.). Collapse each $k_j$ to a point $u_j$, and each parallel class to an oriented edge joining $u_j$ to $u_{j'}$. We obtain an oriented graph $\mathcal{G}_\alpha$ with weights assigned on each of the edges as follows. If $k$ is a transverse arc intersecting exactly all the leaves of a given parallel class, the corresponding edge of $\mathcal{G}_\alpha$ is assigned the weight $\alpha(k)$.

\begin{figure}[htbp]
\vskip 10pt
\centerline{\AffixLabels{\includegraphics{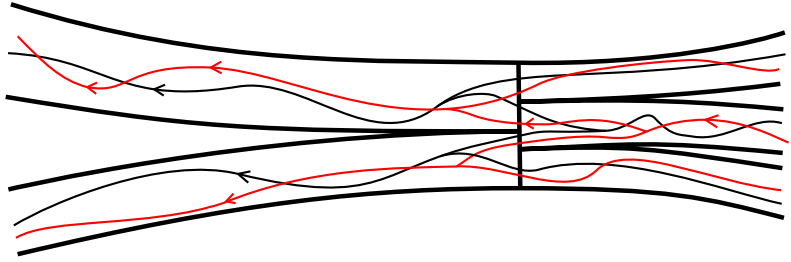}}}
\caption{Thurston's intersection number of two transverse cocycles.}
\label{Figure6}
\end{figure}

Given $\alpha$ and $\beta\in \HO$, the pairing $\tau(\alpha, \beta)$ is the self-intersection number between the two weighted oriented graphs $\mathcal{G}_\alpha$ and $\mathcal{G}_\beta$ defined as follows. Apply to the weighted graph $\mathcal{G}_\beta$  a small perturbation so that the obtained weighted graph $\mathcal{G}'_\beta$ is in transverse position to $\mathcal{G}_\alpha$ as on Figure~\ref{Figure6}; then assign to each intersection point of two edges the product of the corresponding weights, multiplied by $+1$ or $-1$ depending on whether the angle between the two oriented edges is positively or negatively oriented; then take the sum of all of these numbers. It is easy to verify that the resulting number does not depend on the choice of the graphs $\mathcal{G}_{\alpha}$ and $\mathcal{G}'_{\beta}$, and thus that the pairing $\tau(\alpha, \beta)$ is well defined. Note that the intersection number $\tau(\alpha, \beta)$ can be related to the classical self-intersection pairing in homology. Indeed,  it follows from the additivity property of the transverse cocycle $\alpha$ that the oriented weighted graph $\mathcal{G}_{\alpha}$ is a $1$--cycle in $\widehat{U}$. Hence $\alpha\in \HO$ defines a homology class $[\alpha] \in H_1(\widehat{U})$. In particular, Thurston's intersection number on $\HO$ coincides with the classical homology intersection pairing defined on $H_1(\widehat{U})$ (up to a nonzero scalar multiplication).

\subsection{Variation of the length functions}

We now describe the behavior of the length functions $\ell_i^\rho$ of \S\ref{LengthFunctions} under cataclysm deformations. 

Fix a maximal geodesic lamination $\lambda\subset S$ with orientation cover $\La$. Let $\rho$ be an Anosov representation, and let $\rho'=\Lambda^{\varepsilon}\rho$ be a cataclysm deformation for some transverse $n$--twisted cocycle $\varepsilon=(\varepsilon_1, $\ldots$ , \varepsilon_n)\in \CO$ small enough. Let $\ell^\rho_{i} \colon \HO \rightarrow \mathbb{R}$ and $\ell^{\rho'}_{i}\colon \HO \rightarrow \mathbb{R}$ be respectively the length functions associated with $\rho$ and $\rho'$; see \S\ref{LengthFunctions}. 

\begin{thm}
\label{IntCycle}
For every transverse H\"older cocycle $\alpha \in\HO$, 
$$
\ell^{\rho'}_{i}(\alpha)=\ell^{\rho}_{i}(\alpha) + \tau(\alpha, \varepsilon_i)
$$
where $\tau\colon \HO\times\HO\to \mathbb{R}$ is Thurston's intersection number.
\end{thm}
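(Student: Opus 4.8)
The plan is to compute $\ell^{\rho'}_i(\alpha)-\ell^{\rho}_i(\alpha)$ using the definition of the length functions from \S\ref{LengthFunctions} in terms of the $1$--forms $\omega^\rho_i$, $\omega^{\rho'}_i$, and then to identify the resulting quantity with the homological intersection number $\tau(\alpha,\varepsilon_i)$ via the $1$--form $\Delta^{\rho\rho'}_i=\omega^{\rho'}_i-\omega^{\rho}_i$ studied in \S\ref{DifferenceForms}. First I would fix a train track neighborhood $\widehat{U}\supset\La$ as in \S\ref{LengthFunctions} with oriented edges $e_1,\ldots,e_m$ and ingoing lids $k_1,\ldots,k_m$, so that $\ell^{\rho'}_i(\alpha)-\ell^{\rho}_i(\alpha)=\sum_{j=1}^m\alpha_{k_j}(\Delta^{\rho\rho'}_{i,j})=\int_{\La}\Delta^{\rho\rho'}_i\,d\alpha$, where $\Delta^{\rho\rho'}_{i,j}\colon k_j\to\mathbb{R}$ is obtained by integrating the leafwise $1$--form $\Delta^{\rho\rho'}_i$ along the plaques of $e_j$ and reading off the negative endpoint. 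The point is that by Lemma~\ref{FormExtended} the $1$--form $\Delta^{\rho\rho'}_i$ extends to a \emph{genuine} H\"older continuous closed $1$--form on the open surface $\widehat{U}$, so this leafwise pairing can be reinterpreted as an honest pairing of the cohomology class $[\Delta^{\rho\rho'}_i]\in H^1(\widehat{U})$ (or rather its image in a relative/transverse cohomology) against the homology class $[\alpha]\in H_1(\widehat{U})$ defined at the end of \S\ref{ThurstonForm}.

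Next I would compute the cohomology class $[\Delta^{\rho\rho'}_i]$ explicitly. This is exactly what Proposition~\ref{cor46} provides: for every transverse, simple, nonbacktracking, oriented arc $k\subset\widehat{U}$ one has $\int_k\Delta^{\rho\rho'}_i=\varepsilon_i(k)+F_{i,\widehat{P}^+}(u_k^+)-F_{i,\widehat{P}^-}(u_k^-)$, i.e. $\Delta^{\rho\rho'}_i$ and the transverse cocycle $\varepsilon_i$ differ, on transverse arcs, by the coboundary of the function $u\mapsto F_{i,\widehat{P}(u)}(u)$ defined on the one-holed hexagons of $\widehat{U}-\La$. In homological terms this says precisely that, as a class pairing against $1$--cycles supported on $\La$ (equivalently against $[\alpha]\in H_1(\widehat{U})$), $[\Delta^{\rho\rho'}_i]$ represents the same functional as $\varepsilon_i$ does through Thurston's intersection number: evaluating the weighted oriented graph $\mathcal{G}_\alpha$ against $\Delta^{\rho\rho'}_i$ edge by edge, the boundary terms $F_{i,\widehat{P}^\pm}$ cancel in telescoping fashion around each vertex $u_j$ (the collapsed arcs $k_j$), because each $k_j$ lies in a single one-holed hexagon, leaving exactly $\sum_{\text{edges }e\text{ of }\mathcal{G}_\alpha}\alpha(e)\,\varepsilon_i(k_e)$. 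By the identification in \S\ref{ThurstonForm} of Thurston's intersection number with the homology intersection pairing on $H_1(\widehat{U})$, this sum is $\tau(\alpha,\varepsilon_i)$ up to the same universal normalizing scalar that already appears there; the normalization is pinned down by matching the two definitions of $\tau$ on a single explicit pair of arcs, so the scalar is $1$.

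The main obstacle, I expect, is the bookkeeping that turns the leafwise-defined pairing $\int_{\La}\Delta^{\rho\rho'}_i\,d\alpha$ into the combinatorial self-intersection count defining $\tau(\alpha,\varepsilon_i)$ while keeping track of orientations and of the correction functions $F_{i,\widehat{P}}$. Concretely: (i) one must justify that the closed H\"older $1$--form $\Delta^{\rho\rho'}_i$ on $\widehat{U}$, which is exact on each one-holed hexagon of $\widehat{U}-\La$, pairs with $[\alpha]$ only through its ``periods'' across transverse arcs, so that the intermediate values $F_{i,\widehat{P}}$ inside the hexagons are irrelevant and only their boundary values matter; (ii) one must check that the telescoping cancellation of the $F_{i,\widehat{P}^\pm}$ terms around each vertex of $\mathcal{G}_\alpha$ is exact, which uses that $\alpha$ is a transverse cocycle (finite additivity) together with the fact that the lids $k_j$ sit inside single hexagons; and (iii) one must verify the sign/orientation conventions so that the $\pm1$ weights in the definition of $\tau$ match the signs coming from the orientation of the edges of $\widehat{U}$ and of the transverse arcs. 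None of these steps is deep, but (ii) in particular requires care because $\varepsilon_i$ is only finitely additive, not a measure, so the ``integral'' $\int_{\La}\Delta^{\rho\rho'}_i\,d\alpha$ must be handled through the Gap Formula (Theorem~\ref{gap}) exactly as in the proofs of Lemma~\ref{lem28} and Proposition~\ref{pro34}, rather than by naive additivity.
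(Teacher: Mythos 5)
Your plan follows the paper's own proof essentially step for step: write $\ell_i^{\rho'}(\alpha)-\ell_i^{\rho}(\alpha)=\int_{\La}\Delta_i^{\rho\rho'}\,d\alpha$, transfer this integral to the weighted graph $\mathcal{G}_\alpha$ using the closed H\"older extension of $\Delta_i^{\rho\rho'}$ from Lemma~\ref{FormExtended} (the paper disposes of your obstacles (i)--(ii) by exactly the distributional, Gap-Formula-style argument you anticipate), evaluate edge by edge with Proposition~\ref{cor46}, and cancel the $F_{i,\widehat{P}}$ boundary terms because $\mathcal{G}_\alpha$ is a cycle. The only cosmetic difference is that the paper identifies $\sum[\pm1]_p a_p\,\varepsilon_i(c_p)$ with $\tau(\alpha,\varepsilon_i)$ directly from the combinatorial definition of Thurston's intersection number, so your detour through the homology intersection pairing and its normalizing scalar is unnecessary.
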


\begin{proof}
[Proof of Theorem~\ref{IntCycle}]
Let $\alpha\in \HO$. Then 
\begin{eqnarray*}
\ell_i^{\rho'}(\alpha)-\ell_{i}^{\rho}(\alpha)&=&\int_{\La} \omega_i^{\rho'}d\alpha -\int_{\La} \omega_i^\rho d\alpha\\
&=&\int_{\La} \Delta_i^{\rho\rho'}d\alpha.
\end{eqnarray*}
Let $\mathcal{G}_{\alpha}=\sum a_ pc_p$ be a weighted graph as in \S\ref{ThurstonForm}. Applying a small deformation, we can arrange that each $1$--simplex $c_j$ is simple and in transverse position to $\La$ and nonbacktracking. Finally, let $\widehat{U}$ an open surface containing $\La$ as in \S\ref{sect:TransCocycles}, and let us extend the $1$--form $\Delta_i^{\rho\rho'}$ defined along the leaves of $\La$ to $\widehat{U}$ as in \S\ref{DifferenceForms}. 

\begin{lem}
For every $\alpha\in \HO$, 
$$
\int_{\La} \Delta_i^{\rho\rho'}d\alpha=\int_{\mathcal{G}_{\alpha}}\Delta_i^{\rho\rho'}d\alpha.
$$
\end{lem}


\begin{proof}
As in \S\ref{ThurstonForm}, pick a finite family of transverse, simple, nonbacktracking arcs $k_1$, $\ldots$ , $k_m \subset \widehat{U}$ to $\La$, so that  $\La-\bigcup_j k_j$ consists of oriented arcs of finite length. Given two transverse arcs $k_{j'}$ and $k_j$, consider the set of oriented arcs in $\La-\bigcup_j k_j$ whose all positive endpoints lie in $k_{j'}$, and all negative endpoints lie in $k_j$. Let $c_p$ be a  $1$--simplex  intersecting $k_j$ and $k_{j'}$. By subdividing the chain $\sum a_p c_p$ into a sum of smaller simplexes if necessary, we may assume without loss of generality that the positive and negative endpoints of $c_p$ lie in $k_{j'}$ and $k_j$, respectively. Similarly, by subdividing each transverse arc $k_j$ into smaller transverse subarcs, we may assume that two oriented arcs in $\La-\bigcup_j k_j$ whose negative endpoints lie in the same arc $k_j$ also have their positive endpoints lying in the same arc $k_{j'}$. Recall that the length $\ell_i^\rho(\alpha)$ of the transverse cocycle $\alpha\in \HO$ is defined as 
$$
\ell_i^\rho(\alpha)=\sum_{j=1}^m \alpha_{k_j}(h_j)
$$ 
where $\alpha_{k_j}(h_j)$ is the value of the transverse H\"older distribution $\alpha_{k_j}$; see \S\ref{sect:TransCocycles} and  \S\ref{LengthFunctions}. 

For every $j=1$, $\ldots$ , $m$, for every $u\in k_j$, consider the difference
$$
s_j(u)=h_j(u)-\int_{c_p} \Delta_i^{\rho\rho'}=\int_{\mathrm{arc}_u} \Delta_i^{\rho\rho'}-\int_{c_p} \Delta_i^{\rho\rho'}.
$$ 
where $\mathrm{arc}_u$ denotes the oriented arc in $\La-\bigcup_j k_j$ with $u\in k_j$ as negative endpoint. Assuming the $1$--simplex $c_p$ to be small enough, it is contained in a simply connected open subset of $\widehat{U}$. The $1$--form $\Delta_i^{\rho\rho'}$ being smooth, closed on this open subset, it is thus exact. Therefore,
$$
s_j(u)=\int_{\mathrm{arc}_u} \Delta_i^{\rho\rho'}-\int_{c_p} \Delta_i^{\rho\rho'}=\int_{{k_j}_{u\to c_p(0)}} \Delta_i^{\rho\rho'}-\int_{{k_{j'}}_{u'\to c_p(1)}} \Delta_i^{\rho\rho'}
$$
where: $c_p(0)$ and $c_p(1)$ are respectively the negative and the positive endpoints of the $1$--simplex $c_p$; ${k_j}_{u\to c_p(0)}$ is the oriented subarc contained in the transverse arc $k_j$ joining $u$ to $c_p(0)$; and ${k_{j'}}_{u'\to c_p(1)}$ is the oriented subarc contained in the transverse arc $k_{j'}$ joining $u'$ to $c_p(1)$. Note that the function $s_j \colon k_j\to\mathbb{R}$ is H\"older continuous. As a result,
$$
\sum_{j=1}^m \alpha_{k_j}(s_j)=\sum_{j=1}^m \alpha_{k_j} \Big (\int_{{k_{j}}_{u\to c_p(0)}} \Delta_i^{\rho\rho'} \Big) -\sum_{j=1}^m \alpha_{k_j}\Big (\int_{{k_{j'}}_{u'\to c_p(1)}} \Delta_i^{\rho\rho'} \Big)=0.
$$

We  conclude that
$$
\ell_i^{\rho'}(\alpha)-\ell_{i}^{\rho}(\alpha)=\int_{\La} \Delta_i^{\rho\rho'}d\alpha=\sum_{j=1}^m \alpha_{k_j}(h_j)=\int_{\sum a_p c_p} \Delta_i^{\rho\rho'}d\alpha$$
which proves the requested result.
\end{proof}

By applying Lemma~\ref{IntCycle},
\begin{eqnarray*}
\ell_i^{\rho'}(\alpha)-\ell_{i}^{\rho}(\alpha)&=&\int_{\La} \Delta_i^{\rho\rho'}d\alpha\\ 
&=&\int_{\sum a_p c_p} \Delta_i^{\rho\rho'}d\alpha\\ 
&=& \sum a_p \int_{c_p} \Delta_i^{\rho\rho'}d\alpha\\
&=& \sum  [\pm1]_p a_p \varepsilon_i(c_p) + \sum a_p  \big [F_i(c_p(1)) - F_i(c_p(0))\big ]
\end{eqnarray*}
where the latter step follows from an application of Proposition~\ref{cor46}; note that in the above calculation, $F_i$ denotes invariably any of the functions $F_{i,\widehat{P}}$ of the proof of Lemma~\ref{FormExtended} depending on the one-holed hexagons the endpoints $c_p(1)$ and $c_p(0)$ belong to. Since $\mathcal{G}_{\alpha}=\sum a_p c_p$ represents a cycle, it is immediate that $\sum a_p  \big [F_i(c_p(1)) - F_i(c_p(0))\big]=0$. Hence
\begin{eqnarray*}
\ell_i^{\rho'}(\alpha)-\ell_{i}^{\rho}(\alpha)&=&\sum [\pm1]_p a_p \varepsilon_i(c_p)\\
&=&\tau(\alpha, \varepsilon_i)
\end{eqnarray*}
where the coefficient $[\pm1]_p$ is equal to $+1$ or $-1$ depending on whether the transverse arc $c_p$ is positively or negatively oriented for the transverse orientation of $\La$. This achieves the proof of Theorem~\ref{IntCycle}.
\end{proof}

\subsection*{Acknowledgments} I would like to thank Anna Wienhard, Bill Goldman, Dick Canary and Olivier Guichard, for the constant support and interest they showed in this work, as well as the GEAR community. Last but not least, I would like to thank my advisor, Francis Bonahon, for teaching me, with unlimited patience and enthusiasm, the powerful techniques of transverse structures for geodesic lami\-nations.

\end{document}